\def\serieslogo@{} \def\@setcopyright{} \makeatother
\renewcommand*\env@matrix[1][c]{\hskip -\arraycolsep
  \let\@ifnextchar\new@ifnextchar
  \array{*\c@MaxMatrixCols #1}}
\numberwithin{equation}{section}
\newtheorem{thm}{Theorem}[section]
\newtheorem*{main-thm}{Main Theorem}
\newtheorem*{Auslander-thm}{Auslander's Theorem}
\newtheorem{cor}[thm]{Corollary}
\newtheorem{lem}[thm]{Lemma}
\newtheorem{prop}[thm]{Proposition}
\newtheorem*{thmA}{Theorem~A}
\newtheorem*{thmB}{Theorem~B}
\theoremstyle{definition}
\newtheorem{defn}[thm]{Definition}
\newtheorem{rem}[thm]{Remark}
\newtheorem{exam}[thm]{Example}
\newtheorem*{problemA}{Problem~A}
\newtheorem*{problemB}{Problem~B}
\newcommand{\A}{\mathscr A}
\newcommand{\B}{\mathscr B}
\newcommand{\C}{\mathscr C}
\newcommand{\D}{\mathscr D}
\newcommand{\N}{\mathcal N}
\newcommand{\T}{\mathcal T}
\newcommand{\U}{\mathcal U}
\newcommand{\V}{\mathcal V}
\newcommand{\mc}{\mathsf{C}}
\newcommand{\mK}{\mathsf{K}}
\newcommand{\mD}{\mathsf{D}}
\newcommand{\mKb}{\mathsf{K^b}}
\newcommand{\mDsg}{\mathsf{D_{sg}}}
\newcommand{\mDb}{\mathsf{D^b}}
\def\a{\alpha}
\def\b{\beta}
\def\e{\varepsilon}
\def\i{\iota}
\DeclareMathOperator*{\Ker}{\mathsf{Ker}}
 \DeclareMathOperator*{\Image}{\mathsf{Im}}
\DeclareMathOperator*{\Coker}{\mathsf{Coker}}
 \DeclareMathOperator{\pd}{\mathsf{pd}}
\DeclareMathOperator*{\Mod}{\mathsf{Mod}-\!}
 \DeclareMathOperator*{\smod}{\mathsf{mod}-\!}
 \DeclareMathOperator*{\umod}{\underline{\mathsf{mod}}-\!}
\DeclareMathOperator*{\proj}{\mathsf{proj}}
\DeclareMathOperator*{\Inj}{\mathsf{Inj}}
\DeclareMathOperator*{\Proj}{\mathsf{Proj}}
\DeclareMathOperator{\Hom}{\mathsf{Hom}}
\DeclareMathOperator{\Ext}{\mathsf{Ext}}
\DeclareMathOperator*{\Tor}{\mathsf{Tor}}
\newcommand{\Dsg}{\mathsf{D}_\mathsf{sg}}
\newsavebox{\proofbox}
\savebox{\proofbox}{\begin{picture}(7,7)%
  \put(0,0){\framebox(7,7){}}\end{picture}}
\begin{document}


\title[Equivariant recollements and singular equivalences]
{Equivariant recollements and singular equivalences}

\author[Karakikes]{Miltiadis Karakikes}
\address{Department of Mathematics, National and Kapodistrian University of Athens Panepistimioupolis, 15784 Athens, Greece}
\email{miltoskar@math.uoa.gr}

\author[Kontogeorgis]{Aristeides Kontogeorgis}
\address{Department of Mathematics, National and Kapodistrian University of Athens Panepistimioupolis, 15784 Athens, Greece}
\email{kontogar@math.uoa.gr}

\author[Psaroudakis]{Chrysostomos Psaroudakis}
\address{Department of Mathematics, Aristotle University of Thessaloniki, Thessaloniki, 54124, Greece}
\email{chpsaroud@math.auth.gr}

\date{\today}
 
\keywords{%
Recollements of abelian/triangulated categories, Equivariant categories, Derived categories of quasi-coherent sheaves, Singularity categories, Singular Hochschild cohomology, Skew group algebras}

\subjclass[2020]{%
14A22; 
14F08; 
18E10; 
18G80; 
16E30; 
16E65; 
16E40; 
16G10; 
18F20} 

\begin{abstract}
In this paper we investigate equivariant recollements of abelian (resp.\ triangulated) categories. We first characterize when a recollement of abelian (resp.\ triangulated) categories induces an equivariant recollement, i.e.\ a recollement between the corresponding equivariant abelian (resp.\  triangulated) categories. We further investigate singular equivalences in the context of equivariant abelian recollements. In particular, we characterize when a singular equivalence induced by the quotient functor in an abelian recollement lifts to a singular equivalence induced by the equivariant quotient functor. As applications of our results: (i) we construct equivariant recollements for the derived category of a quasi-compact, quasi-separated scheme where the action comes from a subgroup of the automorphism group of the scheme and (ii) we establish new singular equivalences between certain skew group algebras. 
\end{abstract}

\maketitle

\setcounter{tocdepth}{1} \tableofcontents

\section{Introduction and the Main Results}

Equivariant categories appear naturally in various settings and are omnipresent in algebraic geometry, algebraic topology and representation theory. A common feature of ``equivariant mathematics" is that whenever a group acts on a category of interest, studying sheaves over the orbit space is equivalent to studying sheaves over the equivariant category. For example, let $X$ be a variety and $G$ a finite group. It is known that a $G$-action on $X$ induces a $G$-action on the bounded derived category $\mathsf{D}^{\mathsf{b}}(X)$ of coherent sheaves on $X$ and moreover the equivariant derived category $\mathsf{D}^{\mathsf{b}}(X)^G$ can be considered as the derived category of coherent sheaves on the quotient stack $[X/G]$. The starting point of group actions and derived categories was the pioneering work of Bernstein and Lunts \cite{BernsteinLunts}, where they constructed the derived category of equivariant sheaves on locally compact topological $G$-spaces. It should be noted that the study of equivariant sheaves stems from the seminal paper of Grothendieck \cite{Tohoku} who first studied the category of $G$-equivariant sheaves of a space $X$ and found relations to the category of sheaves of the quotient space.

Let $f\colon X\to Y$ be a continuous map between locally compact spaces. It is well known that between the unbounded derived categories of sheaves $\mathsf{D}(X)$ and $\mathsf{D}(Y)$ there are derived functors: $f_*$, $f^*$, $f_{!}$, $f^{!}$ together with the duality, the hom and the tensor product. The main objective of Bernstein and Lunts' book was to obtain an equivariant version of these functors having first established a suitable notion of equivariant derived category. It should be noted that the latter functors are part of what is called {\em Grothendieck's six functor formalism} which by the fundamental work of Beilinson, Bernstein and Deligne \cite{BBD} can be encoded via recollement diagrams of triangulated categories. In the same paper, gluing of t-structures along a recollement has been introduced and the associated hearts form a recollement of abelian categories. This is a diagram of abelian categories and additive functors:
\begin{equation*}
\begin{tikzcd}
 \A \arrow[rr, "\mathsf{i}" description]& & \B \arrow[ll, bend left, "\mathsf{p}"] \arrow[ll, bend right, "\mathsf{q}"'] \arrow[rr, "\mathsf{e}" description] & & \C \arrow[ll, bend left, "\mathsf{r}"] \arrow[ll, bend right, "\mathsf{l}"']
\end{tikzcd}\eqno \mathsf{R}_{\mathsf{ab}}(\A,\B,\C)
\end{equation*}
which satisfies certain properties. Roughly speaking, a recollement $\mathsf{R}_{\mathsf{ab}}(\A,\B,\C)$ means that the sequence $\A \to \B\to \C$  is simultaneously a localisation and a colocalisation sequence. Such diagrams have been studied extensively in representation theory in connection to certain homological questions, see for instance \cite{Psaroud:survey} for a summary. Motivated by the work of Bernstein and Lunts on equivariant derived categories we formulate the next natural question for abelian recollements.

\begin{problemA}
Given a recollement $\mathsf{R}_{\mathsf{ab}}(\A,\B,\C)$ of abelian categories and $G$ a finite group acting on $\B$, under what conditions can we construct a recollement $\mathsf{R}_{\mathsf{ab}}(\A^G,\B^G,\C^G)$ of equivariant abelian categories?
\end{problemA}

Our first main result, proved in Theorem~\ref{main1}, provides necessary and sufficient conditions for constructing the equivariant $\mathsf{R}_{\mathsf{ab}}(\A^G,\B^G,\C^G)$ from $\mathsf{R}_{\mathsf{ab}}(\A,\B,\C)$.

\begin{thmA}
Let $\mathsf{R}_{\mathsf{ab}}(\A,\B,\C)$ be a recollement of abelian categories and $G$ a finite group acting on $\B$. Then the following are equivalent:
\begin{itemize}
\item[(i)] $G$ acts on $\C\simeq \B/\A$ and $\mathsf{e}$ is a $G$-functor. 
\item[(ii)] $\A$ is a $G$-invariant subcategory of $\B$. 
\end{itemize}
If either of the above conditions holds true, then $\mathsf{R}_{\mathsf{ab}}(\A^G, \B^G, \C^G)$ is a recollement of equivariant abelian categories.
\end{thmA}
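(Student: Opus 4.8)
The plan is to first settle the equivalence $\mathrm{(i)}\Leftrightarrow\mathrm{(ii)}$ using the description of $\A$, via the fully faithful functor $\mathsf{i}$, as the kernel $\Ker(\mathsf{e})=\{B\in\B\mid \mathsf{e}(B)=0\}$ inside $\B$, and then to produce the equivariant recollement by equivariantizing the six functors of $\mathsf{R}_{\mathsf{ab}}(\A,\B,\C)$. Write $F_g\colon \B\to\B$ ($g\in G$) for the autoequivalences implementing the $G$-action on $\B$.

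For $\mathrm{(i)}\Rightarrow\mathrm{(ii)}$: since $\mathsf{e}$ is a $G$-functor there are natural isomorphisms $\mathsf{e}\circ F_g\cong F_g^{\C}\circ\mathsf{e}$, and evaluating at $A\in\A$ (so $\mathsf{e}(A)=0$) gives $\mathsf{e}(F_gA)\cong F_g^{\C}(0)=0$; hence $F_gA\in\Ker(\mathsf{e})=\A$, i.e.\ $\A$ is $G$-invariant. For $\mathrm{(ii)}\Rightarrow\mathrm{(i)}$: if $\A$ is $G$-invariant then each $F_g$ restricts to an autoequivalence of $\A$, so $\mathsf{e}\circ F_g\colon \B\to\C$ is an exact functor annihilating $\A$; since $\mathsf{e}\colon\B\to\B/\A\simeq\C$ is the Serre quotient functor, its universal property yields a unique (up to canonical isomorphism) exact functor $\overline{F}_g\colon\C\to\C$ with $\overline{F}_g\circ\mathsf{e}\cong\mathsf{e}\circ F_g$. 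One then checks that $\overline{F_{g^{-1}}}$ is a quasi-inverse of $\overline{F}_g$ and that the uniqueness clause of the universal property forces the comparison isomorphisms $\overline{F}_g\,\overline{F}_h\xrightarrow{\ \sim\ }\overline{F}_{gh}$ (and their cocycle identities) to descend from $\B$; thus $G$ acts on $\C$ and, by construction, $\mathsf{e}$ becomes a $G$-functor.

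For the last assertion, assume the equivalent conditions hold, so that $\mathsf{i}$ (a fully faithful $G$-functor, its image being the $G$-invariant subcategory $\Ker(\mathsf{e})$) and $\mathsf{e}$ are both $G$-functors. The first step is to upgrade the adjoint triples $\mathsf{q}\dashv\mathsf{i}\dashv\mathsf{p}$ and $\mathsf{l}\dashv\mathsf{e}\dashv\mathsf{r}$ to adjunctions of $G$-functors: by the mates (doctrinal adjunction) calculus in the $2$-category of categories carrying a $G$-action, the $G$-functor structures on $\mathsf{i}$ and $\mathsf{e}$ transport along the adjunctions to canonical $G$-functor structures on all four adjoints, for which the corresponding units and counits are morphisms of $G$-functors; the $2$-categorical mates correspondence guarantees that these structures satisfy the required coherence (cocycle) conditions. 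Applying the equivariantization $2$-functor $(-)^G$, which carries $G$-adjunctions to adjunctions, produces adjoint triples $\mathsf{q}^G\dashv\mathsf{i}^G\dashv\mathsf{p}^G$ and $\mathsf{l}^G\dashv\mathsf{e}^G\dashv\mathsf{r}^G$ between the abelian categories $\A^G$, $\B^G$, $\C^G$ (abelian because the actions are by exact autoequivalences). It then remains to verify the recollement axioms, and here the key tool is that the forgetful functors $\B^G\to\B$ and $\C^G\to\C$ are exact, faithful and conservative and commute with all the equivariantized functors: fully faithfulness of $\mathsf{i}^G$ and the isomorphisms $\mathsf{e}^G\mathsf{l}^G\cong\id_{\C^G}\cong\mathsf{e}^G\mathsf{r}^G$ hold because they hold after applying these conservative functors; and for $\Image(\mathsf{i}^G)=\Ker(\mathsf{e}^G)$, the inclusion $\subseteq$ is $\mathsf{e}^G\mathsf{i}^G\cong(\mathsf{e}\mathsf{i})^G=0$, while for $\supseteq$ an object $(B,\{\mu_g\})$ of $\B^G$ with $\mathsf{e}^G(B,\{\mu_g\})\cong 0$ has $\mathsf{e}(B)\cong 0$ by conservativity, hence $B\cong\mathsf{i}(A)$ for some $A\in\A$, and, $\mathsf{i}$ being a fully faithful $G$-functor, the equivariant structure $\{\mu_g\}$ transports uniquely along $\mathsf{i}$ to an equivariant structure $\{\nu_g\}$ on $A$ with $\mathsf{i}^G(A,\{\nu_g\})\cong(B,\{\mu_g\})$. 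This establishes that $\mathsf{R}_{\mathsf{ab}}(\A^G,\B^G,\C^G)$ is a recollement of equivariant abelian categories.

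I expect the main obstacle to be the bookkeeping in the third step: checking that the mates construction genuinely yields \emph{coherent} $G$-functor structures on the four adjoints (so that $(-)^G$ may be applied) and that the equivariantized units and counits are precisely the ones detected by the conservative forgetful functors, i.e.\ that the mates calculus is compatible with equivariantization. The implication $\mathrm{(ii)}\Rightarrow\mathrm{(i)}$ is comparatively soft, but it does depend essentially on the uniqueness part of the universal property of the Serre quotient in order to make the descended $G$-action on $\C$ well defined.
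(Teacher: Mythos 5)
Your proposal is correct and follows essentially the same route as the paper: the implication (i)$\Rightarrow$(ii) by evaluating the $G$-structure of $\mathsf{e}$ on $\A\simeq\Ker(\mathsf{e})$, the implication (ii)$\Rightarrow$(i) by descending the action to the Serre quotient $\B/\A\simeq\C$, and the recollement by lifting the adjoint triples to equivariant ones and using the ``transport of linearizations along the fully faithful $G$-functor $\mathsf{i}$'' for $\Image(\mathsf{i}^G)=\Ker(\mathsf{e}^G)$. Your mates/doctrinal-adjunction step is exactly the paper's cited lemma that an adjoint of a $G$-functor is canonically a $G$-functor with $F^G\dashv H^G$, and your transport-of-equivariant-structure claim is the paper's ``epi on linearizations'' lemma, so the differences are only in packaging.
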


Motivated by the above result we say that $\mathsf{R}_{\mathsf{ab}}(\A,\B,\C)$ \textsf{lifts to a} $G$-\textsf{equivariant recollement} $\mathsf{R}_{\mathsf{ab}}(\A^G,\B^G,\C^G)$ if a  group $G$ acts on $\B$ such that it satisfies one of the equivalent conditions of Theorem~A (Definition~\ref{equivariantrecollement}). In the context of Theorem~A, the equivariant recollement $\mathsf{R}_{\mathsf{ab}}(\A^G,\B^G,\C^G)$ induces an equivalence of categories between $\B^G / \A^G$ and $\C^G\simeq (\B/\A)^G$. This was first explored in the paper of Chen, Chen and Zhou \cite{CCZ} using monadic techniques. From the abelian context of Theorem~A we can pass to the triangulated one and ask if a triangulated analogue of Theorem~A holds. Indeed this is true and proved in Theorem~\ref{main2} with some extra assumptions concerning the triangulated structure. We remark that the case of semi-orthogonal decompositions has been studied by Sun \cite{ChaoSun}. 

In recent years, there has been a lot of attention on equivariant singularity categories. One reason is that equivariant singularity categories appear quite naturally as they can describe derived categories of varieties. For instance, let $X$ be a smooth variety and $s$ a regular section of a vector bundle $\mathcal{E}$. Then the bounded derived category of coherent sheaves on the zero scheme of $s$ is triangle equivalent to a certain equivariant singularity category, for more details see \cite{Isik}. On the other hand, certain homotopical invariants of equivariant singularity categories have been recently investigated. More precisely, Brown and Dyckerhoff studied in \cite{BrownDyckerhoff} the topological K-theory spectrum of the differential graded singularity category of a weighted projective hypersurface over the complex numbers. Furthermore, homological invariants like the singular Hochschild cohomology have been investigated via singular equivalences of Morita type with level, see~\cite{Wang:SingularHH}.

Motivated by these results, and by the recent developments on singular equivalences, i.e.\ triangle equivalences between singularity categories, it is natural to investigate triangle equivalences between equivariant singularity categories. In the abelian recollement context, we formulate the following problem.

\begin{problemB}
Let $\mathsf{R}_{\mathsf{ab}}(\A,\B,\C)$ be a recollement of abelian categories which lifts to a $G$-equivariant recollement $\mathsf{R}_{\mathsf{ab}}(\A^G,\B^G,\C^G)$. Assume that the quotient functor $\mathsf{e}\colon\B \to \C$ induces a singular equivalence between $\B$ and $\C$: 
\[
\mDsg (\mathsf{e})\colon \mDsg (\B)\xrightarrow[]{\simeq} \mDsg (\C).
\]
Can we construct a singular equivalence between $\mDsg (\B^G)$ and $\mDsg (\C^G)$, and between the equivariant singularity categories $\mDsg (\B)^G$ and $\mDsg (\C)^G$?
\end{problemB}

In this direction, we prove in Theorems~\ref{main3} and~\ref{main4} and Theorem~\ref{main6} the following result which constitutes the second main result of this paper. Under certain conditions, it provides the desired equivariant singular equivalences.

\begin{thmB}
Let $\mathsf{R}_{\mathsf{ab}}(\A, \B, \C)$ be a recollement of abelian categories that lifts to a $G$-equivariant recollement with $|G|$ invertible in $\B$. Assume that $\B$ and $\C$ have enough projectives and that the singularity categories $\mDsg(\B^G)$ and $\mDsg(\C^G)$ are idempotent complete. Then the following statements are equivalent:
\begin{itemize}
\item[(i)] The functor $\mathsf{e} \colon \B \to \C$ induces a singular equivalence:
\[
\mDsg (\mathsf{e})\colon \mDsg (\B)\xrightarrow[]{\simeq} \mDsg (\C).
\]

\item[(ii)] The $G$-functor $e^G\colon \B^G \to \C^G$ induces a singular equivalence:
\[
\mDsg (\mathsf{e}^G)\colon \mDsg (\B^G)\xrightarrow[]{\simeq} \mDsg (\C^G).
\]
\end{itemize}
\end{thmB}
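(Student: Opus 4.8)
The plan is to transport the triangle functors $\mDsg(\mathsf{e})$ and $\mDsg(\mathsf{e}^{G})$ into one another along the forgetful and induction functors of the $G$-action, and to read off each property of one from the corresponding property of the other. Write $\mU\colon\B^{G}\to\B$ for the forgetful functor and $\Ind\colon\B\to\B^{G}$ for its (two-sided) adjoint, and likewise over $\C$. Both functors are exact and, being biadjoint, preserve projectives; since $\B$ and $\C$ have enough projectives, both descend to triangle functors on the singularity categories. Two structural facts drive everything. First, $\mU\Ind\cong\bigoplus_{g\in G}g^{\ast}$ contains the identity as a direct summand, and because $|G|$ is invertible in $\B$ the averaged unit and counit make each $X\in\B^{G}$ a direct summand of $\Ind\mU(X)$, naturally in $X$; the same holds over $\C$, and all of it descends to $\mDsg(\B^{G})$ and $\mDsg(\C^{G})$. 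Second, since the recollement lifts $G$-equivariantly, $\mathsf{e}$ is a $G$-functor and $\A$ is $G$-invariant (Theorem~A), so the recollement data commutes with forgetting and inducing: $\mU_{\C}\,\mathsf{e}^{G}\cong\mathsf{e}\,\mU_{\B}$, $\mathsf{e}^{G}\,\Ind_{\B}\cong\Ind_{\C}\,\mathsf{e}$, and similarly with the fully faithful adjoint $\mathsf{l}$ of $\mathsf{e}$ in place of $\mathsf{e}$. In particular $\mathsf{e}^{G}$ carries projectives of $\B^{G}$ (which are summands of $\Ind_{\B}(\proj\B)$) into objects of finite projective dimension in $\C^{G}$, so $\mDsg(\mathsf{e}^{G})$ is defined, and the two squares of triangle functors built respectively from the $\mU$'s and from the $\Ind$'s commute up to natural isomorphism.

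For the implication (i) $\Rightarrow$ (ii), assume $\mDsg(\mathsf{e})$ is a triangle equivalence. Given $\phi\colon X\to Y$ in $\mDsg(\B^{G})$, naturality of the split mono--epi pair realises $\phi$ as a retract of $\mDsg(\Ind_{\B})\,\mDsg(\mU_{\B})(\phi)$, so $\phi$ is recoverable from $\mDsg(\mU_{\B})(\phi)$; together with $\mDsg(\mathsf{e})\circ\mDsg(\mU_{\B})=\mDsg(\mU_{\C})\circ\mDsg(\mathsf{e}^{G})$ this shows that $\mDsg(\mathsf{e}^{G})$ is injective and surjective on $\Hom$-groups, since $\mDsg(\mathsf{e})$ is. For essential surjectivity, given $Z\in\mDsg(\C^{G})$ choose $B\in\mDsg(\B)$ with $\mU_{\C}(Z)\cong\mDsg(\mathsf{e})(B)$; then $\mDsg(\mathsf{e}^{G})(\Ind_{\B}B)\cong\Ind_{\C}\mU_{\C}(Z)$ has $Z$ as a direct summand. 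Since $\mDsg(\mathsf{e}^{G})$ is already fully faithful and $\mDsg(\B^{G})$ is idempotent complete, the idempotent endomorphism of $\Ind_{\B}B$ corresponding to this summand splits inside $\mDsg(\B^{G})$, and its image is carried by $\mDsg(\mathsf{e}^{G})$ onto $Z$. Hence $\mDsg(\mathsf{e}^{G})$ is an equivalence.

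The implication (ii) $\Rightarrow$ (i) is the symmetric statement. One transports $f\colon B\to B'$ in $\mDsg(\B)$ to $\Ind_{\B}f$, applies full faithfulness of $\mDsg(\mathsf{e}^{G})$, and recovers $\mDsg(\mathsf{e})(f)$ as the $g=1$ component of $\mU_{\C}\,\mDsg(\mathsf{e}^{G})(\Ind_{\B}f)\cong\bigoplus_{g\in G}\mathsf{e}\,g^{\ast}(f)$; the same device handles objects, so $\mDsg(\mathsf{e})$ is fully faithful. For essential surjectivity a given $C\in\mDsg(\C)$ is first lifted, after applying $\Ind_{\C}$, to $W\in\mDsg(\B^{G})$ with $\mDsg(\mathsf{e}^{G})(W)\cong\Ind_{\C}(C)$; applying $\mU_{\C}$ gives $\mDsg(\mathsf{e})(\mU_{\B}W)\cong\bigoplus_{g\in G}g^{\ast}(C)$, which has $C$ as a direct summand. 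In the end the statement reduces to the equivalence of the homological conditions characterising (i) and (ii) --- roughly, that the objects of $\A$ (resp.\ $\A^{G}$) have finite projective dimension in $\B$ (resp.\ $\B^{G}$), together with the analogous condition on $\mathsf{l}$ --- and these transfer to one another because $\pd_{\B}A=\pd_{\B^{G}}\Ind_{\B}A$ and $\pd_{\B^{G}}X\le\pd_{\B}\mU_{\B}X$, while $G$-invariance of $\A$ makes $\Ind_{\B}$ restrict to a functor $\A\to\A^{G}$ and each $A\in\A$ a direct summand of $\mU_{\B}\Ind_{\B}A$.

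The main obstacle is pinning down where idempotent completeness is genuinely needed, namely to upgrade the transported functors from ``fully faithful with dense essential image'' to honest triangle equivalences. Concretely, the idempotent-splitting step above and, dually, the essential surjectivity of $\mDsg(\mathsf{e})$ require $\mDsg(\B^{G})$ and $\mDsg(\C^{G})$ --- and hence, after transfer, the relevant quotients --- to be idempotent complete; this is exactly the point at which the hypothesis that $|G|$ is invertible combines with the idempotent completeness of $\mDsg(\B^{G})$ and $\mDsg(\C^{G})$, through a Maschke-type averaging of splitting data and the identification of $\mDsg(\B^{G})$ with the equivariantization $\mDsg(\B)^{G}$ (and similarly for $\C$). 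I expect this comparison to be the technical heart of the argument. Once it is established, the equivalence of (i) and (ii) is assembled from the transfer of full faithfulness, the transfer of the homological finiteness conditions, and this idempotent-completeness comparison --- these being carried out in Theorems~\ref{main3}, \ref{main4}, and~\ref{main6}, respectively.
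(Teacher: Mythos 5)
Your proposal is correct in substance, but for (i) $\Rightarrow$ (ii) it takes a genuinely different route from the paper. The paper (Theorems~\ref{main3}, \ref{main4} and~\ref{main6}) first equips $\mDsg(\B)$ and $\mDsg(\C)$ with $G$-actions, upgrades $\mDsg(\mathsf{e})$ to a $G$-functor so that $\mDsg(\mathsf{e})^G$ is an equivalence, and then transports this back along the comparison functors $F'_{\B}\colon\mDsg(\B^G)\to\mDsg(\B)^G$ and $F'_{\C}$ of Proposition~\ref{singequivuptoretracts}, which are only equivalences up to retracts in general; the idempotent-completeness hypothesis is exactly what makes them honest equivalences. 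You instead never leave $\mDsg(\B^G)$ and $\mDsg(\C^G)$: the averaged splitting exhibiting each equivariant object as a direct summand of $\Ind\,\mathsf{For}(X)$ gives full faithfulness of $\mDsg(\mathsf{e}^G)$ from that of $\mDsg(\mathsf{e})$, and idempotent completeness is invoked once, for $\mDsg(\B^G)$ only, to split the idempotent realizing $Z$ as a summand of $\mDsg(\mathsf{e}^G)(\Ind B)$. For (ii) $\Rightarrow$ (i) you land where the paper does: transferring the projective-dimension criteria of Theorem~\ref{pss} along $\Ind$ and $\mathsf{For}$, which is precisely the proof of Theorem~\ref{main4}. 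As for what each approach buys: the paper's detour through $\mDsg(\B)^G$ also produces the commutative diagram identifying $\mDsg(\mathsf{e}^G)$ with $\mDsg(\mathsf{e})^G$, i.e.\ the equivariant-singularity-category half of Problem~B, which your direct argument does not yield; your argument is more elementary, uses less of the hypothesis, and in fact, if you run the transfer of the criteria of Theorem~\ref{pss} in both directions (your last paragraph already contains the upward transfer via $\pd_{\B^G}\Ind A\le\pd_{\B}A$ and the summand trick), the bare equivalence (i) $\Leftrightarrow$ (ii) follows with no idempotent-completeness assumption at all, since Theorem~\ref{pss} applies verbatim to the recollement $\mathsf{R}_{\mathsf{ab}}(\A^G,\B^G,\C^G)$.

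Two points to tighten. First, the verification you leave implicit is that the averaged splittings are compatible with $\mathsf{e}^G$ under the canonical isomorphisms $\mathsf{For}\circ\mathsf{e}^G\cong\mathsf{e}\circ\mathsf{For}$ and $\mathsf{e}^G\circ\Ind\cong\Ind\circ\mathsf{e}$; these must be mates of one another (checked on components via the $\sigma_g$), and without this the fullness computation $\mDsg(\mathsf{e}^G)(p_Y\circ\Ind f\circ i_X)=\psi$ does not close. Second, the second condition in Theorem~\ref{pss} is about $\mathsf{e}$ of projectives of $\B$, not a condition ``on $\mathsf{l}$'', and your closing appeal to Theorems~\ref{main3}, \ref{main4} and~\ref{main6} cannot serve as part of the proof, since the statement being proved is exactly their conjunction; it should be read only as a pointer to where the corresponding steps live in the paper.
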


The above result lifts the right hand side of $\mathsf{R}_{\mathsf{ab}}(\A, \B, \C)$ first to the bounded derived categories and then to the singularity categories. A natural question is when one can construct a recollement at the level of bounded derived categories. In \cite{Psaroud}, necessary and sufficient conditions were given for such a lifting. In this respect, assuming that the recollement  $\mathsf{R}_{\mathsf{ab}}(\A, \B, \C)$ lifts to a $G$-equivariant recollement, we construct two recollements of triangulated categories: namely, a recollement of derived equivariant categories and a recollement of equivariant derived categories. We show that these two triangulated recollements are compatible in a canonical way; we refer to Theorem~\ref{main5} for more details.

We describe the contents of the paper section by section. In Section~\ref{Preliminaries}, we recall notions and results on finite group actions on additive categories and discuss properties of equivariant categories. Moreover, we recall examples of geometric actions induced by a subgroup of automorphisms of a variety $X$ and of algebraic actions arising as a subgroup of ring automorphisms. In this case, the equivariant category of modules is isomorphic to category of modules of the skew group ring. We also expose some equivariant machinery we need regarding triangulated categories.

In Section~\ref{sectionrecolabel}, we investigate lifts of recollements of abelian and triangulated categories to G-equivariant recollements (Definitions~\ref{equivariantrecollement} and~\ref{equivarianttriangrecollement}) and provide necessary and sufficient conditions for these lifts (Theorems~\ref{main1} and~\ref{main2}). Moreover, we recall a well-known example of a recollement of module categories induced by an idempotent element of a ring. We show that, when the idempotent is G-invariant, there is a natural equivalence between the equivariant recollement induced by an action of automorphisms on the ring and the recollement induced by the corresponding idempotent of the skew group algebra (see subsection~\ref{examplewithidempotents}).

In Section~\ref{section: Yoneda extensions}, we show that if a G-functor is a k-homological embedding (Definition~\ref{homembeddeff}), then the induced equivariant functor is also a k-homological embedding. In Section~\ref{equivariant derived cats}, we investigate two different ways to construct triangulated recollements given a $G$-equivariant recollement of abelian categories $\mathsf{R_{ab}}(\A, \B, \C)$. One construction is taking the equivariant categories first and then their respective bounded derived categories, resulting in $\mathsf{R_{tr}}( \mDb(\A^G), \mDb(\B^G), \mDb(\C^G))$. The other construction consists of taking the bounded derived categories first and then the equivariant categories, yielding $\mathsf{R_{tr}}( \mDb(\A)^G, \mDb(\B)^G, \mDb(\C)^G)$. In Theorem~\ref{main5}, we show that the these two recollements are connected via a comparison functor which is an equivalence if $|G|$ is invertible in $\B$ (Theorem~\ref{derivedequivariantequiv}). 
In the process of doing that, we study the derived functors of equivariant functors in Proposition~\ref{finitenessoflocaldimension} and show that they satisfy certain homological properties. We also extend \cite[Theorem~7.2]{Psaroud} to the equivariant setting by showing in Propositions~\ref{derivedrecol-left-part} and~\ref{derivedrecol-right-part} that a recollement of abelian categories lifts to a recollement of bounded derived categories if and only if the corresponding G-equivariant recollement has the same property.

In Section~\ref{singularcats}, we investigate singularity categories (in the sense of Buchweitz \cite{Buch} and Orlov \cite{Orlov}) in the equivariant setting and singular equivalences in the context of recollements. In particular, we extend the result of Psaroudakis, Skarts\ae{}terhagen and Solberg \cite[Theorem~5.2]{PSS} to the equivariant setting (Theorem~\ref{main3})  by showing that the equivariant functor $\mathsf{e}^G$ of the quotient functor $\mathsf{e} \colon \B \to \C$ induces a singular equivalence provided that $\mDsg(\B^G)$ and $\mDsg(\C^G)$ are idempotent complete. We also prove that the converse holds (Theorem~\ref{main4}); i.e.,\ if $\mathsf{e}^G \colon \B^G \to \C^G$ induces a singular equivalence, then $\mathsf{e}$ also induces a singular equivalence, without the assumption of idempotent completions. In Corollary~\ref{recollementofmodulesandequiv} we give equivalent conditions for the quotient functor of the recollement of module categories of Artin algebras induced by a $G$-invariant idempotent element to be a singular equivalence.

The final Section~\ref{sectionexamples} is devoted to examples and applications of our main results. The first application is of geometric nature, while the rest are algebraic. In subsection~\ref{geomexample}, we show that given a recollement of unbounded derived categories of quasi-coherent sheaves on a scheme $X$ (satisfying certain conditions),
an open subspace $U$, and a suitable subcategory supported on the complement of $U$, 
a group action by automorphisms of $X$ induces an action on these categories, yielding an equivariant recollement.
The second application, subsection~\ref{triangmatrixexample}, concerns equivariant triangular matrix rings; we show that these are also triangular matrix rings and obtain results regarding equivariant singular equivalences (Corollary~\ref{globaldimensionoftriangmatrixrings}). 
The final subsection, \ref{equivariantmoritatypewithlvl}, extends the work of Qin \cite[Theorem~4.1]{YQ} regarding singular equivalences of Morita type with level to the equivariant setting (Corollary~\ref{equivariantmoritatypewithlvlcor}). 
Finally, we derive in Corollary~\ref{HHC} a new isomorphism of Gerstenhaber algebras between the singular Hochschild cohomology of skew group algebras.

\subsection*{Acknowledgments} 
The research work of the first author was supported by the Hellenic Foundation for Research and Innovation (HFRI) under the 4th Call for HFRI Ph.D. Fellowships (Fellowship Number: 9270). For the second author: The research project is implemented in the framework of H.F.R.I call ``Basic research Financing (Horizontal support of all Sciences)" under the National Recovery and Resilience Plan ``Greece 2.0" funded by the European Union -- NextGenerationEU (H.F.R.I. Project Number: 14907). For the third author: The research project is implemented in the framework of H.F.R.I call ``Basic research Financing (Horizontal support of all Sciences)" under the National Recovery and Resilience Plan ``Greece 2.0" funded by the European Union -- NextGenerationEU (H.F.R.I. Project Number: 16785).

We thank the anonymous referee for valuable suggestions and comments. We also thank Martin Kalck for useful discussions on equivariant singularity categories. 

\section{Equivariant Preliminaries}
\label{Preliminaries}

In this section we collect all necessary equivariant preliminaries that are used extensively throughout this paper. All functors are covariant and all groups appearing in the paper are finite.

\subsection{Group Actions on Categories}

Let $G$ be a finite group and $\D$ be a category. 

\begin{defn}\label{defaction}
A {\bf right action} $(\rho,\theta)$ of G on $\D$ consists of
\begin{itemize}
\item an auto-equivalence $\rho_g\colon \D \rightarrow \D$ for all g in $G$, and
\item an isomorphism of functors $\theta_{g,h}\colon \rho_g \circ \rho_h \xrightarrow[]{\simeq} \rho_{hg}$ for all $g$, $h$ in $G$,
    \end{itemize} 
satisfying, for all $g,h,k\in G$, the following \textbf{2-cocycle condition}:
\begin{equation}\label{cocycle}
\begin{tikzcd}
\rho_g\rho_h\rho_k \arrow[r, "\rho_g \theta_{h,k}"] \arrow[d, "\theta_{g,h}\rho_k"' ] & \rho_g \rho_{kh} \arrow[d, "\theta_{g,kh}"] \\
\rho_{hg}\rho_k \arrow[r, "\theta_{hg,k}"'] & \rho_{khg}
\end{tikzcd}
\end{equation}
The natural isomorphisms $\theta_{g,h}$ are called {\bf composition isomorphisms.}
\end{defn}

Similarly, one can define a left group action on $\D$. Note that transitioning from a right $G$-action to a left $G$-action can be achieved by setting $\rho'_g = \rho_{g^{-1}}$ and $\theta'_{g,h}=\theta_{g^{-1}, h^{-1}}$. 
Note that each group action admits a natural isomorphism $u \colon \rho_{1_G} \xrightarrow{\simeq} \mathsf{Id}$ called the \textbf{unit} of the action. 

For a right group action and an object $X$ we adopt the notation ${X^g} \coloneqq \rho_g(X)$, while for a morphism $f \colon X \to Y$ we write $f^g\coloneqq \rho_g(f) \colon {X^g } \to {Y^g}$. 

Suppose that we have a right action of a finite group $G$ on two categories $\D$ and $\hat{\D}$. We write $ (\D,\rho, \theta)$ for the action of $G$ on $\D$ and $(\hat{\D},\hat{\rho},\hat{\theta})$ for the action on $\hat{\D}$.

\begin{defn}\label{G-functordef}
A {\bf $G$-functor} $(F,\sigma)\colon (\D,\rho, \theta) \rightarrow (\hat{\D},\hat{\rho},\hat{\theta})$ consists of a functor $F\colon \D\rightarrow \hat{\D}$ together with a family $\sigma$ of natural isomorphisms $\{ \sigma_g\colon F \circ \rho_g \rightarrow \hat{\rho}_g \circ F\}_{g\in G}$ compatible with the associativity conditions of both sides, that is, the following diagram commutes:
\begin{equation}\label{associativity}
\begin{tikzcd}
F\rho_g\rho_h \arrow[r, "F \theta_{g,h}"] \arrow[d, "\sigma_{g}\rho_h"' ] & F \rho_{hg} \arrow[dd, "\sigma_{hg}"] \\
\hat{\rho}_g F \rho_h \arrow[d,"\hat{\rho}_g \sigma_h"'] \\
\hat{\rho}_{g}\hat{\rho}_h F \arrow[r, "\hat{\theta}_{g,h} F"'] & \hat{\rho}_{hg}F\\
\end{tikzcd}
\end{equation}
\end{defn}

Note that there exists a variation of the above notion given in \cite[Definition~2.18]{ChaoSun}.
In this paper we work with $G$-functors as introduced in Definition~\ref{G-functordef}. 

\begin{defn}
    Let $(F,\sigma^F)$ and $(H,\sigma^H)$ be two $G$-functors $(\D,\rho, \theta) \rightarrow (\hat{\D},\hat{\rho},\hat{\theta})$. A natural transformation $\eta \colon  F \Rightarrow H$ is called {\bf $G$-natural transformation} if it commutes with the action, i.e.\ for all $g \in G$ the following diagram is commutative:
\begin{equation}\label{G-natural}
    \begin{tikzcd}
F \rho_g \arrow[r, "\eta \rho_g"] \arrow[d,"\sigma^F_{g}"'] & H \rho_g \arrow[d, "\sigma^H_{g}"] \\
{\hat{\rho}_g}F \arrow[r, "{\hat{\rho}_g} \eta"'] & {\hat{\rho}_g}H
    \end{tikzcd}
\end{equation}
We write $\eta \colon  (F , \sigma^F) \Rightarrow_G (H, \sigma^H)$ for the $G$-natural transformation.
\end{defn}

The following well known example will be used throughout this work.

\begin{exam}\label{exampleonsheaves1}
Let $G$ be a finite group acting on a scheme $X$ by automorphisms. Then $G$ induces an action on the category $\mathsf{Coh}(X)$ of coherent sheaves over $X$. Indeed, $G$ acts on coherent sheaves by pullbacks $\rho_g \coloneqq g^* \colon  \mathsf{Coh}(X) \to \mathsf{Coh}(X)$, and for the composition there exist canonical isomorphisms 
$\theta_{g,h}\colon  g^* \circ h^* \xrightarrow[]{\simeq}(hg)^*$. 
This is obviously a right action. We could obtain a left action if we acted by pushforwards or, equivalently, by transitioning under the correspondence  $g_{*}= (g^{-1})^*$. Similarly, we have an action on the category $\mathsf{Qcoh} (X)$ of quasi-coherent sheaves of $X$.
\end{exam}

Notice that every group action on a category $\D$ yields a subgroup of $\mathsf{Aut}(\D)$, which is the group of isomorphism classes of auto-equivalences of $\D$. Two equivalences are identified if and only if there is a natural isomorphism between them. However, the converse is not always true. Even for $\mathbb{C}$-linear categories the failure is obstructed by a class in $H^3(G, \mathbb{C}^*)$   (see \cite[Theorem~2.1]{BeckOber}). For a particular example we refer to \cite[Paragraph~3.6]{BeckOber}.

\subsection{Definition and First Properties}

Throughout this subsection we assume that we have a category $\D$ with a right action of a finite group $G$, i.e.\ a triple $(\D, \rho, \theta)$ of Definition~\ref{defaction}. We define, now, the equivariant category $\D^G$.
 
A {\bf $G$-equivariant object} of $\D$ is a pair $(X, \chi)$ where $X$ is an object of $\D$ and $\chi$ is a family of isomorphisms $ \{ \chi_g\colon X \xrightarrow[]{\simeq} {X^g}\}_{g\in G}$, satisfying the following commutative diagram for all $g, h$ in $G$:
    \begin{equation}
    \label{eqcocyclecond}
        \begin{tikzcd}
            X \arrow[r, "\chi_g"] \arrow[rrr, bend right=20, "\chi_{hg}"'] & X^g \arrow[r, "(\chi_h)^g"] & (X^h)^g \arrow[r, "\theta_{g,h}^X"] & X^{hg}
        \end{tikzcd}
    \end{equation}
The family $\chi$ is called the \textbf{equivariant structure} or  \textbf{linearization} of $X$. We refer to the diagram $(\ref{eqcocyclecond})$ as the \textbf{ cocycle condition} on linearizations. An object $X \in \D$ is called $G$-\textbf{invariant} if $X^g \simeq X$ for all $g$ in $G$. It is clear that $G$-equivariant objects are $G$-invariant but in general a $G$-invariant object does not always admit a linearization. Even for $G$-invariant simple objects of a $\mathbb{C}$-linear category, it is known that they admit linearization if and only if they have trivial cohomology class in the second cohomology group $H^2(G, \mathbb{C}^*)$ (see \cite[Lemma~1]{Ploog}). 

Let $(X,\chi)$ and $(Y, \psi)$ be two $G$-equivariant objects. A morphism of equivariant objects $f\colon (X,\chi) \rightarrow (Y, \psi)$ is a morphism $f\colon X\to Y$ in $\D$ that commutes with the linearizations, i.e.\ the following diagram is commutative for all $g$ in $G$:
\begin{equation}\label{equivarianthomgroups}
        \begin{tikzcd}
            X \arrow[r, "f"] \arrow[d,"\chi_g"'] & Y \arrow[d, "\psi_g"] \\
            {X^g} \arrow[r, "{f^g}"] & {Y^g}
        \end{tikzcd}
    \end{equation}  
Equivalently, $f$ is invariant under the (right) action of $G$ on $\Hom_{\D}(X,Y)$ where $G$ acts as follows: $f. g \coloneqq (\psi_g)^{-1} \circ {f^g} \circ \chi_g$. Hence, we have $\Hom_{{\D}^G}((X,\chi),(Y,\psi)) = \Hom_{\D}(X,Y)^G$, which is the invariant subset of $\Hom_{\D}(X,Y)$ under the action of $G$ induced by $\chi$ and $\psi$. 
If the $G$-action on $\D$ is strict, then this is a $G$-action on each Hom-set in the usual sense, i.e.\ we have that $ f.1_G = f$ and $(f .h).g = f. (hg)$. For the former notice that $\chi_{1_G} = \mathsf{Id}_X$ and $\psi_{1_G} = \mathsf{Id}_Y$ in the case of strict action.
However, when the action is not strict, the diagram~$(\ref{equivarianthomgroups})$ describes a $G$-action on Hom-sets ``up to composition isomorphisms $\theta$".
To ease the reader's mind, every $G$-action on a category $\D$ is equivalent to a strict $G$-action on $\D$, see \cite[Theorem~5.4]{EvShinder}.

\begin{defn}
The \textbf{equivariant category} $\D^G$ is defined to be the category with objects the $G$-equivariant objects $(X, \chi)$ and morphisms those in $\D$ that satisfy diagram $(\ref{equivarianthomgroups})$. The category $\D^G$ is also called the \textbf{equivariantization} of $\D$ with respect to the action of $G$.
\end{defn}

\begin{rem}\label{0-object-equiv}
Let $\A$ be an abelian category. If a finite group $G$ acts on $\A$, then the equivariant category $\A^G$ is also abelian. 
Indeed, $0\to (X,\chi)\to (Y, \psi)\to (Z, \zeta)\to 0$ is a short exact sequence in $\A^G$ if and only if $0\to X\to Y\to Z\to 0$ is a short exact sequence in $\A$. Note that the zero object in $\A^G$ has a unique (trivial) linearization $\phi^0 = (\phi^0_g\colon  0 \xrightarrow[]{} { 0^g} =0)_{g \in G}$. This is the zero object of the equivariant category.

One can construct explicitly and canonically the kernels and cokernels of morphisms in $\A^G$ when $\A$ is abelian. We briefly describe the construction.
Regarding the cokernel of $f\colon  (X,\chi) \to (Y, \psi)$, we have the object $(\Coker (f), \phi) $ where the linearization is obtained by the commutative diagram:
    \begin{equation*}
        \begin{tikzcd}
            X \arrow[r, "f"] \arrow[d,"\chi_g"'] & Y \arrow[d, "\psi_g"] \arrow[r, "j"] & \Coker (f) \arrow[d, dashrightarrow, "\phi_g"]\\
            {X ^g } \arrow[r, "{f^g}"'] & {Y^g} \arrow[r, "{j^g}"'] & {\Coker(f)^g}
        \end{tikzcd}
\end{equation*}  
where the dashing arrows are induced by the universal property of the cokernel. Indeed, note that ${\Coker (f)^g}$ is the cokernel of ${f^g}$ since the composition ${j^g} \circ {f^g} = { (j \circ f)^g}= 0 $ is the zero morphism. The commutativity of the diagram implies ${j^g} \circ \psi_g \circ f = {j^g} \circ {f^g} \circ \chi_g = 0 $. Thus, the universal property of the cokernel, yields a unique morphism $\phi_g$ such that the diagram commutes. This is actually an isomorphism since it has an inverse $\phi_{g}^{-1}$, which we obtain using similar arguments and the inverses of $\chi_g$ and $\psi_g$. In order to show that the family $\{ \psi_g\}_{g\in G} $ is a linearization of $\Coker (f)$ one has to use the following diagram:
    \begin{equation*}
        \begin{tikzcd}
            X \arrow[ddd, bend right=70, "\chi_{hg}"'] \arrow[r, "f"] \arrow[d,"\chi_g"'] & Y \arrow[d, "\psi_g"] \arrow[r, "j"] & \Coker(f) \arrow[d, dashrightarrow, "\psi_g"] \arrow[ddd, bend left=70, dashrightarrow, "\phi_{hg}"]\\
            {X ^g} \arrow[r, "{f^g}"] \arrow[d, "{(\chi_h)^g }"']& {Y^g} \arrow[d, "{(\psi_h)^g}"] \arrow[r, "{j^g}"] & {\Coker(f)^g} \arrow[d, dashrightarrow, "{(\phi_h)^g}"] \\
            ( X^h)^g \arrow[d, "\theta_{g,h}"'] \arrow[r, "(f^h)^g "] & {(Y^h)^g} \arrow[d, "\theta_{g,h}"] \arrow[r, "{(j^h)^g}"] & { (\Coker(f)^h)^g} \arrow[d, "\theta_{g,h}"] \\
            {X^{hg}} \arrow[r, "{f^{hg}}"']& { Y^{hg}} \arrow[r, "{j^{hg}}"'] & { \Coker(f)^{hg}}
        \end{tikzcd}
    \end{equation*} 
Since ${\Coker (f)^{hg}}$ is a cokernel of $f$, by the universal property of $\Coker(f)$, we have $\theta_{g,h} \circ {(\phi_h)^g} \circ \phi_g=\phi_{hg}$. 
Clearly, the composition $(X,\chi) \xrightarrow[]{f} (Y, \psi) \to (\Coker (f), \phi) $ is the zero morphism, thus this object is the cokernel of $f$. Similarly, if $f\colon  (X,\chi) \to (Y, \psi)$ is a morphism in $\A^G$, then its kernel is the object $(\Ker (f), \chi_{|_{\Ker (f)}})$. 
\end{rem}

As a special case of the following remark we see that the above construction of kernels (resp.\ cokernels and images) in the equivariant category yields essentially a unique up to isomorphism kernel (resp.\ cokernel and image).

\begin{rem}
If $f\colon  X \to Y$ is an isomorphism and $X$ admits a linearization $\chi$, then $Y$ also admits a linearization $\psi $ given by the following commutative diagram:
    \begin{equation*}
        \begin{tikzcd}
            X \arrow[d, "f"] \arrow[r, "\chi_g"] \arrow[rrr, bend left=30, "\chi_{hg}"] & {X^g} \arrow[d, "{f^g}"] \arrow[r, "{(\chi_h)^g}"] & (X^h)^g \arrow[d, "(f^h)^g"] \arrow[r, "\theta_{g,h}"] & {X^{hg}} \arrow[d, "{f^{hg}}"]\\
            Y \arrow[r, dashrightarrow, "\psi_g"] \arrow[rrr, dashrightarrow, bend right=30, "\psi_{hg}"'] & {Y^g} \arrow[r, dashrightarrow, "{(\psi_h)^g}"] & {(Y^h)^g} \arrow[r, "\theta_{g,h}"] & {Y^{hg}}
        \end{tikzcd}
    \end{equation*}
where $\psi_g \coloneqq {f^g } \circ \chi_g \circ f^{-1} $. Thus the left and middle squares and the outer bended parallelogram are by definition commutative. The right square is commutative since $\theta_{g,h}$'s are natural isomorphisms for all $g,h \in G$. Observe that the commutativity of the bottom part of the diagram, i.e.\ $\phi_{hg} = \theta_{g,h} \circ  {(\psi_h)^g} \circ \psi_g$, follows from the commutativity of the rest of the diagram. 
\end{rem}

\begin{exam}\label{exampleonsheaves2}
Continuing Example~\ref{exampleonsheaves1}, we obtain the equivariant category of coherent sheaves $\mathsf{Coh}^G(X)$ which is abelian. Note that the quotient scheme $X/G$ exists if and only if every $G$-orbit is contained in an affine open by \cite{SGA1}[Expos\'e V, Proposition 1.8]. Assuming that the quotient $X/G$ is a scheme and that $G$ acts freely, then we have the following equivalence between the equivariant category of coherent sheaves and the category of coherent sheaves of the quotient scheme:
\[
\mathsf{Coh}(X/G) \xrightarrow[]{\simeq} \mathsf{Coh}^G(X).
\]
The above equivalence is given by pullback of sheaves along the quotient map $\pi\colon  X \to X/G$. Under these assumptions, the same equivalence holds for the categories of quasi-coherent sheaves, i.e.\ $\mathsf{Qcoh}(X/G) \simeq \mathsf{Qcoh}^G(X)$. If the action is not free or the quotient scheme does not exists, then we have to use the quotient stack $[X/G]$ to avoid obstructions. We remark that, if $X$ is quasi-projective, then every $G$-orbit is contained in an affine open (see \cite[Chapter~II, Paragraph~7, Remark]{Mumford}).
\end{exam}

In the following lemma we show that a $G$-functor induces a functor between the corresponding equivariant categories, which we call \textbf{equivariant functor}. 

\begin{lem}
\label{gfunctorinducesequivariant}
A $G$-functor $(F,\sigma)\colon (\D,\rho, \theta) \rightarrow (\hat{\D},\hat{\rho},\hat{\theta})$ induces an equivariant functor $F^G\colon \D^G \rightarrow \hat{\D}^G$. 
\begin{proof}
Let $(X,\chi)$ be an object in $\D^G$. We define $F^G(X,\chi) = (FX, \chi')$, where 
$ \chi'=\{\chi'_g\colon FX \xrightarrow{F\chi_g} F({X^g}) \xrightarrow[]{\sigma_g}{(FX)^g} \}_{g\in G} $ 
is the induced family of isomorphisms which satisfies the following cocycle condition:
\begin{equation*}
\begin{tikzcd}
            FX \arrow[r, "\chi'_g"] \arrow[rrr, bend right=20, "\phi'_{hg}"'] & {(FX)^g} \arrow[r, "(\chi'_h)^g"] & { ((FX)^h)^g} \arrow[r, "\hat{\theta}_{g,h}"] & {(FX)^{hg}}
\end{tikzcd}
\end{equation*}
Let $f\colon (X, \chi) \rightarrow (Y, \psi)$ be a morphism in $\D^G$. Then we define $F^G(f)$ to be the morphism $F(f)\colon (FX, \chi')\rightarrow(FY, \psi')$ in $\hat{\D}^G$. Indeed, from the commutative square
\[
\begin{tikzcd}
            X \arrow[r, "f"] \arrow[d, "\chi_g"'] & Y \arrow[d, "\psi_{g}"] \\
            {X^g} \arrow[r, "{f^g}"'] & {Y^g}
        \end{tikzcd}
\]        
by applying $F$, we obtain the upper commutative square diagram 
\[
\begin{tikzcd}
            FX \arrow[r, "F(f)"] \arrow[d, "F\chi_{g}"'] \arrow[dd, bend right=80, "\chi'_{g}"'] & FY \arrow[d, "F\psi_{g}"] \arrow[dd, bend left=80, "\psi'_{g}"] \\
            F({X^g}) \arrow[r, "F({f^g})"] \arrow[d, "\sigma^{X}_g"'] & F({Y^g}) \arrow[d, "\sigma^{Y}_g"] \\
            {(FX)^g} \arrow[r, "{(Ff)^g }"] & {(FY)^g}
\end{tikzcd}
\]       
Left and right ``handles" are commutative by definition and the bottom square is commutative since $\sigma_g$ is a natural transformation. We conclude that $F^G(f)$ is a morphism in $\hat{\D}^G$.
\end{proof}
\end{lem}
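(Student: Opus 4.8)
The plan is to define $F^G$ explicitly on objects and morphisms and then verify the three things required: that it sends equivariant objects to equivariant objects, that it sends equivariant morphisms to equivariant morphisms, and that it is functorial. On an object $(X,\chi)$ of $\D^G$ I would set $F^G(X,\chi)=(FX,\chi')$, where $\chi'_g$ is the composite $FX\xrightarrow{F\chi_g}F(X^g)\xrightarrow{\sigma_g}(FX)^g$; each $\chi'_g$ is an isomorphism because $F\chi_g$ and $\sigma_g$ are. On a morphism $f\colon(X,\chi)\to(Y,\psi)$ of $\D^G$ I would set $F^G(f)=F(f)$, after checking that $F(f)$ lies in the invariant subgroup $\Hom_{\hat\D}(FX,FY)^G$.

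The one genuine computation is that the family $\{\chi'_g\}_{g\in G}$ satisfies the cocycle condition $(\ref{eqcocyclecond})$, i.e.\ that $\hat\theta^{FX}_{g,h}\circ(\chi'_h)^g\circ\chi'_g=\chi'_{hg}$. I would obtain this by pasting three commuting cells in the evident rectangle from $FX$ to $(FX)^{hg}$: the image under $F$ of the cocycle diagram for $\chi$, the naturality square of $\sigma_g$ applied to the morphism $\chi_h$ (which turns $\hat\rho_g(F\chi_h)\circ\sigma_g$ into $\sigma_g\circ F((\chi_h)^g)$), and the $G$-functor coherence diagram $(\ref{associativity})$ relating $\sigma_g$, $\sigma_h$, $\sigma_{hg}$ with $\theta$ and $\hat\theta$. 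Chasing around the rectangle and using $\theta^X_{g,h}\circ(\chi_h)^g\circ\chi_g=\chi_{hg}$ collapses the composite to $\chi'_{hg}$. I expect this diagram chase to be the main — and essentially only — obstacle; everything else is formal bookkeeping.

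It remains to check morphisms and functoriality. Applying $F$ to the square $(\ref{equivarianthomgroups})$ for $f$ gives $F(\psi_g)\circ F(f)=F(f^g)\circ F(\chi_g)$; post-composing with $\sigma_g$ and using the naturality square of $\sigma_g$ on $f$ to rewrite $\sigma_g\circ F(f^g)$ as $(Ff)^g\circ\sigma_g$ yields exactly $\psi'_g\circ F(f)=(Ff)^g\circ\chi'_g$, so $F(f)$ is a morphism $(FX,\chi')\to(FY,\psi')$ in $\hat\D^G$. Finally, since composition and identities in the equivariant categories are computed in the underlying categories, $F^G(1_{(X,\chi)})=F(1_X)=1_{FX}$ and $F^G(f'\circ f)=F(f'\circ f)=F(f')\circ F(f)=F^G(f')\circ F^G(f)$, so $F^G\colon\D^G\to\hat\D^G$ is a well-defined functor.
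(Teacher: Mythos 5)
Your proposal is correct and follows essentially the same route as the paper: the same definition of $F^G$ on objects via $\chi'_g=\sigma_g\circ F\chi_g$, the same naturality-of-$\sigma_g$ argument for morphisms, and the cocycle condition verified by pasting $F$ of the cocycle diagram, the naturality square of $\sigma_g$ at $\chi_h$, and the coherence diagram $(\ref{associativity})$. In fact your diagram chase for the cocycle condition spells out a verification the paper only asserts, and the functoriality check is the same routine observation.
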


Clearly, if the functor $F$ in the above lemma is additive, then so is the induced equivariant functor $F^G$. Similarly, one can show the following. 

\begin{lem}\label{equivnaturaltransf}
A $G$-natural transformation $\eta\colon (F,\sigma^F) \Rightarrow_G (H,\sigma^H) $, induces an equivariant natural transformation $\eta^G \colon  F^G \Rightarrow H^G$ such that $\eta^G_{(X,\chi)} =\eta_X$.
\end{lem}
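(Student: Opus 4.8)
The plan is to mimic, almost verbatim, the proof of Lemma~\ref{gfunctorinducesequivariant}, transporting each commutative diagram there into a commutative diagram of natural transformations. First I would unwind what must be checked: given the $G$-natural transformation $\eta \colon (F,\sigma^F) \Rightarrow_G (H,\sigma^H)$, I define $\eta^G_{(X,\chi)} \coloneqq \eta_X$ and I must verify (a) that $\eta_X$ is actually a morphism in $\hat{\D}^G$ from $F^G(X,\chi) = (FX,\chi')$ to $H^G(X,\chi) = (HX,\chi'')$, i.e.\ that it commutes with the induced linearizations $\chi'_g = \sigma^F_g \circ F\chi_g$ and $\chi''_g = \sigma^H_g \circ H\chi_g$; and (b) that the collection $\{\eta_X\}$ is natural in $(X,\chi)$, which is immediate since it is just a restriction of the naturality of $\eta$ to those morphisms lying in $\D^G$.

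For step (a), the square to be shown commutative is
\[
\begin{tikzcd}
FX \arrow[r, "\eta_X"] \arrow[d, "\chi'_g"'] & HX \arrow[d, "\chi''_g"] \\
(FX)^g \arrow[r, "(\eta_X)^g"'] & (HX)^g
\end{tikzcd}
\]
I would split this into two sub-squares by inserting $F(X^g) \to H(X^g)$ along $\eta_{X^g}$. The upper sub-square, with vertical maps $F\chi_g$ and $H\chi_g$ and horizontal maps $\eta_X$ and $\eta_{X^g}$, commutes by ordinary naturality of $\eta$ applied to the morphism $\chi_g \colon X \to X^g$ in $\D$. The lower sub-square, with vertical maps $\sigma^F_g$ and $\sigma^H_g$ and horizontal maps $\eta_{X^g}$ and $(\eta_X)^g = \hat\rho_g(\eta_X)$, is precisely the defining commutativity diagram~$(\ref{G-natural})$ of a $G$-natural transformation, evaluated at $X$. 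Pasting the two sub-squares gives the desired identity $\chi''_g \circ \eta_X = (\eta_X)^g \circ \chi'_g$, so $\eta_X \in \Hom_{\hat{\D}^G}(F^G(X,\chi), H^G(X,\chi))$.

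For step (b), given a morphism $f \colon (X,\chi) \to (Y,\psi)$ in $\D^G$, the square $\eta_Y \circ F(f) = H(f) \circ \eta_X$ holds because $f$ is in particular a morphism $X \to Y$ in $\D$ and $\eta$ is a natural transformation of the underlying functors $F, H \colon \D \to \hat{\D}$. Combining (a) and (b) shows $\eta^G$ is a well-defined natural transformation $F^G \Rightarrow H^G$ with the stated component formula. I do not expect any genuine obstacle here: the statement is exactly the "$2$-functoriality" companion to Lemma~\ref{gfunctorinducesequivariant}, and the only thing one must be a little careful about is keeping straight which square comes from plain naturality of $\eta$ and which from the $G$-naturality axiom~$(\ref{G-natural})$ — the former handles morphisms of the form $\chi_g$ internal to $\D$, the latter handles the interaction with the two $\sigma$'s. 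One may optionally remark that if $F, H$ are additive then $\eta^G$ is a natural transformation of additive functors, and that $\eta^G$ is a natural isomorphism whenever $\eta$ is, since being an isomorphism is detected on underlying objects.
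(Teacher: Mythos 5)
Your proof is correct and is exactly the argument the paper intends: the paper omits the proof of this lemma, saying only that it follows "similarly" to Lemma~\ref{gfunctorinducesequivariant}, and your two-square pasting (ordinary naturality of $\eta$ at $\chi_g$, then the $G$-naturality axiom~$(\ref{G-natural})$ at $X$) is the standard way to fill that in.
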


\begin{rem}
\label{compositionGfunctors}
Let $(F,\sigma^F)\colon (\D,\rho, \theta) \rightarrow (\hat{\D},\hat{\rho},\hat{\theta})$ and $(H,\sigma^H)\colon  (\hat{\D},\hat{\rho},\hat{\theta}) \rightarrow (\Tilde{\D}, \Tilde{\rho}, \Tilde{\theta})  $ be two $G$-functors  and $F^G\colon \D^G\rightarrow \hat{\D}^G, \ H^G\colon \hat{\D}^G \rightarrow \Tilde{\D}^G$ the induced equivariant functors. Then we observe that $(H \circ F, \sigma^{H \circ F})$ is a $G$-functor, with $\sigma^{H\circ F}_g = \sigma^H_g F \circ H \sigma^F_g$ for all $g \in G$, and $(H\circ F)^G = H^G \circ F^G$.
\end{rem}

\begin{rem} 
\label{inclusionfunctor}
Let $\C$ be a $G$-invariant subcategory of $\D$, i.e.\ $ C^g \in \C$ for all $C\in \C $ and $g\in G$. This means equivalently that $\rho_g(\C) \simeq \C$. In this case, the action of $G$ on  $\D$ restricts to an action of $G$ on $\C$. The inclusion functor $\C \hookrightarrow \D$ is naturally a $G$-functor.
\end{rem}

We have the following generalization:

\begin{lem}\label{Ginvariantsubcats}
    Let $(F,\sigma)\colon (\D,\rho, \theta) \rightarrow (\hat{\D},\hat{\rho},\hat{\theta})$ be a $G$-functor and $\C$ and $\hat{\C}$  be  $G$-invariant subcategories of $\D$ and $\hat{\D}$, respectively, such that $F(\C)\subset \hat{\C} $. Then $F^G(\C^G) \subset \hat{\C}^G$.
\end{lem}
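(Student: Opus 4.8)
The plan is to verify the two required conditions defining membership in $\hat{\C}^G$ directly, using the explicit description of the equivariant functor $F^G$ given in Lemma~\ref{gfunctorinducesequivariant}. Concretely, let $(X,\chi)$ be an object of $\C^G$; by definition this means $X \in \C$ together with a linearization $\chi = \{\chi_g \colon X \xrightarrow{\simeq} X^g\}_{g \in G}$ satisfying the cocycle condition~$(\ref{eqcocyclecond})$, and since $\C$ is $G$-invariant each $X^g$ again lies in $\C$, so the whole linearization lives inside $\C$. Applying $F^G$ we get $F^G(X,\chi) = (FX, \chi')$ with $\chi'_g = \sigma_g \circ F\chi_g$.

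The first step is the object-level containment. Since $F(\C) \subset \hat{\C}$, we immediately have $FX \in \hat{\C}$. Next, because $\hat{\C}$ is $G$-invariant, $(FX)^g = \hat{\rho}_g(FX) \in \hat{\C}$ for all $g \in G$, so the linearization $\chi'$ of $FX$ consists of isomorphisms between objects of $\hat{\C}$. Together with the cocycle condition for $\chi'$ — which holds automatically because $F^G(X,\chi)$ is a genuine object of $\hat{\D}^G$ by Lemma~\ref{gfunctorinducesequivariant} — this shows $(FX,\chi') = F^G(X,\chi)$ is an object of $\hat{\C}^G$. The second step is the morphism-level statement: if $f \colon (X,\chi) \to (Y,\psi)$ is a morphism in $\C^G$, then $f \colon X \to Y$ is a morphism in $\C$ (since $\C$ is a subcategory), hence $F(f) = F^G(f) \colon FX \to FY$ is a morphism in $\hat{\C}$ (as $F$ restricts to a functor $\C \to \hat{\C}$), and it satisfies the equivariance square $(\ref{equivarianthomgroups})$ by Lemma~\ref{gfunctorinducesequivariant}; therefore $F^G(f)$ is a morphism in $\hat{\C}^G$.

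Putting these together, $F^G$ sends objects and morphisms of $\C^G$ into $\hat{\C}^G$, which is exactly the assertion $F^G(\C^G) \subset \hat{\C}^G$. There is essentially no obstacle here: the statement is a bookkeeping consequence of the construction of $F^G$ in Lemma~\ref{gfunctorinducesequivariant} combined with the two facts that $G$-invariance of $\hat{\C}$ is inherited by equivariant structures (so that $(FX)^g \in \hat{\C}$) and that $F(\C)\subset\hat{\C}$ guarantees both the object $FX$ and all the structural isomorphisms $F\chi_g$, $\sigma_g$ land in $\hat{\C}$. The only point that requires a moment's care — and the closest thing to a ``main obstacle'' — is making explicit why the linearization $\chi'$ of $FX$ has all its source and target objects in $\hat{\C}$; this is precisely where the $G$-invariance hypothesis on $\hat{\C}$ (not merely on $\C$) is used, and it is what makes $\hat{\C}$ genuinely a subcategory of $\hat{\D}$ closed under forming the equivariant category.
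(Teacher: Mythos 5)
Your proposal is correct and follows essentially the same route as the paper: apply the explicit description of $F^G(X,\chi)=(FX,\chi')$ from Lemma~\ref{gfunctorinducesequivariant}, observe that $FX$, $F(X^g)$ and $(FX)^g$ all lie in $\hat{\C}$ using $F(\C)\subset\hat{\C}$ together with the $G$-invariance of $\C$ and $\hat{\C}$, and conclude. The extra morphism-level check you include is harmless but not needed beyond what the paper records, since membership of an equivariant object in $\hat{\C}^G$ is detected on the underlying object (Remark~\ref{subcatlinear}).
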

\begin{proof}
Let $(X, \chi) \in \C^G$. Then $F^G(X, \chi) =  (FX, \chi')$ where $ \chi'=\{\chi'_g\colon FX \xrightarrow{F\chi_g} F({X^g}) \xrightarrow[]{\sigma_g}{(FX)^g} \}_{g\in G}$, as described in Lemma~\ref{gfunctorinducesequivariant}. Notice that $FX, F(X^g), (FX)^g$ are all objects of $ \mathcal{\hat{\C}}$ for all $g\in G$. We infer that $F^G(X, \chi)$ lies in $\hat{\C}$.
\end{proof}

\begin{lem}\label{ffequiv}
Suppose that $G$ acts on two additive categories $\D$ and $\D'$ and let $(F,\sigma) \colon  (\D, \rho, \theta) \rightarrow (\hat{\D}, \hat{\rho}, \hat{\theta})$ be a $G$-functor. If $F$ is fully faithful, then $F^G$ is also fully faithful.
\end{lem}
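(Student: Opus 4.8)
The plan is to reduce the statement to the elementary fact that a $G$-equivariant bijection of $G$-sets restricts to a bijection of fixed-point sets. Recall from the discussion preceding the definition of $\D^G$ that for $G$-equivariant objects $(X,\chi)$, $(Y,\psi)$ one has $\Hom_{\D^G}((X,\chi),(Y,\psi)) = \Hom_\D(X,Y)^G$, where $G$ acts on $\Hom_\D(X,Y)$ by $f\cdot g = \psi_g^{-1}\circ f^g\circ\chi_g$; likewise, writing $F^G(X,\chi)=(FX,\chi')$ and $F^G(Y,\psi)=(FY,\psi')$ with $\chi',\psi'$ the linearizations produced in \lemref{gfunctorinducesequivariant}, one has $\Hom_{\hat\D^G}(F^G(X,\chi),F^G(Y,\psi)) = \Hom_{\hat\D}(FX,FY)^G$ for the analogous twisted action coming from $\chi'$ and $\psi'$. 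Since $F^G$ acts on morphisms simply by $f\mapsto F(f)$, and since $F$ fully faithful means that $F\colon\Hom_\D(X,Y)\to\Hom_{\hat\D}(FX,FY)$ is already a bijection, it suffices to check that this bijection is equivariant, i.e.\ $F(f\cdot g)=F(f)\cdot g$ for every $g\in G$ and every $f\colon X\to Y$.

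The verification of equivariance is a short diagram chase. By construction $\chi'_g=\sigma_g\circ F\chi_g$ and $\psi'_g=\sigma_g\circ F\psi_g$ (with $\sigma_g$ evaluated at $X$, resp.\ $Y$), whence $(\psi'_g)^{-1}=F(\psi_g^{-1})\circ\sigma_g^{-1}$, and therefore
\[
F(f)\cdot g \;=\; (\psi'_g)^{-1}\circ (Ff)^g\circ\chi'_g \;=\; F(\psi_g^{-1})\circ\bigl(\sigma_g^{-1}\circ (Ff)^g\circ\sigma_g\bigr)\circ F\chi_g .
\]
The naturality square of $\sigma_g\colon F\rho_g\Rightarrow\hat\rho_g F$ at the morphism $f\colon X\to Y$ reads $(Ff)^g\circ\sigma_g=\sigma_g\circ F(f^g)$, so the bracketed term equals $F(f^g)$ and the right-hand side collapses to $F(\psi_g^{-1})\circ F(f^g)\circ F\chi_g=F(\psi_g^{-1}\circ f^g\circ\chi_g)=F(f\cdot g)$, as wanted. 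The only point requiring attention is the bookkeeping of which component of $\sigma$ (and which instance of the composition isomorphisms $\theta$) appears where; the non-strictness of the actions is irrelevant, since the identification of equivariant morphisms with ordinary morphisms satisfying $f\cdot g=f$ for all $g$ holds verbatim in any case.

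Granting equivariance, faithfulness and fullness of $F^G$ follow in two lines. If $f\in\Hom_{\D^G}((X,\chi),(Y,\psi))$, i.e.\ $f\cdot g=f$ for all $g$, then $F(f)\cdot g=F(f\cdot g)=F(f)$ for all $g$, so $F^G(f)=F(f)$ genuinely is a morphism in $\hat\D^G$, and $F^G$ is injective on equivariant Hom-sets because $F$ is. Conversely, given $\tilde f\in\Hom_{\hat\D^G}(F^G(X,\chi),F^G(Y,\psi))$, fullness of $F$ gives $\tilde f=F(f)$ for some $f\colon X\to Y$; then $F(f\cdot g)=F(f)\cdot g=\tilde f\cdot g=\tilde f=F(f)$ for all $g$, and faithfulness of $F$ forces $f\cdot g=f$ for all $g$, so $f$ is a morphism in $\D^G$ with $F^G(f)=\tilde f$. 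Hence $F^G$ is fully faithful. I do not anticipate any real obstacle here: the heart of the argument is the one-line naturality identity for $\sigma_g$, and everything else is formal.
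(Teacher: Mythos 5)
Your proof is correct and follows essentially the same route as the paper: both identify $\Hom_{\D^G}((X,\chi),(Y,\psi))$ with $\Hom_\D(X,Y)^G$ (and likewise in $\hat\D^G$ via the induced linearizations $\chi',\psi'$) and then restrict the bijection given by full faithfulness of $F$ to the $G$-invariant parts. The only difference is that you spell out, via the naturality square for $\sigma_g$, the equivariance $F(f\cdot g)=F(f)\cdot g$ that the paper simply asserts when calling $F_{(X,Y)}$ a $G$-module isomorphism.
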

\begin{proof}
We need to check that there is the following bijection induced by $F^G$:
\[
F^G_{(X,Y)} \colon  \Hom_{\D^G}((X,\chi),(Y,\psi) ) \xrightarrow[]{\simeq} \Hom_{\hat{\D}^G}(F^G(X,\chi),F^G(Y,\psi) )
\]
We can write $\Hom_{\D}(X,Y)^G$ instead of $\Hom_{\D^G}((X,\chi),(Y,\psi) )$, where the $G$-action is induced by $\chi, \psi$. We can also write $\Hom_{\hat{\D}}(FX,FY)^G$ where the $G$-action is induced by $\chi' , \psi'$ which are described in Lemma~\ref{gfunctorinducesequivariant}. Since $F_{(X,Y)} \colon  \Hom_{\D}(X, Y ) \xrightarrow[]{\simeq} \Hom_{\hat{\D}}(FX, FY)$ is a $G$-module isomorphism, it restricts to a $G$-module isomorphism of their $G$-invariant parts.
\end{proof}

The following useful lemma shows that adjoints can be lifted to the equivariant setting, provided that at least one of them is a $G$-functor.

\begin{lem}\textnormal{(\!\!\cite[Lemma~2.19]{ChaoSun})}
\label{adjointequiv}
Let $\D$ and $\hat{\D} $ be two additive categories with $G$-actions. Suppose that $F \dashv H$ is an adjoint pair, with $F\colon  \D \rightarrow \hat{\D}$ and $H \colon  \hat{\D} \rightarrow \D$. If either of the two functors is a $G$-functor, then the other one becomes naturally a $G$-functor. The unit $\upsilon \colon \mathsf{Id} \Rightarrow HF$ and the counit $\nu\colon  FH \Rightarrow \mathsf{Id}$ become $G$-natural transformations. Moreover, have that $F^G \dashv H^G$, with unit $\upsilon^G$ and counit $\nu^G$.
\end{lem}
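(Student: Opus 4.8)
The plan is to leverage the characterization $\Hom_{\D^G}((X,\chi),(Y,\psi)) = \Hom_{\D}(X,Y)^G$ already established, and to show that the adjunction isomorphism for $F \dashv H$ is $G$-equivariant once we equip the non-$G$-functor with the canonical structure forced by the adjunction. Concretely, suppose $(F,\sigma^F)$ is a $G$-functor. I would first define the candidate structure maps $\sigma^H_g \colon H\hat{\rho}_g \to \rho_g H$ by a standard mate/conjugation construction: apply the adjunction to the composite
\[
FH\hat{\rho}_g \xrightarrow{(\sigma^F_g)^{-1} H\hat{\rho}_g} \cdots
\]
more precisely, $\sigma^H_g$ is the mate of $\sigma^F_g$ under the adjunctions $F \dashv H$ on both sides (using that $\rho_g, \hat{\rho}_g$ are auto-equivalences). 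Then I would verify that $(H,\sigma^H)$ satisfies the associativity/compatibility diagram \eqref{associativity}; this is the mate of the corresponding diagram for $(F,\sigma^F)$, so it follows formally from the coherence of mates with respect to composition, but it does require a careful diagram chase using the 2-cocycle conditions \eqref{cocycle} for both actions and the triangle identities.

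Second, I would check that the unit $\upsilon\colon \mathsf{Id} \Rightarrow HF$ and counit $\nu\colon FH \Rightarrow \mathsf{Id}$ become $G$-natural transformations, i.e.\ satisfy diagram \eqref{G-natural} with respect to $\sigma^F$, $\sigma^H$ and their composites (as in Remark~\ref{compositionGfunctors}, $\sigma^{HF}_g = \sigma^H_g F \circ H\sigma^F_g$, and the identity $G$-functor carries the structure built from $u$ and $\theta$). Again this is exactly the defining property of the mate construction: $\sigma^H$ was \emph{chosen} so that the two triangle identities become $G$-natural, so the verification is a rewriting of the mate definition plus a triangle identity. I expect this to be the most delicate bookkeeping step, since one must be scrupulous about where the composition isomorphisms $\theta$, $\hat\theta$ and the unit $u$ enter; the cleanest route is to invoke the strictification (\cite[Theorem~5.4]{EvShinder}), reduce to the case where both actions are strict, and then the $\theta$'s and $u$ disappear and the mate computations become the classical ones.

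Finally, for the conclusion $F^G \dashv H^G$ with unit $\upsilon^G$ and counit $\nu^G$: by Lemma~\ref{equivnaturaltransf}, the $G$-natural transformations $\upsilon$ and $\nu$ induce equivariant natural transformations $\upsilon^G\colon \mathsf{Id}_{\D^G} \Rightarrow H^G F^G$ and $\nu^G\colon F^G H^G \Rightarrow \mathsf{Id}_{\hat\D^G}$ (using Remark~\ref{compositionGfunctors} to identify $(HF)^G = H^G F^G$). Since $\upsilon^G_{(X,\chi)} = \upsilon_X$ and $\nu^G_{(Y,\psi)} = \nu_Y$ at the level of underlying morphisms, the triangle identities for $F^G \dashv H^G$ hold because they hold for $F \dashv H$ and morphisms in the equivariant category are detected on underlying objects of $\D$, $\hat\D$. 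Alternatively, and perhaps more transparently, one checks directly that
\[
\Hom_{\hat\D^G}(F^G(X,\chi),(Y,\psi)) = \Hom_{\hat\D}(FX,Y)^G \cong \Hom_{\D}(X,HY)^G = \Hom_{\D^G}((X,\chi),H^G(Y,\psi)),
\]
where the middle isomorphism is the adjunction isomorphism for $F \dashv H$, which is $G$-equivariant precisely because $\upsilon$ (equivalently $\nu$) is $G$-natural; this bijection is natural in both variables, giving the adjunction. The main obstacle, as noted, is the coherence bookkeeping in constructing $\sigma^H$ and proving \eqref{associativity} and the $G$-naturality of unit and counit simultaneously; I would dispatch it by strictifying the actions first.
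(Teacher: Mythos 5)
The paper does not actually prove this lemma: it is quoted from \cite[Lemma~2.19]{ChaoSun}, so there is no in-paper argument to compare against. Your proposal is the standard mate/doctrinal-adjunction proof, which is correct and is essentially the argument of the cited source: the only point I would make explicit is that the mate of $\sigma^F_g$ taken along $F \dashv H$ on both sides has the shape $\rho_g H \Rightarrow H\hat{\rho}_g$, so to get $\sigma^H_g \colon H\hat{\rho}_g \Rightarrow \rho_g H$ in the paper's convention you must invert it, and its invertibility is exactly where the hypothesis that $\rho_g$ and $\hat{\rho}_g$ are auto-equivalences enters (which you do flag). The coherence diagram~(\ref{associativity}) for $\sigma^H$, the $G$-naturality of $\upsilon$ and $\nu$, and the lift $F^G \dashv H^G$ via Lemma~\ref{equivnaturaltransf}, Remark~\ref{compositionGfunctors} and faithfulness of the forgetful functor all go through as you describe.
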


\begin{cor}\label{G functor equivalence}
Let $(F,\sigma^F)\colon (\D,\rho, \theta) \rightarrow (\hat{\D},\hat{\rho},\hat{\theta})$ be a $G$-functor. If $F\colon \D \xrightarrow[]{} \hat{\D}$ is an equivalence, then so is $F^G \colon  \D^G \xrightarrow[]{} \hat{\D}^G$.
\end{cor}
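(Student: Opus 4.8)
The plan is to exhibit $F^G$ as one half of an adjoint equivalence, pushing all the bookkeeping of the composition isomorphisms into the equivariant lifting of adjunctions recorded in Lemma~\ref{adjointequiv}. Since $F\colon\D\to\hat\D$ is an equivalence, fix a quasi-inverse $H\colon\hat\D\to\D$ together with natural isomorphisms $\upsilon\colon\mathsf{Id}_{\D}\Rightarrow HF$ and $\nu\colon FH\Rightarrow\mathsf{Id}_{\hat\D}$; after the standard adjustment of one of these so that the triangle identities hold, we may and do assume that $F\dashv H$ is an adjoint pair with unit $\upsilon$ and counit $\nu$.

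Now apply Lemma~\ref{adjointequiv} to the adjoint pair $F\dashv H$ in which $(F,\sigma^F)$ is a $G$-functor: the functor $H$ acquires a canonical $G$-functor structure, the unit $\upsilon$ and counit $\nu$ become $G$-natural transformations, and there is an induced adjunction $F^G\dashv H^G$ on the equivariant categories with unit $\upsilon^G$ and counit $\nu^G$ (using Remark~\ref{compositionGfunctors} to identify $(HF)^G=H^G F^G$ and $(FH)^G=F^G H^G$). It remains only to check that $\upsilon^G$ and $\nu^G$ are isomorphisms. By Lemma~\ref{equivnaturaltransf} their components are $\upsilon^G_{(X,\chi)}=\upsilon_X$ and $\nu^G_{(\hat X,\hat\chi)}=\nu_{\hat X}$, which are isomorphisms in $\D$, $\hat\D$ respectively because $F$ is an equivalence. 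Finally, a morphism of equivariant objects that is invertible in the underlying category is invertible in the equivariant category — its inverse again commutes with the linearizations, by the diagram~(\ref{equivarianthomgroups}) — so $\upsilon^G_{(X,\chi)}$ and $\nu^G_{(\hat X,\hat\chi)}$ are isomorphisms in $\D^G$ and $\hat\D^G$. Hence $\upsilon^G$ and $\nu^G$ are natural isomorphisms, so $F^G$ and $H^G$ are mutually quasi-inverse and $F^G$ is an equivalence.

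An alternative route avoids choosing a quasi-inverse: by Lemma~\ref{ffequiv}, $F^G$ is fully faithful, so one only needs essential surjectivity. Given $(\hat X,\hat\chi)\in\hat\D^G$, pick $X\in\D$ and an isomorphism $f\colon FX\to\hat X$, transport $\hat\chi$ along $f$ to a linearization $\chi'$ on $FX$, and then use that $F$ is fully faithful — so that, composing with $\sigma^F_g$, one can identify $\Hom_{\hat\D}(FX,F(X^g))$ with $\Hom_{\D}(X,X^g)$ — to produce a family $\chi$ on $X$ with $F^G(X,\chi)=(FX,\chi')\cong(\hat X,\hat\chi)$; the cocycle condition~(\ref{eqcocyclecond}) for $\chi$ then follows from that for $\chi'$ together with the associativity compatibility~(\ref{associativity}) of $\sigma^F$. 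Either way, the one genuinely delicate point is the compatibility bookkeeping between the data $\theta$, $\hat\theta$ and $\sigma^F$ — verifying that the transported or pulled-back families really satisfy the relevant cocycle and associativity diagrams. In the adjoint-based argument this is exactly what is absorbed into Lemmas~\ref{adjointequiv} and~\ref{equivnaturaltransf}, which is why I would present that route.
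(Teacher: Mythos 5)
Your primary argument is correct and is exactly the route the paper intends: the corollary is stated as an immediate consequence of Lemma~\ref{adjointequiv} (upgrade the equivalence to an adjoint equivalence, lift it to $F^G\dashv H^G$, and note via Lemma~\ref{equivnaturaltransf} that the equivariant unit and counit have the same components as $\upsilon$ and $\nu$, hence are isomorphisms). Your alternative sketch (fully faithfulness plus transporting linearizations back along $F$) is also sound and is essentially the ``epi on linearizations'' argument the paper later isolates as Lemma~\ref{epilinear}.
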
 

We explain below how to transfer $G$-actions via an equivalence of categories.

\begin{lem}\label{inducedequivariantequivalence}
 Assume that $F\colon  \D \xrightarrow{\simeq} \mathcal{\hat{D}}$ is an equivalence of categories and let $G$ be a finite group acting on $\D$. Then, there is an induced action on $\mathcal{\hat{D}}$ rendering $F$ a $G$-functor. Moreover, $F^G \colon  \D^G \to \hat{\D}^G$ is also an equivalence of categories.
\end{lem}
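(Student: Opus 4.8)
The plan is to transport the $G$-action on $\D$ across the equivalence. Fix a quasi-inverse $H\colon\hat{\D}\to\D$ together with a natural isomorphism $\upsilon\colon\mathsf{Id}_{\D}\xrightarrow{\simeq}HF$. For each $g\in G$ set $\hat{\rho}_g\coloneqq F\circ\rho_g\circ H\colon\hat{\D}\to\hat{\D}$, which is an auto-equivalence as a composite of equivalences. Since $\hat{\rho}_g\hat{\rho}_h=F\rho_g HF\rho_h H$, define the composition isomorphisms by
\[
\hat{\theta}_{g,h}\colon\ F\rho_g HF\rho_h H \xrightarrow{F\rho_g\,\upsilon^{-1}\,\rho_h H} F\rho_g\rho_h H \xrightarrow{F\theta_{g,h}H} F\rho_{hg}H ,
\]
that is, first cancel the internal $HF$ using $\upsilon^{-1}$ and then apply $F\theta_{g,h}H$; this is an isomorphism $\hat{\rho}_g\hat{\rho}_h\xrightarrow{\simeq}\hat{\rho}_{hg}$.

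The first step is to check that $(\hat{\rho},\hat{\theta})$ satisfies the $2$-cocycle condition \eqref{cocycle}. After substituting the definition of $\hat{\theta}$ into the square \eqref{cocycle} for $(\hat{\rho},\hat{\theta})$, the two composites from $\hat{\rho}_g\hat{\rho}_h\hat{\rho}_k$ to $\hat{\rho}_{khg}$ each factor, via the interchange law and invertibility of $\upsilon$, as the canonical cancellation of the two internal copies of $HF$ followed by $F(-)H$ applied to one of the two sides of the square \eqref{cocycle} for $(\rho,\theta)$; these agree because $(\rho,\theta)$ is a $G$-action. Hence $(\hat{\D},\hat{\rho},\hat{\theta})$ is a $G$-action. (Conceptually this is just the fact that transport of structure along an equivalence preserves all coherence data; for the write-up I would display the relevant pasting of interchange and naturality squares.)

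The second step is to exhibit $F$ as a $G$-functor for this action. Put $\sigma_g\coloneqq F\rho_g\upsilon\colon F\rho_g\Rightarrow F\rho_g HF=\hat{\rho}_g F$; each $\sigma_g$ is a natural isomorphism. Unwinding the definitions of $\hat{\theta}$ and $\sigma$, the compatibility diagram \eqref{associativity} reduces --- again by the interchange law, the naturality of $\upsilon$, and the identity $\upsilon^{-1}\circ\upsilon=\mathsf{Id}$ --- to the trivial commutativity of a square that applies $F\theta_{g,h}$ and inserts a copy of $\upsilon$ at disjoint positions. Thus $(F,\sigma)\colon(\D,\rho,\theta)\to(\hat{\D},\hat{\rho},\hat{\theta})$ is a $G$-functor, which proves the first assertion of the lemma.

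Finally, since $F$ is a $G$-functor and an equivalence, Corollary~\ref{G functor equivalence} yields that $F^{G}\colon\D^{G}\to\hat{\D}^{G}$ is an equivalence, which is the remaining assertion. (Alternatively, by Lemma~\ref{adjointequiv} the quasi-inverse $H$ acquires a $G$-functor structure with $F^{G}\dashv H^{G}$ an adjoint pair, again forcing $F^{G}$ to be an equivalence.) The only real content is the two coherence verifications in the second and third steps; I expect the $2$-cocycle check for $\hat{\theta}$ to be the main obstacle, as it is the point where the cocycle condition for $\theta$ must be reconciled with the bookkeeping of the auxiliary copies of $HF$.
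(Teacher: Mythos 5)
Your proposal is correct and follows essentially the same route as the paper: transport the action by setting $\hat{\rho}_g = F\rho_g F^{-1}$, build $\hat{\theta}_{g,h}$ by cancelling the inner composite (the paper uses the counit of $F^{-1}\dashv F$ where you use $\upsilon^{-1}$, which is the same device) and applying $F\theta_{g,h}F^{-1}$, then invoke Corollary~\ref{G functor equivalence}. The only difference is that you spell out the coherence checks and the explicit $\sigma_g = F\rho_g\upsilon$, which the paper leaves as immediate; these verifications are sound.
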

\begin{proof}
Let $F^{-1}$ denote a quasi-inverse of $F$.
For any $g \in G$ define an auto-equivalence $\hat{\rho}_g \colon  \mathcal{\hat{D}} \to \mathcal{\hat{D}}$ as the composition $F \circ \rho_g\circ F^{-1}$. Now $\hat{\theta}$ is induced by $\theta$, i.e.\ $\hat{\theta}_{g,h}$  is the composition
\[ 
\hat{\rho}_g \circ \hat{\rho}_h = F \rho_g F^{-1} \circ F \rho_h F^{-1} \xrightarrow{F \rho_g \e \rho_h F^{-1}} F\rho_g \circ \rho_h F^{-1} \xrightarrow{F \theta_{g,h} F^{-1}} F \rho_{hg} F^{-1} = \hat{\rho}_{gh}
\]
where $\e$ is the counit of the adjunction $F^{-1} \dashv F$. This composition is, by construction, a natural isomorphism.
The cocycle condition~$(\ref{cocycle})$ follows immediately. Moreover, $F$ becomes a $G$-functor by the definition of the action on $\mathcal{\hat{D}}$. By Corollary~\ref{G functor equivalence}, the functor $F^G$ is an equivalence.
\end{proof}


There is a useful bi-adjoint pair of functors between the categories $\D$ and $\D^G$, namely the \textbf{forgetful} functor denoted by $\mathsf{For}$ and the \textbf{induction} functor denoted by $\mathsf{Ind}$.
The forgetful functor $\mathsf{For} \colon  \D^G \rightarrow \D$ is defined by forgetting the linearizations of an equivariant object, that is, $\mathsf{For}(X, \chi) = X$. Moreover, the induction functor $\mathsf{Ind} \colon  \D \rightarrow \D^G$ is defined by $\mathsf{Ind}(X) = (\bigoplus_{h \in G}  X^h, \phi ) $, where $\{\phi_g\colon  \bigoplus_{h\in G}  X^h \rightarrow \bigoplus_{h \in G} (X^h)^g\}_{g\in G} $ is the collection of isomorphisms $\theta_{g,h}^{-1} \colon X^{hg} \rightarrow ( X^h)^g $.
For a proof of the adjuction see \cite[Lemma~3.8]{Elagin}. Given a $G$-functor $(F,\sigma)\colon (\D,\rho, \theta) \rightarrow (\hat{\D},\hat{\rho},\hat{\theta})$, then by Lemma~\ref{gfunctorinducesequivariant}, it induces an equivariant functor $F^G\colon \D^G \rightarrow \hat{\D}^G$ that commutes with the forgetful functor, that is, the following diagram is commutative:
\begin{equation}\label{comdiagramFor}
\begin{tikzcd}
\D^G \arrow[r, "F^G"] \arrow[d, "\mathsf{For}"'] & \hat{\D}^G \arrow[d, "\mathsf{For}"] \\
            \D \arrow[r, "F"] & \hat{\D}
\end{tikzcd}
\end{equation}
The equivariant functor commutes with the induction functor up to natural isomorphism $\oplus \sigma_g \colon  F^G \circ \mathsf{Ind} \xrightarrow[]{\simeq } \mathsf{Ind} \circ F$.  When $\C$ is a $G$-invariant subcategory of $\D$, we have an induced action of $G$ on $\C$ and the equivariant category $\C^G$ can be naturally identified with the full subcategory $\mathsf{For}^{-1}(\C) $ of $\D^G$.

\subsection{Equivariant Category of Modules}\label{skewgroupalgebra} 

In this subsection we discuss the equivariant module category. Let $\Mod R$ denote the category of all right $R$-modules of a unital and associative ring $R$. For $m \in M$ and $r \in R$ we write $m \cdot r$ for the $R$-module action. Consider a finite group $G$ and a group homomorphism $\rho \colon G \rightarrow \mathsf{Aut}(R)^{op}$, where the multiplication of $\mathsf{Aut}^{op}$ is defined by $g \cdot_{op} h \coloneqq hg$. We also write $\rho(g)(r) \coloneqq r^g$ for $g \in G$ and $r \in R$. The opposite multiplication implies that, for $g,h \in G$:
\[
(r^h)^g  = \rho(g)\big(\rho(h)(r)\big) =\big(\rho(g) \cdot_{op} \rho(h)\big)(r) = \big(\rho(h) \rho(g) \big) (r) = \rho(hg)(r) = r^{hg}
\] 
hence, this is a right $G$-action on $R$.
Also, for $g \in G$ and $M \in  \Mod{R}$, the \textbf{twisted module} $M^g$ is defined such that $M^g = M$ as an abelian group but with a different $R$-action: $m \bullet_g r \coloneqq m \cdot r^{g^{-1}}$ for $m \in M$ and $r \in R$.
This yields an automorphism $(-)^g \colon  \Mod{R} \rightarrow \Mod{R}$ sending a module $M$ to $M^g$ and a morphism $f$ to itself. We have the data of a strict right $G$-action on $\Mod{R}$. Indeed $(M^h)^g = M^{hg}$; we use the notation $(\bullet_h)_g$ for the multiplication of $(M^h)^g$, since it is twisted twice, and observe that 
\[
m (\bullet_h)_g r = m \bullet_h r^{g^{-1}} = m \cdot (r^{g^{-1}})^{h^{-1}} = m \cdot r^{g^{-1}h^{-1}} = m \cdot r^{(hg)^{-1}} = m \bullet_{hg} r
\]
For this action, we say $G$ \textbf{acts on $\Mod R$ by automorphisms}.

A \textbf{skew group algebra} $RG$ (or twisted group ring, sometimes denoted by $R\#G$) is the free left $R$-module with elements in $G$ as a basis (i.e.\ formal sums $\sum_{g\in G} r_g g$) and multiplication defined by $(rg)(r' g') \coloneqq r r'^{g} gg'$ for $r,r' \in R$ and $g, g' \in G$. The skew group algebra is not the usual group algebra, which is denoted by $R[G]$, unless $G$ acts trivially on $R$. Moreover, $r \mapsto r 1_G$ is a ring homomorphism but $RG$ is not an $R$-algebra unless the action is trivial. It is canonically an $R^G$-algebra by the composition $R^G \hookrightarrow R \to RG$, where $R^G$ denotes the ring of invariants.  

A right $RG$-module $M$ is a right $R$-module equipped with a right $G$-action, $m \mapsto mg$, compatible with the $R$-action, i.e.\ such that $(m g) \cdot r = (m \cdot r^g)g$.
There is a canonical isomorphism:
\begin{equation}\label{isomorphism of equivariant module categories}
    \Phi \colon  \Mod RG \xrightarrow[]{\simeq} (\Mod R)^G
\end{equation}
such that $\Phi(M) = (M, \mu)$ for an object $M \in  \Mod{RG}$, where $\mu$ is a family $\{ \mu_g \}_{g\in G}$ defined by $\mu_g\colon  M \xrightarrow[]{\simeq} M^g$, $m  \mapsto m g$ (thus $\mu_g \mu_h = \mu_{hg}$) and $\Phi(f) = f$ for a morphism $f \in \Mod RG$. Notice that $\mu_g$ is indeed an $R$-module homomorphism: 
\[
mr \mapsto (m \cdot r)g = (mg) \cdot r^{g^{-1}} = (mg)\bullet_g r
\]
The functor $\Phi$ restricts to an equivalence $\smod RG \simeq (\smod R)^G$ and, if $|G|$ is invertible in $R$, restricts also to an equivalence $(\proj R)^G \simeq \proj (RG)$. Here, we assume that the ring $R$ is noetherian, so that the category $\smod R$ of finitely generated right $R$-modules is abelian. Then, the equivariant category $(\smod R)^G$ is abelian and therefore $\smod RG$ is also abelian. For more details we refer to the classical paper of Reiten and Riedtmann \cite{RR}, see also \cite[Proposition~2.48]{Demonet}.

\begin{rem}
We summarise below some remarks on skew group algebras.
\begin{enumerate}
\item The equivalence $\Phi$ actually says that in order to have a linearization of an $R$-module $M$, it suffices to equip it with a compatible $G$-action. Moreover, the set of all linearizations of an $R$-module is in one-to-one correspondence with the different compatible $G$-actions it can be equipped with.
\item If the group action on $R$ is trivial, then every $R$-module has a linearization. Indeed, for any module $M$, if the group acts trivially ($mg = m $), then the compatibility condition always holds since $m \cdot r = (m \cdot r)g = (mg) \cdot r^{g^{-1}}$.
This implies that any $R$-module becomes naturally an $R[G]$-module.
\item For a left $G$-action on the category of $R$-modules, one would have to use a group homomorphism $G \to \mathsf{Aut}(R)$, use $m \bullet_g r = mr^g$ for the $R$-module structure of ${^g \! M}$, use the rule $gr = r^{g^{-1}}g$ for the skew group ring and use the linearizations $m \mapsto mg^{-1}$.
\end{enumerate}
\end{rem}

\begin{rem}
In view of the geometric action of Examples~\ref{exampleonsheaves1} and~\ref{exampleonsheaves2} and the algebraic action of the previous Section~\ref{skewgroupalgebra} it is natural to wonder whether the two actions are related and, if they are, how. It turns out that the affine geometric case coincides with the algebraic one. Thus one can view the geometric action as a generalization of the algebraic one or even think of the equivariant  (quasi-)coherent sheaves as sheafified modules over the skew group algebras (this is mentioned in \cite[Section~2]{BCA}).
Indeed, let $R$ be a commutative noetherian ring. Observe that automorphisms of $R$ correspond to automorphisms of the affine scheme $\mathsf{Spec}(R)$. Following the construction of pullbacks by automorphisms (see for example \cite{Ueno1, Ueno2}), the induced $G$-action by automorpshisms on $\Mod R$ coincides with the induced $G$-action by pullbacks on $\mathsf{Qcoh}(R)$ using the canonical equivalence $\Mod R \simeq \mathsf{Qcoh}(\mathsf{Spec}(R))$ (resp.\  for $\smod R \simeq \mathsf{Coh}(\mathsf{Spec}(R))$).
\end{rem}

\subsection{Action on Triangulated Categories}
\label{actionontriangcats}
In this subsection, we briefly expose some machinery on actions for triangulated categories that we will need in the sequel. For more details we refer to \cite{ChaoSun, Elagin, Ploog, XiaoWuChen2} and references therein. Throughout this subsection we consider right group actions on categories.

When a group acts on a triangulated category the action has to be compatible with the triangulated structure. We recall the following definition.

\begin{defn}\label{admissibleaction}
\textnormal{(\!\!\cite[Definition~3.1]{ChaoSun}}
A $G$-action $(\rho, \theta)$ on a triangulated category $\T$ with translation auto-equivalence $[1]$, is said to be \textbf{admissible} if each $\rho_g$ is a triangle auto-equivalence and is equipped with natural isomorphisms $\tau_{g} \colon  [1] \circ \rho_g \xrightarrow[]{\simeq} \rho_g \circ [1] $, i.e.\ the shift $[1]$ is a $G$-functor. 
\end{defn}

Note that $[1]^G$ is the translation functor of $\T^G$. We have an equality  $\mathsf{For} \circ [1]^G = [1] \circ \mathsf{For}$ and a natural isomorphism $\oplus_{g \in G} \rho_g \colon  \mathsf{Ind}\circ [1] \xrightarrow[]{\simeq} [1]^G \circ \mathsf{Ind}$.

\begin{defn}
Given an admissible $G$-action on a triangulated category $\T$, a \textbf{canonical triangulated structure} on $\T^G$ is such that the forgetful functor is triangulated.
\end{defn} 
It is well known that $\T^G$ is not always triangulated, but by work of Chao Sun \cite{ChaoSun} (which generalized Chen's work \cite{XiaoWuChen2}) we have the following result. Recall that $|G|$ is invertible in an additive category $\D$ if for any morphisms $f\colon X \to Y$ there exists a morphism $g\colon  X \to Y$ such that $f= ng$.

\begin{prop}\label{canonicalpretriangstructure}
\textnormal{(\!\!\cite[Proposition~3.3]{ChaoSun}}
 If G acts admissibly on a triangulated category $\T$ and $|G|$ is invertible in $\T$, there exists a unique canonical pre-triangulated structure on $\T^G$. In particular, exact triangles in $\T^G$ are those who are mapped to exact triangles in $\T$ via the forgetful functor. Moreover, if $\T^G$ admits a triangulated structure, then it is unique.
\end{prop}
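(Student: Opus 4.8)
The plan is to follow the strategy of Chen \cite{XiaoWuChen2} and Chao Sun \cite{ChaoSun} for constructing the pre-triangulated structure on $\T^G$, and then to address uniqueness separately. Recall that the candidate class of triangles in $\T^G$ consists of those diagrams $(X,\chi)\to (Y,\psi)\to (Z,\zeta)\to [1]^G(X,\chi)$ whose image under $\mathsf{For}$ is an exact triangle in $\T$; this is forced since any canonical structure must make $\mathsf{For}$ triangulated, so the class is unique if it exists — the content is that this class satisfies the (pre-)triangle axioms.

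First I would verify the easy axioms: rotations of distinguished triangles are distinguished (using that $[1]^G$ is a triangle auto-equivalence, which follows from admissibility of the action and Lemma~\ref{gfunctorinducesequivariant}/Corollary~\ref{G functor equivalence}), that $(X,\chi)\xrightarrow{\id}(X,\chi)\to 0\to$ is distinguished, and that any morphism in $\T^G$ embeds into a distinguished triangle. For the last point, given $f\colon (X,\chi)\to (Y,\psi)$, one takes a cone $Z$ of $\mathsf{For}(f)$ in $\T$ and must equip $Z$ with a linearization $\zeta$ compatible with the triangle maps; since each $\rho_g$ is a triangle functor, $\rho_g$ applied to the cone triangle is again a cone triangle on $f^g$, and $\chi_g,\psi_g$ induce an isomorphism of triangles, producing a candidate isomorphism $Z\to Z^g$ — but it is only determined up to the usual cone non-uniqueness, so one must average over $G$ (this is where $|G|$ invertible enters) to obtain an honest linearization satisfying the cocycle condition \eqref{eqcocyclecond}. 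This averaging argument, ensuring the corrected $\zeta_g$ still commutes with $f$ and with the connecting morphism and still satisfies the $2$-cocycle relation with the $\theta$'s, is the technical heart of the construction and the step I expect to be the main obstacle.

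The octahedral axiom is then checked by lifting the octahedron from $\T$ via $\mathsf{For}$: given composable morphisms in $\T^G$ and the chosen cone linearizations, one applies the octahedral axiom in $\T$, and uses the invertibility of $|G|$ once more to adjust the new morphisms between cones so that they become morphisms in $\T^G$. Finally, for the uniqueness statement: if $\T^G$ happens to carry a genuine triangulated structure (not just pre-triangulated) making $\mathsf{For}$ triangulated, then its distinguished triangles are exactly those sent to distinguished triangles in $\T$ by the faithful, conservative functor $\mathsf{For}$ — a standard argument shows a triangulated structure is determined by which candidate triangles are distinguished once one knows every morphism completes to a triangle, so the two structures coincide. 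I would also remark that the pre-triangulated structure produced is automatically the canonical one in the sense of the preceding definition, since $\mathsf{For}$ is triangulated by construction.

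Throughout, I would lean on the already-established equivariant toolkit: Lemma~\ref{gfunctorinducesequivariant} to know $[1]^G$ exists and is a functor, Corollary~\ref{G functor equivalence} to know it is an auto-equivalence, and the identities $\mathsf{For}\circ[1]^G=[1]\circ\mathsf{For}$ recorded after Definition~\ref{admissibleaction}. Since the full verification is carried out in \cite[Proposition~3.3]{ChaoSun}, I would not reproduce every diagram chase but would isolate precisely the averaging constructions that require $|G|$ to be invertible, as these are the points where the hypothesis is genuinely used.
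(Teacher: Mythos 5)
First, note that the paper does not prove this proposition at all: it is quoted from \cite[Proposition~3.3]{ChaoSun}, whose proof (building on \cite{XiaoWuChen2}) identifies $\T^G$ with the module category of the separable exact monad attached to the adjunction $\mathsf{Ind}\dashv\mathsf{For}$ (separability coming from $|G|^{-1}$ times the sum of the structure maps) and then rectifies a tentative monad action on a cone using the separability idempotent. Measured against that, your outline for the pre-triangulated part is in the right spirit, but the step you yourself call the ``main obstacle'' is precisely the content of the proposition and is left undone: naively averaging the TR3-isomorphisms $\zeta_g\colon Z\to Z^g$ is delicate (an average of isomorphisms need not be invertible, and the corrected family must simultaneously commute with all three maps of the triangle and satisfy the cocycle identity with the $\theta$'s), which is why the actual arguments correct a module structure rather than the isomorphisms themselves. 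The same averaging is also needed for uniqueness: making $\mathsf{For}$ exact only forces the distinguished triangles to lie \emph{inside} the preimage class, and to get equality one must take a candidate triangle whose image is exact, compare it with a genuine distinguished triangle on the same morphism, and push the TR3 comparison isomorphism from $\T$ into $\T^G$ by averaging; faithfulness and conservativity of $\mathsf{For}$ plus ``a standard argument'' do not suffice on their own, although this part is fixable along the lines you indicate.

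The genuinely wrong step is the paragraph on the octahedral axiom. The proposition (and Sun's result) deliberately asserts only a \emph{pre-}triangulated structure: it is not known, and in this generality not expected, that $\T^G$ satisfies the octahedral axiom merely because $|G|$ is invertible. If one could verify it by ``lifting the octahedron from $\T$ via $\mathsf{For}$'' and averaging the connecting maps, then $\T^G$ would be canonically triangulated whenever $|G|$ is invertible, and the additional hypotheses used throughout this paper and the literature (existence of a dg-enhancement, cf.\ Remark~\ref{trianequivrem} and \cite{Elagin}, or the explicit assumption ``$\T^G$ admits a canonical triangulated structure'' in Theorem~\ref{quotienttheorem} and Section~\ref{recollementsoftriangulatedcats}) would be superfluous; the guarded clause ``if $\T^G$ admits a triangulated structure, then it is unique'' would also be pointless. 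The problem with your argument is that the octahedron in $\T$ produces new morphisms between cones only in $\T$, and averaging them so that they lie in $\T^G$ destroys the commutativity constraints of the octahedron (one cannot average several maps that must satisfy different compatibilities simultaneously). So you should delete the octahedron claim, prove only TR1--TR3 (with the averaging/separability correction spelled out or delegated to \cite{ChaoSun}, exactly as the paper does), and prove uniqueness by the comparison argument sketched above.
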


In this case we say that $\T^G$ is \textbf{canonically triangulated}, implying the uniqueness of the structure. 
For an example of a non triangulated equivariant category where $|G|$ is non invertible, see \cite[Remark~4.6,~(2)]{XiaoWuChen2}.
The following is the triangulated analogue of Lemma~\ref{gfunctorinducesequivariant}.

\begin{lem}
\label{triangadjointequiv}
Let $G$ act admissibly on triangulated categories $\T$ and $\widehat{\T}$ with $|G|$ invertible in both $\T$ and $\hat{\T}$. Let $(F, \sigma) \colon  \T \to \widehat{\T}$ be a triangulated $G$-functor. Assume that $\T^G$ and $\widehat{\T}^G$ are canonically triangulated. Then the induced equivariant functor $F^G\colon \T^G \to \widehat{\T}^G $ is triangulated.
\end{lem}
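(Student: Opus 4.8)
The goal is to show that $F^G\colon \T^G\to\widehat\T^G$ is a triangulated functor, i.e.\ it commutes with the shift functors up to the prescribed natural isomorphism and it sends exact triangles to exact triangles. The key tool is Proposition~\ref{canonicalpretriangstructure}: under the hypothesis that $|G|$ is invertible, the exact triangles in the canonically triangulated category $\T^G$ (resp.\ $\widehat\T^G$) are \emph{precisely} those that the forgetful functor $\mathsf{For}$ sends to exact triangles in $\T$ (resp.\ $\widehat\T$). So the whole argument is driven by the commutativity of diagram~$(\ref{comdiagramFor})$, namely $\mathsf{For}\circ F^G = F\circ\mathsf{For}$, together with the fact that $F$ is already triangulated.

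First I would handle the shift. Since the $G$-actions on $\T$ and $\widehat\T$ are admissible, $[1]$ is a $G$-functor on each side, and $[1]^G$ is the translation functor of $\T^G$ (resp.\ $\widehat\T^G$), as recorded after Definition~\ref{admissibleaction}. Because $(F,\sigma)$ is a triangulated $G$-functor, it comes equipped with a natural isomorphism $F[1]\xrightarrow{\simeq}[1]F$ which is simultaneously compatible with the triangulated structure and $G$-natural (one should check this compatibility is part of the data of a triangulated $G$-functor, or derive it from the two structures); then Lemma~\ref{equivnaturaltransf} promotes this to an equivariant natural isomorphism $F^G\circ[1]^G\xrightarrow{\simeq}[1]^G\circ F^G$. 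This gives the required connecting isomorphism for $F^G$, and by construction it is compatible via $\mathsf{For}$ with the connecting isomorphism of $F$.

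Next, the exact triangles. Let $(X,\chi)\to(Y,\psi)\to(Z,\zeta)\to(X,\chi)[1]$ be an exact triangle in $\T^G$. By Proposition~\ref{canonicalpretriangstructure} this means $X\to Y\to Z\to X[1]$ is an exact triangle in $\T$. Applying the triangulated functor $F$ yields an exact triangle $FX\to FY\to FZ\to (FX)[1]$ in $\widehat\T$. Using $(\ref{comdiagramFor})$, this triangle is exactly $\mathsf{For}$ applied to the candidate triangle $F^G(X,\chi)\to F^G(Y,\psi)\to F^G(Z,\zeta)\to F^G(X,\chi)[1]$ in $\widehat\T^G$ (with the last morphism using the connecting isomorphism above, which $\mathsf{For}$ identifies with the standard one). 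Invoking Proposition~\ref{canonicalpretriangstructure} again — now on $\widehat\T^G$ — we conclude that this candidate triangle is exact in $\widehat\T^G$. Hence $F^G$ preserves exact triangles, and combined with the previous paragraph $F^G$ is triangulated.

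\textbf{Main obstacle.} The routine parts (kernels/cokernels, naturality of $\sigma$) are not the issue here; the delicate point is making sure the connecting isomorphism $F[1]\Rightarrow[1]F$ built into ``triangulated $G$-functor'' is genuinely $G$-natural so that Lemma~\ref{equivnaturaltransf} applies to produce $F^G[1]^G\Rightarrow[1]^GF^G$, and then verifying that under $\mathsf{For}$ this equivariant isomorphism reduces to the original one on the nose (so that no spurious sign or twist appears when we translate exact triangles across $\mathsf{For}$). Once that bookkeeping is in place, the two-fold application of Proposition~\ref{canonicalpretriangstructure} makes the preservation of triangles essentially formal.
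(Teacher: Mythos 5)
Your proposal follows essentially the same route as the paper: preservation of exact triangles comes from the equality $\mathsf{For}\circ F^G = F\circ\mathsf{For}$ of diagram~(\ref{comdiagramFor}) together with the characterization of exact triangles in the canonically triangulated equivariant categories (Proposition~\ref{canonicalpretriangstructure}), and the shift compatibility comes from promoting $F\circ[1]\simeq[1]\circ F$ to a $G$-natural isomorphism and applying Lemma~\ref{equivnaturaltransf}. The one step you defer --- checking that $F\circ[1]\Rightarrow[1]\circ F$ is genuinely $G$-natural --- is exactly what the paper carries out, by equipping $F\circ[1]$ and $[1]\circ F$ with the composite $G$-functor structures of Remark~\ref{compositionGfunctors} and verifying the corresponding square commutes.
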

\begin{proof}
We denote by $\hat{\rho}_g$ the auto-equivalence in $\widehat{\T}$ corresponding to $g \in G$, by $\widehat{[1]}$ the shift functor of $\widehat{\T}$ and by $\widehat{\tau}$ its $G$-functor structure.

By diagram~$(\ref{comdiagramFor})$ we have  $\mathsf{For} \circ F^G = F \circ \mathsf{For}$. It follows that $F^G$ preserves distinguished triangles. Also, the action is admissible, thus $[1]$ is a $G$-functor and we have a natural isomorphism $F \circ [1] \simeq_{\eta} \widehat{[1]} \circ F$ which is also a $G$-natural isomorphism, since the following diagram commutes:
\begin{equation*}
\begin{tikzcd}
F \circ [1] \circ \rho_g \arrow[r, "\eta \rho_g"] \arrow[d, "\sigma'"']&  \widehat{[1]} \circ F \circ g \arrow[d, "\sigma''" ] \\
\hat{\rho}_g \circ F \circ [1] \arrow[r, "\rho_g \eta"'] & \hat{\rho}_g \circ \widehat{[1]} \circ F
\end{tikzcd}
\end{equation*}
Here we used that $F \circ [1]$ and $\widehat{[1]}\circ F$ are $G$-functors with $G$-structures $\sigma'\coloneqq \sigma [1]\circ F\tau$ and $\sigma''\coloneqq  \widehat{\tau} F \circ \widehat{[1]}\sigma$, respectively, described in Remark~\ref{compositionGfunctors}.
By Lemma~\ref{equivnaturaltransf}, we have a natural isomorphism $F^G \circ [1] \simeq _{\eta^G} [1]\circ F^G$, which concludes the proof.
\end{proof}

We recall below the result of Chao Sun about actions on Verdier quotients. 

\begin{thm} \textnormal{(\!\!\cite[Theorem~3.9]{ChaoSun})}
\label{quotienttheorem}
Suppose $\T$ is a triangulated category with an admissible $G$ action, $|G|$ is invertible in $\T$ and $\T^G$ admits a canonical triangulated structure. Let $\U$ be a $G$-invariant subcategory of $\T$. Then:
\begin{itemize}
\item[(i)] The Verdier quotient $\T/ \U$ carries an admissible $G$ action and $(\T/ \U) ^G $ admits a canonical triangulated structure.

\item[(ii)] There is a natural triangle functor $\T^G / \U^G \xrightarrow{} (\T/ \U)^G$ that is an equivalence up to retracts. In particular, if $\T^G / \U^G$ is idempotent complete, then it is an equivalence.
\end{itemize}
\end{thm}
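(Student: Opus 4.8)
The plan is to prove (i) and (ii) together, since the triangulated (as opposed to merely pre\nobreakdash-triangulated) structure on $(\T/\U)^G$ only becomes available after the comparison functor of (ii) has been built. First I would descend the action. As $\U$ is $G$-invariant, each triangle auto-equivalence $\rho_g$ satisfies $\rho_g(\U)=\U$, so the composite $Q\circ\rho_g\colon\T\to\T/\U$ annihilates $\U$ and factors uniquely, by the universal property of the Verdier localisation $Q\colon\T\to\T/\U$, as $\bar\rho_g\circ Q$ with $\bar\rho_g$ a triangle auto-equivalence of $\T/\U$. Because $Q$ identifies functors (and natural transformations) out of $\T/\U$ with functors out of $\T$ annihilating $\U$, the composition isomorphisms $\theta_{g,h}$, the unit, the $G$-functor structure $\tau$ of the shift, and all their coherence diagrams descend as well; this yields an admissible $G$-action on $\T/\U$ for which $Q$ is a triangulated $G$-functor with identity structure maps. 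One then verifies that $|G|$ remains invertible in $\T/\U$ (writing $\Hom_{\T/\U}(X,Y)$ as a filtered colimit of the $|G|$-divisible groups $\Hom_{\T}(Z,Y)$), and likewise in $\T^G$ and in its Verdier quotient $\T^G/\U^G$, where $\U^G$ is identified with the triangulated subcategory $\mathsf{For}^{-1}(\U)$ inside the triangulated category $\T^G$. By Proposition~\ref{canonicalpretriangstructure}, $(\T/\U)^G$ then carries a unique canonical pre-triangulated structure, whose exact triangles are exactly those sent by $\mathsf{For}$ to exact triangles of $\T/\U$.

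Next I would construct the comparison functor. Since $Q$ is a triangulated $G$-functor, Lemma~\ref{gfunctorinducesequivariant} produces $Q^G\colon\T^G\to(\T/\U)^G$, and $Q^G$ preserves exact triangles: an exact triangle of $\T^G$ is sent by $\mathsf{For}$ to an exact triangle of $\T$, hence by $Q$ to an exact triangle of $\T/\U$, which is an exact triangle of $(\T/\U)^G$. By Lemma~\ref{Ginvariantsubcats}, $Q^G(\U^G)\subseteq 0^G=0$, so the universal property of the Verdier quotient $\T^G\to\T^G/\U^G$ yields a triangle functor $\bar Q\colon\T^G/\U^G\to(\T/\U)^G$ through which $Q^G$ factors.

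The heart of the matter is proving $\bar Q$ fully faithful, and this is where invertibility of $|G|$ is essential — morally it lets one average an arbitrary roof representing a $G$-fixed morphism into an equivariant one. I would argue functorially. The biadjunction $\mathsf{For}\dashv\mathsf{Ind}\dashv\mathsf{For}$ descends along the Verdier quotients to a biadjunction $\overline{\mathsf{For}}\dashv\overline{\mathsf{Ind}}\dashv\overline{\mathsf{For}}$ between $\T/\U$ and $\T^G/\U^G$, and one has the analogous biadjunction between $\T/\U$ and $(\T/\U)^G$. Two facts drive the computation: $\mathsf{For}\circ\mathsf{Ind}\cong\bigoplus_{g\in G}\rho_g$, which persists verbatim after passing to the quotients, and the compatibilities $\bar Q\circ\overline{\mathsf{Ind}}\cong\mathsf{Ind}$ and $\mathsf{For}\circ\bar Q\cong\overline{\mathsf{For}}$, which drop out of diagram~(\ref{comdiagramFor}) and the isomorphism $Q^G\circ\mathsf{Ind}\cong\mathsf{Ind}\circ Q$ together with the essential surjectivity of $Q$. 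For $A=\overline{\mathsf{Ind}}(U)$ and an arbitrary $B$, chaining the relevant adjunction isomorphisms then gives natural isomorphisms $\Hom_{\T^G/\U^G}(A,B)\cong\Hom_{\T/\U}(U,\overline{\mathsf{For}}B)\cong\Hom_{(\T/\U)^G}(\bar Q A,\bar Q B)$ whose composite is the map induced by $\bar Q$, so $\bar Q$ is fully faithful on such $A$. Since $|G|$ is invertible, the composite of the unit $\mathsf{Id}\Rightarrow\overline{\mathsf{Ind}}\,\overline{\mathsf{For}}$ with the counit $\overline{\mathsf{Ind}}\,\overline{\mathsf{For}}\Rightarrow\mathsf{Id}$ is multiplication by $|G|$, so every object of $\T^G/\U^G$ is a retract of one of the form $\overline{\mathsf{Ind}}(U)$; a retract-of-an-isomorphism argument then promotes full faithfulness to all objects. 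The same retract observation in $(\T/\U)^G$ — every $(\bar X,\bar\chi)$ is a retract of $\mathsf{Ind}(\bar X)\cong Q^G(\mathsf{Ind}(X))$, which lies in the essential image of $\bar Q$ — shows that $\bar Q$ is dense up to direct summands.

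It remains to upgrade the pre-triangulated structure and read off the statements. Since $\bar Q$ is an exact, fully faithful functor from the triangulated category $\T^G/\U^G$ onto a subcategory of $(\T/\U)^G$ dense up to summands, it induces an equivalence on idempotent completions, and the idempotent completion of $\T^G/\U^G$ is triangulated (Balmer--Schlichting); identifying $(\T/\U)^G$ with the corresponding full subcategory of this triangulated category, one checks it is closed under shifts and — being pre-triangulated — under cones, hence is a triangulated subcategory, whose inherited triangulation must coincide with the canonical pre-triangulated structure by the uniqueness in Proposition~\ref{canonicalpretriangstructure}. This proves (i); and then $\bar Q$ is precisely the triangle functor of (ii): it is an equivalence up to retracts, and an equivalence as soon as $\T^G/\U^G$ — equivalently its image $(\T/\U)^G$ — is idempotent complete. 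I expect the bulk of the effort to be organisational — transporting the $G$-action and the biadjunctions through the localisations with all their coherences, and checking that the displayed chain of Hom-isomorphisms really is the map induced by $\bar Q$ — while the one genuinely delicate conceptual point is the final upgrade from pre-triangulated to triangulated, which is exactly the reason (i) and (ii) have to be handled simultaneously.
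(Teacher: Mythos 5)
A preliminary remark: the paper itself contains no proof of Theorem~\ref{quotienttheorem} --- it is imported verbatim from \cite[Theorem~3.9]{ChaoSun} --- so your proposal can only be measured against Sun's argument, not against anything in this paper. Your overall strategy (descend the action along the Verdier localisation via its universal property, induce $\bar{Q}\colon \T^G/\U^G \to (\T/\U)^G$ from $Q^G$, and prove full faithfulness and density up to retracts using the biadjunction $\mathsf{For}\dashv\mathsf{Ind}\dashv\mathsf{For}$ together with the fact that the unit--counit composite is multiplication by $|G|$) is the standard route and agrees in substance with Sun's; those parts are sound, granting the compatibility checks you yourself flag, and granting that $\U$ is a triangulated $G$-invariant subcategory (otherwise neither Verdier quotient exists) with $\U^G$ identified with $\mathsf{For}^{-1}(\U)$, as you do.

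The genuine gap is at the final step, which is where the actual content of part (i) sits. Having identified $(\T/\U)^G$ with a full subcategory of the triangulated idempotent completion $(\T^G/\U^G)^\natural$ of \cite{BalSchil}, you assert it is closed under cones ``being pre-triangulated''. As stated this is circular: pre-triangulatedness produces a candidate cone \emph{inside} $(\T/\U)^G$, but you do not yet know that this candidate triangle is exact in the ambient triangulation, so you cannot identify it with the ambient cone, nor conclude closure under cones. What is missing is the prior claim that every canonical candidate triangle $\Delta$ of $(\T/\U)^G$ (one sent by $\mathsf{For}$ to an exact triangle of $\T/\U$) is exact in $(\T^G/\U^G)^\natural$. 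This is where the averaging really works on triangles, not just on objects: write $\mathsf{For}\Delta$ as the image of an exact triangle of $\T$ under the localisation, so that $\mathsf{Ind}\,\mathsf{For}\Delta \cong Q^G(\mathsf{Ind}(-))$ applied to that triangle is isomorphic to $\bar{Q}$ of an exact triangle of $\T^G/\U^G$ and hence ambient-exact; then $\Delta$ is a direct summand of $\mathsf{Ind}\,\mathsf{For}\Delta$ (unit--counit composite equals $|G|$, which splits by invertibility), and direct summands of exact triangles are exact in the completion by the very construction of the Balmer--Schlichting triangulation. Only after this does uniqueness of cones give closure under cones, and only after the converse check (ambient-exact triangles with vertices in $(\T/\U)^G$ are $\mathsf{For}$-exact) is the inherited triangulation ``canonical'', so that the uniqueness statement of Proposition~\ref{canonicalpretriangstructure} --- which concerns structures for which $\mathsf{For}$ is exact --- can legitimately be invoked. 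With this inserted your argument closes; without it, the transfer of the octahedral axiom, i.e.\ precisely the point you single out as delicate, is asserted rather than proved.
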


We finish this section by verifying that the quotient functor is a $G$-functor. 

\begin{rem}\label{quotientGfunctor}
With the same assumption as in Theorem~\ref{quotienttheorem}, the quotient functor $Q\colon \T \xrightarrow{} \T/ \U$ is canonically a $G$-functor.

Let $X$ be an object in $\mathcal{T}$ and $\hat{\rho}_g$ the induced auto-equivalences of the action on the quotient $\T / \U$. The image of $X$ under the quotient functor is denoted also by $X$.
The family of natural isomorphisms $\{ \sigma_g \colon Q \circ \rho_g \xrightarrow[]{\simeq} \hat{\rho}_g \circ Q \}_{g\in G} $ where $\sigma_g = \mathsf{Id}$ make the following diagram commutative:
\begin{equation*}
\begin{tikzcd}
 Q \circ \rho_g (X) = {X^g} \arrow[r, "\mathsf{Id}"] \arrow[d, " Q\circ \rho_g (f)"'] & \hat{\rho}_g \circ  Q(X) = {X^g} \arrow[d, " \hat{\rho}_g \circ Q(f)"] \\
 Q \circ \rho_g (Y) = {Y^g} \arrow[r, "\mathsf{Id}"] & \hat{\rho}_g \circ  Q(Y) = {Y^g}
\end{tikzcd}
\end{equation*}
for each $f\colon X \to Y$ in $\T$.
The horizontal maps are identities because the action on the quotient is induced by the action on $\T$. Moreover, $ Q\circ \rho_g (f) =  \hat{\rho}_g \circ  Q(f)$, since on the left hand side of the equation we have ${X^g} \xleftarrow[]{\mathsf{Id}_{X^g} } {X^g}  \xrightarrow[]{f^g} {Y^g}$ while on the right hand side we have ${X^g} \xleftarrow[]{\mathsf{Id}_X^g } {X^g} \xrightarrow[]{f^g} {Y^g}$ which are identical by $\mathsf{Id}_X^g = \mathsf{Id}_{X^g}$.
The family $\{ \sigma_g\}_{g\in G}$ obviously satisfies the associativity condition~$(\ref{associativity})$.
\end{rem}

\section{Equivariant Recollements}
\label{sectionrecolabel}

In this section we investigate finite group actions on recollements of abelian and triangulated categories. We study finite group actions induced by automorphisms of rings in the context of recollements of module categories. Throughout this section, all group actions are right actions. We write $F\colon \D \to \hat{\D}$ for a $G$ functor between two categories with $G$ actions, omitting the notations $(F,\sigma)$ for the $G$-functor and $(\D,\rho, \theta)$ for the action when the context is clear.

\subsection{Epi on Linearizations}
The following lemma is very useful and important for what follows, but first we introduce some new terminology.

\begin{defn}\label{epionlin}
Suppose a finite group $G$ acts on two categories $\A$ and $\B$. Let $F\colon \A \rightarrow \B$ be a $G$-functor. We say that $F^G$ is \textbf{epi on linearizations} if for all $(FX,\chi')\in \B^G$, there exists $(X, \chi) \in \A^G$ such that $F^G(X,\chi) = (FX, \chi')$.
\end{defn}

The property of being epi on linearizations is analogous to $\Image F^G = (\Image F)^G$ when the right hand side makes sense, i.e.\ when there is an induced action on $\Image F$. 

\begin{lem}\label{epilinear} 
Suppose a finite group $G$ acts on two additive categories $\A, \B$ (which can be in particular abelian or triangulated with an admissible action) and let $F\colon \A \to \B$ be a fully faithful $G$-functor. Then, $F^G$ is epi on linearizations. 
\begin{proof}
Let $(FX, \chi') $ be an object of $ \B^G$ and denote by $\sigma$ the natural isomorphism $F \rho_g \xrightarrow{\simeq} \rho_g F$.
We need to find some isomorphism $\chi_g\colon X \to X^g$ such that the composition $FX \xrightarrow{F \chi_g} F(X^g) \xrightarrow{\sigma^X_g} (FX)^g$ is equal to $\chi'_g$. Notice that $(\sigma^X_g)^{-1} \chi'_g \in \mathsf{Iso}(FX, F(X^g))$ and since $F_{X,X^g}$ induces a bijection of the isomorphism subgroups, there exists $\chi_g \in \mathsf{Iso}(X, X^g)$ such that $F(\chi_g) = (\sigma_g^X)^{-1}\chi'_g$. Now we have to show that the family of isomorphisms $\{\chi_g\}_{g \in G}$ satisfies the cocycle condition in order to be a linearization of $X$, then we have by construction that $F^G(X,\chi) = (FX, \chi')$.

Indeed, it inherits this property from $\chi'_g$. Consider the following diagram which illustrates the construction of $\chi_g$: 

 \begin{equation*}
        \begin{tikzcd}
            X \arrow[rr, "\chi_g"'] \arrow[rrrrrr, bend left=20, "\chi_{hg}"] && X^g  \arrow[rr, "(\chi_h)^g"']  && (X^h)^g \arrow[rr, "\theta_{g,h}^X"']  && X^{hg} \\
            && F(X^g) \arrow[dd, "\sigma^X_g"'] \arrow[rr, dashrightarrow, "F((
            \chi_h)^g)"]		   && F ((X^h)^g) \arrow[d, "\sigma^{X^h}_g"] \arrow[rr, "F \theta^X_{g,h}"] && F(X^{gh}) \arrow[dd, "\sigma^X_{hg}"]	\\
            &&	&& (F (X^h))^g \arrow[d, "(\sigma^X_h)^g"]  \\
            FX \arrow[rr, "\chi'_g"] \arrow[rruu, "F(\chi_g)=(\sigma^X_g)^{-1}\chi'_g"] \arrow[rrrrrr, bend right=20, "\chi'_{hg}"']  && (FX)^g \arrow[rr, "(\chi'_h)^g"']  \arrow[rru, "( (\sigma^X_h)^{-1}\chi'_h)^g" description] && ((FX)^h)^g  \arrow[rr, "(\theta'_{g,h})^{FX}"] && (FX)^{hg}
        \end{tikzcd}
    \end{equation*}
The above diagram commutes. Indeed, the left triangle commutes by the definition of the linearization $\chi'_g$. Similarly, the middle triangle is obtained by applying $\rho_g$ to the underlying diagram which is the definition of $\chi'_h$. Note that $( (\sigma^X_h)^{-1}\chi'_h)^g = (F\chi_h)^g$. Consequently, the dashing arrow is $ (\sigma^{X^h}_g)^{-1} \circ (F\chi_h)^g \circ \sigma_g^X = F( (\chi_h)^g)$ obtained by applying the natural transformation $\sigma_g$ on the isomorphism $\chi_h$. The commutativity of the right square is precisely the condition that $F$ is a $G$-functor, applied on the object $X$. The lower bended diagram is the cocycle condition of $\chi'$.

Notice that $F\theta_{g,h}^X$ lies in $\mathsf{Iso}\big(F((X^{h})^g), F(X^{hg}) \big)$, hence, corresponds through $F$ to the isomorphism $\theta_{g,h}^X \in \mathsf{Iso}((X^h)^g, X^{hg})$. We also have:
\begin{align*}
F(\chi_{hg}) = (\sigma_{hg}^X)^{-1} \chi'_{hg} = (\sigma_{hg}^X)^{-1} \theta'_{g,h} \circ (\chi'_h)^g \circ \chi'_g & = F(\theta_{g,h}^X) \circ F((\chi_h)^g) \circ F(\chi_g) \\
& = F(\theta_{g,h} \circ (\chi_h)^g \circ \chi_g)
\end{align*}
and therefore $\chi_{hg} = \theta^X_{g,h} \circ (\chi_h)^g \circ \chi_g$ by the bijection of the isomorphism groups.
\end{proof}
\end{lem}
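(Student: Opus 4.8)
The plan is to take an arbitrary object $(FX,\chi')$ of $\B^G$ — so $FX$ is an object literally in the image of $F$, and $\chi'=\{\chi'_g\colon FX\xrightarrow{\simeq}(FX)^g\}_{g\in G}$ satisfies the cocycle condition~$(\ref{eqcocyclecond})$ in $\B$ — and to manufacture a linearization $\chi$ of $X$ in $\A$ whose image under $F^G$ is exactly $(FX,\chi')$. Recall from Lemma~\ref{gfunctorinducesequivariant} that $F^G(X,\chi)=(FX,\chi')$ precisely when, for each $g$, the composite $FX\xrightarrow{F\chi_g}F(X^g)\xrightarrow{\sigma^X_g}(FX)^g$ equals $\chi'_g$, where $\sigma$ is the $G$-functor structure isomorphism $F\rho_g\xrightarrow{\simeq}\rho_gF$. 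So the only reasonable candidate is to \emph{define} $\chi_g$ as the unique morphism with $F(\chi_g)=(\sigma^X_g)^{-1}\circ\chi'_g$; this exists and is an isomorphism because $F$ is fully faithful, hence induces a bijection $F_{X,X^g}\colon\Hom_\A(X,X^g)\xrightarrow{\simeq}\Hom_\B(FX,F(X^g))$ restricting to a bijection on isomorphisms, and $(\sigma^X_g)^{-1}\chi'_g$ is an isomorphism $FX\to F(X^g)$.

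First I would carry out exactly this definition of $\chi_g$ for all $g\in G$. The substantive step is then to verify the cocycle condition $(\ref{eqcocyclecond})$ for the family $\{\chi_g\}$, i.e. $\chi_{hg}=\theta^X_{g,h}\circ(\chi_h)^g\circ\chi_g$ for all $g,h\in G$. The idea is to apply the faithful functor $F$ to this desired identity and reduce it to an identity in $\B$. Applying $F$ and using the defining relations $F(\chi_g)=(\sigma^X_g)^{-1}\chi'_g$, together with naturality of $\sigma$ (which controls $F((\chi_h)^g)$ in terms of $(F\chi_h)^g=((\sigma^X_h)^{-1}\chi'_h)^g$) and the $G$-functor compatibility diagram $(\ref{associativity})$ for $(F,\sigma)$ applied to $X$ (which relates $F\theta^X_{g,h}$ with $\hat\theta'_{g,h}$ and the $\sigma$'s), everything collapses to the cocycle condition that $\chi'$ already satisfies in $\B^G$. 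Concretely, I would chase the large diagram built from two "layers": the top layer living in $\A$ (the putative cocycle triangle for $\chi$), the bottom layer the honest cocycle triangle for $\chi'$ in $\B$, connected by the vertical $\sigma$-maps; commutativity of the connecting squares is precisely naturality of $\sigma$ plus the $G$-functor axiom, and once the outer boundary commutes, faithfulness of $F$ transports the conclusion back to $\A$. Having checked the cocycle condition, $(X,\chi)\in\A^G$, and by construction $F^G(X,\chi)=(FX,\chi')$, so $F^G$ is epi on linearizations.

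The main obstacle is purely bookkeeping: keeping straight the two twisted objects $(X^h)^g$ versus $X^{hg}$, the composition isomorphisms $\theta$ on the $\A$-side versus $\theta'$ on the $\B$-side, and the fact that $\sigma$ appears both as $\sigma^X_g$ and as $\sigma^{X^h}_g$ (and as $(\sigma^X_h)^g$) in the relevant squares — one must invoke the associativity axiom $(\ref{associativity})$ for the $G$-functor $(F,\sigma)$ in exactly the right orientation. There is no genuine difficulty beyond diagram-chasing, since fully faithfulness does all the real work of lifting; the care needed is only in arranging the diagram so that every small region is manifestly one of: definition of $\chi_g$/$\chi_h$, naturality of $\sigma_g$, the $G$-functor axiom for $F$, or the cocycle condition for $\chi'$. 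Finally, I note the statement is uniform over the additive, abelian, and admissibly-triangulated cases, since the argument uses only that $F$ is a fully faithful additive $G$-functor and never the kernels/cokernels or triangles.
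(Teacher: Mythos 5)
Your proposal is correct and follows essentially the same route as the paper: define $\chi_g$ by fully faithfulness via $F(\chi_g)=(\sigma^X_g)^{-1}\chi'_g$, then verify the cocycle condition by applying $F$, chasing a diagram whose regions are naturality of $\sigma$, the $G$-functor axiom~$(\ref{associativity})$, and the cocycle condition for $\chi'$, and finally transport back along the bijection on isomorphisms. The paper's proof is precisely this diagram chase, so no comparison is needed.
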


\begin{rem}\label{subcatlinear}
We remark that we apply the above Lemma when $\A$ is a $G$-invariant subcategory of $\B$. The inclusion $\iota \colon \A \hookrightarrow B$ is epi on linearizations. In particular, any object $(X,\chi)$ of $\B^G$ is an object of $\A^G$ if and only if $X$ lies in $\A$.
\end{rem}

\subsection{Recollement of Abelian Categories}\label{recollementsofabeliancats}

In this section we examine when a recollement $\mathsf{R}_{\mathsf{ab}}(\A,\B,\C)$ of abelian categories induces an equivariant recollement $\mathsf{R}_{\mathsf{ab}}(\A^G,\B^G,\C^G)$ of abelian categories. Recall that a recollement of abelian categories is a diagram of the form
\[
\begin{tikzcd}
 \A \arrow[rr, "\mathsf{i}" description]& & \B \arrow[ll, bend left, "\mathsf{p}"] \arrow[ll, bend right, "\mathsf{q}"'] \arrow[rr, "\mathsf{e}" description] & & \C \arrow[ll, bend left, "r"] \arrow[ll, bend right, "l"']
\end{tikzcd}\eqno \mathsf{R}_{\mathsf{ab}}(\A,\B,\C)
\]
with $(\mathsf{l,e,r})$ and $(\mathsf{q,i,p})$ adjoint triples, $\mathsf{i,l,r}$ fully faithful functors and $\Image (\mathsf{i}) =\Ker (\mathsf{e})$. Recollements of triangulated categories were introduced in  \cite{BBD} and were studied later also in the case of abelian categories. For an overview of abelian recollements and their applications to representation theory, we refer to \cite{Psaroud:survey}.

We show below how to construct a recollement of abelian equivariant categories.

\begin{prop}\label{prop1}
Let $\mathsf{R}_{\mathsf{ab}}(\A,\B,\C)$ be an abelian recollement. Let $G$ be a finite group acting on $\A$, $\B$ and $\C$. If one of the functors of each of the adjoint triples in $\mathsf{R}_{\mathsf{ab}}(\A,\B,\C)$  is a $G$-functor, then all three are $G$-functors and we have a recollement of equivariant abelian categories:
\begin{equation*}
\begin{tikzcd}
 \A^G \arrow[rr, "\mathsf{i}^G" description]& & \B^G \arrow[ll, bend left, "\mathsf{p}^G"] \arrow[ll, bend right, "\mathsf{q}^G"'] \arrow[rr, "\mathsf{e}^G" description] & & \C^G \arrow[ll, bend left, "\mathsf{r}^G"] \arrow[ll, bend right, "\mathsf{l}^G"']
\end{tikzcd}\eqno \mathsf{R}_{\mathsf{ab}}(\A^G,\B^G,\C^G)
\end{equation*}
\end{prop}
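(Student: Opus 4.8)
The strategy is to push everything through the machinery already set up in Section~\ref{Preliminaries}, reducing the verification of the axioms of $\mathsf{R}_{\mathsf{ab}}(\A^G,\B^G,\C^G)$ to the corresponding facts for $\mathsf{R}_{\mathsf{ab}}(\A,\B,\C)$. First I would observe that, by Lemma~\ref{adjointequiv}, as soon as one functor in the adjoint triple $(\mathsf{l},\mathsf{e},\mathsf{r})$ is a $G$-functor, the other two become $G$-functors in a canonical way and $\mathsf{l}^G\dashv\mathsf{e}^G\dashv\mathsf{r}^G$ is again an adjoint triple; likewise for $(\mathsf{q},\mathsf{i},\mathsf{p})$, giving $\mathsf{q}^G\dashv\mathsf{i}^G\dashv\mathsf{p}^G$. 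So the diagram $\mathsf{R}_{\mathsf{ab}}(\A^G,\B^G,\C^G)$ makes sense and consists of additive functors between abelian categories (the equivariant categories are abelian by Remark~\ref{0-object-equiv}, and the induced functors are additive by Lemma~\ref{gfunctorinducesequivariant} and the remark after it). It remains to check: (a) $\mathsf{i}^G$, $\mathsf{l}^G$, $\mathsf{r}^G$ are fully faithful, and (b) $\Image(\mathsf{i}^G)=\Ker(\mathsf{e}^G)$.

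For (a), full faithfulness of $\mathsf{i}$, $\mathsf{l}$, $\mathsf{r}$ transfers to $\mathsf{i}^G$, $\mathsf{l}^G$, $\mathsf{r}^G$ directly by Lemma~\ref{ffequiv}. For (b), the inclusion $\Image(\mathsf{i}^G)\subseteq\Ker(\mathsf{e}^G)$ is immediate: if $(X,\chi)\in\A^G$ then $\mathsf{e}^G\mathsf{i}^G(X,\chi)$ has underlying object $\mathsf{e}\mathsf{i}(X)=0$, and the zero object of $\C^G$ is unique with its trivial linearization (Remark~\ref{0-object-equiv}), so $\mathsf{e}^G\mathsf{i}^G=0$. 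For the reverse inclusion, let $(B,\beta)\in\B^G$ with $\mathsf{e}^G(B,\beta)=0$ in $\C^G$; applying the forgetful functor and using $\mathsf{For}\circ\mathsf{e}^G=\mathsf{e}\circ\mathsf{For}$ (diagram~(\ref{comdiagramFor})) gives $\mathsf{e}(B)=0$, so $B\in\Ker(\mathsf{e})=\Image(\mathsf{i})$, i.e.\ $B\cong \mathsf{i}(A)$ for some $A\in\A$. Now the point is that $\mathsf{i}$ identifies $\A$ with the Serre subcategory $\Ker(\mathsf{e})$ of $\B$, which is in particular a $G$-invariant subcategory of $\B$ (it is the kernel of the $G$-functor $\mathsf{e}$); by Remark~\ref{subcatlinear} (equivalently Lemma~\ref{epilinear} applied to the inclusion $\Ker(\mathsf{e})\hookrightarrow\B$), the equivariant object $(B,\beta)$ lies in $\Ker(\mathsf{e})^G$, and since $\mathsf{i}^G$ induces an equivalence $\A^G\xrightarrow{\simeq}\Ker(\mathsf{e})^G$ (being fully faithful with the right essential image — here one transports the linearization $\beta$ across $\mathsf{i}$, or invokes that $\mathsf{i}\colon\A\xrightarrow{\simeq}\Ker(\mathsf{e})$ is an equivalence of $G$-categories so $\mathsf{i}^G$ is an equivalence by Corollary~\ref{G functor equivalence}), we get $(B,\beta)\cong\mathsf{i}^G(A,\alpha)$ for a suitable linearization $\alpha$ on $A$. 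Hence $(B,\beta)\in\Image(\mathsf{i}^G)$.

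I expect the only genuinely delicate point to be the precise bookkeeping in step (b): one must make sure that the linearization $\beta$ on $B\cong\mathsf{i}(A)$ really does descend to a linearization on $A$ compatible with the $G$-functor structure of $\mathsf{i}$, so that $(B,\beta)$ is isomorphic in $\B^G$ — not merely in $\B$ — to an object in the image of $\mathsf{i}^G$. This is exactly what ``epi on linearizations'' (Definition~\ref{epionlin}, Lemma~\ref{epilinear}) is designed to handle: the fully faithful $G$-functor $\mathsf{i}$ (equivalently, the inclusion of the $G$-invariant Serre subcategory $\Ker(\mathsf{e})$) is epi on linearizations, so every linearization of $\mathsf{i}(A)$ lifts to a linearization of $A$, and the cocycle condition is automatic by the diagram chase in the proof of Lemma~\ref{epilinear}. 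Once this is in place, everything else is formal, and assembling the verified pieces yields the recollement $\mathsf{R}_{\mathsf{ab}}(\A^G,\B^G,\C^G)$.
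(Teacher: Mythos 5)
Your proposal is correct and follows essentially the same route as the paper's proof: lift both adjoint triples via Lemma~\ref{adjointequiv}, transfer full faithfulness via Lemma~\ref{ffequiv}, and settle $\Ker(\mathsf{e}^G)\subseteq\Image(\mathsf{i}^G)$ by the epi-on-linearizations property of the fully faithful $G$-functor $\mathsf{i}$ (Lemma~\ref{epilinear}), which is exactly the delicate point you flag. The only cosmetic difference is that you phrase the reverse inclusion in terms of essential images and the equivalence $\A^G\simeq\Ker(\mathsf{e})^G$, whereas the paper works directly with objects of the form $(\mathsf{i}X,\chi')$ and lifts the linearization; both are the same argument.
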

\begin{proof}
By Lemma~\ref{adjointequiv} an adjoint triple $L \dashv F \dashv G $ lifts to an equivariant adjoint triple if (at least) one of these functors is a $G$-functor. So, for $(\mathsf{q}^G, \mathsf{i}^G, \mathsf{p}^G)$ and $(\mathsf{l}^G,\mathsf{e}^G, \mathsf{r}^G)$ to be equivariant triples it suffices that one functor of each of these adjoint triples $(\mathsf{l,e,r})$ and $(\mathsf{q, i,p})$ is a $G$-functor. The condition that $\mathsf{l}^G, \mathsf{r}^G, \mathsf{i}^G$ are fully faithful is also automatically fulfilled by Lemma~\ref{ffequiv}.

It remains to prove that $\Image (\mathsf{i}^G) = \Ker (\mathsf{e}^G)$. It suffices to show the equality on objects of these subcategories.
For an object $(X,\chi)$ in $\B^G$, observe that it lies in $\Ker(\mathsf{e}^G)$ if and only if $X$ lies in $\Ker(\mathsf{e})$. This is easy to prove using the uniqueness of the zero object, see Remark~\ref{0-object-equiv}. Now, the inclusion $\Image (\mathsf{i}^G) \subseteq \Ker (\mathsf{e}^G)$ follows immediately from $\Image(\mathsf{i}) = \Ker(\mathsf{e})$. For the other inclusion, we have to show that for an object of the form $(\mathsf{i}X, \chi') \in\Ker (\mathsf{e}^G)$ with $\mathsf{i}X \in \Ker (\mathsf{e}) = \Image (\mathsf{i})$ for some $X \in \A$, there exists some linearization $\chi$ of $X$, equivalently an equivariant object $(X,\chi) \in \A^G$ such that $\mathsf{i}^G (X, \chi) = (\mathsf{i}X,\chi')$. This follows from $\mathsf{i}$ being epi on linearizations by Lemma~\ref{epilinear}.
\end{proof}

Since $\A$ can be interpreted as a full subcategory of $\B$, it is natural to ask whether the action on $\B$ induces an action on $\A$. It turns out that this is closely related to $\mathsf{e}$ being a $G$-functor.

\begin{prop}\label{actiontosubcategory}

Let $\mathsf{R}_{\mathsf{ab}}(\A,\B,\C)$ be an abelian recollement. Let $G$ be a finite group acting on $\B$ and $\C$. Assume also that one of the functors $\mathsf{l}, \mathsf{e}$ or $\mathsf{r}$ is a $G$-functor. Then we have an induced action on $\A$ and $\mathsf{R}_{\mathsf{ab}}(\A^G, \B^G, \C^G)$ is a recollement of abelian categories.
\end{prop}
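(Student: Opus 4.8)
The goal is to promote the hypothesis ``one of $\mathsf{l},\mathsf{e},\mathsf{r}$ is a $G$-functor'' on $\B$ and $\C$ into a genuine $G$-action on $\A$, and then invoke Proposition~\ref{prop1}. The key observation is that $\A\hookrightarrow\B$ is realised as $\Image(\mathsf{i})=\Ker(\mathsf{e})$, so the natural candidate for the action on $\A$ is the restriction of the action on $\B$. First I would check that $\Ker(\mathsf{e})$ is a $G$-invariant subcategory of $\B$: given $X\in\B$ with $\mathsf{e}X=0$, we must show $X^g\in\Ker(\mathsf{e})$, i.e.\ $\mathsf{e}(X^g)=0$ for all $g\in G$. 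If $\mathsf{e}$ is a $G$-functor this is immediate from the natural isomorphism $\mathsf{e}\circ\rho_g\xrightarrow{\sigma_g}\bar\rho_g\circ\mathsf{e}$, which sends $\mathsf{e}(X^g)$ isomorphically to $(\mathsf{e}X)^g=0^g\cong 0$. If instead $\mathsf{l}$ (resp.\ $\mathsf{r}$) is the $G$-functor, then by Lemma~\ref{adjointequiv} its adjoint $\mathsf{e}$ is automatically a $G$-functor as well (since $(\mathsf{l},\mathsf{e},\mathsf{r})$ is an adjoint triple, $\mathsf{l}\dashv\mathsf{e}$ or $\mathsf{e}\dashv\mathsf{r}$ suffices), so we reduce to the previous case. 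Hence in all three cases $\mathsf{e}$ is a $G$-functor and $\Ker(\mathsf{e})$ is $G$-invariant.

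Next, by Remark~\ref{inclusionfunctor}, a $G$-invariant subcategory inherits a $G$-action from the ambient category, so $\A$ acquires a right $G$-action $(\rho_g|_\A,\theta|_\A)$ and the inclusion $\mathsf{i}\colon\A\hookrightarrow\B$ becomes a $G$-functor. Now all three categories $\A,\B,\C$ carry compatible $G$-actions, and $\mathsf{i}$ (equivalently its image, since $\mathsf{i}$ is fully faithful onto $\Ker\mathsf{e}$) is a $G$-functor in the adjoint triple $(\mathsf{q},\mathsf{i},\mathsf{p})$; moreover one of $\mathsf{l},\mathsf{e},\mathsf{r}$ is a $G$-functor by hypothesis (and we have even shown $\mathsf{e}$ always is). Thus both adjoint triples of $\mathsf{R}_{\mathsf{ab}}(\A,\B,\C)$ contain a $G$-functor, which is precisely the hypothesis of Proposition~\ref{prop1}. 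Applying that proposition gives the recollement of equivariant abelian categories $\mathsf{R}_{\mathsf{ab}}(\A^G,\B^G,\C^G)$, completing the argument.

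\textbf{Main obstacle.} The only real subtlety is the reduction step: ensuring that when $\mathsf{l}$ or $\mathsf{r}$ is the distinguished $G$-functor, its adjoint $\mathsf{e}$ also becomes a $G$-functor \emph{compatibly}, so that the $G$-functor structure on $\mathsf{e}$ used to prove $G$-invariance of $\Ker(\mathsf{e})$ is the canonical one coming from Lemma~\ref{adjointequiv}. One should double-check that the resulting action on $\A$ does not depend (up to equivalence) on which of $\mathsf{l},\mathsf{e},\mathsf{r}$ was assumed to be a $G$-functor, but since all of them force the same $G$-functor structure on $\mathsf{e}$ (adjoints and their $G$-structures being determined up to unique $G$-natural isomorphism), and the action on $\A$ is just the restriction of the fixed action on $\B$, this is automatic. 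Everything else is a routine verification that the restricted data $(\rho_g|_\A,\theta|_\A)$ satisfies the $2$-cocycle condition, which it does because $\A$ is a full subcategory closed under the $\rho_g$.
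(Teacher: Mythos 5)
Your proposal is correct and follows essentially the same route as the paper: use Lemma~\ref{adjointequiv} to make $\mathsf{e}$ a $G$-functor in all three cases, deduce that $\Ker(\mathsf{e})$ is $G$-invariant from $\mathsf{e}(X^g)\simeq(\mathsf{e}X)^g=0$, restrict the action to it, and conclude via Proposition~\ref{prop1}. The only cosmetic difference is that the paper transfers the action to the abstract category $\A$ through the equivalence $F\colon\A\xrightarrow{\simeq}\Ker(\mathsf{e})$ using Lemma~\ref{inducedequivariantequivalence}, whereas you identify $\A$ with $\Ker(\mathsf{e})$ outright, which you acknowledge and which is harmless.
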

\begin{proof}
Since $F\colon \A \xrightarrow[]{\simeq }  \mathsf{Im}(\mathsf{i}) = \mathsf{Ker}(\mathsf{e})$, if we show that $\mathsf{Ker}(\mathsf{e})$ is a $G$-invariant subcategory of $\B$, then $\A$ inherits a $G$-action by the equivalence $F$, by Lemma~\ref{inducedequivariantequivalence}. 
Note that $F$ is a $G$-functor by construction.
Moreover, the inclusion $\iota \colon \mathsf{Ker}(\mathsf{e}) \hookrightarrow \B$ is a $G$-functor by Remark~\ref{inclusionfunctor}, hence $\mathsf{i}= \iota \circ F$ becomes a $G$-functor being the composition of $G$-functor. This is also fully faithful being a composition of fully faithful functors.
Then, by Proposition~\ref{prop1}, we conclude that $\mathsf{R}_{\mathsf{ab}}(\A^G, \B^G, \C^G)$ is a recollement of abelian categories.

Since either of $ \mathsf{l,e,r}$ is a $G$-functor, then all are $G$-functors. Since $\mathsf{e}$ is a $G$-functor, it follows that $\Ker (\mathsf{e})$ is a $G$-invariant subcategory. Indeed, if $X$ lies in $\Ker (\mathsf{e})$, then $X^g$ lies in $\Ker(\mathsf{e})$ if and only if $\mathsf{e}(X^g) =0$, which holds since $\mathsf{e}(X^g) \simeq  (\mathsf{e}X)^g=0$.
\end{proof}


\begin{rem}\label{SerreSubcatEquivariant}
Recall that a Serre subcategory $\A$ of an abelian category $\B$, is a  full subcategory, closed under subobjects, quotients and extensions. In particular, in a recollement $\mathsf{R_{ab}}(\A, \B, \C)$, the subcategory $\A$ is Serre. In the above Proposition we showed that under certain assumptions on a $G$-action on $\B$, then $\A^G$ is a Serre subcategory of $\B^G$. However, this holds in general whenever $\A$ is a $G$-invariant Serre subcategory of an abelian category $\B$.
For the closed under extensions condition, we must check that given a short exact sequence where the outer terms lie in $\A^G$, the middle term also also lies in $\A^G$:
$$ 0 \rightarrow (X, \chi) \rightarrow (Y, \psi) \rightarrow (Z, \zeta) \xrightarrow{} 0 $$
Using the forgetful functor, which is exact, we obtain a short exact sequence where the outer terms are in $\A$ and therefore, since it is a Serre subcategory, the middle term $Y$ is also in $\A$. Thus, $(Y,\psi)$ lies in $\A^G$, by Remark~\ref{subcatlinear}.
Similarly we can show that $\A^G$ is closed under subobjects and quotients.
\end{rem}

The above remark shows that the Gabriel quotient $\B^G / \A^G$ exists and motivates the following result.

\begin{prop}\label{actiontoquotient}
Let $\mathsf{R}_{\mathsf{ab}}(\A,\B,\C)$ be an abelian recollement.  Let $G$ be a finite group acting on $\B$ such that $\A$ is a $G$-invariant subcategory of $\B$. Then there is an induced action on $\C$ rendering $\mathsf{e}$ a $G$-functor. Then $\mathsf{R}_{\mathsf{ab}}(\A^G,\B^G,\C^G)$ is a recollement of abelian categories.
\end{prop}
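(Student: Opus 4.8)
The plan is to equip $\C$ with a $G$-action for which $\mathsf{e}$ becomes a $G$-functor and then invoke Proposition~\ref{actiontosubcategory} (or Proposition~\ref{prop1}): once $G$ acts on $\B$ and on $\C$ with $\mathsf{e}$ a $G$-functor, that result supplies the induced action on $\A$ together with the equivariant recollement $\mathsf{R}_{\mathsf{ab}}(\A^G,\B^G,\C^G)$. To build the action on $\C$, I use the standard fact that in an abelian recollement $\mathsf{R}_{\mathsf{ab}}(\A,\B,\C)$ the quotient functor $\mathsf{e}$ realizes $\C$ as the Gabriel quotient of $\B$ by $\A=\Image(\mathsf{i})=\Ker(\mathsf{e})$: it factors as $\B\xrightarrow{Q}\B/\A\xrightarrow{\bar{\mathsf{e}}}\C$ with $\bar{\mathsf{e}}$ an equivalence and $Q$ the localization at the class $W$ of morphisms whose kernel and cokernel lie in $\A$. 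So it suffices to produce a $G$-action on $\B/\A$ for which $Q$ is a $G$-functor, and then transport it along $\bar{\mathsf{e}}$.

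For the descent to $\B/\A$: each $\rho_g$ is an exact auto-equivalence of $\B$, and, $\A$ being $G$-invariant, $\rho_g(\A)\subseteq\A$; by exactness $\rho_g(W)\subseteq W$, so $Q\rho_g$ inverts $W$ and, by the universal property of $Q$, factors uniquely as $\overline{\rho}_g\,Q$ for a functor $\overline{\rho}_g\colon\B/\A\to\B/\A$. Using the composition isomorphism $\theta_{g,g^{-1}}$ and the unit $u$ one checks that $\overline{\rho}_{g^{-1}}$ is a quasi-inverse of $\overline{\rho}_g$, so each $\overline{\rho}_g$ is an auto-equivalence. The sharper form of the universal property — precomposition with $Q$ is fully faithful on functor categories out of $\B$ — lets every natural transformation between $W$-inverting functors descend uniquely along $Q$, compatibly with vertical and horizontal composition. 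Applying this to $\theta_{g,h}$ produces composition isomorphisms $\overline{\theta}_{g,h}\colon\overline{\rho}_g\overline{\rho}_h\xrightarrow{\simeq}\overline{\rho}_{hg}$ characterized by $\overline{\theta}_{g,h}\,Q=Q\,\theta_{g,h}$, and whiskering the $2$-cocycle square~$(\ref{cocycle})$ for $(\rho,\theta)$ with $Q$ and descending it gives~$(\ref{cocycle})$ for $(\overline{\rho},\overline{\theta})$. Thus $(\B/\A,\overline{\rho},\overline{\theta})$ is a $G$-category and, taking $\sigma_g=\mathsf{Id}$, the functor $Q$ is a $G$-functor, the associativity axiom~$(\ref{associativity})$ for $Q$ being exactly the identity $\overline{\theta}_{g,h}\,Q=Q\,\theta_{g,h}$.

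Transporting this action along the equivalence $\bar{\mathsf{e}}\colon\B/\A\xrightarrow{\simeq}\C$ via Lemma~\ref{inducedequivariantequivalence} endows $\C$ with a $G$-action for which $\bar{\mathsf{e}}$ is a $G$-functor; then $\mathsf{e}=\bar{\mathsf{e}}\circ Q$ is a $G$-functor, being a composition of $G$-functors (Remark~\ref{compositionGfunctors}). Now $G$ acts on $\B$ and on $\C$ and $\mathsf{e}$ is a $G$-functor, so Proposition~\ref{actiontosubcategory} applies and delivers the induced action on $\A$ and the equivariant recollement $\mathsf{R}_{\mathsf{ab}}(\A^G,\B^G,\C^G)$. (Equivalently one may finish with Proposition~\ref{prop1}: for the restricted action on the $G$-invariant subcategory $\Image(\mathsf{i})=\Ker(\mathsf{e})$, the functor $\mathsf{i}$ is a $G$-functor by Remark~\ref{inclusionfunctor} and Lemma~\ref{inducedequivariantequivalence}.)

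The step I expect to be the main obstacle is the descent in the second paragraph: showing that the whole $G$-action data — the auto-equivalences, the composition isomorphisms, and the $2$-cocycle identity — passes from $\B$ to the Gabriel quotient $\B/\A$, and that $Q$ acquires a compatible $G$-functor structure. This is formal, but it rests on the $2$-categorical universal property of the localization $Q$ (fully faithfulness of precomposition with $Q$ on functor categories), and one must keep careful track of whiskerings when descending diagram~$(\ref{cocycle})$. It is the abelian analogue of Chao Sun's Theorem~\ref{quotienttheorem}, and in fact simpler, since a Gabriel quotient of an abelian category is again abelian and no hypothesis on $|G|$ is needed.
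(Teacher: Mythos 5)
Your proof is correct and takes essentially the same route as the paper's: descend the $G$-action to the Gabriel quotient $\B/\A$ (using exactness of each $\rho_g$ and $G$-invariance of $\A$), transport it along the equivalence $\B/\A\simeq\C$ via Lemma~\ref{inducedequivariantequivalence}, conclude that $\mathsf{e}=\bar{\mathsf{e}}\circ Q$ is a $G$-functor, and finish by the earlier propositions. The only difference is cosmetic: the paper constructs the descended auto-equivalences and composition isomorphisms explicitly on the quotient (citing Gabriel for the auto-equivalence claim), whereas you obtain the same data from the universal property of the localization at $W$.
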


\begin{proof}
Since $\A$ is a $G$-invariant subcategory, we have that $\mathsf{i}$ is a $G$-functor (see Remark~\ref{inclusionfunctor}). Denote by $Q \colon \B \to \B/\A$ the quotient functor. Recall that there exists an equivalence $F \colon \B / \A \xrightarrow{\simeq} \C$ such that $\mathsf{e}=F\circ Q$. If we show that there is an induced action on $\B/\A$ such that the quotient functor is a $G$-functor, then we have that $\C$ inherits this action through $F$ in a natural way. Moreover, $F$ is a $G$-functor by Lemma~\ref{inducedequivariantequivalence} and hence $\mathsf{e} = F \circ Q$ becomes also a $G$-functor as a composition of two $G$-functors. Then by Proposition~\ref{prop1} the result follows.

The quotient functor $Q \colon \B \to \B/\A$ is identity on objects, thus for the functor $\rho_g\colon \B/\A \to \B / \A$ on objects we define $B^g =  Q(B)^g = Q(B^g)$.
Now, for morphisms of $\B/\A$ we define that $\rho_g$ maps a morphism $f$ to $f^g$, as it would in $\B$. In this way, it preserves the isomorphisms in $\B/\A$ and thus $\rho_g$ is well defined. Indeed, an isomorphisms $f\colon X \rightarrow Y$ in $\B /\A$ is a morphism in $\B$ such that $\Ker f,\ \Image f \in \A$. That is an exact sequence of the form:
$$0 \rightarrow \Ker f \rightarrow X \xrightarrow[]{f} Y \rightarrow \Image f \rightarrow 0$$
Now applying the exact functor $\rho_g$, we obtain the exact sequence
$$0 \rightarrow \Ker (f)^g \rightarrow X^g \xrightarrow[]{f^g} Y^g \rightarrow \Image (f)^g \rightarrow 0$$
Since $\A$ is $G$-invariant, we also have that $\Ker(f)^g,\ \Image(f)^g \in \A$, so $f^g\colon  X^g \rightarrow Y^g$ is an isomorphism in $\B/\A$. Notice that $\rho_g$ is an auto-equivalence of $\B/\A$ since any auto-equivalence of $\B$ that restricts to an auto-equivalence of $\A$ yields an auto-equivalence of the quotient (this a consequence of \cite[Corollary~III.1.2]{Gabriel}).

The composition natural isomorphism $\theta'$ for the action on $\B/\A$ is induced by the corresponding one of the action on $\B$ by composing it with $Q$, i.e.\ $\theta'_{g,h} = Q \theta_{g,h}$, for all $g,h \in G$. To have a clearer picture of this, for some $X \xrightarrow[]{f} Y$ in $\B /\A$ we have that:
\begin{equation*}
\begin{tikzcd}
 (X^h)^g = Q((X^h)^g) \arrow[d, "Q((f^h)^g)"'] \arrow[r, "Q \circ \theta_{g,h}^X", "\simeq"'] & Q(X^{hg})= X^{hg} \arrow[d, "Q(f^{hg})"] \\
(Y^h)^g = Q((Y^h)^g) \arrow[r, "Q \circ \theta_{g,h}^Y", "\simeq"'] & Q(Y^{hg})= Y^{hg}
\end{tikzcd}
\end{equation*}
which is commutative, since we are applying $Q$ to the underlying commutative diagram which is due to the action on $\B$. The cocycle condition~$(\ref{cocycle})$ for this action immediately holds. Note that the definition of the action renders $Q$ a $G$-functor in a canonical way (the family of 2-isomorphisms $\sigma^Q$ is simply the identity by the definition of the action).
\end{proof}

\begin{rem}\label{quotientequivalenceofequivariantcats}
The above proof, in fact, shows that there exists an equivalence of categories $\B^G/\A^G \simeq (\B/\A)^G$. This fact was proved by Chen, Chen and Zhou (see \cite[Corollary~4.4]{CCZ}) using monadic techniques. The main difference is that the above proof relies on intrinsic tools related to the equivariant category, in particular, the epi on linearizations property that fully faithful $G$-functors admit. Thus, with this direct approach, we avoid the machinery of (co)monads.
\end{rem}

We finish this section by combining the previous results into one theorem.

\begin{thm} 
\label{main1}
    Let $\mathsf{R}_{\mathsf{ab}}(\A,\B,\C)$ be an abelian recollement. 
Let $G$ be a finite group acting on $\B$. Then the following are equivalent:
\begin{itemize}
\item[(i)] $G$ acts on $\C\simeq \B/\A$ and $\mathsf{e}$ is a $G$-functor. 
\item[(ii)] $\A$ is a $G$-invariant subcategory of $\B$.
\end{itemize}
If either of the above conditions holds true, then $\mathsf{R}_{\mathsf{ab}}(\A^G, \B^G, \C^G)$ is a recollement of abelian categories.
\end{thm}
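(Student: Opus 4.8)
The plan is to prove Theorem~\ref{main1} by recognizing it as a synthesis of Propositions~\ref{actiontosubcategory}, \ref{actiontoquotient} and~\ref{prop1}, so the bulk of the work has already been done and what remains is to assemble the two implications cleanly and then read off the final assertion. Throughout, recall that in the recollement $\mathsf{R}_{\mathsf{ab}}(\A,\B,\C)$ the functor $\mathsf{i}$ is fully faithful with $\Image(\mathsf{i}) = \Ker(\mathsf{e})$, so $\A$ is identified via an equivalence $F\colon \A \xrightarrow{\simeq} \Ker(\mathsf{e})$ with the Serre subcategory $\Ker(\mathsf{e})$ of $\B$, and $\mathsf{e}$ factors as $F' \circ Q$ through the Gabriel quotient $Q\colon \B \to \B/\A$.

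First I would prove (i)$\Rightarrow$(ii). Assume $G$ acts on $\C \simeq \B/\A$ and that $\mathsf{e}$ is a $G$-functor. Since $\mathsf{e}$ is a $G$-functor, for any $X \in \Ker(\mathsf{e})$ and $g \in G$ we have a natural isomorphism $\mathsf{e}(X^g) \simeq (\mathsf{e}X)^g = 0^g \simeq 0$, so $X^g \in \Ker(\mathsf{e})$; that is, $\Ker(\mathsf{e})$ is a $G$-invariant subcategory of $\B$. Because $\A \simeq \Ker(\mathsf{e})$ via the equivalence $F$, this is precisely the assertion that $\A$ is $G$-invariant. (This is essentially the last paragraph of the proof of Proposition~\ref{actiontosubcategory}, extracted and stated on its own.)

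Next I would prove (ii)$\Rightarrow$(i). Assume $\A$ is a $G$-invariant subcategory of $\B$. This is exactly the hypothesis of Proposition~\ref{actiontoquotient}, which produces an induced $G$-action on $\B/\A$ making the quotient functor $Q$ a $G$-functor (with trivial $2$-isomorphism structure), and then transports this action along the equivalence $\B/\A \xrightarrow{\simeq} \C$ via Lemma~\ref{inducedequivariantequivalence}, so that $\mathsf{e} = F' \circ Q$ becomes a $G$-functor as a composition of $G$-functors. This gives the $G$-action on $\C \simeq \B/\A$ and the fact that $\mathsf{e}$ is a $G$-functor, which is (i).

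Finally, for the last sentence: under either equivalent condition we now have that $\A$ is $G$-invariant (so $G$ acts on $\A$ by restriction and $\mathsf{i}$ is a $G$-functor by Remark~\ref{inclusionfunctor}), $G$ acts on $\C$, and $\mathsf{e}$ is a $G$-functor; since $\mathsf{e}$ lies in the adjoint triple $(\mathsf{l},\mathsf{e},\mathsf{r})$ and $\mathsf{i}$ lies in $(\mathsf{q},\mathsf{i},\mathsf{p})$, Proposition~\ref{prop1} applies and yields that $\mathsf{R}_{\mathsf{ab}}(\A^G,\B^G,\C^G)$ is a recollement of abelian categories. I do not expect a genuine obstacle here: the only point requiring care is making sure the $G$-action on $\A$ (respectively $\C$) obtained in one implication is compatible — up to the equivalences $F$ and $F'$ — with the one implicitly used in the other, and that the hypotheses of Proposition~\ref{prop1} (one functor in each adjoint triple being a $G$-functor) are met; both are immediate from Remark~\ref{inclusionfunctor} together with the factorization $\mathsf{e} = F' \circ Q$. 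The mild subtlety worth a sentence in the write-up is that, strictly speaking, (i) does not assert $\mathsf{i}$ is a $G$-functor, but this follows for free from (ii) via $G$-invariance of $\A$, so the proof should invoke (i)$\Leftrightarrow$(ii) before applying Proposition~\ref{prop1}.
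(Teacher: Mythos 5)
Your proposal is correct and follows essentially the same route as the paper: the paper's proof of Theorem~\ref{main1} simply cites Proposition~\ref{actiontosubcategory} for (i)$\Rightarrow$(ii) and Proposition~\ref{actiontoquotient} for (ii)$\Rightarrow$(i), with the final assertion coming from Proposition~\ref{prop1} exactly as you assemble it. Your extraction of the kernel-invariance argument for (i)$\Rightarrow$(ii) and the remark about $\mathsf{i}$ becoming a $G$-functor via $G$-invariance of $\A$ match the paper's reasoning.
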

\begin{proof}
The implication (i) $\Longrightarrow$ (ii) is Proposition~\ref{actiontosubcategory} while the converse implication (ii) $\Longrightarrow$ (i) is Proposition~\ref{actiontoquotient}.
\end{proof}

Motivated by this we introduce the following notion.

\begin{defn}
\label{equivariantrecollement}
A recollement of abelian categories $\mathsf{R}_{\mathsf{ab}}(\A,\B,\C)$ \textbf{lifts to a} $G$-\textbf{equivariant recollement} $\mathsf{R}_{\mathsf{ab}}(\A^G,\B^G,\C^G)$ if a  group $G$ acts on $\B$ such that it satisfies either of the equivalent conditions of Theorem~\ref{main1}.
\end{defn}

\subsection{Recollement Induced by an Idempotent}\label{examplewithidempotents} 
In this subsection we introduce a direct application of Theorem~\ref{main1} on the well known example about recollements induced by idempotent elements of rings.

Let $R$ be a ring and $e$ an idempotent element in $R$. We denote by $\Mod R$ the category of right $R$-modules. This induces a functor $\mathsf{e} \coloneqq (-)e\colon  \Mod R \xrightarrow[]{} \Mod eRe$, which is multiplication from the right side by $e$. This functor is isomorphic to the functors $\Hom_R (eR, -) \simeq eR \otimes_R-$ which have fully faithful left and right adjoints, forming the following adjoint triple: 
\[
- \otimes_{eRe}eR \ \dashv \  \Hom_R(eR, -)\simeq -\otimes_R Re\ \dashv \  \Hom_{eRe}(Re,-) 
\]
The kernel of $(-)e$ is the category $\Mod R/ReR$. This yields a recollement of module categories:
\begin{equation} \label{modulerecollement}
\begin{tikzcd}
\Mod R/ReR \arrow[rr, "\mathsf{inc}"]& & \Mod R \arrow[ll, bend left, "\Hom_R(R/eRe \text{,}-)"] \arrow[ll, bend right, "- \otimes_R R/ReR"'] \arrow[rr, "\mathsf{e}"] & & \Mod eRe \arrow[ll, bend left, "\Hom_{eRe}(Re\text{,}-)"] \arrow[ll, bend right, "- \otimes_{eRe}eR"']
\end{tikzcd}
\end{equation}
which is said to be \textbf{induced by the idempotent} $e$.

Given a finite group $G$ and a group homomorphism $\rho\colon  G \rightarrow \mathsf{Aut}(R)^{op}$, there is and induced (right) strict action on $\Mod R$ as described in subsection~\ref{skewgroupalgebra}. We prove below that $G$ acts by automorphisms on the outer terms of the recollement and therefore we can construct a recollement of equivariant module categories.

\begin{prop}
Let $R$ be a ring and $e$ an idempotent element of $R$. Let  $G$ be a finite group acting by automorphisms on $R$ such that $e \in R^G$. Then $G$ acts by automorphisms on $\Mod eRe$ and on $\Mod R/ReR$. Moreover, $\mathsf{e}$ is a strict $G$-functor and the recollement induced by the idempotent $e$ lifts to a $G$-equivariant recollement $\mathsf{R_{ab}} ( (\Mod R/ReR)^G, (\Mod R)^G, (\Mod eRe)^G )$.
\begin{proof}
The assumpion that $e$ is an element of $R^G$ implies that a group homomorphism $\rho\colon  G \rightarrow \mathsf{Aut}(R)^{op}$ induces a group homomorphism $\rho\colon  G \rightarrow \mathsf{Aut}(eRe)^{op}$ since we have $(ere)^g = er^ge$ which is an automorphism of $eRe$. Thus, we have an induced action by automorphisms of $eRe$ on $\Mod eRe$. Note that $(Me)^g = M^g e$, since $m \bullet_g e = m \cdot e^{g^{-1}} = m \cdot e $. Moreover, the equality $(Me)^g = M^ge$, yields that the functor $(-)e$ is a strict $G$-functor in the sense that the family of natural isomorphisms needed to define a $G$-functor are identities.

By Theorem~\ref{main1}, $G$ acting on $\Mod eRe$ such that $\mathsf{e}=(-)e$ is a $G$-functor is equivalent to $\Mod R/ReR$ being $G$-invariant. Obviously, the induced action on this subcategory is also strict. Note that this action on $\Mod R/ReR$ is also induced by automorphisms. Indeed, the homomorphism  $G \to \mathsf{Aut}(R/ReR)^{op} $ is well defined, since if  $r_1-r_2=r'er'' \in ReR$, then ${r_1}^g - {r_2}^g = (r_1-r_2)^g = (r'er'')^g=(r')^ge(r'')^g$, which lies in $ReR$.
\end{proof}
\end{prop}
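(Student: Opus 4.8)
The plan is to deduce the entire statement from Theorem~\ref{main1} applied to the recollement~$(\ref{modulerecollement})$ induced by $e$. Since $G$ already acts on $\B = \Mod R$ by automorphisms (subsection~\ref{skewgroupalgebra}), it suffices to produce a $G$-action on $\C = \Mod eRe$ for which $\mathsf{e} = (-)e$ is a $G$-functor; Theorem~\ref{main1}, implication (i)$\Rightarrow$(ii), then gives at once that $\A = \Mod R/ReR$ is a $G$-invariant subcategory of $\Mod R$ and that the recollement lifts to $\mathsf{R_{ab}}((\Mod R/ReR)^G, (\Mod R)^G, (\Mod eRe)^G)$. The two remaining assertions, namely that the induced actions on $\Mod eRe$ and on $\Mod R/ReR$ are \emph{by automorphisms} in the precise sense of subsection~\ref{skewgroupalgebra}, will be handled by exhibiting the corresponding ring homomorphisms to $\mathsf{Aut}(-)^{op}$ directly.

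For the corner ring: because $e \in R^G$, each automorphism $\rho(g)$ of $R$ restricts to $eRe$, since $(ere)^g = e^g r^g e^g = e\, r^g e \in eRe$; this yields $\rho\colon G \to \mathsf{Aut}(eRe)^{op}$ and hence, as in subsection~\ref{skewgroupalgebra}, a strict right $G$-action by automorphisms on $\Mod eRe$. To see that $(-)e$ is a \emph{strict} $G$-functor I would compare the two $eRe$-modules $M^g e$ and $(Me)^g$: they have the same underlying group $Me$ (using $e^{g^{-1}} = e$), and for $x \in Me$ and $ese \in eRe$ both module structures send $x, ese$ to $x \cdot (e\, s^{g^{-1}} e)$, so the two $eRe$-module structures coincide and do so naturally in $M$; thus $(-)e \circ \rho_g = \rho_g \circ (-)e$ on objects, and since the twist functors are the identity on morphisms the same holds on morphisms. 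Consequently the structure isomorphisms $\sigma_g$ of $(-)e$ may be taken to be identities, the associativity diagram~$(\ref{associativity})$ commutes trivially, and $(-)e$ is a strict $G$-functor.

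It now follows from Theorem~\ref{main1} that $\Mod R/ReR$ is $G$-invariant and that $(\ref{modulerecollement})$ lifts to the asserted $G$-equivariant recollement. It remains to see that the induced action on $\Mod R/ReR$ is by automorphisms: again using $e \in R^G$, the two-sided ideal $ReR$ is $G$-stable, since $(r_1 e r_2)^g = r_1^g e\, r_2^g \in ReR$, so $\rho(g)$ descends to an automorphism of $R/ReR$ and we obtain $\rho\colon G \to \mathsf{Aut}(R/ReR)^{op}$; one checks that the associated strict action on $\Mod R/ReR$ coincides with the action restricted from $\Mod R$ under the identification of $\Mod R/ReR$ with the $R$-modules annihilated by $ReR$ (the twist $M^g$ depends only on how $R/ReR$ acts when $M$ is such a module). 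The main obstacle is the middle paragraph: keeping the opposite-group bookkeeping straight and verifying the equality $M^g e = (Me)^g$ of $eRe$-modules precisely; once that identification is secured, the rest is a direct application of Theorem~\ref{main1} together with the $G$-stability of $ReR$.
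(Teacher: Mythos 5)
Your proposal is correct and follows essentially the same route as the paper: produce the restricted action of $G$ on $eRe$ (using $e\in R^G$), verify $(Me)^g=M^g e$ so that $(-)e$ is a strict $G$-functor, invoke Theorem~\ref{main1} to get $G$-invariance of $\Mod R/ReR$ and the lifted equivariant recollement, and check the $G$-stability of the ideal $ReR$ so the quotient action is again by automorphisms. Your component-wise comparison of the two $eRe$-module structures on $Me$ is just a slightly more explicit version of the paper's computation $m\bullet_g e = m\cdot e^{g^{-1}} = m\cdot e$.
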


\begin{rem}
If we assume that $e$ is not $G$-invariant, then $\Ker (e) = \Mod R/ReR$ is not $G$-invariant either. 
In fact, the action is not well defined since two elements $r \neq r'$ such that $r+ReR = r'+ReR  \in R/ReR$ might yield $r^g + ReR \neq r'^g+ReR$. Hence, even for $R/ReR$ as a module over itself, the $(R/ReR)$-module structure on $(R/ReR)^g$ is not well defined.
This is due to the fact that an automorphisms of $R$ does not induce an automorphism of $R/ReR$ unless it preserves $e$.
\end{rem}

Since we have a $G$-equivariant recollement of categories of modules, it is natural to compare it to the recollement of the categories of modules of the skew group algebras. It turns out that this recollement is also induced by the corresponding idempotent.

\begin{prop}\label{equivariant recollement induced by an idempotent}
Let $R$ be a ring and $e$ an idempotent element of $R$. Let $G$ be a finite group acting by automorphisms on $R$ such that $e \in R^G$. Then $G$ acts by automorphisms on $\Mod R$ and on $\Mod eRe$ such that there exists an equivariant recollement. Moreover, the corresponding recollement of the skew group algebras is also induced by the idempotent $e' \coloneqq e1_G$.
\begin{proof}
Recall from equation~$(\ref{isomorphism of equivariant module categories})$ that $\Phi \colon \Mod RG \to (\Mod R)^G $ is a canonical isomorphism of categories. We denote, also, by $\Phi $ the isomorphism $\Mod (eRe)G \to (\Mod eRe)^G$.
Observe that the skew group algebra $(eRe)G$ is isomorphic to $e'RGe'$, thus we can identify their categories of modules. Denote by $\mathsf{e'} \coloneqq (-)e'$.
We have the following square
\begin{equation*}
\begin{tikzcd}
\Mod RG \arrow[r, "\mathsf{e'}"] \arrow[d, "\Phi"'] & \Mod e'RGe' \\
(\Mod R)^G \arrow[r, "{\mathsf{e}}^G"'] &  (\Mod eRe)^G \arrow[u, "\Phi^{-1}"']
\end{tikzcd}
\end{equation*}
which is commutative. Indeed,
\[ 
\Phi ^{-1} \circ \mathsf{e}^G \circ \Phi M = \Phi^{-1} \circ \mathsf{e}^G (M,\mu) = \Phi^{-1}(Me, \mu') = M e1_G = (M)e' 
\]
where the families of linearizations are
\begin{itemize}
\item[(i)] $\mu_g\colon  M \xrightarrow[]{\simeq} M^g$ given by $m \mapsto mg$, and
\item[(ii)] $\mu'_g\colon  Me \xrightarrow[]{\simeq} M^g e \xrightarrow{=} (Me)^g $ (since $e$ induces a strict $G$-functor) given by $m\cdot e \mapsto mg \cdot e = (m\cdot e^{g})g=(m\cdot e)g$ (since $e$ is $G$-invariant).
\end{itemize}
We infer that we have the following recollement
\begin{equation} \label{equivariantmodulerecollement}
\begin{tikzcd}
\Mod RG/RG e' RG \arrow[rr, "\mathsf{inc}"]& &  \Mod RG \arrow[ll, bend left, "\Hom_{RG}(RG/e'RGe'\text{,}-)"] \arrow[ll, bend right, "- \otimes_{RG} RG/RGe'RG"'] \arrow[rr, "\mathsf{e'}"] & &  \Mod e'RG e' \arrow[ll, bend left, "\Hom_{e'RGe'}(RGe'\text{,}-)"] \arrow[ll, bend right, "- \otimes_{e'RGe'}e'RG"']
\end{tikzcd}
\end{equation}
which is equivalent through the canonical isomorphisms $\Phi$ of equation~$(\ref{isomorphism of equivariant module categories})$ to the $G$-equivariant recollement $\mathsf{R_{ab}} ( (\Mod R/ReR)^G, (\Mod R)^G, (\Mod eRe)^G )$.
\end{proof}
\end{prop}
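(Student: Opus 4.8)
The plan is to establish the commutative square relating $\mathsf{e'}$ on $\Mod RG$ to $\mathsf{e}^G$ on $(\Mod R)^G$ via the canonical isomorphisms $\Phi$, and then to observe that the recollement $(\ref{modulerecollement})$ for the skew group algebra with the idempotent $e' = e1_G$ is carried, under $\Phi$, to the $G$-equivariant recollement obtained by applying Theorem~\ref{main1} to the $R$-recollement. The first step is to record the two identifications: the isomorphism $\Phi \colon \Mod RG \xrightarrow{\simeq} (\Mod R)^G$ of equation~$(\ref{isomorphism of equivariant module categories})$, and the ring isomorphism $(eRe)G \cong e'RGe'$, which allows us to identify $\Mod (eRe)G$ with $\Mod e'RGe'$ and, after applying $\Phi$ once more, with $(\Mod eRe)^G$. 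One should check the ring isomorphism explicitly on generators: an element $\sum_g (er_ge)g$ of $(eRe)G$ corresponds to $\sum_g (e1_G)(r_gg)(e1_G)$ in $e'RGe'$, using that $e \in R^G$ so $e^g = e$ and the skew multiplication rule $(e1_G)(r_gg) = (er_g)g$.

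Next I would verify the commutativity of the square displayed in the statement, which is the computational heart of the argument. For $M \in \Mod RG$ with $\Phi(M) = (M,\mu)$, $\mu_g(m) = mg$, one computes $\mathsf{e}^G(M,\mu) = (Me, \mu')$ where $\mu'_g$ is the composite $Me \xrightarrow{\simeq} M^ge = (Me)^g$ — here using that $(-)e$ is a \emph{strict} $G$-functor, established in the previous proposition, so that $(Me)^g = M^ge$ literally. Then one checks that $\mu'_g(m\cdot e) = (mg)\cdot e = (m\cdot e^g)g = (m\cdot e)g$, the last equality because $e$ is $G$-invariant; comparing with the linearization that $\Phi$ assigns to the $RG$-module $Me' = Me$, namely $n \mapsto ne'= n(e1_G) = (ne)1_G$, one sees $\Phi^{-1}(Me,\mu') = Me' = Me'$ as $e'RGe'$-modules, which is exactly $(M)e' = \mathsf{e'}(M)$. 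This is precisely the chain of equalities written in the excerpt; one must also check it on morphisms, but there $\Phi$ and $\mathsf{e'}$, $\mathsf{e}^G$ are all the identity on underlying maps, so commutativity on morphisms is immediate.

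Finally, with the square in hand, I would invoke the standard recollement $(\ref{modulerecollement})$ applied to the ring $RG$ and its idempotent $e'$: since $e'RGe' = (eRe)G$ and the kernel of $\mathsf{e'} = (-)e'$ is $\Mod RG/RGe'RG$, this gives recollement $(\ref{equivariantmodulerecollement})$. The commutative square, together with the fact that $\Phi$ is an isomorphism of categories (not merely an equivalence) and hence transports recollements, identifies the quotient functor $\mathsf{e'}$ with $\mathsf{e}^G$; since $\mathsf{e}^G$ is the quotient functor of the $G$-equivariant recollement $\mathsf{R_{ab}}((\Mod R/ReR)^G, (\Mod R)^G, (\Mod eRe)^G)$ produced by Theorem~\ref{main1} (applicable because $e \in R^G$ makes $\Mod R/ReR$ a $G$-invariant subcategory, as in the preceding proposition), and recollements are determined up to equivalence by the quotient functor together with the middle category, the two recollements are equivalent via the $\Phi$'s. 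The only genuine subtlety — and the step I expect to demand the most care — is matching the \emph{adjoint functors} on the left, i.e.\ checking that under $\Phi$ the functors $\Hom_{e'RGe'}(RGe',-)$ and $-\otimes_{e'RGe'}e'RG$ correspond to $\mathsf{r}^G = \Hom_{eRe}(Re,-)^G$ and $\mathsf{l}^G = (-\otimes_{eRe}eR)^G$; this should follow formally from uniqueness of adjoints once the quotient functor is identified, but spelling out the module-theoretic identifications $RGe' \cong Re \otimes_{?} ?$ requires the observation that $RGe' \cong (RG)\otimes_{RG}RGe'$ is, as an $(RG, e'RGe')$-bimodule, built from $Re$ with its twisted $G$-structure, so that the two sides of the adjunction genuinely agree rather than merely being abstractly isomorphic.
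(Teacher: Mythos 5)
Your proposal follows essentially the same route as the paper: identify $(eRe)G$ with $e'RGe'$, verify the commutative square $\Phi^{-1}\circ\mathsf{e}^G\circ\Phi = (-)e'$ by the same computation with the linearizations $\mu$ and $\mu'$ (using strictness of $(-)e$ and $G$-invariance of $e$), and then transport the recollement induced by $e'$ through the isomorphisms $\Phi$. Your extra care about matching the outer adjoints via uniqueness of adjoints is sound and only makes explicit what the paper leaves implicit in ``equivalent through the canonical isomorphisms $\Phi$.''
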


\subsection{Recollement of Triangulated Categories}
\label{recollementsoftriangulatedcats}
In this section we examine the conditions under which a recollement $\mathsf{R}_{\mathsf{tr}}(\U,\T,\V)$ of triangulated categories induces an equivariant recollement $\mathsf{R}_{\mathsf{tr}}(\U^G,\T^G,\V^G)$ of triangulated categories. Note that this has been proved for semi-orthogonal decompositions in \cite{ChaoSun}. We provide a proof for recollements of triangulated categories for completeness.

\begin{rem}\label{invertibilityonsubcats}
If $G$ is a finite group acting on all three categories of a recollement $\mathsf{R}_{\mathsf{tr}}(\U,\T,\V)$ and if $|G|$ is invertible in $\T$, then it is invertible in $\U$ and $\V$.
    
Indeed, using the fully faithful functors of the recollement this is immediate. For example, $|G|$ is invertible in $\U$ because for any $f $ in $\Hom_{\U}(X,Y)$ we have $i(f)$ lies in $ \Hom_{\T}(\mathsf{i}X,\mathsf{i}Y)$. Then, by invertibility in $\T$ there exists some $f'\colon \mathsf{i}X \to \mathsf{i}Y$ such that $\mathsf{i}(f)=nf'$. Then again by fully faithfulness of $\mathsf{i}$ we have that $f = n \mathsf{i}^{-1}(f')$.
\end{rem}

\begin{rem}\label{identificationofTRIANGsubcategories}
Using Lemma~\ref{triangadjointequiv}, we have canonical identifications in recollements of triangulated categories. That is, if $F \colon  \D \to \hat{\D}$ is a triangulated equivalence and $G$ is a finite group acting admissibly on $\D$, then there is an admissible $G$-action on $ \hat{\D}$ rendering $F$ a triangulated $G$-functor. Moreover, if the equivariant categories are canonically triangulated, then $F^G$ is triangulated. Then, in a recollement of triangulated categories $\mathsf{R}_{\mathsf{tr}}(\U,\T,\V)$ we can identify $\U$ with the full subcategory $\Ker (\mathsf{e})$ of $\T$ and $\V$ with the Verdier quotient $\T/\U$.
\end{rem}

 \begin{rem}\label{trianequivrem}
In general, the equivariant category of a triangulated category is not always triangulated; see subsection~\ref{actionontriangcats}.
A sufficient condition for $\T^G$ to be canonically triangulated is that $|G|$ is invertible in $\T$ and that $\T$ admits a dg-enhancement  \cite[Theorem~6.9, Corollary~6.10]{Elagin}. If $\mathsf{R_{tr}}(\U, \T, \V)$ is a recollement of triangulated categories, then by  \cite[Remark~3.2]{SC} we know that if $\T$ admits a dg-enhancement then also $\U$ and $\V$ admit a dg-enhancement (respectively as a full subcategory and as the Verdier quotient). Thus, if a finite group $G$ acts on $\U$, $\T$ and $\V$ with $|G|$ invertible in $\T$ (hence in all three categories by Remark~\ref{invertibilityonsubcats}), then the equivariant categories $\U^G$, $\T^G$ and $\V^G$ are canonically triangulated. 
\end{rem}
Without the assumption on the existence of a dg-enhancement for $\T$, we don't know in general if $\T^G$ being triangulated implies that $\U^G$ and $\V^G$ are triangulated. 

\begin{prop}\label{prop2}
Let  $\mathsf{R}_{\mathsf{tr}}(\U,\T,\V)$ be a triangulated recollement. Let $G$ be a finite group acting admissibly on $\U$, $\T$ and $\V$. Assume also that $|G|$ is invertible in $\T$ and that $\T^G$ admits the canonical triangulated structure. If one of the functors of each of the adjoint triples in  $\mathsf{R}_{\mathsf{tr}}(\U,\T,\V)$ is a $G$-functor, then all three are $G$-functors, $\U^G,\V^G$ admit canonical triangulated structures and we have a recollement of equivariant triangulated categories:
\begin{equation*}
\begin{tikzcd}
 \U ^G \arrow[rr, "\mathsf{i}^G" description]& & \T ^G \arrow[ll, bend left, "\mathsf{p}^G"] \arrow[ll, bend right, "\mathsf{q}^G"'] \arrow[rr, "\mathsf{e}^G" description] & & \V ^G \arrow[ll, bend left, "\mathsf{r}^G"] \arrow[ll, bend right, "\mathsf{l}^G"']
\end{tikzcd}\eqno \mathsf{R}_{\mathsf{tr}}(\U^G,\T^G,\V^G)
\end{equation*}
\end{prop}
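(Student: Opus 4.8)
The plan is to mimic the proof of Proposition~\ref{prop1} for the abelian case, adding the triangulated bookkeeping that is now available. First I would observe that, by Lemma~\ref{adjointequiv}, if one functor of each adjoint triple $(\mathsf{l},\mathsf{e},\mathsf{r})$ and $(\mathsf{q},\mathsf{i},\mathsf{p})$ is a $G$-functor, then all six functors are $G$-functors and we obtain two equivariant adjoint triples $(\mathsf{l}^G,\mathsf{e}^G,\mathsf{r}^G)$ and $(\mathsf{q}^G,\mathsf{i}^G,\mathsf{p}^G)$. Next, since $|G|$ is invertible in $\T$, it is invertible in $\U$ and $\V$ by Remark~\ref{invertibilityonsubcats}; invoking Remark~\ref{identificationofTRIANGsubcategories} (identifying $\U$ with $\Ker(\mathsf{e})$ and $\V$ with $\T/\U$, both of which carry admissible $G$-actions via Lemma~\ref{inducedequivariantequivalence} and Theorem~\ref{quotienttheorem}) together with the hypothesis that $\T^G$ is canonically triangulated, we conclude that $\U^G$ and $\V^G$ also admit canonical triangulated structures. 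By Lemma~\ref{triangadjointequiv}, each of the six equivariant functors is then triangulated.

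The remaining points are exactly the recollement axioms for $\mathsf{R}_{\mathsf{tr}}(\U^G,\T^G,\V^G)$. For full faithfulness of $\mathsf{i}^G$, $\mathsf{l}^G$, $\mathsf{r}^G$ I would cite Lemma~\ref{ffequiv}. For the last axiom, $\Image(\mathsf{i}^G)=\Ker(\mathsf{e}^G)$, the argument is the same as in the abelian case: for an object $(X,\chi)\in\T^G$, the forgetful functor detects whether $(X,\chi)\in\Ker(\mathsf{e}^G)$, i.e.\ $(X,\chi)\in\Ker(\mathsf{e}^G)$ iff $X\in\Ker(\mathsf{e})=\Image(\mathsf{i})$ (using that $\mathsf{For}\circ\mathsf{e}^G=\mathsf{e}\circ\mathsf{For}$ and the uniqueness of the zero object in the equivariant category). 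The inclusion $\Image(\mathsf{i}^G)\subseteq\Ker(\mathsf{e}^G)$ is then immediate, and for the reverse inclusion one uses that $\mathsf{i}$ is a fully faithful $G$-functor, hence epi on linearizations by Lemma~\ref{epilinear}: given $(\mathsf{i}X,\chi')\in\Ker(\mathsf{e}^G)$ there is a linearization $\chi$ of $X$ with $\mathsf{i}^G(X,\chi)=(\mathsf{i}X,\chi')$, so $(\mathsf{i}X,\chi')\in\Image(\mathsf{i}^G)$.

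I expect the main obstacle to be purely structural rather than computational: making sure all the triangulated compatibilities line up, in particular that $\U^G$ and $\V^G$ really are canonically triangulated in the precise sense needed, and that the identification of $\U^G$ with $\Ker(\mathsf{e}^G)$ respects the triangulated structure. This is where one must be careful to invoke Remark~\ref{trianequivrem}/Remark~\ref{identificationofTRIANGsubcategories} correctly (the dg-enhancement discussion or, alternatively, the uniqueness statement in Proposition~\ref{canonicalpretriangstructure}), since without it one does not know a priori that $\Ker(\mathsf{e})^G$ and $(\T/\U)^G$ are triangulated. Once this is in place, the verification of the recollement axioms is a routine transcription of the abelian proof, so I would keep that part brief and refer back to Proposition~\ref{prop1} wherever the argument is verbatim.
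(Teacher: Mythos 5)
Your proposal is correct in substance and, for the recollement axioms themselves (lifting the adjoint triples via Lemma~\ref{adjointequiv}, full faithfulness via Lemma~\ref{ffequiv}, and $\Image(\mathsf{i}^G)=\Ker(\mathsf{e}^G)$ via the epi-on-linearizations property of Lemma~\ref{epilinear}), it coincides with the paper's proof, which simply refers back to Proposition~\ref{prop1} for this part. Where you genuinely diverge is in establishing that $\U^G$ and $\V^G$ are canonically triangulated. The paper's route is lighter: invertibility of $|G|$ in $\U$ and $\V$ (Remark~\ref{invertibilityonsubcats}) gives the canonical pre-triangulated structures by Proposition~\ref{canonicalpretriangstructure}, and then the octahedral axiom for $\U^G$ (resp.\ $\V^G$) is checked directly using the fully faithful triangle functor $\mathsf{i}^G$ (resp.\ $\mathsf{l}^G$ or $\mathsf{r}^G$) into the triangulated category $\T^G$; no identification of $\U$ with $\Ker(\mathsf{e})$ or of $\V$ with $\T/\U$ is needed at this stage. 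Your route instead passes through these identifications and Sun's Theorem~\ref{quotienttheorem}, which works but costs you two extra verifications that you only gesture at: first, in Proposition~\ref{prop2} the actions on $\U$ and $\V$ are \emph{given}, so you must check that the equivalences $\U\simeq\Ker(\mathsf{e})$ and $\T/\U\simeq\V$ are $G$-equivalences for these given actions (Lemma~\ref{inducedequivariantequivalence} constructs an action on the target rather than comparing with a pre-existing one; for $\V$ this amounts to descending the $G$-structure of $\mathsf{e}$ along the quotient functor $Q$, which needs a short argument); second, you must check that the triangulated structure transported to $\V^G$ from $(\T/\U)^G$ is the \emph{canonical} one, e.g.\ by observing that the comparison equivalence commutes with the forgetful functors and invoking the uniqueness clause of Proposition~\ref{canonicalpretriangstructure}. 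Both gaps are fillable — and you correctly flag them as the delicate points — but the paper's octahedron argument avoids them entirely, deferring the $\Ker(\mathsf{e})$ and $\T/\U$ identifications to Propositions~\ref{triangactiontosubcategory} and~\ref{actiontoquotienttriang}, where the actions on $\U$ and $\V$ are defined by transport rather than assumed.
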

\begin{proof}
The invertibility of $|G|$ in $\T$ yields its invertibility in $\U,\V$ by Remark~\ref{invertibilityonsubcats}. Thus, $\U^G$ and $\V^G$ admit a canonical pre-triangulated structure by Proposition~\ref{canonicalpretriangstructure}. By Lemma~\ref{triangadjointequiv} all equivariant functors are triangulated. Observe that, using the fully faithful triangle functor $\mathsf{i}^G \colon \U^G \to \T^G$, the octahedron axiom holds for $\U^G$, thus it is canonically triangulated. Similarly, $\V^G$ is canonically triangulated using either $\mathsf{l}^G$ or $\mathsf{r}^G$.
  The rest of the proof is the same as the proof of Proposition~\ref{prop1}.
\end{proof}

The next result is the triangulated analogue of  Proposition~\ref{actiontosubcategory} using Remark~\ref{identificationofTRIANGsubcategories} and Proposition~\ref{prop2}. The proof is similar to the one given in Proposition~\ref{actiontosubcategory} and is left to the reader.

\begin{prop}\label{triangactiontosubcategory}
Let  $\mathsf{R}_{\mathsf{tr}}(\U,\T,\V)$ be a triangulated recollement. Let $G$ be a finite group acting admissibly on $\T $ and $\V $ such that either one of $\mathsf{l,e}$ or $\mathsf{r}$ is a $G$-functor. Assume also that $|G|$ is invertible in $\T$ and that $\T^G$ admits a canonical triangulated structure. Then we have an induced action on $\U$ and $\mathsf{R}_{\mathsf{tr}}(\U^G,\T^G,\V^G)$ is a recollement of triangulated categories.
\end{prop}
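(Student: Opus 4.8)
The plan is to mimic the proof of Proposition~\ref{actiontosubcategory} and reduce everything to Proposition~\ref{prop2}: the only thing that has to be produced by hand is an admissible $G$-action on $\U$ for which $\mathsf{i}$ is a triangulated $G$-functor, and once that is in place all the hypotheses of Proposition~\ref{prop2} are satisfied and the conclusion follows formally.

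First I would note that since one of $\mathsf{l}, \mathsf{e}, \mathsf{r}$ is a $G$-functor, Lemma~\ref{adjointequiv} applied successively to the adjoint pairs $\mathsf{l} \dashv \mathsf{e}$ and $\mathsf{e} \dashv \mathsf{r}$ turns all three of them into $G$-functors, and they are automatically triangulated being the functors of a recollement of triangulated categories. In particular $\mathsf{e}$ carries a natural isomorphism $\sigma_g \colon \mathsf{e}\rho_g \xrightarrow{\simeq} \hat{\rho}_g \mathsf{e}$, so for $X \in \Ker(\mathsf{e})$ we get $\mathsf{e}(X^g) \cong (\mathsf{e}X)^g = 0$; hence $\Ker(\mathsf{e})$ is a $G$-invariant triangulated subcategory of $\T$, and the admissible $G$-action on $\T$ restricts to an admissible $G$-action on $\Ker(\mathsf{e})$ (each $\rho_g$ restricts to a triangle auto-equivalence, the shift $[1]$ restricts, and so do the natural isomorphisms $\tau_g$ of Definition~\ref{admissibleaction}). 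Using the triangulated equivalence $F \colon \U \xrightarrow{\simeq} \Ker(\mathsf{e})$ coming from the recollement together with Remark~\ref{identificationofTRIANGsubcategories} (equivalently Lemma~\ref{inducedequivariantequivalence}), I would transport this admissible action along $F^{-1}$ to an admissible $G$-action on $\U$ for which $F$ becomes a triangulated $G$-functor. Composing with the inclusion $\iota \colon \Ker(\mathsf{e}) \hookrightarrow \T$, which is a $G$-functor by Remark~\ref{inclusionfunctor}, shows that $\mathsf{i} = \iota \circ F$ is a fully faithful triangulated $G$-functor; in particular one functor of the left adjoint triple $(\mathsf{q}, \mathsf{i}, \mathsf{p})$ is a $G$-functor.

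With this action in place one has: $G$ acting admissibly on $\U, \T, \V$; $|G|$ invertible in $\T$; $\T^G$ carrying the canonical triangulated structure (by hypothesis); and one functor of each adjoint triple a $G$-functor ($\mathsf{e}$ on the right, $\mathsf{i}$ on the left). Proposition~\ref{prop2} then applies directly and gives that $\U^G$ and $\V^G$ are canonically triangulated and that $\mathsf{R}_{\mathsf{tr}}(\U^G, \T^G, \V^G)$ is a recollement of triangulated categories.

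The part that requires care, and the reason this is not literally the abelian proof, is the triangulated bookkeeping: one must check that restricting the action to $\Ker(\mathsf{e})$ preserves \emph{admissibility} (not merely the existence of auto-equivalences) and that transporting along $F^{-1}$ keeps the action admissible, so that Proposition~\ref{prop2} is genuinely applicable. I would also point out that we do not need to identify the given action on $\V$ with the one induced on the Verdier quotient $\T/\U$ via Theorem~\ref{quotienttheorem}: Proposition~\ref{prop2} takes an arbitrary admissible action on $\V$ as input, so the data we already have suffice. Everything else parallels the proof of Proposition~\ref{actiontosubcategory} word for word.
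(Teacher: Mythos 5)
Your proposal is correct and follows exactly the route the paper intends: the paper leaves this proof to the reader, saying it is "similar to the one given in Proposition~\ref{actiontosubcategory}" using Remark~\ref{identificationofTRIANGsubcategories} and Proposition~\ref{prop2}, which is precisely what you carry out (making $\mathsf{l},\mathsf{e},\mathsf{r}$ all $G$-functors via Lemma~\ref{adjointequiv}, showing $\Ker(\mathsf{e})$ is $G$-invariant, transporting the admissible action to $\U$ along the equivalence so $\mathsf{i}$ becomes a triangulated $G$-functor, then invoking Proposition~\ref{prop2}). Your extra attention to preserving admissibility under restriction and transport, and the observation that no compatibility with the quotient action on $\V$ is needed, are exactly the right details to fill in.
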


The following is the triangulated analogue of Proposition~\ref{actiontoquotient}.

\begin{prop}\label{actiontoquotienttriang}
Let  $\mathsf{R}_{\mathsf{tr}}(\U,\T,\V)$ be a triangulated recollement. Let $G$ be a finite group acting admissibly on $\T $ such that $\U $ is a $G$-invariant subcategory of $\T $.  Assume also that $|G|$ is invertible in $\T $ and that $\T ^G$ admits a canonical triangulated structure. Then there is an induced action on $\V$ rendering $\mathsf{e}$ a $G$-functor and $\mathsf{R}_{\mathsf{tr}}(\U^G,\T^G,\V^G)$ is a recollement of triangulated categories.
\end{prop}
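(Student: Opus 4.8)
The plan is to mimic the abelian case (Proposition~\ref{actiontoquotient}) as closely as possible, replacing the Gabriel quotient by the Verdier quotient and invoking Chao Sun's machinery recalled in Theorem~\ref{quotienttheorem} where needed. First I would use the identification of Remark~\ref{identificationofTRIANGsubcategories}: since $\mathsf{i}$ is fully faithful and triangulated, $\U$ may be identified with the thick subcategory $\Ker(\mathsf{e}) \subseteq \T$, and there is a triangulated equivalence $F \colon \T/\U \xrightarrow{\simeq} \V$ with $\mathsf{e} = F \circ Q$, where $Q \colon \T \to \T/\U$ is the Verdier quotient functor. Because $\U$ is assumed $G$-invariant, the inclusion $\mathsf{i} \colon \U \hookrightarrow \T$ is canonically a $G$-functor by Remark~\ref{inclusionfunctor}.

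Next I would apply Theorem~\ref{quotienttheorem}(i): since $G$ acts admissibly on $\T$, $|G|$ is invertible in $\T$, and $\T^G$ is canonically triangulated, the Verdier quotient $\T/\U$ inherits an admissible $G$-action and $(\T/\U)^G$ admits a canonical triangulated structure; moreover, by Remark~\ref{quotientGfunctor}, the quotient functor $Q \colon \T \to \T/\U$ is canonically a $G$-functor (with identity $2$-isomorphisms). Then I would transport this action along the equivalence $F$: by Lemma~\ref{inducedequivariantequivalence} there is an induced admissible $G$-action on $\V$ making $F$ a triangulated $G$-functor, and by Lemma~\ref{triangadjointequiv} (applicable since $|G|$ is invertible in $\V$ as well, by Remark~\ref{invertibilityonsubcats}, once we know $\V^G$ is canonically triangulated — which follows from $\V \simeq \T/\U$ and Theorem~\ref{quotienttheorem}(i)) the functor $F^G$ is triangulated. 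Composing, $\mathsf{e} = F \circ Q$ is a $G$-functor, being a composition of $G$-functors (Remark~\ref{compositionGfunctors}).

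With $\mathsf{e}$ now known to be a $G$-functor, the hypotheses of Proposition~\ref{prop2} (equivalently Proposition~\ref{triangactiontosubcategory}) are satisfied: $G$ acts admissibly on all three of $\U$, $\T$, $\V$, $|G|$ is invertible in $\T$, $\T^G$ is canonically triangulated, and one functor of each adjoint triple — namely $\mathsf{e}$, and hence also $\mathsf{i}$ — is a $G$-functor. Proposition~\ref{prop2} then yields that all six functors lift, that $\U^G$ and $\V^G$ are canonically triangulated, and that $\mathsf{R}_{\mathsf{tr}}(\U^G, \T^G, \V^G)$ is a recollement of triangulated categories. One technical point worth spelling out is that the $G$-action induced on $\V$ via $F$ and the original need for $\mathsf{l}, \mathsf{e}, \mathsf{r}$ to be simultaneously $G$-functors are compatible, i.e.\ that the adjoints $\mathsf{l}$ and $\mathsf{r}$ acquire their $G$-structures from $\mathsf{e}$'s via Lemma~\ref{adjointequiv}, which is exactly what Proposition~\ref{prop2} packages.

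The main obstacle I anticipate is the bookkeeping around triangulated structures on the equivariant categories: one must verify that $\V^G$ carries a canonical triangulated structure before Lemma~\ref{triangadjointequiv} can be applied to $F^G$, and this is where the subtlety flagged in Remark~\ref{trianequivrem} lives — in general $\T^G$ being triangulated does not formally force $\V^G$ to be. Here the difficulty is resolved precisely because $\V \simeq \T/\U$ is a Verdier quotient and Theorem~\ref{quotienttheorem}(i) delivers the canonical triangulated structure on $(\T/\U)^G$ directly; transporting along the equivalence $F$ (Lemma~\ref{inducedequivariantequivalence}, Corollary~\ref{G functor equivalence}) then gives it on $\V^G$. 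Everything else is a routine assembly of the lemmas already in hand, and the proof can reasonably be shortened to: ``The proof is analogous to that of Proposition~\ref{actiontoquotient}, using Theorem~\ref{quotienttheorem}, Remark~\ref{quotientGfunctor} and Proposition~\ref{prop2} in place of the abelian ingredients.''
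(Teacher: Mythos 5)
Your proposal is correct and follows essentially the same route as the paper's own proof: apply Theorem~\ref{quotienttheorem} to put an admissible action and canonical triangulated structure on $\T/\U$ and $(\T/\U)^G$, transport along the equivalence $F$ via Lemma~\ref{inducedequivariantequivalence} and Lemma~\ref{triangadjointequiv}, write $\mathsf{e}$ as a composition of $G$-functors using Remark~\ref{quotientGfunctor}, and conclude with Proposition~\ref{prop2}. The extra care you take about the triangulated structure on $\V^G$ is consistent with (and implicit in) the paper's argument, so there is nothing to correct.
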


\begin{proof}
By Theorem~\ref{quotienttheorem} we have that the quotient $\T /\U $ has a canonical admissible $G$-action and the equivariant category $\mathcal{(T/U)}^G$ admits the canonical triangulated structure. Since the quotient category is triangle equivalent to $\V $, say by $F\colon  \mathcal{T/U} \xrightarrow[]{\sim} \V $, then $\V $ inherits a $G$-action and $F$ becomes a $G$-functor, i.e.\ $F^G$ is a triangulated equivalence (see Lemma~\ref{inducedequivariantequivalence} and Lemma~\ref{triangadjointequiv}). By Remark~\ref{quotientGfunctor} the quotient functor $Q \colon  \T  \to\mathcal{T/U}$ is naturally a $G$-functor. Now, $\mathsf{e}$ becomes also a $G$-functor being the composition of $G$-functors $\mathsf{e}=Q\circ F$ by the universal property of the quotient. Note that $\mathsf{i}$ is a $G$-functor as a composition of such, as described in Remark~\ref{identificationofTRIANGsubcategories}. Now the result follows from Proposition~\ref{prop2}.
\end{proof}

Combining the previous results yields the following:

\begin{thm} 
\label{main2}
 Let  $\mathsf{R}_{\mathsf{tr}}(\U,\T,\V)$ be a triangulated recollement.
Let $G$ be a finite group acting admissibly on $\T $, $|G|$ is invertible in $\T $ and $\T ^G$ is canonically triangulated. Then the following are equivalent:
\begin{itemize}

\item[(i)] $G$ is acting admissibly on $\V \simeq \T /\U $ and $\mathsf{e}$ is a $G$-functor. 

\item[(ii)] $\U $ is a $G$-invariant triangulated subcategory of $\T $.
\end{itemize}
If either of the above conditions holds true, then $\mathsf{R}_{\mathsf{tr}}(\U^G,\T^G,\V^G)$ is a recollement of triangulated categories.
\begin{proof}
The implication (i) $\Longrightarrow$ (ii) is Proposition~\ref{triangactiontosubcategory} while the converse implication (ii) $\Longrightarrow$ (i) is Proposition~\ref{actiontoquotienttriang}.
\end{proof}
\end{thm}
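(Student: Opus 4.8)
The plan is to follow the blueprint of the abelian statement, Theorem~\ref{main1}, upgrading every step so that it respects the triangulated structure. The theorem breaks into three tasks: the implication (i) $\Rightarrow$ (ii), the converse (ii) $\Rightarrow$ (i), and the assertion that either hypothesis yields the equivariant recollement $\mathsf{R}_{\mathsf{tr}}(\U^G,\T^G,\V^G)$. I would package the first two as triangulated analogues of Propositions~\ref{actiontosubcategory} and~\ref{actiontoquotient}, each of which then invokes the triangulated analogue of Proposition~\ref{prop1}, namely Proposition~\ref{prop2}, to produce the recollement.

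For (i) $\Rightarrow$ (ii): assume $G$ acts admissibly on $\V$ and $\mathsf{e}$ is a $G$-functor. Applying Lemma~\ref{adjointequiv} to the adjunctions $\mathsf{l} \dashv \mathsf{e}$ and $\mathsf{e} \dashv \mathsf{r}$ promotes $\mathsf{l}$ and $\mathsf{r}$ to $G$-functors. For $X \in \Ker(\mathsf{e})$ and $g \in G$ one has $\mathsf{e}(X^g) \simeq (\mathsf{e}X)^g \simeq 0$, using that $\mathsf{e}$ is a $G$-functor and the unique linearization of the zero object, so $X^g \in \Ker(\mathsf{e})$; hence $\Image(\mathsf{i}) = \Ker(\mathsf{e})$ is a $G$-invariant triangulated subcategory of $\T$. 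Transporting the action along the triangle equivalence $\U \simeq \Ker(\mathsf{e})$ via Lemma~\ref{inducedequivariantequivalence}, with triangulation handled by Lemma~\ref{triangadjointequiv}, makes $\mathsf{i}$ a $G$-functor, and Proposition~\ref{prop2} applies.

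For (ii) $\Rightarrow$ (i): assume $\U$ is a $G$-invariant triangulated subcategory of $\T$, so that $\mathsf{i}$ is a $G$-functor by Remark~\ref{inclusionfunctor}. The essential ingredient is the quotient theorem of Chao Sun, Theorem~\ref{quotienttheorem}: the Verdier quotient $\T/\U$ acquires an admissible $G$-action, $(\T/\U)^G$ is canonically triangulated, and the quotient functor $Q\colon \T \to \T/\U$ is a $G$-functor (Remark~\ref{quotientGfunctor}). Transporting along a chosen triangle equivalence $F\colon \T/\U \xrightarrow{\simeq} \V$ via Lemmas~\ref{inducedequivariantequivalence} and~\ref{triangadjointequiv} endows $\V$ with an admissible $G$-action making $F$ a $G$-functor; then $\mathsf{e} = F \circ Q$ is a composition of $G$-functors, hence a $G$-functor, and Proposition~\ref{prop2} again concludes.

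In both cases the common endpoint, Proposition~\ref{prop2}, runs as follows: once one functor of each adjoint triple is a $G$-functor all are (Lemmas~\ref{adjointequiv} and~\ref{triangadjointequiv}), the fully faithful ones stay fully faithful after equivariantization (Lemma~\ref{ffequiv}), $|G|$ remains invertible in $\U$ and $\V$ (Remark~\ref{invertibilityonsubcats}) so $\U^G$ and $\V^G$ carry canonical pre-triangulated structures (Proposition~\ref{canonicalpretriangstructure}) which are in fact triangulated because the octahedron axiom pulls back along the fully faithful triangle functors $\mathsf{i}^G$, $\mathsf{l}^G$, $\mathsf{r}^G$; and $\Image(\mathsf{i}^G) = \Ker(\mathsf{e}^G)$ follows from $\Image(\mathsf{i}) = \Ker(\mathsf{e})$ together with the epi-on-linearizations property of the fully faithful $G$-functor $\mathsf{i}$ (Lemma~\ref{epilinear}). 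I expect the main obstacle to lie not in the equivalence of (i) and (ii) — which is essentially formal once the zero-object and inclusion/quotient observations are in place — but in the triangulated bookkeeping: guaranteeing that $\U^G$ and $\V^G$ are canonically triangulated rather than merely pre-triangulated and that the quotient $\V^G \simeq (\T/\U)^G$ behaves correctly, which is precisely where Theorem~\ref{quotienttheorem} and the fully faithful equivariant embeddings carry the weight.
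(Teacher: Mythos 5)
Your proposal is correct and follows essentially the same route as the paper: (i) $\Rightarrow$ (ii) is exactly the paper's Proposition~\ref{triangactiontosubcategory} (G-invariance of $\Ker(\mathsf{e})$ plus transport of the action along $\U \simeq \Ker(\mathsf{e})$), (ii) $\Rightarrow$ (i) is exactly Proposition~\ref{actiontoquotienttriang} (Sun's Theorem~\ref{quotienttheorem} for the Verdier quotient, transport along $\T/\U \simeq \V$, and $\mathsf{e}$ as a composition of $G$-functors), and both conclude via Proposition~\ref{prop2} with the same ingredients (Lemmas~\ref{adjointequiv}, \ref{triangadjointequiv}, \ref{ffequiv}, \ref{epilinear}, Remark~\ref{invertibilityonsubcats}, and the octahedron axiom pulled back along the fully faithful equivariant embeddings).
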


\begin{defn}\label{equivarianttriangrecollement}
    We say that a recollement of triangulated categories $\mathsf{R}_{\mathsf{tr}}(\U,\T,\V)$ \textbf{lifts to a} $G$-\textbf{equivariant recollement} $\mathsf{R}_{\mathsf{tr}}(\U^G,\T^G,\V^G)$ if a  group $G$ acts admissibly on $\T $, $|G|$ is invertible in $\T$, $\T^G$ is canonically triangulated and either of the equivalent conditions of Theorem~\ref{main2} are satisfied.
\end{defn}

We finish this section with the following remark on short exact sequences of abelian (respectively, triangulated) categories.

\begin{rem}\label{equivariantSES}
Let $0\to \A \xrightarrow{\mathsf{i}} \B \xrightarrow{\mathsf{e}} \C \to 0$ be a short exact sequence of abelian categories, i.e.\ the category $\C$ is equivalent to the Gabriel quotient 
$\B / \A$ where $\A$ is a Serre subcategory of $\B$. Assume that a finite group $G$ acts on $\A, \B$ and $\C$ such that $\mathsf{i}$ and $\mathsf{e}$ are $G$-functors. Then by \cite[Corollary~4.4]{CCZ} (or Remark~\ref{quotientequivalenceofequivariantcats}) we have an equivalence $\B^G/\A^G \simeq (\B /\A)^G$, which shows that we have a short exact sequence of equivariant abelian categories $0 \to \A^G \xrightarrow{\mathsf{i}^G} \B^G \xrightarrow{\mathsf{e}^G} \C^G \to 0$.

In the case of short exact sequences of triangulated categories the above does not carry over verbatim. Assume that $0\to \U \xrightarrow{\mathsf{i}} \T \xrightarrow{\mathsf{e}} \V \to 0$ is a short exact sequence of triangulated categories and let $G$ be a finite group acting on $\U, \T$ and $\V$ such that $|G|$ is invertible in all three categories. Assume further that all three equivariant categories are triangulated. Then, by Theorem~\ref{quotienttheorem}, the Verdier quotient $\T^G / \U^G$ is equivalent to $(\T/\U)^G$ up to retracts. Consider the following commutative diagram:
\begin{equation*}
    \begin{tikzcd}
0 \arrow[r] & \U^G \arrow[d, "\mathsf{Id}"]  \arrow[r, "\mathsf{i}^G"] & \T^G  \arrow[d, "\mathsf{Id}"]  \arrow[r, "Q"] & \T^G / \U^G \arrow[r]  \arrow[d, "F"] & 0  \\
0 \arrow[r] & \U^G \arrow[r, "\mathsf{i}^G"] & \T^G \arrow[r, "\mathsf{e}^G"] & (\T / \U)^G 
    \end{tikzcd}
\end{equation*}
where $\V$ is identified with $\T / \U$. We can think that the natural equivalence up to retracts $F \colon \T^G / \U^G \to (\T / \U)^G$ measures how far is the above sequence from being exact.
Nevertheless, if we enhance the short exact sequence with a right adjoint $\mathsf{r}\colon \V \to \T$, then the composition $\mathsf{e} \circ \mathsf{r}$ becomes a localization functor with $\Ker (\mathsf{er})=\U $ by \cite[Proposition~4.9.1~(4)]{Krause: localizationtheory} and, moreover, $\mathsf{r}$ is fully faithful by \cite[Corollary~2.4.2]{Krause: localizationtheory}. 
Hence, the same holds for the equivariant functors $\mathsf{r}^G$ and $\mathsf{e}^G$, i.e.\ the composition $\mathsf{e}^G \circ \mathsf{r}^G$ is a localization functor, thus $F \colon \T^G / \U^G \to (\T / \U)^G$ is an equivalence by \cite[Corollary~2.4.2]{Krause: localizationtheory}. In particular, this is true for the recollement of equivariant triangulated categories. Therefore, the left-exact sequence of equivariant triangulated categories is a short exact sequence if either we assume that the Verdier quotient $\T^G / \U^G$ is idempotent complete, or if we assume that $\T$ admits a localization functor $L \colon \T \to \T$ with $\Ker L = \U$, equivalently $\mathsf{e}$ admits a right adjoint, equivalently $\mathsf{i}$ admits a right adjoint, by \cite[Proposition~4.9.1~(2)]{Krause: localizationtheory}.

Therefore, the essential difference between subsections~\ref{recollementsofabeliancats} and~\ref{recollementsoftriangulatedcats} is that the results of the former can be formulated for exact sequences of abelian categories without any extra assumptions, while for the latter we need extra assumptions so that $F \colon \T^G / U^G \to (\T/\U)^G$ is an equivalence of triangulated categories.
\end{rem}

\section{Equivariant Yoneda Extensions}
\label{section: Yoneda extensions}

In this section we investigate homological embeddings in equivariant abelian categories. In this section, all finite group actions are assumed to be right actions.

Recall that the Yoneda extension group is
\[
\Ext^n_{\A}(X,Y) = \big\{\theta\colon 0 \rightarrow Y \rightarrow A_n \rightarrow \dots \rightarrow A_1 \rightarrow X \rightarrow 0 \ | \ \theta \ \text{exact sequence} \big\} / \sim
\]
where $\theta \sim \theta'$ if and only if there exists an extension $\theta''$ and chain maps $\mu$ and $\nu$ that are identity on the sides in the following way:
\begin{equation*}
    \begin{tikzcd}
 \theta:    & 0 \arrow[r] & Y \arrow[d, equal] \arrow[r] & A_n  \arrow[r] & \dots \arrow[r] & A_1 \arrow[r] & X \arrow[d, equal] \arrow[r] & 0 \\
 \theta'': \arrow[u, "\mu"'] \arrow[d, "\nu"]     & 0 \arrow[r] & Y \arrow[d, equal] \arrow[r] & A''_n \arrow[d, "\nu_n"] \arrow[r] \arrow[u, "\mu_n"'] & \dots \arrow[r] & A''_1 \arrow[d, "\nu_1"]\arrow[r] \arrow[u, "\mu_1"'] & X \arrow[d, equal] \arrow[r] & 0 \\
      \theta' : & 0 \arrow[r] & Y \arrow[r] & A'_n \arrow[r] & \dots \arrow[r] & A'_1 \arrow[r] & X \arrow[r] & 0
    \end{tikzcd}
\end{equation*}

We also need to recall the next homological property of a functor.

\begin{defn}\label{homembeddeff}\textnormal{(\!\!\cite[Definition~3.6]{Psaroud})}
An exact functor $\mathsf{i}\colon \A \rightarrow \B$ between abelian categories is called a \textbf{$k$-homological embedding}, for $k \geq 0$, if the map
\[
\mathsf{i}^n_{X,Y}\colon \Ext^n_{\A}(X,Y) \xrightarrow[]{} \Ext^n_{\B}(\mathsf{i}X,\mathsf{i}Y)
\]
is invertible for all $X, Y$ in $\A$ and $0 \leq n \leq k$. The functor $\mathsf{i}$ is called a \textbf{homological embedding} if it is a $k$-homological embedding for all $k \geq 0$.
\end{defn}

We will need the following observation due to Oort~\cite{Oort}.

\begin{rem} 
\label{remhomemb}
If $\mathsf{i} \colon \A \rightarrow \B$ is a $k$-homological embedding, then the map 
\[
\mathsf{i}^{k+1}_{X,Y}\colon \Ext^{k+1}_{\A}(X,Y) \xrightarrow[]{} \Ext^{k+1}_{\B}(\mathsf{i}X,\mathsf{i}Y)
\]
is a monomorphism for all $X, Y \in \A$.
\end{rem}

Thus, assuming that the functor $\mathsf{i}\colon \A \rightarrow \B$ is a $k$-homological embedding, proving that it is a $(k+1)$-homological embedding, requires showing that the map $\mathsf{i}^{k+1}_{X,Y}\colon \Ext^{k+1}_{\A}(X,Y) \xrightarrow[]{} \Ext^{k+1}_{\B}(\mathsf{i}X, \mathsf{i}Y)$ is an epimorphism. We use this fact to prove how to lift a homological embedding between abelian categories to equivariant ones.

\begin{prop}
\label{equivarianthomembedding}
Let $\mathsf{i}\colon \A \rightarrow \B$ be an exact functor between abelian categories. Assume that $\mathsf{i}$ is a $G$-functor. If the functor $\mathsf{i}$ is a k-homological embedding, then the equivariant functor $\mathsf{i}^G\colon \A^G \rightarrow \B^G$ is also a k-homological embedding. 
\begin{proof}
We prove that $\mathsf{i}^G$ induces an isomorphism 
\[
\mathsf{i}^G_{{(X,\chi),(Y,\psi)}}\colon \Ext^n_{\A^G}((X,\chi),(Y,\psi)) \simeq \Ext^n_{\B^G}(\mathsf{i}^G(X,\chi),\mathsf{i}^G(Y,\psi)) 
\]
for all $(X,\chi),(Y,\psi)$ in $\A^G$ and $0 \leq n \leq k$. Note that $\Ext^n_{\B^G}(\mathsf{i}^G(X,\chi),\mathsf{i}^G(Y,\psi)=\Ext^n_{\B^G}((\mathsf{i}X,\chi'),(\mathsf{i}Y,\psi'))$ where $ \chi', \psi'$ are induced by $\mathsf{i}^G$, see Lemma~\ref{gfunctorinducesequivariant}. First we show that $\mathsf{i}^G$ is well defined, i.e.\ for $\theta \sim \theta'$ we prove that $\mathsf{i}^G(\theta) \sim \mathsf{i}^G(\theta')$. Suppose we have two equivalent extensions $\theta \sim \theta'$:
\[
\begin{tikzcd}[column sep=small]
        \theta: & 0 \arrow[r] & (Y, \psi) \arrow[r] & (A_n, a_n)  \arrow[r] & \cdots \arrow[r] & (A_1, a_1) \arrow[r] & (X, \chi)  \arrow[r] & 0 \\
       \theta'': \arrow[u, "\mu"'] \arrow[d, "\nu"] & 0 \arrow[r] & (Y, \psi) \arrow[u, equal]\arrow[d, equal] \arrow[r] & (A''_n, a_n) \arrow[u, "\mu_n"']\arrow[d, "\nu_n"] \arrow[r] & \cdots \arrow[r] & (A''_1, a_1) \arrow[u, "\mu_1"'] \arrow[d, "\nu_1"]\arrow[r] & (X, \chi) \arrow[d, equal] \arrow[u, equal]\arrow[r] & 0 \\
      \theta' : & 0 \arrow[r] & (Y,\psi) \arrow[r] & (A'_n, a'_n) \arrow[r] & \cdots \arrow[r] & (A'_1, a'_1) \arrow[r] & (X, \chi) \arrow[r] & 0
    \end{tikzcd}
\]
We claim that their images under $\mathsf{i}^G$ are also equivalent via some $\mu'$ and $\nu'$. Set $(\mu'_i)_{1 \leq i \leq n}:=(\mathsf{i}^G \mu_i)_{1 \leq i \leq n} $ and $(\nu'_i)_{1 \leq i \leq n}:=(\mathsf{i}^G \nu_i)_{1 \leq i \leq n} $. Since the functor $\mathsf{i}^G$ is exact, we obtain following commutative diagram:
\[
\begin{tikzcd}[column sep=small]
     \mathsf{i}^G(\theta): & 0 \arrow[r] & \mathsf{i}^G(Y,\psi) \arrow[r] & \mathsf{i}^G(A_n, a_n) \arrow[r] & \cdots \arrow[r] & \mathsf{i}^G(A_1, a_1) \arrow[r] & \mathsf{i}^G(X,\chi) \arrow[r] & 0 \\
      \mathsf{i}^G(\theta''): \arrow[d, "\nu'"] \arrow[u, "\mu'"'] & 0 \arrow[r] & \mathsf{i}^G(Y,\psi) \arrow[d, equal]\arrow[u, equal] \arrow[r] & \mathsf{i}^G(A_n'', a_n) \arrow[d, "\mathsf{i}^G\nu_n"] \arrow[u, "\mathsf{i}^G\mu_n"'] \arrow[r] & \cdots \arrow[r] & \mathsf{i}^G(A''_1, a_1) \arrow[d, "\mathsf{i}^G\nu_1"] \arrow[u, "\mathsf{i}^G\mu_1"'] \arrow[r] & \mathsf{i}^G(X,\chi) \arrow[d, equal] \arrow[u, equal]\arrow[r] & 0 \\
      \mathsf{i}^G(\theta') : & 0 \arrow[r] & \mathsf{i}^G(Y,\psi) \arrow[r] & \mathsf{i}^G(A'_n,a'_n) \arrow[r] & \cdots \arrow[r] & \mathsf{i}^G(A'_1,a'_1) \arrow[r] & \mathsf{i}^G(X,\chi) \arrow[r] & 0
\end{tikzcd}   
\]
We infer that the map $\mathsf{i}^G$ is well defined.

If $\mathsf{i}$ is fully faithful, then $\mathsf{i}^G$ is fully faithful by Lemma~\ref{ffequiv}. Also, the functor $\mathsf{i}$ being a $1$-homological embedding, implies that $\mathsf{i}^G$ is a $1$-homological embedding. Indeed, $\mathsf{i}^G$ is fully faithful and one can show that its image is also closed under extensions using the same arguments appearing in Remark~\ref{SerreSubcatEquivariant}. Thus, by Remark~\ref{remhomemb}, to prove that $\mathsf{i}^G$ is a $k$-homological embedding, it suffices to prove that $(\mathsf{i}^G)^n$ is an epimorphism for $2 \leq n \leq k$. Let $\theta' \in \Ext^n_{\B^G}((\mathsf{i}X,\chi'),(\mathsf{i}Y, \psi'))$ be represented by 
\[
0 \xrightarrow[]{} (\mathsf{i}Y, \psi') \rightarrow  (A'_n, a'_n) \rightarrow \dots \rightarrow (A'_1, a'_1) \rightarrow (\mathsf{i}X, \chi') \rightarrow 0  
\]
Applying the exact forgetful functor we have that $\mathsf{For}(\theta') \in \Ext^n_{\B^G}(\mathsf{i}X,\mathsf{i}Y)$. Since $\mathsf{i}$ is a k-homological embedding, we obtain an extension 
\[
\xi\colon \ 0 \rightarrow Y \rightarrow A_n \rightarrow \dots \rightarrow A_1 \rightarrow X \rightarrow 0 
\]
such that $\mathsf{i}(\xi) = \mathsf{For}(\theta')$. Combining the above we can write $\theta'$ as follows:
\[
 0 \xrightarrow[]{} (\mathsf{i}Y, \psi') \rightarrow  (\mathsf{i}A_n, a'_n) \rightarrow \dots \rightarrow (\mathsf{i}A_1, a'_1) \rightarrow (\mathsf{i}X, \chi') \rightarrow 0  
\]
By Lemma~\ref{epilinear}, we have that for all $(\mathsf{i}A_i, a'_i)$ there exists $(A_i, a_i)$ in $\A^G$ such that $\mathsf{i}^G(A_i, a_i) = (\mathsf{i}A_i, a'_i) $. Since $\mathsf{i}^G$ is fully faithful, we have that maps between two consecutive terms of $\theta'$ are images of maps of $\A^G$. In particular, if $f'\colon (\mathsf{i}A'_i, a'_i) \xrightarrow[]{} (\mathsf{i}A_{i+1}, a'_{i+1})$ (where $A'_i$ might be $\mathsf{i}X$ and $A'_{i+1}$ might be $\mathsf{i}Y$), there exists $f\colon (A_i, a_i) \rightarrow (A_{i+1}, a_{i+1})$ such that $\mathsf{i}(f) =f'$.
We have proved that there exists some $\theta \in \mathrm{Ext}^n_{\A^G}((X,\chi),(Y, \psi))$, consisting of the above $(A_i, a_i) \in \A^G$ and their maps, such that $\mathsf{i}^G(\theta) = \theta'$.
\end{proof}
\end{prop}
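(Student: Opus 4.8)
The plan is to show, by induction on $n$, that for every $0\le n\le k$ and all $(X,\chi),(Y,\psi)\in\A^G$ the canonical map
\[
(\mathsf{i}^G)^n\colon \Ext^n_{\A^G}\big((X,\chi),(Y,\psi)\big)\lxr\Ext^n_{\B^G}\big(\mathsf{i}^G(X,\chi),\mathsf{i}^G(Y,\psi)\big)
\]
is bijective. First I would record that $\mathsf{i}^G$ is exact: since $\mathsf{For}\circ\mathsf{i}^G=\mathsf{i}\circ\mathsf{For}$, both forgetful functors are exact, and the forgetful functor on $\B^G$ detects short exact sequences (Remark~\ref{0-object-equiv}), exactness of $\mathsf{i}$ forces exactness of $\mathsf{i}^G$; hence $\mathsf{i}^G$ carries an $n$-fold Yoneda extension to an $n$-fold Yoneda extension, and carries a refinement together with its two chain maps (identities on the ends) to such a datum, so $(\mathsf{i}^G)^n$ is well defined on equivalence classes. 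The case $n=0$ is immediate, since $k\ge 0$ makes $\mathsf{i}$ fully faithful and hence $\mathsf{i}^G$ fully faithful by \lemref{ffequiv}. For $n=1$ I would, in addition, check that the essential image of $\mathsf{i}^G$ is closed under extensions; as in Remark~\ref{SerreSubcatEquivariant}, this follows from the fact that $(B,\beta)\in\B^G$ lies in the image of $\mathsf{i}^G$ iff $B$ lies in the image of $\mathsf{i}$ (Remark~\ref{subcatlinear}): apply the exact forgetful functor to a short exact sequence in $\B^G$ with outer terms in the image of $\mathsf{i}^G$ and use that $\mathsf{i}$, being a $1$-homological embedding, has extension-closed image. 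A fully faithful exact functor with extension-closed essential image is an isomorphism on $\Ext^1$, which settles $n=1$.

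For the inductive step, I would fix $2\le n\le k$ and assume $\mathsf{i}^G$ is already an $(n-1)$-homological embedding. By Oort's observation (Remark~\ref{remhomemb}) the map $(\mathsf{i}^G)^n$ is then automatically a monomorphism, so only surjectivity remains. Given a class represented by
\[
\theta'\colon\ 0\to(\mathsf{i}Y,\psi')\to(A'_n,a'_n)\to\cdots\to(A'_1,a'_1)\to(\mathsf{i}X,\chi')\to 0
\]
in $\B^G$, where $\chi',\psi'$ are the linearizations $\mathsf{i}^G$ attaches to $\mathsf{i}X,\mathsf{i}Y$ (\lemref{gfunctorinducesequivariant}), I would apply the exact forgetful functor to get a class $\mathsf{For}(\theta')\in\Ext^n_{\B}(\mathsf{i}X,\mathsf{i}Y)$; since $\mathsf{i}$ is a $k$-homological embedding and $n\le k$, there is an extension $\xi\colon 0\to Y\to A_n\to\cdots\to A_1\to X\to 0$ in $\A$ with $\mathsf{i}(\xi)$ representing that class. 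After replacing $\theta'$ within its equivalence class, I would assume its underlying sequence is $\mathsf{i}(\xi)$, i.e.\ with objects $\mathsf{i}A_i$ and maps $\mathsf{i}$ of those of $\xi$. Then I transport the equivariant data back: since $\mathsf{i}$ is fully faithful, \lemref{epilinear} gives that $\mathsf{i}^G$ is epi on linearizations, so each $a'_i$ is induced from a linearization $a_i$ of $A_i$, i.e.\ $\mathsf{i}^G(A_i,a_i)=(\mathsf{i}A_i,a'_i)$; and full faithfulness of $\mathsf{i}^G$ lifts each differential of $\theta'$ uniquely to a morphism in $\A^G$. Testing with the exact forgetful functor shows the lifted morphisms still form an exact complex, so they assemble into $\theta\in\Ext^n_{\A^G}((X,\chi),(Y,\psi))$ with $\mathsf{i}^G(\theta)=\theta'$, completing the induction.

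The step I expect to be the main obstacle is the reduction ``one may assume the underlying sequence of $\theta'$ is $\mathsf{i}(\xi)$''. A priori the objects $A'_i$ need not lie in the essential image of $\mathsf{i}$ --- the image of a homological embedding is generally not closed under subobjects or quotients --- so the identity $[\mathsf{For}(\theta')]=[\mathsf{i}(\xi)]$ in $\Ext^n_{\B}$ is only a zigzag of Yoneda refinements, not an isomorphism of complexes, and lifting that zigzag to $\B^G$ is not formal. I would handle this by being careful to work with a single chosen representative: one must track the equivariant structure through the refinements relating $\mathsf{For}(\theta')$ to $\mathsf{i}(\xi)$ (using the exactness of the forgetful functor at each step) so as to produce a genuine representative of $[\theta']$ whose underlying complex is $\mathsf{i}(\xi)$; once that representative is in hand, the equivariant lifting (\lemref{epilinear} for the linearizations, full faithfulness of $\mathsf{i}^G$ for the differentials) and the exactness check are formal, and compatibility at the ends is automatic because $\chi',\psi'$ are exactly the linearizations $\mathsf{i}^G$ induces on $\mathsf{i}X,\mathsf{i}Y$. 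The remaining verifications --- well-definedness of $(\mathsf{i}^G)^n$ on classes and exactness of a termwise lift --- are routine, relying only on the exactness of $\mathsf{i}$ and on the forgetful functor detecting short exact sequences.
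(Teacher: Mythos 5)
Your proposal follows the paper's proof essentially step for step: well-definedness of $(\mathsf{i}^G)^n$ on Yoneda classes via exactness, the cases $n\le 1$ via Lemma~\ref{ffequiv} together with extension-closure of the image as in Remark~\ref{SerreSubcatEquivariant}, Oort's observation (Remark~\ref{remhomemb}) to reduce the inductive step to surjectivity, and then the exact forgetful functor, surjectivity of $\mathsf{i}^n$, the epi-on-linearizations property (Lemma~\ref{epilinear}) and full faithfulness of $\mathsf{i}^G$ to lift a representative back to $\A^G$. The only point of divergence is precisely the step you single out as the main obstacle, namely whether one may assume the underlying complex of the chosen equivariant representative $\theta'$ is literally $\mathsf{i}(\xi)$. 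The paper does not carry out the zigzag-tracking you propose: it simply writes $\mathsf{i}(\xi)=\mathsf{For}(\theta')$ as an equality of extensions, i.e.\ it takes the representative of $\theta'$ to have middle terms of the form $\mathsf{i}A_i$ without further comment, and then proceeds exactly as in your final paragraph. So as a reconstruction of the paper's argument your proposal is faithful, and your caution is warranted rather than a sign you missed a trick in the paper; but note that your proposed repair remains a promissory note: the intermediate complexes in a Yoneda zigzag relating $\mathsf{For}(\theta')$ to $\mathsf{i}(\xi)$ carry no linearizations, so ``tracking the equivariant structure through the refinements'' is not a routine verification, and neither your sketch nor the paper actually proves that the class of $\theta'$ in $\Ext^n_{\B^G}$ admits an equivariant representative whose middle terms lie in the essential image of $\mathsf{i}$. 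If you want a fully airtight write-up, this is the one step that still needs a genuine argument.
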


In the next section, we will examine the converse in the context of abelian recollements, that is, whether the equivariant functor $\mathsf{i}^G$ being a k-homological embedding implies that the functor $\mathsf{i}\colon \A \rightarrow \B$ is also a k-homological embedding.

\section{Equivariant Derived Categories}
\label{equivariant derived cats}

In this section we investigate triangulated recollements of bounded derived categories induced by abelian recollements that lift to $G$-equivariant recollements. Throughout this section all groups are finite and all actions are right ones.

\subsection{Action on Derived Categories}\label{actiononderivedcats}

By subsection~\ref{actionontriangcats} we already know that the equivariant category of a triangulated category is not always triangulated. The question that arises is: what happens in the derived category scenario?
If $\A$ is an abelian category with enough injectives, then a categorical action of $G$ on $\A$ induces a categorical action of $G$ on the bounded derived category $\mDb(\A)$ but its equivariant category is not a priori triangulated. 
In the next subsection we describe briefly the tools we need in the sequel of our work. For a more in depth view to this problem we refer you to \cite{XiaoWuChen,Elagin,Sosna}.

\begin{rem}\label{Gactiononderivedcat}
    The action of $G$ on an abelian category $\A$ extends to an admissible action on $\mDb(\A)$ via the derived functors of each $\rho_g$, which we denote also by $\rho_g$, because each $\rho_g\colon  \A \xrightarrow[]{} \A$ is exact, thus its derived functor is computed component-wise as follows:
\begin{equation}\label{actiononD^b}
\rho_g(\cdots \xrightarrow{} X^n \xrightarrow[]{\delta^n} X^{n+1} \xrightarrow[]{} \cdots) = \ \ \cdots \xrightarrow[]{} (X^n)^g \xrightarrow{(\delta^n)^g} (X^{n+1})^g \xrightarrow[]{} \cdots 
\end{equation}
We write $ \rho_g(X^{\bullet}) \coloneqq (X^{\bullet})^g$ for the above action on chain complexes.

The composition isomorphisms $\theta_{g,h}$ of the induced action on chain complexes are also induced canonically, i.e.\ the family is induced by component-wise composition isomorphisms and it satisfies the 2-cocycle condition~$(\ref{cocycle})$.
 We write $(\mDb(\A), \rho, \theta)$ for the (right) $G$-action induced by the $G$-action on $(\A, \rho, \theta)$, to emphasize that it is induced by it. However, we omit this notation when it is clear. We write $(X^\bullet, \chi^\bullet)$ for the $G$-equivariant complexes.
\end{rem}

It is natural to consider the equivariant derived category $\mDb(\A)^G$, which is canonically pre-triangulated, by Proposition~\ref{canonicalpretriangstructure}. Notice that since $\A^G$ is an abelian category, there exists also its derived category $\mDb(\A^G)$. There exists a natural functor $K_{\A} \colon  \mDb(\A^G) \xrightarrow{} \mDb(\A)^G$, called the \textbf{comparison functor}, which maps 
$(X,\chi)^{\bullet} \coloneqq \cdots \xrightarrow[]{} (X^n, \chi^n) \xrightarrow{\delta^n}  (X^{n+1},\chi^{n+1}) \xrightarrow[]{} \cdots$ 
to $(X^{\bullet}, \chi^{\bullet})$.

The following result has been proved independently by  Xiao-Wu Chen \cite[Proposition~4.5]{XiaoWuChen2} and Elagin \cite[Theorem~7.1]{Elagin}. 
Here we want to point out that while both used a special case of Beck's theorem (see \cite[Chapter~VI,~7.1]{MacLane}), Chen gives a full description of this comparison functor but proves it when the action is strict. However, Chen comments that the results probably holds without the strictness condition. Elagin proved this result for any action, but without defining precisely the comparison functor. 
Combining both their results, we can see that Chen's Theorem works also without the strictness assumption. Namely, having a strict action implies that $\A^G$ is {\em isomorphic} to $\A_M$, which is the category of modules defined by the monad on $\A$ induced by the adjunction $\mathsf{Ind} \dashv \mathsf{For}$. If the action is not strict, we still have an {\em equivalence} of categories $\A^G \simeq \A_M$ by results of Elagin.

\begin{thm}\label{derivedequivariantequiv}\textnormal{(\!\!\cite[Proposition~4.5]{XiaoWuChen2}, \cite[Theorem~7.1]{Elagin})}
 Let G be a finite group acting on an abelian category $\A$ and $|G|$ is invertible in $ \A$. Then $\mathsf{D^b}(\A)^G$ is canonically triangulated and $K_{\A}\colon  \mDb(\A^G) \xrightarrow{} \mDb(\A)^G$ is a triangle equivalence.
\end{thm}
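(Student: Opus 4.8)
The strategy is to reduce the assertion to a statement about monads and then apply Beck's monadicity theorem. Since every $G$-action is equivalent to a strict one \cite[Theorem~5.4]{EvShinder}, I may assume the action on $\A$ is strict, so that $\A^G$ is isomorphic to the Eilenberg--Moore category $\A_M$ of the monad $M=\mathsf{For}\circ\mathsf{Ind}$ attached to the adjunction $\mathsf{Ind}\dashv\mathsf{For}$ (for a general action one invokes Elagin's equivalence $\A^G\simeq\A_M$ \cite[Theorem~7.1]{Elagin}). Under this identification the comparison functor $K_\A$ becomes the canonical comparison functor $\mDb(\A_M)\to\mDb(\A)_{\widetilde M}$, where $\widetilde M$ is the monad induced on $\mDb(\A)$. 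The feature of the situation that makes everything work is that, $|G|$ being invertible in $\A$, the monad $M$ is \emph{separable}: its multiplication $M\circ M\Rightarrow M$ admits a section which is a morphism of $M$-bimodules, namely the normalised averaging $\frac{1}{|G|}\sum_{g\in G}(-)$.

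First, $\mDb(\A)^G$ is canonically pre-triangulated: each $\rho_g$ is exact, so $|G|$ is invertible in the category of bounded complexes componentwise, hence in $\mathsf{K}^{\mathsf{b}}(\A)$ and, after Verdier localisation, in $\mDb(\A)$, whence Proposition~\ref{canonicalpretriangstructure} applies. Exactness of the $\rho_g$ also shows that $\mathsf{Ind}$ and $\mathsf{For}$ extend componentwise to exact functors on $\mDb(\A)$ forming again an adjoint pair, and that the induced monad $\widetilde M$ is still separable (the section of the multiplication extends as well). Now for any additive category $\D$ with a $G$-action and $|G|$ invertible in $\D$, unwinding the cocycle condition identifies $\D^G$ with the Eilenberg--Moore category of the induced (separable) monad; applied to $\D=\mDb(\A)$ this gives $\mDb(\A)^G\simeq\mDb(\A)_{\widetilde M}$. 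Hence the theorem reduces to showing that the comparison functor
\[
K_\A\colon \mDb(\A_M)\longrightarrow\mDb(\A)_{\widetilde M}
\]
is an equivalence of categories, i.e.\ that forming bounded derived categories commutes with passing to modules over the separable exact monad $M$.

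For the latter I would invoke Beck's monadicity theorem \cite[Chapter~VI,~7.1]{MacLane}. Full faithfulness: morphisms on either side are computed as equalizers of morphism groups in $\mDb(\A)$ built from $\widetilde M$, and because $\widetilde M$ is separable these equalizers are \emph{split}, hence preserved by the exact forgetful functor, so the two computations agree. Essential surjectivity: a $\widetilde M$-module $(Y^\bullet,\xi)$ is, by separability, a retract of the corresponding free $\widetilde M$-module, which lies in the image of $K_\A$; since $\mDb(\A_M)\simeq\mDb(\A^G)$ is idempotent complete --- being the bounded derived category of the abelian category $\A^G$, cf.\ Remark~\ref{0-object-equiv} --- this retract is realised inside $\mDb(\A^G)$, so $(Y^\bullet,\xi)$ lies in the essential image. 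Thus $K_\A$ is an equivalence. Transporting the triangulated structure of $\mDb(\A^G)$ (which is triangulated, $\A^G$ being abelian) along $K_\A$ makes the forgetful functor $\mDb(\A)^G\to\mDb(\A)$ exact, so it is \emph{a} canonical triangulated structure, hence \emph{the} canonical one by the uniqueness clause of Proposition~\ref{canonicalpretriangstructure}; consequently $\mDb(\A)^G$ is canonically triangulated and $K_\A$ is, tautologically, a triangle equivalence.

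The hard part is precisely the last reduction: proving that $\mDb$ commutes with the Eilenberg--Moore construction for the separable exact monad $M$, i.e.\ that a $\widetilde M$-module structure living a priori only in the derived category can be rigidified to an honest complex of equivariant objects. Separability --- equivalently, the invertibility of $|G|$ --- is exactly what is needed here, since it makes every module a direct summand of a free one and turns the coequalizers in Beck's criterion into split ones. This is the step carried out, for strict actions, by Xiao-Wu Chen \cite[Proposition~4.5]{XiaoWuChen2}, and in general through Elagin's monadic description of $\A^G$ \cite[Theorem~7.1]{Elagin}.
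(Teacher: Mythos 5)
Your proposal follows essentially the same route as the paper, which offers no independent argument for this statement but cites Chen and Elagin and explains that the strictness hypothesis in Chen's monadic proof can be removed via Elagin's equivalence $\A^G\simeq\A_M$; your reduction to a separable monad (with the averaging section coming from $|G|^{-1}$), the appeal to Beck-type monadicity for the comparison functor $\mDb(\A_M)\to\mDb(\A)_{\widetilde M}$, and the transport of the triangulated structure so that $\mathsf{For}$ becomes exact is precisely the strategy of those cited proofs. The only soft spot is the full-faithfulness step, which you state rather loosely (one really reduces to induced objects via the split counit and the derived adjunction), but since you explicitly defer this crux to the same references the paper relies on, your argument matches the paper's treatment.
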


Note that the assumption on $|G|$ is necessary, see \cite[Remark~4.6,~(2)]{XiaoWuChen2} for a counter example in the modular case of a field $k$ of characteristic dividing $|G|$.

\begin{rem}
Theorem~\ref{derivedequivariantequiv} has many interesting consequences. For instance, if $X$ is a quasi-projective variety over a field $k$ with $G$-action as described in Examples~\ref{exampleonsheaves1} and~\ref{exampleonsheaves2} such that $\mathrm{char}(k)$ does not divide $|G|$, then $  \mathsf{\mDb}(\mathsf{Coh}[X/G])\simeq \mDb(\mathsf{Coh}^G(X)) \simeq \mDb(X)^G$.
Notice that if we work over a field of characteristic zero, then this result follows directly for any finite group $G$.
\end{rem}

\subsection{Equivariant Derived Functors}
In this subsection, we investigate the existence of the derived functor of an exact $G$-equivariant functor between abelian categories as well as certain homological properties. This subsection constists of many verifications of natural constructions which will be useful also in the next subsection. The key result is Proposition~\ref{finitenessoflocaldimension}.
Throughout this section, all derived categories are endowed with the (right) action of the finite group $G$ induced from the underlying abelian category.

\begin{lem}\label{D^b(e)Gfunctor}
Let $F\colon \B \to \C$ be an exact functor between abelian categories. Let also $G$ be a finite group acting on $\B$ and $\C$ with $|G|$ invertible in both categories such that $(F, \sigma^F)$ is a $G$-functor.
There exists a canonical family $\{ \sigma^{\mDb(F)}_g \}_{g \in G}$ so that  $(\mDb(F), \sigma^{\mDb(F)}) \colon \mDb(\B) \to \mDb (\C)$ is a $G$-functor. 
\end{lem}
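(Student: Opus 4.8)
The plan is to define the family $\{\sigma^{\mDb(F)}_g\}_{g\in G}$ termwise out of $\sigma^F$, and then to check the three requirements of Definition~\ref{G-functordef}: that each $\sigma^{\mDb(F)}_g$ is a natural transformation $\mDb(F)\circ\rho_g\Rightarrow\rho_g\circ\mDb(F)$, that it is invertible, and that the family satisfies the compatibility diagram~\eqref{associativity}.

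First I would record that, since $F$ is exact and every $\rho_g$ on $\B$ (resp.\ on $\C$) is exact, all the functors in play are computed componentwise on complexes, exactly as in~\eqref{actiononD^b} and Remark~\ref{Gactiononderivedcat}; in particular $\mDb(F)\circ\rho_g$ is the total derived functor of the exact functor $F\circ\rho_g^{\B}$, while $\rho_g^{\C}\circ\mDb(F)$ is the total derived functor of $\rho_g^{\C}\circ F$. For a complex $X^\bullet = (\cdots\to X^n\xrightarrow{\delta^n}X^{n+1}\to\cdots)$ in $\mDb(\B)$ I would set $\sigma^{\mDb(F)}_{g,X^\bullet}$ to be the morphism of complexes whose $n$-th component is $\sigma^F_{g,X^n}\colon F((X^n)^g)\xrightarrow{\simeq}(FX^n)^g$. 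This is a chain map precisely because $\sigma^F_g$ is a natural transformation, hence commutes with the differentials $(\delta^n)^g$ and $F(\delta^n)^g$; it is an isomorphism of complexes since each $\sigma^F_{g,X^n}$ is.

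Next I would check naturality. Given a morphism of complexes $f^\bullet\colon X^\bullet\to Y^\bullet$, the square relating $\sigma^{\mDb(F)}_{g,X^\bullet}$ and $\sigma^{\mDb(F)}_{g,Y^\bullet}$ commutes degreewise, because $\sigma^F_g$ is natural with respect to each $f^n$. That $\sigma^{\mDb(F)}_g$ descends to a natural transformation of functors on $\mDb$ (not merely on complexes or on the homotopy category $\mKb$) is then automatic: it is the image of the natural isomorphism $\sigma^F_g\colon F\rho_g^{\B}\xrightarrow{\simeq}\rho_g^{\C}F$ under the functorial passage to total derived functors; equivalently, one invokes the universal property of the Verdier localization $\mKb(\B)\to\mDb(\B)$ together with the fact that both composite functors invert quasi-isomorphisms. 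Finally, for the hexagon~\eqref{associativity}: by Remark~\ref{Gactiononderivedcat} the composition isomorphisms $\theta_{g,h}$ on $\mDb(\B)$ and on $\mDb(\C)$ are also induced componentwise from those of $\B$ and $\C$, so diagram~\eqref{associativity} for $(\mDb(F),\sigma^{\mDb(F)})$, evaluated at any complex $X^\bullet$, is in each homological degree $n$ exactly diagram~\eqref{associativity} for $(F,\sigma^F)$ at the object $X^n$, which commutes by hypothesis; a morphism of complexes vanishes iff it vanishes in every degree, so the diagram commutes in $\mDb(\C)$ and $(\mDb(F),\sigma^{\mDb(F)})$ is a $G$-functor.

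There is no deep difficulty here — the content is bookkeeping about componentwise constructions. The one point deserving care is the assertion that the termwise-defined $\sigma^{\mDb(F)}_g$ is a genuine natural transformation at the level of the \emph{derived} categories; this is settled not by a naive termwise argument but by appealing to functoriality of total derived functors of exact functors (or, what amounts to the same thing, to the universal property of the localization). The hypothesis that $|G|$ be invertible plays no role in this particular argument and is retained only because it is in force throughout the section, e.g.\ to ensure that the relevant equivariant derived categories are canonically triangulated.
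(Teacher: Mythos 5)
Your proof is correct and the construction is the same as the paper's: the family $\sigma^{\mDb(F)}_g$ is defined termwise from $\sigma^F_g$, checked to be an isomorphism of complexes, and the compatibility diagram~(\ref{associativity}) is verified degreewise. The only place where you diverge is the justification that the termwise-defined transformation is natural with respect to morphisms of $\mDb(\B)$: the paper does this by hand, representing a morphism by a roof $X^\bullet \xleftarrow{q} W^\bullet \xrightarrow{f} Y^\bullet$, noting that $F$ and $\rho_g$ preserve quasi-isomorphisms, and checking commutativity of the two squares componentwise; you instead establish chain-level naturality and then invoke the $2$-universal property of the Verdier localization (equivalently, functoriality of total derived functors of exact functors) to descend the natural isomorphism to $\mDb$. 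Both are valid; your route is shorter and isolates the abstract principle, while the paper's explicit roof computation keeps everything self-contained and makes visible exactly which diagrams are being used later (e.g.\ in Lemma~\ref{D(F^G)andD(F)^G}). Your remark that the invertibility of $|G|$ is not actually used in this argument and is only carried along for the section's standing hypotheses is accurate and consistent with the paper's proof, which likewise never invokes it.
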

\begin{proof}
We show that $\{ \sigma^{\mDb(F)}_g \}_{g \in G}$ is the family of isomorphisms induced by the $G$-functor structure of $F$, i.e.\ the natural transformations $\sigma^F_g \colon   F \circ \rho_g \xrightarrow[]{\sim} \rho_g \circ F $. Indeed, let $X^{\bullet} \in \mDb(\B)$ and $g \in G$, then we have the following commutative diagram:
\begin{equation}\label{natransformobjects}
\begin{tikzcd}
\mDb(F) \big( (X^{\bullet})^g \big) \arrow[dd,"\big(\sigma^{\mDb(F)}_{g} \big)^{X^{\bullet}}"] & \cdots \arrow[r] & F((X^n)^g) \arrow[r, " F((\delta^n)^g)"] \arrow[dd, "\big(\sigma^{F}_{g}\big)^{X^n}"'] &  F((X^{n+1})^g) \arrow[r]  \arrow[dd, "\big(\sigma^{F}_{g}\big)^{X^{n+1}}"] & \cdots \\
\\
 \big( \mDb(F) X^{\bullet} \big)^g  & \cdots \arrow[r] & (FX^n)^g \arrow[r, " (F\delta^n)^g"'] &  (FX^{n+1})^g \arrow[r] & \cdots
\end{tikzcd}
\end{equation}
The middle (and every other) square commutes since $\sigma_g^F$ is a natural isomorphism. Therefore, $\big(\sigma^{\mDb(F)}_{g} \big)^{X^{\bullet}}$ is an isomorphism of complexes.

Now let $X^{\bullet}, Y^{\bullet} \in \mDb(\B)$ and $f \in \Hom _{\mDb(\B)} ( X^{\bullet}, Y^{\bullet})$, that is, there exists a quasi-isomorphism  $q \colon W^{\bullet} \to X^{\bullet}$ such that the morphism $f$ is represented by the roof $X ^{\bullet} \xleftarrow{q} W^{\bullet}  \xrightarrow{f}~Y^{\bullet} $. Note that both $\rho_g$ and $F$, being exact functors, preserve quasi-isomorphisms. We have the following commutative square (independent of the choice of $W^{\bullet}$) which proves that $\{ \sigma^{\mDb(F)}_g \}_{g \in G}$ is a family of natural isomorphisms. 

\begin{equation}\label{natransformmorphisms}
\begin{tikzcd}
 \mDb(F) \big( (X^{\bullet})^g \big) \arrow[rr,"\big(\sigma^{\mDb(F)}_{g}\big)^{X^{\bullet}}"]& &   \big(\mDb(F)(X^{\bullet})\big)^g  \\
 \\
 \mDb(F) \big( (W^{\bullet})^g \big) \arrow[rr," \big(\sigma^{\mDb(F)}_{g}\big) ^{W^{\bullet}}"]  \arrow[uu, " \mDb(F)(f^g)"] \arrow[dd, " \mDb(F) (q^g)"'] & & \big( \mDb(F)(W^{\bullet}) \big)^g \arrow[dd, "  \big(\mDb(F)(f)\big)^g"]  \arrow[uu, " \big(\mDb(F)(q) \big)^g"']\\
 \\
 \mDb(F) \big((Y^{\bullet})^g \big)  \arrow[rr, "\big( \sigma^{\mDb(F)}_{g}\big)^{Y^{\bullet}}"] &&  \big( \mDb(F)(Y^{\bullet}) \big)^g 
\end{tikzcd}
\end{equation}

To see the commutativity of the bottom square of the above diagram, one would have to see the complexes commute component-wise. Let $f \colon  W^\bullet \to Y^\bullet$ be the chain map and consider the following diagram:  

\begin{equation*}
\begin{tikzcd}
\dots \arrow[r, dashrightarrow] & F((W^n)^g) \arrow[dr, "(\sigma^F_{g})^{W_n}"] \arrow[dd, dashrightarrow, near start, "F((f^n)^g)" description] \arrow[rr, dashrightarrow, "F(( \delta^n)^g)"] && F(( W^{n-1})^g) \arrow[dr, "(\sigma^F_{g})^{W_{n-1}}"] \arrow[dd, dashrightarrow, near start, "F((f^{n-1})^g)" description]  \arrow[rr, dashrightarrow] && \dots \\
\dots \arrow[rr] & & (F W^n)^g \arrow[dd, near start, "(Ff^n)^g"] \arrow[rr, near start, "(F \delta^n)^g" description] & & (F W^{n-1})^g \arrow[dd, crossing over, near start, "(Ff^{n-1})^g"] \arrow[r]  & \dots \\
\dots \arrow[r, dashrightarrow]  & F(( Y^n)^g) \arrow[dr, "(\sigma^F_{g})^{Y_n}" description] \arrow[rr,dashrightarrow, near start, "F(( \delta^n)^g)"] & & F(( Y^{n-1})^g) \arrow[dr, "(\sigma^F_{g})^{Y_{n-1}}" description] \arrow[rr, dashrightarrow] & & \dots \\
\dots \arrow[rr] & & (F Y^n)^g \arrow[rr, "(F \delta^n)^g"] & & (F Y^{n-1})^g  \arrow[r]  & \dots
\end{tikzcd}
\end{equation*}
The top and botom faces commute due to $\sigma^F_g$ being a natural isomorpshism. The front and back faces commute since $(Ff)^g$ and $F(f^g)$ respectively are chain maps. The left and right faces commute since $\sigma^F_g$ is a natural isomorphism.
 
The commutativity of the upper square is obtained similarly.
This family satisfies the associativity condition~$(\ref{associativity})$, since it is induced by $\sigma^{F}$.
\end{proof}

Note that since $\mDb(F)$ is naturally a $G$-functor, its equivariant functor $\mDb(F)^G$ is triangulated functor by Lemma~\ref{triangadjointequiv}. In the following Lemma we show that $\mDb(F)^G$ and $\mDb(F^G)$ are ``equivalent" up to the comparison functor.

\begin{lem}\label{D(F^G)andD(F)^G}
    With the same hypothesis as in Lemma~\ref{D^b(e)Gfunctor}, we have the following commutative diagram:
    \begin{equation*}
        \begin{tikzcd}
            \mDb(\B^G) \arrow[d,"K_{\B}"'] \arrow[r, "\mDb(F^G)"] & \mDb(\C^G) \arrow[d,"K_{\C}"] \\
            \mDb(\B)^G \arrow[d, "\mathsf{For}"'] \arrow[r, "\mDb(F)^G"] & \mDb(\C)^G \arrow[d, "\mathsf{For}"] \\
            \mDb(\B) \arrow[r, "\mDb(F)"] & \mDb(\C)
        \end{tikzcd}
    \end{equation*}

\begin{proof}
Because the functor $F$ is exact, its derived functor $\mDb(F)$ is computed by applying $F$ component-wise on complexes, thus we write
\[ \mDb(F)(X^\bullet) = \cdots \xrightarrow{} FX^n \xrightarrow{F\delta^n} FX^{n+1} \xrightarrow{} \cdots \eqqcolon (FX)^\bullet \]
Similar notation is used for the functors $\mDb(F^G)$ and $\mDb(F)^G$. 

Therefore, verifying the commutativity of the upper square reduces to checking it component-wise on the underlying objects and their linearizations. Since both functors $\mDb(F^G)$ and $\mDb(F)^G$ are defined by applying $F$ and $\sigma^F$ to each component of a complex, they naturally coincide. 

To formalize this, let $(X, \chi)^\bullet$ be an object in $\mDb(\B^G)$. Applying the comparison functor yields $K_{\B}(X, \chi)^\bullet = (X^\bullet, \chi^\bullet)$. Next, applying the derived $G$-functor gives:
\[ \mDb(F)^G(X^\bullet, \chi^\bullet) = (\mDb(F)X^\bullet, \mDb(F)\chi^\bullet) = ((FX)^\bullet, (F\chi)^\bullet) \]
where $(F\chi)^\bullet$ is the linearization induced by $(\mDb(F), \sigma^{\mDb(F)})$; see Lemma~\ref{gfunctorinducesequivariant}. As shown in the proof of Lemma~\ref{D^b(e)Gfunctor}, each $\sigma^{\mDb(F)}_g$ is constructed component-wise from $\sigma^F$, which justifies using the notation $(F\chi)^\bullet$ for the resulting linearization.

Following the other path around the square, we first apply the derived equivariant functor:
\[ \mDb(F^G)(X, \chi)^\bullet = (F^G(X, \chi))^\bullet = (FX, F\chi)^\bullet \]
where $F\chi$ is the linearization induced by $(F, \sigma^F)$. Finally, applying the comparison functor yields:
\[ K_{\C}(FX, F\chi)^\bullet = ((FX)^\bullet, (F\chi)^\bullet) \]
This proves the commutativity of the upper square. The commutativity of the lower square is already known from diagram~$(\ref{comdiagramFor})$.
\end{proof}
\end{lem}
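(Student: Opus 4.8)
The plan is to treat the two squares of the diagram separately and to reduce both of them to a termwise computation, which is legitimate because $F$ is exact. The lower square requires no new argument: by Lemma~\ref{D^b(e)Gfunctor} the derived functor $\mDb(F)$ is itself a $G$-functor with structure maps $\sigma^{\mDb(F)}$, so the general diagram~$(\ref{comdiagramFor})$ applied to the $G$-functor $(\mDb(F),\sigma^{\mDb(F)})$ yields at once $\mathsf{For}\circ\mDb(F)^G=\mDb(F)\circ\mathsf{For}$. Thus essentially all of the work lies in the upper square.

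For the upper square I would first unwind both composites on an object $(X,\chi)^\bullet$ of $\mDb(\B^G)$, that is, on a complex of $G$-equivariant objects of $\B$. Since $F$ is exact, both $\mDb(F)$ and $\mDb(F^G)$ are computed by applying $F$ in each degree, and the comparison functor $K_\B$ merely reinterprets $(X,\chi)^\bullet$ as the $G$-equivariant complex $(X^\bullet,\chi^\bullet)$ of $\mDb(\B)^G$. Going right and then down, one lands on the $G$-equivariant complex $\bigl((FX)^\bullet,(F\chi)^\bullet\bigr)$ whose linearization is induced degreewise by $(F,\sigma^F)$ as in Lemma~\ref{gfunctorinducesequivariant}; going down and then right, one lands again on $\bigl((FX)^\bullet,(F\chi)^\bullet\bigr)$, but now with the linearization induced by the $G$-functor $(\mDb(F),\sigma^{\mDb(F)})$. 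The underlying complexes obviously coincide, so the only point to verify is that these two linearizations agree.

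This last point is precisely where the proof of Lemma~\ref{D^b(e)Gfunctor} does the work: there $\sigma^{\mDb(F)}_g$ is constructed as the isomorphism of complexes whose degree-$n$ component is $(\sigma^F_g)^{X^n}$. Hence precomposing with $\mDb(F)\chi_g$ termwise produces in each degree exactly $\sigma^F_g\circ F\chi_g$, which is precisely the linearization that $F^G$ attaches to $(X^n,\chi^n)$; so the two equivariant complexes coincide on the nose. Commutativity on morphisms then follows formally: both composites send a roof of equivariant complexes $X^\bullet\xleftarrow{q}W^\bullet\xrightarrow{f}Y^\bullet$ to $F$ applied termwise to that roof, and $\mDb(F)$, being exact, preserves quasi-isomorphisms, so the image is again a genuine morphism in $\mDb(\C)^G$. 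I do not anticipate any real obstacle here: the statement is a compatibility between two canonical constructions, and the only mildly delicate point — matching the induced linearizations — has already been rendered transparent by the explicit degreewise description of $\sigma^{\mDb(F)}$ established in Lemma~\ref{D^b(e)Gfunctor}.
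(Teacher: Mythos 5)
Your argument is correct and follows essentially the same route as the paper: the lower square is exactly diagram~$(\ref{comdiagramFor})$ applied to the $G$-functor $(\mDb(F),\sigma^{\mDb(F)})$, and the upper square is verified by the same degreewise comparison of the two induced linearizations, using the componentwise description of $\sigma^{\mDb(F)}$ from Lemma~\ref{D^b(e)Gfunctor}. Your extra remark on morphisms (roofs) is a fine, if minor, elaboration of what the paper leaves implicit.
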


We mention the following remark that can be proven by observing that the part of \cite[Corollary~3.13]{ChaoSun}  regarding the existence of enough projectives (resp.\ injectives) holds for any idempotent complete exact category. 

\begin{rem}\label{enoughprojinj}
Let $\B$ be an abelian category with enough projectives (resp.\ injectives). Assume a finite group $G$ acts on $\B$ with $|G|$ invertible in $\B$. Then $\B^G$ has enough projectives (resp.\ injectives). 
\end{rem}

We need the above remark for the final two results of this subsection for which we also recall the following notion from \cite{Psaroud}.

\begin{defn}
A left (resp.\ right) exact functor $F\colon  \B \to \C$ between abelian categories, where $\B$ has enough injectives (resp.\ projectives), is \textbf{of locally finite cohomological} (resp.\ \textbf{homological}) \textbf{dimension} if for every $X \in \B$ there exists some $n_X\geq 0$ such that the right derived functor $\mathbb{R}^nF(X)=0$ (resp.\ the left derived functor $\mathbb{L}^nF(X)=0$) for all $n > n_X$.
\end{defn}

\begin{prop}\label{finitenessoflocaldimension}
Let $\B$ and $\C$ be a abelian categories where $\B$ has enough injectives (resp.\ projectives). Assume a finite group $G$ acts on both $\B$ and $\C$ such that $|G|$ is invertible in both categories. Given a left (resp.\ right) exact $G$-functor \linebreak $F\colon  \B\to\C $, the right (resp.\ left) derived functor of the induced equivariant functor $F^G\colon  \B^G \to \C^G$ exists. Moreover, the following hold:
\begin{itemize}
\item[(i)] Assume that $X \in \B$ admits a linearization $(X, \chi) \in \B^G$. Then there exists some $n \in \mathbb{N}$ such that $\mathbb{R}^{n} F(X) = 0$ if and only if $\mathbb{R}^{n} F^G (X, \chi) = 0 $ (resp.\ for left derived functors).

\item[(ii)] There exists some $N \in \mathbb{N}$ such that $\mathbb{R}^{N}F(X) = 0$ for all $X \in \B$ if and only if $\mathbb{R}^N F^G (X,\phi) = 0$ for all $(X, \chi) \in \B^G$ (resp.\  for left derived functors). In particular, $\mathbb{R}F$ is bounded if and only if $\mathbb{R}F^G$ is bounded (resp.\ for bounded left derived functors).
\item[(iii)] The functor $F$ is of locally finite cohomological (resp.\ homological) dimension if and only if the equivariant functor $F^G$ is.
\end{itemize}
\end{prop}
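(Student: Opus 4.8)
The plan is to reduce everything to the key computational fact that the (right, resp.\ left) derived functor of the equivariant functor $F^G$ can be computed using equivariant injective (resp.\ projective) resolutions, and that the underlying complex of such a resolution is an ordinary injective (resp.\ projective) resolution in $\B$ after applying the forgetful functor. First I would invoke Remark~\ref{enoughprojinj} to guarantee that $\B^G$ has enough injectives (resp.\ projectives), so that $\mathbb{R}F^G$ (resp.\ $\mathbb{L}F^G$) exists in the classical sense. I will treat only the left-exact/right-derived case; the right-exact case is dual and follows by passing to opposite categories, on which $G$ still acts with $|G|$ invertible.

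The crucial observation is the following compatibility: if $(X,\chi)\in\B^G$ and $(I^\bullet,\chi^\bullet)$ is an injective resolution of $(X,\chi)$ in $\B^G$, then $\mathsf{For}(I^\bullet)$ is a complex of objects of $\B$ which, while not necessarily injective, is $F$-acyclic. Indeed, by the bi-adjunction $\mathsf{Ind}\dashv\mathsf{For}$ together with the standard averaging argument available because $|G|$ is invertible, every injective object of $\B^G$ is a direct summand of $\mathsf{Ind}(J)$ for $J$ injective in $\B$, and $\mathsf{For}\circ\mathsf{Ind}(J)=\bigoplus_{h\in G}J^h$ is a finite direct sum of injectives of $\B$, hence injective; so $\mathsf{For}$ sends injectives of $\B^G$ to injectives of $\B$. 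Therefore $\mathsf{For}(I^\bullet)$ \emph{is} an injective resolution of $X=\mathsf{For}(X,\chi)$ in $\B$. Now by Lemma~\ref{D^b(e)Gfunctor} and Lemma~\ref{D(F^G)andD(F)^G} the derived functors are compatible with $\mathsf{For}$: applying $F^G$ to $(I^\bullet,\chi^\bullet)$ and then forgetting gives exactly $F$ applied to $\mathsf{For}(I^\bullet)$, i.e.\ $\mathsf{For}\circ\mathbb{R}F^G(X,\chi)\cong\mathbb{R}F(X)$ as complexes computing cohomology, and since $\mathsf{For}$ is exact and faithful it detects vanishing of cohomology objects: $\mathbb{R}^nF^G(X,\chi)=0$ if and only if $\mathbb{R}^nF(X)=0$. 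This immediately yields (i).

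For (ii): the ``only if'' direction of the uniform bound is clear from (i) applied objectwise, once we note that every $(X,\chi)\in\B^G$ has underlying object $X\in\B$, so a bound $N$ working for all objects of $\B$ works for all of $\B^G$. For the ``if'' direction, suppose $\mathbb{R}^NF^G(X,\chi)=0$ for every $(X,\chi)\in\B^G$; given an arbitrary $X\in\B$ (which need not admit a linearization), consider the induced object $\mathsf{Ind}(X)=(\bigoplus_{h\in G}X^h,\phi)\in\B^G$. Then $\mathbb{R}^NF^G(\mathsf{Ind}(X))=0$, and by the compatibility with $\mathsf{For}$ just established together with $\mathsf{For}\circ\mathsf{Ind}(X)=\bigoplus_{h\in G}X^h$ and the commutation of $\mathbb{R}F$ with the exact autoequivalences $\rho_h$, we get $0=\mathbb{R}^NF(\bigoplus_{h\in G}X^h)\cong\bigoplus_{h\in G}\mathbb{R}^NF(X^h)\cong\bigoplus_{h\in G}(\mathbb{R}^NF(X))^h$, whence $\mathbb{R}^NF(X)=0$. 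The boundedness statement is the evident reformulation. Part (iii) follows from (i): $F$ is of locally finite cohomological dimension if and only if for each $X\in\B$ there is $n_X$ with $\mathbb{R}^nF(X)=0$ for $n>n_X$; given $(X,\chi)\in\B^G$ apply this to the underlying $X$ and use (i) to transport vanishing to $F^G$, and conversely given $X\in\B$ apply the hypothesis on $F^G$ to $\mathsf{Ind}(X)$ and argue as in (ii) that $\mathbb{R}^nF(X)$ is a direct summand of $\mathbb{R}^nF^G(\mathsf{Ind}(X))$ after forgetting, hence vanishes for large $n$.

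The main obstacle I anticipate is establishing cleanly that $\mathsf{For}$ carries injectives of $\B^G$ to injectives (equivalently, $F$-acyclics) of $\B$ — this is where the hypothesis $|G|$ invertible is essential and where one must be careful that the averaging/splitting argument really produces $I^\bullet$ as a summand of $\mathsf{Ind}$ of an ordinary resolution; everything after that is bookkeeping with the exact, faithful forgetful functor and the exact autoequivalences $\rho_g$. A secondary point requiring care is that in part (ii) and (iii) one cannot assume the given $X\in\B$ is linearizable, which is precisely why the detour through $\mathsf{Ind}(X)$ (rather than $X$ itself) is needed, and why the direct-summand/retract argument for $\mathbb{R}^nF(X)$ inside $\bigoplus_h(\mathbb{R}^nF(X))^h$ must be spelled out.
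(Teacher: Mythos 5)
Your proposal is correct and follows essentially the same route as the paper: compute $\mathbb{R}F^G$ via equivariant injective resolutions, note that $\mathsf{For}$ carries them to injective resolutions in $\B$ so that $\mathsf{For}\big(\mathbb{R}^nF^G(X,\chi)\big)\cong\mathbb{R}^nF(X)$, use that $\mathsf{For}$ detects vanishing for (i), and pass through $\mathsf{Ind}(X)$ with $\mathsf{For}\,\mathsf{Ind}(X)=\bigoplus_{h}X^h$ for the non-linearizable objects in (ii) and (iii). The only cosmetic difference is that the paper gets preservation of injectives by $\mathsf{For}$ directly from its being right adjoint to the exact induction functor (no averaging or summand-of-$\mathsf{Ind}$ description needed), and the equality $\mathsf{For}\circ F^G=F\circ\mathsf{For}$ is just the general commuting square for $G$-functors rather than the derived-category lemmas you cite.
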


\begin{proof}
We prove statements (i), (ii) and (iii) for the case of right derived functors (the other case is analogous).

First, recall that by Remark~\ref{enoughprojinj}, the equivariant category $\B^G$ has enough injectives. Let $(X, \chi)$ be an object in $\B^G$ and consider an injective resolution:
\[
(X,\chi) \to (I^0, \i^0) \to (I^1, \i^1) \to \dots
\]
Applying the equivariant functor $F^G$, which is also left exact, yields the complex:
\[
(FI^0, F\i^0) \to (FI^1, F\i^1) \to \dots
\]
where $F\i^n$ denote the linearizations obtained as in Lemma~\ref{gfunctorinducesequivariant}.
The $n$-th right derived functor $\mathbb{R}^n F^G(X, \chi)$ is defined as the $n$-th cohomology of this complex computed in $\C^G$.

Recall that the forgetful functor $\mathsf{For} \colon \B^G \to \B$ is exact and preserves injectives (as the right adjoint of the exact induction functor). Therefore, applying $\mathsf{For}$ to the above resolution yields an injective resolution of $X$ in $\B$:
\[
X \to I^0 \to I^1 \to \dots
\]
Consequently, the underlying object of the complex $F^G(I^\bullet, \iota^\bullet)$ is simply $F(I^\bullet)$. 
Since cohomology in the equivariant category $\C^G$ is computed using the underlying objects (kernels, images and cokernels in $\C^G$ are formed in $\C$ and then linearized, see Remark~\ref{0-object-equiv}), we have an isomorphism of underlying objects:
\[
\mathsf{For}\left(\mathbb{R}^n F^G(X, \chi)\right) \cong \mathrm{H}^n(F(I^\bullet)) \cong \mathbb{R}^n F(X).
\]

To prove (i), observe that from the isomorphism above, $\mathbb{R}^n F^G(X, \chi) = 0$ in $\C^G$ implies the underlying object $\mathbb{R}^n F(X)$ is zero in $\C$. Conversely, if $\mathbb{R}^n F(X) = 0$, then the equivariant object $\mathbb{R}^n F^G(X, \chi)$ is zero because the zero object has a unique linearization, see Remark~\ref{0-object-equiv}.

The ``only if" of (ii) follows immediately from (i). For the ``if" direction, assume that there exists a (fixed) $N$ such that $\mathbb{R}^N F^G(Y, \psi) = 0$ for all $(Y, \psi) \in \B^G$. For any $X \in \B$, consider its induction $\mathsf{Ind}(X) \in \B^G$. By hypothesis, we have $\mathbb{R}^N F^G(\mathsf{Ind}(X)) = 0$, hence $\mathsf{For}(\mathbb{R}^N F^G(\mathsf{Ind}(X)))=0 $. Using the  isomorphism above and the additivity of derived functors we have:
\[
\mathsf{For}(\mathbb{R}^N F^G(\mathsf{Ind}(X))) \cong \mathbb{R}^N F(\mathsf{For}(\mathsf{Ind}(X))) = \mathbb{R}^N F\left(\bigoplus_{g \in G} X^g\right) \cong \bigoplus_{g \in G} \mathbb{R}^N F(X^g).
\]
Since this direct sum is zero, each summand must be zero. In particular, for the unit $1_G$, we have $X^{1_G} \simeq X$, which implies that $  \mathbb{R}^N F(X) = 0$.

The ``only if" of (iii) is also immediate from (i).
The ``if" part follows by a similar proof as in (ii), by applying the induction functor on each object of $\B$ and using the hypothesis. 
\end{proof}

\begin{rem}
In the course of the proof above, we have described the form of the derived equivariant functor. Specifically, for any object $(X, \chi) \in \B^G$, we have a natural isomorphism:
\[
\mathbb{R}^n F^G(X, \chi) \cong ( \mathbb{R}^n F(X), \psi )
\]
where the linearization $\psi$ is canonically induced. In particular, it is the canonical linearization of the cokernel $\Image \big( F^G(I^{n-1}, \i^{n-1}) \to F^G(I^n, \i^n) \big)$ into the kernel $\Ker (F^G(I^n,\i^n) \to F^G(I^{n+1}, \i^{n+1}) )$, which are canonically linearized objects by Remark~\ref{0-object-equiv}.
\end{rem}

\begin{rem}
\label{counterexampleforchar}
Consider the recollement~$(\ref{modulerecollement})$. The above result provides a comparison between the left derived functors of $\mathsf{l}= -\otimes_{eRe}eR\colon \Mod{eRe}\to \Mod{R}$ and  $\mathsf{l}^G= -\otimes_{e'RGe'}e'RG\colon \Mod{e'RGe'}\to \Mod{RG}$. In particular, assuming that $\Tor^{eRe}_i(X, eR)=0$ for $i>n$, then we obtain that the $\Tor^{e'RGe'}_i(X, e'RG)=0$ for $i>n$. So, as probably expected, there is an interplay between homological properties of the recollement~$(\ref{modulerecollement})$ and the equivariant one $(\ref{equivariantmodulerecollement})$.
\end{rem}

We show below the converse of Proposition~\ref{equivarianthomembedding} for abelian recollements.

\begin{prop}\label{homologicalembedrecollements}
Let $\mathsf{R_{ab}} (\A, \B, \C)$ be a recollement of abelian categories. Assume that $\B$ has enough projective or injective objects. Assume also that a finite group $|G|$ acts on $\B$ with $|G|$ invertible in $\B$ and that the recollement lifts to a $G$-equivariant recollement. The following statements are equivalent:
\begin{itemize}
\item[(i)] The functor $\mathsf{i} \colon \A \to \B$ is a $k$-homological embedding.
\item[(ii)]The functor $\mathsf{i}^G \colon \A^G \to \B^G$ is a $k$-homological embedding.
\end{itemize}
\begin{proof}
We provide a proof for the case that $\B$ has enough projectives. The other case is similar. By \cite[Theorem~3.9]{Psaroud}) we have that $\mathsf{i} \colon \A \to \B$ is a $k$-homological embedding if and only if $\mathbb{L}^n \mathsf{q}(\mathsf{i}(X))=0$ for all $X \in \A$ and $1 \leq n \leq k$. This is equivalent to $\mathbb{L}^n \mathsf{q}^G(\mathsf{i}^G(X,\chi))=0$ for all $(X,\chi) \in \A^G$ and $1 \leq n \leq k$, by Proposition~\ref{finitenessoflocaldimension}. The latter is equivalent to $\mathsf{i}^G \colon \A^G \to \B^G$ being a $k$-homological embedding.
\end{proof}
\end{prop}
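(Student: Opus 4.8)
The plan is to reduce the statement to a known characterization of $k$-homological embeddings in terms of the left (or right) derived functor of the corresponding coreflection (or reflection) functor, and then to transfer this vanishing condition between $\A$ and $\A^G$ using the comparison between $F$ and $F^G$ established in Proposition~\ref{finitenessoflocaldimension}. Concretely, I would first note that since the recollement $\mathsf{R_{ab}}(\A,\B,\C)$ lifts to a $G$-equivariant recollement, all six functors $\mathsf{q},\mathsf{i},\mathsf{p},\mathsf{l},\mathsf{e},\mathsf{r}$ are $G$-functors by Proposition~\ref{prop1}, and the induced equivariant recollement $\mathsf{R_{ab}}(\A^G,\B^G,\C^G)$ has $(\mathsf{q}^G,\mathsf{i}^G,\mathsf{p}^G)$ as an adjoint triple with $\mathsf{i}^G$ fully faithful; moreover $\B^G$ has enough projectives by Remark~\ref{enoughprojinj}, so the relevant derived functors exist.

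Next I would invoke \cite[Theorem~3.9]{Psaroud}: in a recollement of abelian categories where $\B$ has enough projectives, the functor $\mathsf{i}\colon\A\to\B$ is a $k$-homological embedding if and only if $\mathbb{L}^n\mathsf{q}(\mathsf{i}X)=0$ for all $X\in\A$ and all $1\le n\le k$ (dually with $\mathbb{R}^n\mathsf{p}$ and injectives in the other case). Applying the same theorem to the equivariant recollement, $\mathsf{i}^G\colon\A^G\to\B^G$ is a $k$-homological embedding if and only if $\mathbb{L}^n\mathsf{q}^G(\mathsf{i}^G(X,\chi))=0$ for all $(X,\chi)\in\A^G$ and $1\le n\le k$. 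The bridge between these two statements is Proposition~\ref{finitenessoflocaldimension} applied to the right exact $G$-functor $\mathsf{q}$: for each $n$ and each object $Z\in\B$ admitting a linearization $(Z,\zeta)$, one has $\mathbb{L}^n\mathsf{q}(Z)=0$ if and only if $\mathbb{L}^n\mathsf{q}^G(Z,\zeta)=0$. Since $\mathsf{i}^G(X,\chi)=(\mathsf{i}X,\chi')$ with $\mathsf{i}X$ equipped with the induced linearization $\chi'$, this gives the equivalence of the two vanishing conditions object-by-object in the equivariant category, and hence the equivalence of (i) and (ii).

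One subtlety worth spelling out is the direction of the biconditional at the level of objects: the ``only if'' part of Proposition~\ref{finitenessoflocaldimension}(i) immediately yields that $\mathsf{i}$ being a $k$-homological embedding forces the equivariant vanishing, which is just Proposition~\ref{equivarianthomembedding}; for the converse we use that every $X\in\A$ that is the underlying object of some equivariant object (and $\mathsf{i}X$ certainly is, whenever $X$ carries a linearization) has $\mathbb{L}^n\mathsf{q}(\mathsf{i}X)=0$ as soon as the equivariant derived functor vanishes on the corresponding equivariant object. A minor point is that one must be slightly careful about which objects of $\A$ arise this way; but $\A$ being $G$-invariant and the induction functor $\mathsf{Ind}\colon\A\to\A^G$ providing a linearization of $\bigoplus_{g}X^g$ for every $X$, together with the additivity argument used in the proof of Proposition~\ref{finitenessoflocaldimension}(ii) and the retract $X\simeq X^{1_G}$, handles any $X$ that is not itself already linearizable. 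The main obstacle — really the only nontrivial input — is therefore already packaged in Proposition~\ref{finitenessoflocaldimension} and \cite[Theorem~3.9]{Psaroud}, so the proof itself is a short assembly of these two facts.
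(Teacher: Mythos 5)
Your proposal is correct and follows essentially the same route as the paper: reduce both (i) and (ii) to the vanishing criteria $\mathbb{L}^n\mathsf{q}(\mathsf{i}X)=0$, respectively $\mathbb{L}^n\mathsf{q}^G(\mathsf{i}^G(X,\chi))=0$, via \cite[Theorem~3.9]{Psaroud}, and transfer the vanishing using Proposition~\ref{finitenessoflocaldimension}. You in fact spell out a point the paper's proof leaves implicit, namely that for objects $X\in\A$ admitting no linearization the converse direction requires the induction-functor trick ($\mathsf{Ind}(X)$, additivity, and the summand $X\simeq X^{1_G}$) from the proof of Proposition~\ref{finitenessoflocaldimension}(ii), which is a welcome clarification.
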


\subsection{Commutative Diagrams of Equivariant Derived Categories}

In this section we lift \cite[Theorem~7.2]{Psaroud} to the equivariant setting which we split into three parts and then prove the commutativity of a natural diagram of recollements, which constitutes the main result of this section.

\begin{prop}\label{derivedrecol-left-part}
    Let $\mathsf{R}_{\mathsf{ab}}(\A,\B,\C)$ be a recollement of abelian categories and assume that $\B$ and $\C$ have enough projective and injective objects. Assume that a finite group $G$ acts on $\B$ with $|G|$ be invertible in $\B$ and that the recollement lifts to a $G$-equivariant recollement.
   Then the following statements are equivalent:
\begin{itemize}
 \item[(a)] The functor $\mathsf{i}\colon  \A \to \B$ is a homological embedding, the functor $\mathsf{q}\colon  \B \to \A$ is of locally finite homological dimension and $\mathsf{p}\colon  \B \to \A$ is of locally finite cohomological dimension. 
\item[(b)] There exists a recollement of triangulated categories 
            \begin{equation*}
            \begin{tikzcd}
             \mDb(\A) \arrow[rr, "\mDb(\mathsf{i})" description]& & \mDb(\B) \arrow[ll, bend left, "\mathbb{R}\mathsf{p}"] \arrow[ll, bend right, "\mathbb{L}\mathsf{q}"'] \arrow[rr, "\mDb(\mathsf{e})" description] & & \mDb(\C) \arrow[ll, bend left, "\mathsf{r}'"] \arrow[ll, bend right, "\mathsf{l}'"']
            \end{tikzcd}
            \end{equation*} 
\item[(a-G)] The equivariant functor $\mathsf{i}^G\colon  \A^G \to \B^G$ is a homological embedding, the equivariant functor $\mathsf{q}^G\colon  \B^G \to \A^G$ is of locally finite homological dimension and also $\mathsf{p}^G\colon  \B^G \to \A^G$ is of locally finite cohomological dimension. 
\item[(b-G)] There exists a recollement of triangulated categories 
            \begin{equation*}
            \begin{tikzcd}
             \mDb(\A^G) \arrow[rr, "\mDb(\mathsf{i}^G)" description]& & \mDb(\B^G) \arrow[ll, bend left, "\mathbb{R}(\mathsf{p}^G)"] \arrow[ll, bend right, "\mathbb{L}(\mathsf{q}^G)"'] \arrow[rr, "\mDb(\mathsf{e}^G)" description] & & \mDb(\C^G) \arrow[ll, bend left, "\mathsf{r}'"] \arrow[ll, bend right, "\mathsf{l}'"']
            \end{tikzcd}
            \end{equation*}
\end{itemize}
\begin{proof} 
The equivalence between (a-G) and (b-G) follows from the equivalence between (a) and (b), which this is \cite[Theorem~7.2 (i)]{Psaroud}. Note that, by Remark~\ref{invertibilityonsubcats}, since $|G|$ is invertible in $\B$, then it is invertible in all three abelian categories and by Remark~\ref{enoughprojinj} the abelian categories $\B^G, \C^G$ also have enough projective and injective objects.

(a)$\iff$(a-G):
By Proposition~\ref{finitenessoflocaldimension} we have that $\mathsf{q}$ is of locally finite homological dimension if and only if $\mathsf{q}^G$ is of locally finite homological dimension and $\mathsf{p}$ is of locally finite cohomological dimension if and only $\mathsf{p}^G$ is of locally finite cohomological dimension.  By Proposition~\ref{homologicalembedrecollements},  $\mathsf{i}$ is a homological embedding if and only if $\mathsf{i}^G$ is homological embedding. 
\end{proof}
\end{prop}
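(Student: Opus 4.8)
The plan is to reduce the whole statement to \cite[Theorem~7.2~(i)]{Psaroud} together with the transfer results established earlier in this section. First I would check that the hypotheses pass to the $G$-equivariant recollement: invertibility of $|G|$ in $\B$ forces it in $\A$ and $\C$ by Remark~\ref{invertibilityonsubcats}; the categories $\B^G$ and $\C^G$ again have enough projective and injective objects by Remark~\ref{enoughprojinj}; and $\mathsf{R}_{\mathsf{ab}}(\A^G,\B^G,\C^G)$ is a recollement of abelian categories with the expected adjoint triples by Theorem~\ref{main1}, Proposition~\ref{prop1} and Lemma~\ref{adjointequiv}. Granting this, the equivalence (a)$\iff$(b) is \cite[Theorem~7.2~(i)]{Psaroud} applied to $\mathsf{R}_{\mathsf{ab}}(\A,\B,\C)$, and the equivalence (a-G)$\iff$(b-G) is the same result applied verbatim to $\mathsf{R}_{\mathsf{ab}}(\A^G,\B^G,\C^G)$.

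It then remains to prove (a)$\iff$(a-G), which I would do by matching the three defining conditions one at a time. For the dimension conditions, ``$\mathsf{q}$ of locally finite homological dimension $\iff$ $\mathsf{q}^G$ of locally finite homological dimension'' and ``$\mathsf{p}$ of locally finite cohomological dimension $\iff$ $\mathsf{p}^G$ of locally finite cohomological dimension'' are instances of Proposition~\ref{finitenessoflocaldimension}~(iii), applied to the right-exact functor $\mathsf{q}$ and the left-exact functor $\mathsf{p}$ respectively. For the homological embedding condition, ``$\mathsf{i}$ is a homological embedding $\iff$ $\mathsf{i}^G$ is a homological embedding'' follows from Proposition~\ref{homologicalembedrecollements}, which gives the equivalence of $k$-homological embeddings for every fixed $k\ge 0$, hence of homological embeddings.

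The step I expect to require the most care is not any of the cited results individually, but the bookkeeping that makes them applicable: one must identify the functors $\mathsf{q}^G$, $\mathsf{p}^G$, $\mathsf{i}^G$ appearing in the equivariant recollement of (a-G) with the equivariant functors induced by the $G$-functors $\mathsf{q}$, $\mathsf{p}$, $\mathsf{i}$, so that Proposition~\ref{finitenessoflocaldimension} and Proposition~\ref{homologicalembedrecollements} genuinely compute the relevant derived functors and extension maps — in particular so that $\mathsf{For}(\mathbb{L}^n\mathsf{q}^G(X,\chi)) \cong \mathbb{L}^n\mathsf{q}(X)$, and similarly for $\mathbb{R}^n\mathsf{p}^G$. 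This identification is exactly what Proposition~\ref{prop1} records (the adjoint triples of the recollement lift to the equivariant adjoint triples), combined with the component-wise description of derived functors of exact $G$-functors from Lemma~\ref{D^b(e)Gfunctor} and the remark following Proposition~\ref{finitenessoflocaldimension}; once it is in place the three equivalences assemble immediately.
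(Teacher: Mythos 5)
Your proposal is correct and follows essentially the same route as the paper: both deduce (a)$\iff$(b) and (a-G)$\iff$(b-G) from \cite[Theorem~7.2~(i)]{Psaroud} (the latter applied to the equivariant recollement, after checking invertibility of $|G|$ and enough projectives/injectives via Remarks~\ref{invertibilityonsubcats} and~\ref{enoughprojinj}), and then obtain (a)$\iff$(a-G) from Propositions~\ref{finitenessoflocaldimension} and~\ref{homologicalembedrecollements}. The extra bookkeeping you flag — identifying the recollement functors $\mathsf{q}^G,\mathsf{p}^G,\mathsf{i}^G$ with the induced equivariant functors — is indeed what Proposition~\ref{prop1} provides, and is implicit in the paper's proof.
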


\begin{prop}\label{derivedrecol-right-part}
        Let $\mathsf{R}_{\mathsf{ab}}(\A,\B,\C)$ be a recollement of abelian categories and assume that $\B$ and $\C$ have enough projective and injective objects. Assume that a finite group $G$ acts on $\B$ with $|G|$ be invertible in $\B$ and that the recollement lifts to a $G$-equivariant recollement.
Then the following statements are equivalent:
\begin{itemize}
            \item[(a)] The functor $\mathsf{i}\colon  \A \to \B$ is a homological embedding, the functor $\mathsf{l}\colon  \C \to \B$ is of locally finite homological dimension and $\mathsf{r}\colon  \C \to \B$ is of locally finite cohomological dimension. 
            \item[(b)] There exists a recollement of triangulated categories 
            \begin{equation*}
            \begin{tikzcd}
             \mDb(\A) \arrow[rr, "\mDb(\mathsf{i})" description]& & \mDb(\B) \arrow[ll, bend left, "\mathsf{p}'"] \arrow[ll, bend right, "\mathsf{q}'"'] \arrow[rr, "\mDb(\mathsf{e})" description] & & \mDb(\C) \arrow[ll, bend left, "\mathbb{R}\mathsf{r}"] \arrow[ll, bend right, "\mathbb{L}\mathsf{l}"']
            \end{tikzcd}
            \end{equation*}
\item[(a-G)] The equivariant functor $\mathsf{i}^G\colon  \A^G \to \B^G$ is a homological embedding, the equivariant functor $\mathsf{l}^G\colon  \C^G \to \B^G$ is of locally finite homogical dimension and also $\mathsf{r}^G\colon  \C^G \to \B^G$ is of locally finite cohomological dimension. 
\item[(b-G)] There exists a recollement of triangulated categories 
            \begin{equation*}
            \begin{tikzcd}
             \mDb(\A^G) \arrow[rr, "\mDb(\mathsf{i}^G)" description]& & \mDb(\B^G) \arrow[ll, bend left, "\mathsf{p}'"] \arrow[ll, bend right, "\mathsf{q}'"'] \arrow[rr, "\mDb(\mathsf{e}^G)" description] & & \mDb(\C^G) \arrow[ll, bend left, "\mathbb{R}(\mathsf{r}^G)"] \arrow[ll, bend right, "\mathbb{L}(\mathsf{l}^G)"']
            \end{tikzcd}
            \end{equation*}
 \end{itemize}
\begin{proof}
The proof is similar as above using \cite[Theorem~7.2 (ii)]{Psaroud}.
\end{proof}
\end{prop}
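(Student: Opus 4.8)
The plan is to mirror exactly the structure of the proof of Proposition~\ref{derivedrecol-left-part}, since the statement here is precisely the ``right-hand part'' analogue, obtained by exchanging the roles of the left adjoint triple $(\mathsf{q},\mathsf{i},\mathsf{p})$ with the right adjoint triple $(\mathsf{l},\mathsf{e},\mathsf{r})$ inside the recollement. First I would record that the equivalence (a)$\iff$(b) is literally \cite[Theorem~7.2~(ii)]{Psaroud}, applied to the abelian recollement $\mathsf{R}_{\mathsf{ab}}(\A,\B,\C)$, and that (a-G)$\iff$(b-G) is the same result applied instead to the $G$-equivariant recollement $\mathsf{R}_{\mathsf{ab}}(\A^G,\B^G,\C^G)$, which we are entitled to do because, by hypothesis, the recollement lifts to a $G$-equivariant recollement and hence $\mathsf{R}_{\mathsf{ab}}(\A^G,\B^G,\C^G)$ is a genuine recollement of abelian categories by Theorem~\ref{main1} (Proposition~\ref{prop1}). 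For the latter application one must know that the hypotheses of \cite[Theorem~7.2~(ii)]{Psaroud} hold for the equivariant recollement: namely that $\B^G$ and $\C^G$ have enough projective and injective objects. This follows from Remark~\ref{enoughprojinj}, using that $|G|$ is invertible in $\B$ and — via Remark~\ref{invertibilityonsubcats}, applied to the equivariant recollement — invertible in $\C$ as well (it is invertible in $\A$ too, though that is not needed here).

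Next I would establish (a)$\iff$(a-G), which is the only genuinely equivariant input and is proved exactly as in Proposition~\ref{derivedrecol-left-part} but with the functors $\mathsf{l}$ and $\mathsf{r}$ in place of $\mathsf{q}$ and $\mathsf{p}$. Concretely: $\mathsf{i}$ is a homological embedding if and only if $\mathsf{i}^G$ is a homological embedding, by Proposition~\ref{homologicalembedrecollements} (this is the ``for all $k$'' version of Proposition~\ref{equivarianthomembedding} together with its converse). The functor $\mathsf{l}\colon\C\to\B$ is right exact, $\C$ has enough projectives, and $\mathsf{l}$ is a $G$-functor because the recollement lifts $G$-equivariantly (one functor of each adjoint triple is a $G$-functor, hence all of them are, by Lemma~\ref{adjointequiv}/Proposition~\ref{prop1}); so Proposition~\ref{finitenessoflocaldimension}~(iii) applies and gives that $\mathsf{l}$ is of locally finite homological dimension if and only if $\mathsf{l}^G$ is. Dually, $\mathsf{r}\colon\C\to\B$ is left exact, $\C$ has enough injectives, and $\mathsf{r}$ is a $G$-functor, so Proposition~\ref{finitenessoflocaldimension}~(iii) gives that $\mathsf{r}$ is of locally finite cohomological dimension if and only if $\mathsf{r}^G$ is. Conjoining these three equivalences yields (a)$\iff$(a-G).

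Finally, the four statements are chained as (b)$\iff$(a)$\iff$(a-G)$\iff$(b-G), which closes the proof; in the write-up it suffices to say ``the proof is similar to that of Proposition~\ref{derivedrecol-left-part}, using \cite[Theorem~7.2~(ii)]{Psaroud} in place of \cite[Theorem~7.2~(i)]{Psaroud}'', exactly as the authors have done. The only point demanding a moment of care — and the place where I would expect a referee to push — is the verification that Proposition~\ref{finitenessoflocaldimension} is genuinely applicable to $\mathsf{l}$ and $\mathsf{r}$: one needs both that these are $G$-functors (which rests on the $G$-equivariant lifting hypothesis and Lemma~\ref{adjointequiv}) and that the relevant side of $\C$ has enough projectives, respectively injectives, which is part of the standing hypothesis. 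Everything else is a mechanical transcription of the left-hand case, so there is no real obstacle; the ``hard part'' is purely bookkeeping about which adjoint has which exactness and which homological finiteness condition is the relevant one.
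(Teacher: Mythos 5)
Your proposal is correct and follows exactly the route the paper intends: the equivalences (a)$\iff$(b) and (a-G)$\iff$(b-G) come from applying \cite[Theorem~7.2~(ii)]{Psaroud} to the original and the equivariant recollement (the latter legitimised by Theorem~\ref{main1}, Remark~\ref{invertibilityonsubcats} and Remark~\ref{enoughprojinj}), while (a)$\iff$(a-G) follows from Proposition~\ref{homologicalembedrecollements} together with Proposition~\ref{finitenessoflocaldimension}~(iii) applied to $\mathsf{l}$ and $\mathsf{r}$. This is precisely the paper's own (abbreviated) argument, so nothing further is needed.
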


\begin{cor}\label{derivedrecol-both-parts}
 Let $\mathsf{R}_{\mathsf{ab}}(\A,\B,\C)$ be a recollement of abelian categories and assume that $\B$ and $\C$ have enough projective and injective objects. Assume that a finite group $G$ acts on $\B$ with $|G|$ be invertible in $\B$ and that the recollement lifts to a $G$-equivariant recollement.
Then the following statements are equivalent:
\begin{itemize}
\item[(a)] The functors $\mathsf{i}\colon  \A \to \B$ is a homological embedding, $\mathsf{q}\colon  \B \to \A$ and $\mathsf{l}\colon \C\to \B$ are of locally finite homological dimension and $\mathsf{p}\colon  \B \to \A$ and $\mathsf{r}\colon \C \to \B$ are of locally finite cohomological dimension. 
\item[(b)] There exists a recollement of triangulated categories 
            \begin{equation*}
            \begin{tikzcd}
             \mDb(\A) \arrow[rr, "\mDb(\mathsf{i})" description]& & \mDb(\B) \arrow[ll, bend left, "\mathbb{R}\mathsf{p}"] \arrow[ll, bend right, "\mathbb{L}\mathsf{q}"'] \arrow[rr, "\mDb(\mathsf{e})" description] & & \mDb(\C) \arrow[ll, bend left, "\mathbb{R}\mathsf{r}"] \arrow[ll, bend right, "\mathbb{L}\mathsf{l}"']
            \end{tikzcd}
            \end{equation*}
 \item[(a-G)] The functor $\mathsf{i}^G\colon  \A^G \to \B^G$ is a homological embedding, the functors $\mathsf{q}^G\colon  \B^G \to \A^G$ and $\mathsf{l}^G\colon  \C^G \to \B^G$ are of locally finite homological dimension and $\mathsf{p}^G\colon  \B^G \to \A^G$ and $\mathsf{r}^G\colon  \C^G \to \B^G$ are of locally finite cohomological dimension. 
\item[(b-G)] There exists a recollement of triangulated categories 
            \begin{equation*}
            \begin{tikzcd}
             \mDb(\A^G) \arrow[rr, "\mDb(\mathsf{i}^G)" description]& & \mDb(\B^G) \arrow[ll, bend left, "\mathbb{R}(\mathsf{p}^G)"] \arrow[ll, bend right, "\mathbb{L}(\mathsf{q}^G)"'] \arrow[rr, "\mDb(\mathsf{e}^G)" description] & & \mDb(\C^G) \arrow[ll, bend left, "\mathbb{R}(\mathsf{r}^G)"] \arrow[ll, bend right, "\mathbb{L}(\mathsf{l}^G)"']
            \end{tikzcd}
            \end{equation*}
\end{itemize}
\begin{proof}
The result follows from Proposition~\ref{derivedrecol-left-part} and Proposition~\ref{derivedrecol-right-part} using also the fact that adjoints are unique up to natural isomorphism.
\end{proof}
\end{cor}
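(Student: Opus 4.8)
The plan is to observe that this corollary is nothing but the simultaneous version of Proposition~\ref{derivedrecol-left-part} and Proposition~\ref{derivedrecol-right-part}, so that essentially no new work is needed beyond a small bookkeeping argument for gluing the two ``half-recollements'' into one. Concretely, condition~(a) here is the logical conjunction of hypothesis~(a) of Proposition~\ref{derivedrecol-left-part} (which controls $\mathsf{i}$, $\mathsf{q}$, $\mathsf{p}$) with hypothesis~(a) of Proposition~\ref{derivedrecol-right-part} (which controls $\mathsf{i}$, $\mathsf{l}$, $\mathsf{r}$), and likewise (a-G) is the conjunction of the two~(a-G)'s. Before anything else I would record the standing reductions used by both propositions: by Remark~\ref{invertibilityonsubcats}, invertibility of $|G|$ in $\B$ forces it in $\A$ and $\C$, and by Remark~\ref{enoughprojinj} the equivariant categories $\A^G$, $\B^G$, $\C^G$ then have enough projective and injective objects.

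For the equivalence (a)$\iff$(a-G) I would simply intersect, over the two halves, the equivalences already used inside the proofs of the two propositions: the functor $\mathsf{i}$ is a homological embedding if and only if $\mathsf{i}^G$ is, by Proposition~\ref{homologicalembedrecollements}; and each of $\mathsf{q}$, $\mathsf{p}$, $\mathsf{l}$, $\mathsf{r}$ is of locally finite homological, resp.\ cohomological, dimension if and only if its equivariant counterpart $\mathsf{q}^G$, $\mathsf{p}^G$, $\mathsf{l}^G$, $\mathsf{r}^G$ is, by Proposition~\ref{finitenessoflocaldimension}.

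For (a)$\iff$(b) and (a-G)$\iff$(b-G) the idea is to feed condition~(a) into the equivalence (a)$\iff$(b) of Proposition~\ref{derivedrecol-left-part} to produce the left part of the desired recollement (so that $\mDb(\mathsf{i})$ admits the left adjoint $\mathbb{L}\mathsf{q}$ and the right adjoint $\mathbb{R}\mathsf{p}$) and, in parallel, into the equivalence (a)$\iff$(b) of Proposition~\ref{derivedrecol-right-part} to produce the right part (so that $\mDb(\mathsf{e})$ admits the left adjoint $\mathbb{L}\mathsf{l}$ and the right adjoint $\mathbb{R}\mathsf{r}$); conversely, the recollement displayed in~(b) restricts on each side to the recollement appearing in~(b) of the corresponding proposition. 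The same argument works verbatim with the (b-G) halves, and chaining (b)$\iff$(a)$\iff$(a-G)$\iff$(b-G) then finishes.

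The only point that is not purely formal — and the step I expect to be the main, if mild, obstacle — is checking that the two half-recollements assemble into a single recollement diagram whose four outer functors are \emph{simultaneously} $\mathbb{R}\mathsf{p}$, $\mathbb{L}\mathsf{q}$, $\mathbb{R}\mathsf{r}$, $\mathbb{L}\mathsf{l}$. Here one invokes uniqueness of adjoints: the functor $\mathsf{q}'$ occurring in part~(b) of Proposition~\ref{derivedrecol-right-part} is a left adjoint of $\mDb(\mathsf{i})$, hence naturally isomorphic to the $\mathbb{L}\mathsf{q}$ produced by part~(b) of Proposition~\ref{derivedrecol-left-part}, and similarly $\mathsf{p}'\simeq\mathbb{R}\mathsf{p}$, $\mathsf{l}'\simeq\mathbb{L}\mathsf{l}$, $\mathsf{r}'\simeq\mathbb{R}\mathsf{r}$; thus the two half-recollements are one and the same recollement and yield exactly the diagram in~(b), with the identical remark handling the diagram in~(b-G).
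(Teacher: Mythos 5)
Your proposal is correct and follows exactly the paper's route: combine Proposition~\ref{derivedrecol-left-part} with Proposition~\ref{derivedrecol-right-part} and identify the remaining outer functors via uniqueness of adjoints up to natural isomorphism, which is precisely the one-line argument the paper gives. The extra bookkeeping you spell out (invertibility of $|G|$ in $\A$, $\C$ and the equivalence (a)$\iff$(a-G)) is already contained in those propositions, so nothing further is needed.
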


We finish this section with the following result.

\begin{thm}
\label{main5}
    Let $\mathsf{R}_{\mathsf{ab}}(\A,\B,\C)$ be a recollement of abelian categories and assume that $\B$ and $\C$ have enough projective and injective objects. Assume that a finite group $G$ acts on $\B$ with $|G|$ be invertible in $\B$ and that the recollement lifts to a $G$-equivariant recollement. In either setup of the Propositions~\ref{derivedrecol-left-part}, \ref{derivedrecol-right-part} or \ref{derivedrecol-both-parts}, we have a commutative diagram of recollements of the form:
\[
\begin{tikzcd}[scale cd=0.9]
& \mathsf{R}_{\mathsf{ab}}(\A,\B,\C) \arrow[dl, dotted] \arrow[dr, dotted] \\
\mathsf{R}_{\mathsf{tr}}(\mDb(\A) ,\mDb(\B), \mDb(\C) ) \arrow[d, dotted] & & \mathsf{R}_{\mathsf{ab}}(\A^G, \B^G, \C^G) \arrow[d, dotted] \\
\mathsf{R}_{\mathsf{tr}}(\mDb(\A)^G ,\mDb(\B)^G, \mDb(\C)^G ) \arrow[rr, "\simeq"] & & \mathsf{R}_{\mathsf{tr}}(\mDb(\A^G) ,\mDb(\B^G), \mDb(\C^G) )
\end{tikzcd}
\]
where the horizontal equivalence of recollements is induced by the comparison functors $K$ of Proposition~\ref{derivedequivariantequiv} and the dotted arrows are indicating the two different ways to construct the left and right recollements of triangulated categories.

\begin{proof}
We prove it in the setup of Proposition~\ref{derivedrecol-left-part}. The other two cases are similar.
Starting from the recollement $\mathsf{R}_{\mathsf{ab}}(\A, \B, \C)$, Proposition~\ref{derivedrecol-left-part} yields the top left dotted arrow and Theorem~\ref{main1} yields the top right dotted arrow. The right vertical dotted arrow exists also by Proposition~\ref{derivedrecol-left-part}. The $G$-action on $\mathsf{R}_{\mathsf{ab}}(\A,\B,\C)$ extends naturally to an action of derived categories by Remark~\ref{actiononderivedcats}. Since $\mathsf{e}$ is an exact functor, $\mDb(\mathsf{e})$ is naturally a $G$-functor by Lemma~\ref{D^b(e)Gfunctor}. Thus, by Theorem~\ref{main2}, we have the left vertical dotted arrow.
We have the following diagram that we need to show that commutes (up to natural isomorphism):

\begin{equation*}
\begin{tikzcd}
\mDb(\A)^G \arrow[ddd, "K_{\A}"] \arrow[rr, "\mDb(\mathsf{i})^G" description]& & \mDb(\B)^G \arrow[ddd, "K_{\B}"] \arrow[ll, bend left, "(\mathbb{R}\mathsf{p})^G"] \arrow[ll, bend right, "(\mathbb{L}\mathsf{q})^G"'] \arrow[rr, "\mDb(\mathsf{e})^G" description] & & \mDb(\C)^G \arrow[ddd, "K_{\C}"] \arrow[ll, bend left, "\mathsf{r}'"] \arrow[ll, bend right, "\mathsf{l}'"'] \\
 & \\
 \\
\mDb(\A^G) \arrow[rr, "\mDb(\mathsf{i}^G)" description]& & \mDb(\B^G) \arrow[ll, bend left, "\mathbb{R}(\mathsf{p}^G)"] \arrow[ll, bend right, "\mathbb{L}(\mathsf{q}^G)"'] \arrow[rr, "\mDb(\mathsf{e}^G)" description] & & \mDb(\C^G) \arrow[ll, bend left, "\mathsf{r}'"] \arrow[ll, bend right, "\mathsf{l}'"']
\end{tikzcd}
\end{equation*}
which means that the equivalence $K_{-}\colon  \mDb(-)^G \to \mDb(-^G) $ commutes with the corresponding adjoints.
We just need to show the commutativity with the middle functors, i.e.\ $\mDb(\mathsf{i})^G$ and $\mDb(\mathsf{e})^G$, and then the commutativity up to natural isomorphism with the adjoints follows immediately, since, for example $K_{\A}^{-1} \circ \mathbb{L}q^G \circ K_{\B}$ is a left adjoint of $\mDb(\mathsf{i})^G$ and by uniqueness of adjoints we have that it is naturally isomorphic to $ (\mathbb{L}\mathsf{q})^G$. The desired commutativity has been proved in Lemma~\ref{D(F^G)andD(F)^G}. 
\end{proof}
\end{thm}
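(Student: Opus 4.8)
The plan is to verify the commutativity of the large square diagram displayed in the statement, noting that the only genuinely new content is the commutativity involving the middle (left-hand and right-hand quotient) functors, since the rest follows formally. First I would observe that, by Remark~\ref{invertibilityonsubcats}, the invertibility of $|G|$ in $\B$ propagates to $\A$ and $\C$, and by Remark~\ref{enoughprojinj} all three of $\A^G$, $\B^G$, $\C^G$ inherit enough projectives and injectives; this ensures that all derived functors written in the diagram exist and that the comparison functors $K_{\A}$, $K_{\B}$, $K_{\C}$ of Theorem~\ref{derivedequivariantequiv} are triangle equivalences. I would then set up the diagram exactly as displayed, with vertical arrows the comparison equivalences and horizontal arrows the two parallel recollements, and reduce the claim to: the square with horizontal arrows $\mDb(\mathsf{i})^G$ (top) and $\mDb(\mathsf{i}^G)$ (bottom) commutes up to natural isomorphism, and similarly the square with $\mDb(\mathsf{e})^G$ and $\mDb(\mathsf{e}^G)$.

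Next, for these two middle squares I would simply invoke Lemma~\ref{D(F^G)andD(F)^G}, which was proved precisely to establish $K_{\C} \circ \mDb(F^G) = \mDb(F)^G \circ K_{\B}$ for an exact $G$-functor $F$. Applying this with $F = \mathsf{e}$ (which is exact as a recollement quotient functor, and a $G$-functor by the lifting hypothesis via Theorem~\ref{main1} and Proposition~\ref{actiontoquotient}) and with $F = \mathsf{i}$ (which is exact and a $G$-functor since $\A$ is $G$-invariant, via Remark~\ref{inclusionfunctor}) gives the commutativity of both middle squares on the nose. Here one should remark that $\mDb(\mathsf{e})$ and $\mDb(\mathsf{i})$ are canonically $G$-functors by Lemma~\ref{D^b(e)Gfunctor}, so the equivariant functors $\mDb(\mathsf{e})^G$, $\mDb(\mathsf{i})^G$ make sense and are triangulated by Lemma~\ref{triangadjointequiv}.

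Finally, I would deduce commutativity up to natural isomorphism with the six adjoint functors by a uniqueness-of-adjoints argument. For instance, $K_{\A}^{-1} \circ \mathbb{L}(\mathsf{q}^G) \circ K_{\B}$ is a triangulated left adjoint of $K_{\A}^{-1} \circ \mDb(\mathsf{i}^G) \circ K_{\A} = \mDb(\mathsf{i})^G$ (using the commutativity of the middle square just established, plus the fact that $K$'s are equivalences), so it is naturally isomorphic to the chosen left adjoint $(\mathbb{L}\mathsf{q})^G$; the same argument handles $(\mathbb{L}\mathsf{q})^G$ versus $\mathbb{L}(\mathsf{q}^G)$, $(\mathbb{R}\mathsf{p})^G$ versus $\mathbb{R}(\mathsf{p}^G)$, and the right-hand adjoints $\mathsf{r}'$, $\mathsf{l}'$ on both rows. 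The argument is identical in the setups of Propositions~\ref{derivedrecol-right-part} and~\ref{derivedrecol-both-parts}. The main obstacle — but one already dispatched in Lemma~\ref{D(F^G)andD(F)^G} — is the component-wise bookkeeping showing that the linearization produced by first taking the equivariant functor and then the derived functor agrees with the one produced in the other order; since that lemma is available, the present proof is essentially a formal assembly.
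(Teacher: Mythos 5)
Your proposal is correct and follows essentially the same route as the paper: reduce to the two middle squares, settle them by Lemma~\ref{D(F^G)andD(F)^G}, and get the six adjoints by uniqueness of adjoints, with Lemma~\ref{D^b(e)Gfunctor} supplying the $G$-structure on $\mDb(\mathsf{e})$ and $\mDb(\mathsf{i})$. The only point left implicit is the existence of the equivariant triangulated recollement $\mathsf{R}_{\mathsf{tr}}(\mDb(\A)^G,\mDb(\B)^G,\mDb(\C)^G)$ itself (the left vertical dotted arrow), which the paper obtains from Theorem~\ref{main2}; your uniqueness-of-adjoints step tacitly uses that $(\mathbb{L}\mathsf{q})^G$, $(\mathbb{R}\mathsf{p})^G$, etc.\ are already the adjoints in such a recollement.
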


\section{Equivariant Singularity Categories}
\label{singularcats}

In this section we investigate singularity categories in equivariant abelian recollements. In particular, given a recollement of abelian categories $\mathsf{R}_{\mathsf{ab}}(\A, \B, \C)$ and a finite group action such that the recollement lifts to an equivariant recollement $\mathsf{R}_{\mathsf{ab}}(\A^G, \B^G, \C^G)$, in the sense of Theorem~\ref{main1},  we investigate when the quotient functor $\mathsf{e}^G$ induces a singular equivalence between the singularity categories $\mDsg(\B^G)$ and $\mDsg (\C^G)$. Moreover, we explore how this singular equivalence is related to the singular equivalence of $\mDsg(\B)$ and $\mDsg(\C)$ induced by $\mathsf{e}$. The case of the singular equivalence induced by $\mathsf{e}$ has been characterized in \cite[Theorem~5.2]{PSS}, which we will also use and extend in the case of equivariant categories. We begin by recalling some useful facts about singularity categories.

Buchweitz \cite{Buch} defined the singularity category $\mDsg(R)$ for a noetherian ring $R$ to be the Verdier quotient of the bounded derived category $\mDb(\smod R)$ modulo the homotopy category $\mKb (\proj R)$ of finitely generated projective $R$-modules, i.e.\ $\mDb(\smod R) / \mKb (\proj R)$. Later Orlov \cite{Orlov} defined the singularity category $\mDsg(X)$ of an algebraic variety $X$ as the Verdier quotient of the bounded derived category $\mDb (X)$ of coherent sheaves on $X$ quotient by the full subcategory of perfect complexes $\mathsf{perf}(X)$. In both settings, the singularity category captures many geometric properties.
In the algebraic setup, the singularity category $\mDsg(\Lambda)$ of an Artin algebra $\Lambda$ is trivial if and only if $\Lambda$ has finite global dimension. 
We recall below the definition of the singularity category in the context of abelian categories.

\begin{defn}\label{dsgdef}
Let $\A$ be an abelian category with enough projectives. Denote by $\mKb (\Proj\A)$ the homotopy category of bounded complexes of projectives in $\A$. The \textbf{singularity category} of $\A$ is the Verdier quotient:
\[
\mDsg (\A) \coloneqq \mDb (\A) / \mKb(\Proj\A)
\]
\end{defn}

The singularity category carries a unique triangulated structure such that the quotient functor $Q_{\A} \colon \mDb(\A) \to \mDsg(\A) $ is triangulated. The objects of the singularity category are the objects of the bounded derived category and morphisms $f\colon X^{\bullet} \to Y^{\bullet}$ are equivalence classes of fractions $(X^{\bullet} \leftarrow L^{\bullet}  \rightarrow Y^{\bullet} )$ such that the cone of $L^{\bullet}  \to X^{\bullet} $ lies in $\mKb( \Proj \A)$. The exact triangles in $\mDsg (\A)$ are images via $Q_{\A}$ of exact triangles in $\mDb(\A)$.  

Note that, like in the previous sections, all actions are right ones.

\subsection{Equivariant Singularity Categories}
In this section we investigate how the action on $\A$ induces naturally an action on $\mDsg(\A)$ and how the equivariant singularity category $\mDsg(\A)^G$ is related to the singularity category $\mDsg (\A^G)$.
We begin this section with two useful remarks on projectives and injectives in the equivariant case.
Recall that the forgetful functor $\mathsf{For}$ and the induction $\mathsf{Ind}$, being bi-adjoint, both preserve projective and injective objects. 

\begin{rem}
Let $\A$ be an abelian category and $G$ a finite group acting on $\A$. Then we have that 
$\Proj(\A^G) = \mathsf{Add}\{ \mathsf{Ind}(P) \ | \ P \in \Proj\A \}$, where by $\mathsf{Add}$ we mean summands of coproducts. 
We also have the same description for injectives, i.e.\ $\Inj(\A^G) = \mathsf{Add}\{ \mathsf{Ind}(I) \ | \ I \in \Inj\A \}$.

Indeed, for every $(X, \chi) \in \A^G$ we have an epimorphism $\mathsf{Ind} \circ \mathsf{For} (X, \chi) \twoheadrightarrow (X,\chi)$ induced by the counit of the adjunction $\mathsf{Ind} \dashv  \mathsf{For}$. It is an epimorphism since the forgetful functor  $\mathsf{For}$ is faithful. So, if $(P, \pi)$ is projective in $\A^G$ then the above map splits and thus we get the desired description of projectives. 
\end{rem}

The following Lemma describes the projectives and the injectives of the equivariant category. This lemma was proven in the general context of exact categories but we state it for abelian categories, to match the context of this paper.

\begin{lem}\label{equivariantprojinj}\textnormal{(\!\!\cite[Lemma~3.12]{ChaoSun})}
Let $\A$ be an abelian category together with an action of a finite group $G$ such that $|G|$ is invertible in $\A$. Then $\Proj(\A^G) = (\Proj\A)^G$ and $\Inj(\A^G) = (\Inj\A)^G$.
\end{lem}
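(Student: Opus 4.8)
The plan is to prove the equality for projectives; the statement for injectives is formally dual. First I would record two preliminaries. Since each $\rho_g$ is an auto-equivalence of $\A$, both $\Proj\A$ and $\Inj\A$ are $G$-invariant subcategories, so $(\Proj\A)^G$ and $(\Inj\A)^G$ make sense and, by Remark~\ref{subcatlinear}, consist exactly of those equivariant objects $(P,\chi)$ (resp.\ $(I,\psi)$) whose underlying object is projective (resp.\ injective). Second, by \cite[Theorem~5.4]{EvShinder} I may assume the $G$-action on $\A$ is strict, because replacing the action by a strictification is compatible with equivariantization (Corollary~\ref{G functor equivalence}) and preserves projectives and injectives; this turns the Hom-set $G$-actions used below into genuine group actions.

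The inclusion $\Proj(\A^G)\subseteq(\Proj\A)^G$ needs no hypothesis on $|G|$: in the biadjunction $\mathsf{Ind}\dashv\mathsf{For}\dashv\mathsf{Ind}$ the functor $\mathsf{Ind}$ is exact (on underlying objects it is the finite biproduct $\bigoplus_{g\in G}\rho_g$ of exact functors), so $\mathsf{For}$, having the exact right adjoint $\mathsf{Ind}$, preserves projectives; hence $(P,\chi)\in\Proj(\A^G)$ forces $P=\mathsf{For}(P,\chi)\in\Proj\A$. This is already contained in the remark preceding the statement, which identifies $\Proj(\A^G)$ with $\Add\{\mathsf{Ind}(P)\mid P\in\Proj\A\}$, and dually $\mathsf{For}$ preserves injectives because its left adjoint $\mathsf{Ind}$ is exact.

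For the reverse inclusion $(\Proj\A)^G\subseteq\Proj(\A^G)$ I would run a Maschke-type averaging argument. Fix $(P,\chi)$ with $P\in\Proj\A$, an epimorphism $\pi\colon(Y,\psi)\twoheadrightarrow(Z,\zeta)$ in $\A^G$, and a morphism $f\colon(P,\chi)\to(Z,\zeta)$. Since $\mathsf{For}$ is exact, $\pi\colon Y\to Z$ is an epimorphism in $\A$, so projectivity of $P$ produces $h\colon P\to Y$ with $\pi h=f$. Recall that $\Hom_{\A^G}((P,\chi),(Y,\psi))=\Hom_\A(P,Y)^G$ for the right $G$-action $h\cdot g\coloneqq\psi_g^{-1}\circ h^g\circ\chi_g$, and that the map $\pi_\ast\colon\Hom_\A(P,Y)\to\Hom_\A(P,Z)$ is $G$-equivariant because $\pi$ is a morphism of $\A^G$. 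As $|G|$ is invertible in $\A$, the operator $e\coloneqq\tfrac{1}{|G|}\sum_{g\in G}(-\cdot g)$ is a well-defined idempotent on $\Hom_\A(P,Y)$ projecting onto the $G$-invariant part, and for $\tilde h\coloneqq e(h)$ we obtain $\tilde h\in\Hom_{\A^G}((P,\chi),(Y,\psi))$ together with
\[
\pi_\ast(\tilde h)=e(\pi_\ast h)=e(f)=f,
\]
the last equality because $f$ is already $G$-invariant. Hence $f$ lifts along $\pi$ and $(P,\chi)$ is projective in $\A^G$. The injective statement follows from the dual co-averaging argument, lifting a morphism against a monomorphism of $\A^G$ and again using invertibility of $|G|$.

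The only genuinely delicate point is the claim that $h\cdot g=\psi_g^{-1}\circ h^g\circ\chi_g$ defines an honest right $G$-action on $\Hom_\A(P,Y)$ when the action on $\A$ is not strict; this is a diagram chase with the cocycle conditions~$(\ref{eqcocyclecond})$ on $\chi,\psi$ and the naturality of the $\theta_{g,h}$, and it is sidestepped entirely by the strictification in the first paragraph. Everything else is routine.
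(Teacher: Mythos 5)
Your argument is correct, but note that the paper itself gives no proof of this lemma: it is quoted verbatim from \cite[Lemma~3.12]{ChaoSun}, so the relevant comparison is with Sun's argument rather than with anything in the text. Your easy inclusion $\Proj(\A^G)\subseteq(\Proj\A)^G$ coincides with the paper's preceding remark ($\mathsf{For}$ preserves projectives/injectives because its bi-adjoint $\mathsf{Ind}$ is exact, and this needs no hypothesis on $|G|$). For the reverse inclusion you run a Hom-level Maschke argument (average a lift of $f$ over the $G$-action $h\cdot g=\psi_g^{-1}\circ h^g\circ\chi_g$ on $\Hom_\A(P,Y)$, after strictifying the action via Shinder so that this really is a group action), whereas the cited proof works at the object level: for $(P,\chi)$ with $P\in\Proj\A$, the unit/counit composite $(P,\chi)\to\mathsf{Ind}\,\mathsf{For}(P,\chi)\to(P,\chi)$ is multiplication by $|G|$, so invertibility of $|G|$ makes $(P,\chi)$ a retract of $\mathsf{Ind}(P)$, which is projective since $\mathsf{Ind}$ is left adjoint to the exact functor $\mathsf{For}$; this also explains the description $\Proj(\A^G)=\Add\{\mathsf{Ind}(P)\mid P\in\Proj\A\}$ recorded just before the lemma. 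The object-level route buys two things: it needs no strictification (the coherence isomorphisms never enter), and it only uses that morphisms are divisible by $|G|$ — which is literally the paper's stated definition of ``$|G|$ invertible'' — and it works verbatim for exact categories, which is Sun's setting. Your averaging operator $e=\tfrac{1}{|G|}\sum_{g\in G}(-\cdot g)$ implicitly uses that Hom-groups are \emph{uniquely} $|G|$-divisible (the standard reading of the hypothesis, and Sun's); under the weaker divisibility-only reading it is not a well-defined projector, so if you want your version to be airtight against the paper's literal definition you should either strengthen the hypothesis as stated or switch to the retract argument. What your approach buys in return is a completely explicit lifting argument that never leaves $\A^G$'s Hom-sets, and the dual co-averaging for injectives is equally transparent.
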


Elagin  \cite{Elagin} and Chen  \cite{XiaoWuChen2} had already noticed that their result (see Theorem~\ref{derivedequivariantequiv}) holds for unbounded derived categories if we assume idempotent completion of the derived equivariant category. Chao Sun generalised their results in \cite[Examples~3.19,~3.20]{ChaoSun} in the following way. 

Let $\A$ be an additive category and let $\mc ^*(\A)$ (resp.\ $\mK^*(\A)$ and, if $\A$ is abelian with enough projectives, $\mD^*(\A)$) with $* \in \{\varnothing, +, -, \mathsf{b} \}$ denote the category of complexes (resp.\ homotopy and derived category) of unbounded, bounded bellow, bounded above and bounded complexes of $\A$ respectively. Let $G$ be a finite group acting on $\A$ with $|G|$ invertible in $\A$. There is a natural $G$-action on $\mc^*(\A)$ (induced by acting component-wise, as in Remark~\ref{Gactiononderivedcat}) and the comparison functor (defined similarly as in subsection~\ref{actiononderivedcats}) yields a canonical equivalence $K \colon \mc ^*(\A^G) \xrightarrow{\simeq} \mc^*(\A)^G$.
There is also an induced admissible $G$-action on $\mK^*(\A)$, with $\mK^*(\A)^G$ canonically triangulated, and Sun proved that the functor $K$ induces the following triangle equivalence up to retracts:
\[
K \colon \mK^* (\A^G) \to \mK^*(\A)^G
\]
If $\A$ is abelian with enough projectives, then $\mD^*(\A)$ carries an admissible $G$-action with $\mD^*(\A)^G$ being canonically triangulated and the above comparison functor induces a triangle equivalence up to retracts $K\colon \mD^*(\A^G) \to \mD^*(\A)^G$. Notice that, when the derived categories involved are bounded, they are always idempotent complete by \cite[Corollary~2.10]{BalSchil}.

Let $\A$ be an abelian category. Then the projective objects $\Proj\A$ is an idempotent complete additive category. Thus, by \cite[Lemma~2.3]{XiaoWuChen2},  $(\Proj\A)^G$ is also an idempotent complete additive category, which implies that $\mKb ((\Proj \A)^G) $ is idempotent complete by \cite[Theorem~2.9]{IyamaYang}. In this case, Lemma~\ref{equivariantprojinj} and the above comparison functor yield the following equivalence:
\begin{equation}\label{Kbprojcomparison}
\mKb (\Proj \A^G) = \mKb ((\Proj \A)^G) \xrightarrow[]{\simeq} \mKb (\Proj \A)^G
\end{equation}
This last equivalence can be interpreted as the restriction of the comparison functor on $\mDb(\A^G)$ to the full subcategory $ \mKb (\Proj \A^G)$. Notice that the canonical triangulated structure of $\mKb(\Proj \A)^G $ can be obtained also by the fact that  $\mKb(\Proj \A)$ is a $G$-invariant subcategory of $\mDb (\A)$. Indeed, $\mKb(\Proj \A)^G$ admits a unique canonical triangulated structure such that the inclusion
$\mKb (\Proj \A)^G \hookrightarrow \mDb (\A)^G $ is triangulated functor.
Thus, the Verdier quotient $\mDb(\A)^G / \mKb (\Proj \A)^G$ is defined. Therefore, we have an equivalence realised by the comparison functor:
\begin{equation}\label{comparisonfuctoronquotients}
K \colon \mDb (\A^G) / \mKb (\Proj \A^G) \xrightarrow{\simeq} \mDb (\A)^G / \mKb (\Proj\A)^G 
\end{equation}
Note that the left hand side is the singularity category of $\A^G$ (see Definition~\ref{dsgdef}). Thus, we have that the comparison functor induces an equivalence:
\begin{equation}
K \colon \mDsg (\A^G) \xrightarrow{\simeq} \mDb (\A)^G / \mKb (\Proj\A)^G 
\end{equation}

In the next result, we show how the $G$-action on $\A$ induces a $G$-action on the singularity category $\mDsg(\A)$.

\begin{prop}\label{equivsing=singequiv}
   Let $\A$ be an abelian category with enough projectives. Assume that a finite group $G$ acts on $\A$ with $|G|$ invertible in $\A$. Then the singularity category $\mDsg(\A)$ carries an admissible $G$-action, its equivariant category is canonically triangulated and there is a triangle equivalence up to retracts:
\[ 
F \colon \mDb (\A)^G / \mKb(\Proj\A)^G \xrightarrow[]{}  (\mDb (\A) / \mKb (\Proj \A))^G \eqqcolon \mDsg (\A)^G
\]

\begin{proof}
By the preceding discussion, we have a natural $G$-action on $\mDb(\A)$ for which $\mKb(\Proj \A)$ is a $G$-invariant subcategory. Thus, Theorem~\ref{quotienttheorem} yields that there exists a natural admissible $G$-action on the quotient $\mDb (\A) / \mKb (\Proj\A)$ and the equivariant category $(\mDb (\A) / \mKb (\Proj\A))^G$ is canonically triangulated.
The triangle functor $F$ is induced by the quotient functor 
$$Q' \colon \mDb (\A)^G \to \mDb (\A)^G/\mKb(\Proj\A)^G$$ 
in the following way. By Remark~\ref{quotientGfunctor}, the quotient functor $Q_{\A} \colon \mDb (\A) \to \mDsg(\A)$ is a $G$-functor and $\mKb (\Proj\A)$ lies in its kernel. Hence  $\mKb (\Proj \A)^G$ lies in the kernel of $Q_{\A}^G \colon \mDb (\A)^G \to (\mDb (\A)/\mKb (\Proj \A))^G $ by Lemma~\ref{Ginvariantsubcats}. Using the universal property for the quotients we have the following commutative diagram
\begin{equation}\label{factorization of equivariant quotient}
    \begin{tikzcd}
        \mDb (\A)^G \arrow[rr, "Q' "] \arrow[dr, "Q_{\A}^G"']& &  \mDb (\A)^G/\mKb (\Proj \A)^G \arrow[dl, dashrightarrow, "F"] \\
         & (\mDb (\A)/\mKb (\Proj \A))^G
    \end{tikzcd}
\end{equation}
where the functor $F$ is the unique functor that completes this into a commutative diagram and it is an equivalence up to retracts (for more details see the proof of \cite[Theorem~3.9]{ChaoSun}).
\end{proof}
\end{prop}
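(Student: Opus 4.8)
The plan is to build the admissible $G$-action on $\mDsg(\A)$ by transporting the already-established $G$-action on $\mDb(\A)$ through the Verdier quotient by the $G$-invariant subcategory $\mKb(\Proj\A)$, and then to produce the comparison functor $F$ as the factorization of the equivariant quotient functor. First I would record the two inputs we already have: by Remark~\ref{Gactiononderivedcat} the action of $G$ on $\A$ induces an admissible $G$-action on $\mDb(\A)$ (computed component-wise, so each $\rho_g$ is triangulated and the shift is a $G$-functor), and since $|G|$ is invertible in $\A$ it is invertible in $\mDb(\A)$, so by Proposition~\ref{canonicalpretriangstructure} (and the dg-enhancement discussion, or directly Theorem~\ref{derivedequivariantequiv}) the equivariant category $\mDb(\A)^G$ is canonically triangulated. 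Next, observe that $\mKb(\Proj\A)$ is a $G$-invariant triangulated subcategory of $\mDb(\A)$: this is immediate from the component-wise formula~$(\ref{actiononD^b})$ since each $\rho_g$ is exact and preserves projectives (indeed $\rho_g$ is an auto-equivalence, hence $\rho_g(\Proj\A)\simeq\Proj\A$), so $\rho_g$ sends a bounded complex of projectives to one.

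Now apply Theorem~\ref{quotienttheorem}(i) with $\T=\mDb(\A)$ and $\U=\mKb(\Proj\A)$: the Verdier quotient $\mDsg(\A)=\mDb(\A)/\mKb(\Proj\A)$ carries an admissible $G$-action and $\mDsg(\A)^G$ admits a canonical triangulated structure. This gives the first two assertions of the statement. For the equivalence up to retracts, Theorem~\ref{quotienttheorem}(ii) directly yields a natural triangle functor
\[
\mDb(\A)^G/\mKb(\Proj\A)^G \xrightarrow{\ } \bigl(\mDb(\A)/\mKb(\Proj\A)\bigr)^G = \mDsg(\A)^G
\]
which is an equivalence up to retracts; I would identify this with the functor $F$ of the statement. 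To make the construction explicit (and to match the diagram~$(\ref{factorization of equivariant quotient})$ referenced in the proof), I would note that by Remark~\ref{quotientGfunctor} the quotient functor $Q_{\A}\colon\mDb(\A)\to\mDsg(\A)$ is canonically a $G$-functor (with identity $2$-isomorphisms), so by Lemma~\ref{gfunctorinducesequivariant} it induces $Q_{\A}^G\colon\mDb(\A)^G\to\mDsg(\A)^G$; since $\mKb(\Proj\A)$ lies in the kernel of $Q_{\A}$ and $\mKb(\Proj\A)^G=(\mKb(\Proj\A))^G$ is a subcategory of $\mDb(\A)^G$ by Remark~\ref{inclusionfunctor}, Lemma~\ref{Ginvariantsubcats} shows $Q_{\A}^G$ kills $\mKb(\Proj\A)^G$, so it factors through the Verdier quotient $Q'\colon\mDb(\A)^G\to\mDb(\A)^G/\mKb(\Proj\A)^G$, giving the unique $F$ completing~$(\ref{factorization of equivariant quotient})$.

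The main obstacle — really the only nonroutine point — is verifying that $\mDb(\A)^G/\mKb(\Proj\A)^G$ is a well-defined Verdier quotient, i.e.\ that $\mKb(\Proj\A)^G$ is a triangulated subcategory of the (only pre-triangulated a priori) category $\mDb(\A)^G$, and that the canonical triangulated structure on $\mKb(\Proj\A)^G$ is compatible with the inclusion into $\mDb(\A)^G$. I would handle this exactly as in the displayed paragraph preceding equation~$(\ref{Kbprojcomparison})$: since $\Proj\A$ is idempotent complete, so is $(\Proj\A)^G$ by \cite[Lemma~2.3]{XiaoWuChen2}, hence $\mKb((\Proj\A)^G)$ is idempotent complete by \cite[Theorem~2.9]{IyamaYang}; combined with $\Proj(\A^G)=(\Proj\A)^G$ from Lemma~\ref{equivariantprojinj}, the comparison equivalence $(\ref{Kbprojcomparison})$ identifies it with $\mKb(\Proj\A)^G$, and the latter inherits a canonical triangulated structure from being a $G$-invariant subcategory of $\mDb(\A)^G$ with triangulated inclusion. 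Everything else — admissibility, that $F$ is triangle-exact, the equivalence-up-to-retracts claim — is a direct citation of Theorem~\ref{quotienttheorem}, whose proof in \cite{ChaoSun} already does the work; I would simply refer to it as in the last line of the proposed proof.
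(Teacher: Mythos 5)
Your proposal is correct and follows essentially the same route as the paper's proof: it invokes Theorem~\ref{quotienttheorem} for the admissible action on $\mDsg(\A)$ and the canonical triangulated structure on $\mDsg(\A)^G$, and constructs $F$ by factoring $Q_{\A}^G$ through the Verdier quotient via Remark~\ref{quotientGfunctor}, Lemma~\ref{Ginvariantsubcats} and the universal property, exactly as in diagram~(\ref{factorization of equivariant quotient}). The extra care you take with $\mKb(\Proj\A)^G$ being triangulated inside $\mDb(\A)^G$ is a welcome elaboration of the paper's preceding discussion, not a deviation.
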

Combining the equivalence~$(\ref{comparisonfuctoronquotients})$, the definition of singularity category and Proposition~\ref{equivsing=singequiv} we obtain the following.

\begin{prop}\label{singequivuptoretracts}
Let $\A$ be an abelian category with enough projectives. Assume a finite group $G$ acts on $\A$ with $|G|$ invertible in $\A$. Then we have that
$$\mDsg (\A^G) \coloneqq  \mDb (\A^G) / \mKb(\Proj \A^G) \simeq \mDb (\A)^G / \mKb(\Proj\A)^G \to \mDsg (\A)^G  $$
where the rightmost map is an equivalence up to retracts.
Moreover, there exists the following commutative diagram:
\begin{equation}\label{commutativityofdiagramswithsingularities}
    \begin{tikzcd}
        \mDb (\A^G) \arrow[d, "K"'] \arrow[r, "Q_{\A^G}"] & \frac{\mDb(\A^G)}{\mKb (\Proj\A^G)} \arrow[d, "K"'] \arrow[r, "="] & \mDsg (\A^G) \arrow[d, dashrightarrow, "F'"]\\
         \mDb(\A)^G \arrow[r, "Q'"'] & \frac{\mDb(\A)^G}{\mKb (\Proj\A)^G} \arrow[r, "F"'] &  \mDsg (\A)^G
    \end{tikzcd}
\end{equation}
\begin{proof}
The left and middle vertical arrows are the equivalences induced by the comparison functor. The right vertical arrow $F'$ is induced by $F$ and is the equivalence up to retracts described in Proposition~\ref{equivsing=singequiv}. The commutativity of the left square is evident since we have the horizontal quotient functors and that $\mKb (\Proj\A^G) \simeq \mKb (\Proj\A)^G $ by equation~$(\ref{Kbprojcomparison})$.
\end{proof}
\end{prop}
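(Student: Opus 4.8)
The plan is to assemble the statement from the comparison equivalences already established in the preceding discussion, together with Proposition~\ref{equivsing=singequiv}; there is no genuinely new content, only a functoriality check for Verdier localization. First I would recall from Theorem~\ref{derivedequivariantequiv} that, since $|G|$ is invertible in $\A$, the comparison functor $K\colon \mDb(\A^G)\to\mDb(\A)^G$ is a triangle equivalence. By equation~$(\ref{Kbprojcomparison})$ (which rests on Lemma~\ref{equivariantprojinj}, giving $\Proj(\A^G)=(\Proj\A)^G$), this $K$ restricts to a triangle equivalence $\mKb(\Proj\A^G)=\mKb((\Proj\A)^G)\xrightarrow{\simeq}\mKb(\Proj\A)^G$ onto the $G$-invariant thick subcategory $\mKb(\Proj\A)^G$ of $\mDb(\A)^G$. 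Hence, by the universal property of Verdier quotients, $K$ descends to a triangle equivalence
\[
K\colon \mDb(\A^G)/\mKb(\Proj\A^G)\xrightarrow{\simeq}\mDb(\A)^G/\mKb(\Proj\A)^G,
\]
which is exactly the equivalence~$(\ref{comparisonfuctoronquotients})$, and whose source is by definition $\mDsg(\A^G)$.

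\textbf{Building $F'$.} Next I would invoke Proposition~\ref{equivsing=singequiv}, which supplies a triangle functor $F\colon \mDb(\A)^G/\mKb(\Proj\A)^G\to\mDsg(\A)^G$ that is an equivalence up to retracts. Setting $F'\coloneqq F\circ K\colon\mDsg(\A^G)\to\mDsg(\A)^G$, we obtain a functor which is an equivalence up to retracts, since $K$ is a triangle equivalence and $F$ is one up to retracts. This yields the chain
\[
\mDsg(\A^G)=\mDb(\A^G)/\mKb(\Proj\A^G)\xrightarrow[K]{\simeq}\mDb(\A)^G/\mKb(\Proj\A)^G\xrightarrow[F]{}\mDsg(\A)^G
\]
asserted in the statement.

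\textbf{Commutativity of the diagram.} For the left square, the comparison functor $K$ on $\mDb$ commutes with the quotient functors $Q_{\A^G}$ and $Q'$ by its very construction (it is, after the canonical identification $\mc^{\mathsf{b}}(\A^G)\simeq\mc^{\mathsf{b}}(\A)^G$, applied component-wise, and it carries $\mKb(\Proj\A^G)$ into $\mKb(\Proj\A)^G$), so upon passing to quotients one gets $K\circ Q_{\A^G}=Q'\circ K$. The right-hand square commutes by the definition $F'=F\circ K$, using the identification of $\mDb(\A^G)/\mKb(\Proj\A^G)$ with $\mDsg(\A^G)$ recorded in the top horizontal "$=$".

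\textbf{Expected obstacle.} No step is hard; the only point needing care is that the component-wise comparison functor genuinely restricts to $\mKb(\Proj\A^G)\xrightarrow{\simeq}\mKb(\Proj\A)^G$ — which is precisely equation~$(\ref{Kbprojcomparison})$ — and that Verdier localization is functorial, so that $K$ descends compatibly with the quotient functors and with $F$ from Proposition~\ref{equivsing=singequiv}. Once these are in hand, the commutativity of both squares and the equivalence-up-to-retracts property of $F'$ are immediate.
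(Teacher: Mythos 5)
Your proposal is correct and follows essentially the same route as the paper: both rely on the comparison-functor equivalences (Theorem~\ref{derivedequivariantequiv} together with equation~(\ref{Kbprojcomparison}) descending to the quotients, i.e.\ equation~(\ref{comparisonfuctoronquotients})) and on Proposition~\ref{equivsing=singequiv} to supply $F$, with $F'$ defined as the induced composite and commutativity checked from the component-wise construction of $K$. The extra detail you give (universal property of the Verdier quotient, explicit check of the left square) is just an expansion of what the paper states more tersely.
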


We close this subsection with a careful examination of the action on $\mDsg(\A)$. 
\begin{rem}\label{Gactiononsingularity}
The action of $G$ on objects is the same as on objects of $\mDb (\A)$ (see Remark~\ref{actiononD^b}). The action on morphisms has to be well defined - that is the reason why the subcategory we quotient with has to be $G$-invariant; so that Theorem~\ref{quotienttheorem} works. Another key point is that $G$ is acting admissibly:

A morphism $f\colon X^{\bullet} \to Y^{\bullet}$ in $\mDsg (\A) $ is of the form $  X^{\bullet} \xleftarrow[]{f_1} W^{\bullet}  \xrightarrow[]{f_2} Y^{\bullet} $, where $\mathsf{Cone}(f_1) \in \mKb(\Proj\A)$. 
So, the morphism $f^g\colon (X^{\bullet})^g \to (Y^{\bullet})^g$ is of the form $ (X^{\bullet})^g \xleftarrow[]{f_1^g} (W^{\bullet})^g  \xrightarrow[]{f_2^g} (Y^{\bullet})^g$. Now, we have that $\mathsf{Cone}(f_1)^g \simeq \mathsf{Cone}(f_1^g)$. Indeed, we have the following diagram of exact triangles:
\begin{equation*}
    \begin{tikzcd}
        (W^{\bullet})^g \arrow[r, "f_1^g"] \arrow[d, equal ] & (X^{\bullet})^g \arrow[d, equal] \arrow[r] & \mathsf{Cone}(f_1^g) \arrow[r] \arrow[d, dashrightarrow, ""] & (W^{\bullet})^g[1] \arrow[d, "\simeq"] \\
        (W^{\bullet})^g \arrow[r, "f_1^g"] & (X^{\bullet})^g \arrow[r] & \mathsf{Cone}(f_1)^g \arrow[r] & (W^{\bullet}[1])^g
    \end{tikzcd}
\end{equation*}
The first two equalities are by definition of the action and the  rightmost isomorphism is due to the $G$-action being admissible (i.e.\ $\rho_g[1] \simeq~[1]\rho_g$). Since there are three vertical morphisms, we can complete the missing one (dashing arrow) and since they are isomorphisms so is this one. Thus $\mathsf{Cone}(f_1^g) \simeq  \mathsf{Cone}(f_1)^g$ and since $\mathsf{Cone}(f_1) \in \mKb(\Proj\A)$ and this is a $G$-invariant subcategory, we have that $\mathsf{Cone}(f_1^g) \in \mKb(\Proj\A)$. 
\end{rem}

\subsection{Equivariant Singular Equivalences}
In this subsection we investigate singular equivalences in equivariant recollements of abelian categories. We first recall the following result. 

\begin{thm}\label{pss}\textnormal{(\!\!\cite[Theorem~5.2]{PSS})}
    Let $\mathsf{R}_{\mathsf{ab}}(\A, \B, \C)$ be a recollement of abelian categories and $\B$ and $\C$ have enough projectives. The following are equivalent:
    \begin{itemize}
        \item[(i)] $\pd_{\B}\mathsf{i}(A) < \infty$ and $\pd_{\C}\mathsf{e}(P) < \infty$ for all $A \in \A$ and $P\in \Proj\B$.
        \item[(ii)]  The functor $\mathsf{e}\colon\B \to \C$ induces a singular equivalence between $\B$ and $\C$: 
\[        
\mDsg (\mathsf{e})\colon \mDsg (\B)\xrightarrow[]{\simeq} \mDsg (\C)
\]
\end{itemize}
\end{thm}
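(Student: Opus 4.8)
The statement in question is Theorem~\ref{pss}, which is cited from \cite[Theorem~5.2]{PSS}, so strictly speaking the ``proof'' is a reference. Nonetheless, let me sketch how one would prove this result directly, since understanding its proof is essential for the equivariant extension that follows.

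\medskip

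\textbf{Approach.} The plan is to use the characterization of singularity categories via stabilization and the structure theory of abelian recollements. Recall that in a recollement $\mathsf{R}_{\mathsf{ab}}(\A,\B,\C)$ the functor $\mathsf{e}$ is exact with exact fully faithful left adjoint $\mathsf{l}$ and right adjoint $\mathsf{r}$, that $\mathsf{i}$ identifies $\A$ with the Serre subcategory $\Ker(\mathsf{e})$, and that $\mathsf{e}\mathsf{l}\simeq\mathsf{id}_{\C}\simeq\mathsf{e}\mathsf{r}$. The functor $\mDb(\mathsf{e})\colon\mDb(\B)\to\mDb(\C)$ is a Verdier localization with kernel the thick subcategory generated by $\mDb(\A)$ (viewed inside $\mDb(\B)$ via $\mDb(\mathsf{i})$); this is the derived analogue of the Gabriel quotient. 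The first step is therefore to recall that $\mDb(\mathsf{e})$ descends to singularity categories, i.e.\ there is an induced functor $\mDsg(\mathsf{e})\colon\mDsg(\B)\to\mDsg(\C)$, which happens precisely because $\mathsf{e}$ sends projectives of $\B$ to objects of finite projective dimension in $\C$ under hypothesis (i) — so $\mDb(\mathsf{e})$ maps $\mKb(\Proj\B)$ into $\mKb(\Proj\C)$ up to the thick closure, hence the composite $\mDb(\B)\to\mDb(\C)\to\mDsg(\C)$ kills $\mKb(\Proj\B)$.

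\medskip

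\textbf{Key steps.} (1) Assume (i). Show $\mDb(\mathsf{e})$ induces a well-defined triangle functor $\mDsg(\mathsf{e})\colon\mDsg(\B)\to\mDsg(\C)$: one needs $\mDb(\mathsf{e})(\mKb(\Proj\B))\subseteq\mathsf{Thick}(\mKb(\Proj\C))$, which reduces to $\pd_{\C}\mathsf{e}(P)<\infty$ for $P\in\Proj\B$. (2) For surjectivity (essential surjectivity and fullness), use that $\mathsf{l}$ (or $\mathsf{r}$) is a section of $\mathsf{e}$: every object of $\mDsg(\C)$ is hit since $\mDb(\mathsf{e})\circ\mDb(\mathsf{l})\simeq\mathsf{id}$, and the localization property of $\mDb(\mathsf{e})$ with kernel generated by $\mDb(\A)$ shows morphisms lift. (3) For faithfulness, the kernel of $\mDsg(\mathsf{e})$ is controlled by the image of $\mathsf{Thick}(\mDb(\A))$ in $\mDsg(\B)$; the condition $\pd_{\B}\mathsf{i}(A)<\infty$ for all $A\in\A$ says precisely that this image is zero in $\mDsg(\B)$, since an object of $\mDb(\B)$ lying in $\mathsf{Thick}(\mDb(\mathsf{i}(\A)))$ with each $\mathsf{i}(A)$ perfect is itself perfect. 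Combining, $\mDsg(\mathsf{e})$ is a triangle equivalence. (4) Conversely, assume (ii). If $\mDsg(\mathsf{e})$ is an equivalence, then $\mathsf{i}(A)$ must be zero in $\mDsg(\B)$ for every $A\in\A$ (since $\mathsf{e}\mathsf{i}=0$ forces $\mDsg(\mathsf{e})\mDsgΣ(\mathsf{i}(A))=0$, and faithfulness gives $\mathsf{i}(A)=0$ in $\mDsg(\B)$), i.e.\ $\pd_{\B}\mathsf{i}(A)<\infty$; and for $P\in\Proj\B$, the fact that $\mDsg(\mathsf{e})$ is even defined (well-definedness already uses this, or one extracts it from the equivalence being compatible with the canonical functors $Q_\B,Q_\C$) forces $\pd_{\C}\mathsf{e}(P)<\infty$.

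\medskip

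\textbf{Main obstacle.} The subtle point is step (3), the faithfulness/kernel computation: one must show that $\mathsf{Thick}_{\mDb(\B)}(\mDb(\A))$ maps into $\mKb(\Proj\B)$ inside $\mDsg(\B)$ exactly when $\pd_\B\mathsf{i}(A)<\infty$ for all $A\in\A$, and crucially that this is enough to conclude the induced functor on the quotient is faithful — this requires knowing that $\mDb(\mathsf{e})$ is a Verdier quotient by $\mathsf{Thick}(\mDb(\A))$ \emph{and} a compatibility of the two quotient constructions (quotient by $\mDb(\A)$ then by projectives, versus quotient by projectives then by the image). This is handled by a $3\times 3$-type diagram of Verdier quotients and the octahedral axiom; making that diagram commute and identifying the iterated quotients is the technical heart. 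Everything else is a formal manipulation with adjoint triples, exactness of $\mathsf{e}$, and the definition of $\mDsg$ as a Verdier quotient.
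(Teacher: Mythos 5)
The paper offers no proof of this statement: it is recalled verbatim from \cite[Theorem~5.2]{PSS} and used as a black box, so there is nothing internal to compare your sketch against. Your reconstruction has the right skeleton (well-definedness of $\mDsg(\mathsf{e})$ from $\pd_{\C}\mathsf{e}(P)<\infty$; killing of $\A$ from $\pd_{\B}\mathsf{i}(A)<\infty$; comparison of iterated Verdier quotients), and your treatment of (ii)$\Rightarrow$(i) is fine: an equivalence reflects zero objects, giving $\pd_{\B}\mathsf{i}(A)<\infty$, and commutativity of the square with $Q_{\B},Q_{\C}$ gives $\pd_{\C}\mathsf{e}(P)<\infty$.

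As written, though, there are concrete gaps in (i)$\Rightarrow$(ii). First, $\mDb(\mathsf{l})$ does not exist: $\mathsf{l}$ is only right exact, so ``$\mDb(\mathsf{e})\circ\mDb(\mathsf{l})\simeq \mathsf{Id}$'' must be replaced either by $\mathbb{L}\mathsf{l}$ on $\mathsf{D}^{-}(\C)$ (which exists since $\C$ has enough projectives, and satisfies $\mDb(\mathsf{e})\circ\mathbb{L}\mathsf{l}\simeq\mathsf{Id}$ because $\mathsf{e}$ is exact and $\mathsf{e}\mathsf{l}\simeq\mathsf{Id}$), or, more cheaply, by the observation that every object of $\mDsg(\C)$ is a shift of a stalk complex $C\simeq \mathsf{e}\mathsf{l}(C)$. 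Second, your step (3) identification only yields an equivalence of $\mDsg(\B)$ with $\mDb(\C)/\Thick(\mathsf{e}(\Proj\B))$; to land in $\mDsg(\C)$ you must also show that every projective of $\C$ lies in $\Thick(\mathsf{e}(\Proj\B))$ --- this holds because $\mathsf{l}$ preserves projectives (left adjoint of the exact $\mathsf{e}$) and $\mathsf{e}\mathsf{l}\simeq\mathsf{Id}$, but it is an essential step you never state. Third, the claim that $\mDb(\mathsf{e})$ exhibits $\mDb(\C)$ as the Verdier quotient of $\mDb(\B)$ by $\Thick(\mathsf{i}(\A))=\mathsf{D}^{\mathsf{b}}_{\A}(\B)$ is not a formality for abelian recollements: it needs the fully faithful derived adjoint on $\mathsf{D}^{-}$ (or $\mathsf{D}^{+}$) together with a truncation argument to descend to bounded complexes, so it should be proved or cited (Miyachi-type localization results) rather than asserted as ``the derived analogue of the Gabriel quotient''. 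With those three repairs your outline does become a proof, but in its current form the key identifications are assumed rather than established.
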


The existence of $\mDsg(\mathsf{e})$ is related to the following commutative diagram:
\begin{equation}\label{commutativediagramofsingularcats}
\begin{tikzcd}
 \mDb(\B) \arrow[d, "\mDb(\mathsf{e})"'] \arrow[r, "Q_{\B}"] & \mDsg (\B) \arrow[d, "\mDsg (\mathsf{e})"] \\ 
 \mDb(\C) \arrow[r, "Q_{\C}"'] & \mDsg (\C)
\end{tikzcd}
\end{equation}

We aim to extend the above commutative square to a commutative square of equivariant categories and functors.  Note that even if the square of the underlying functors is commutative, the equivariant square does not always commute. This is due to the extra stucture of the $G$-functors.
We start with the following Lemma, which is an analogue of Lemma~\ref{D^b(e)Gfunctor}.

\begin{lem}\label{D_sg(e)Gfunctor}
    Let $\mathsf{e}\colon\B \to \C$ be an exact functor between abelian categories with enough projectives such that the singularity functor $\mDsg (\mathsf{e}) \colon \mDsg (\B) \to \mDsg (\C)$ exists. Let also $G$ be a finite group acting on $\B$ and $\C$ with $|G|$ invertible in both categories such that $(\mathsf{e}, \sigma^\mathsf{e})$ is a $G$-functor. There exist canonical $G$-actions on $\mDsg(\B)$ and $\mDsg(\C)$ and a family $\{ \sigma^{\mDsg (\mathsf{e})} \}_{g\in G}$ so that  $(\mDsg (\mathsf{e}), \sigma^{\mDsg (\mathsf{e})})$ is a $G$-functor. 
\end{lem}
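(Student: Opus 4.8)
The strategy mirrors the proof of Lemma~\ref{D^b(e)Gfunctor}, but now at the level of Verdier quotients. First, I would produce the $G$-actions on $\mDsg(\B)$ and $\mDsg(\C)$. By Remark~\ref{Gactiononderivedcat} the $G$-action on $\B$ (resp.\ $\C$) induces an admissible $G$-action on $\mDb(\B)$ (resp.\ $\mDb(\C)$), and the subcategory $\mKb(\Proj\B)$ (resp.\ $\mKb(\Proj\C)$) is $G$-invariant: indeed each $\rho_g$ is exact and sends projectives to projectives, so it preserves bounded complexes of projectives. Hence Theorem~\ref{quotienttheorem} applies and endows $\mDsg(\B)=\mDb(\B)/\mKb(\Proj\B)$ and $\mDsg(\C)=\mDb(\C)/\mKb(\Proj\C)$ with canonical admissible $G$-actions; moreover, by Remark~\ref{quotientGfunctor}, the quotient functors $Q_\B\colon \mDb(\B)\to\mDsg(\B)$ and $Q_\C\colon\mDb(\C)\to\mDsg(\C)$ are canonically $G$-functors with identity $2$-isomorphisms.

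Next, I would construct the family $\{\sigma^{\mDsg(\mathsf{e})}_g\}_{g\in G}$. Since $\mathsf{e}$ is exact and a $G$-functor, Lemma~\ref{D^b(e)Gfunctor} gives a canonical $G$-functor structure $\{\sigma^{\mDb(\mathsf{e})}_g\}$ on $\mDb(\mathsf{e})$. Because $\mDb(\mathsf{e})$ sends $\mKb(\Proj\B)$ into $\mKb(\Proj\C)$ (the image of a projective under $\mathsf{e}=\mathsf{e}$ has finite projective dimension by hypothesis, but more directly the singular functor $\mDsg(\mathsf{e})$ exists, which is exactly the statement that this inclusion holds), the composite $Q_\C\circ\mDb(\mathsf{e})$ kills $\mKb(\Proj\B)$ and so factors through $Q_\B$, yielding $\mDsg(\mathsf{e})$ with $\mDsg(\mathsf{e})\circ Q_\B = Q_\C\circ\mDb(\mathsf{e})$. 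I would then define $\sigma^{\mDsg(\mathsf{e})}_g$ as the natural isomorphism induced on the quotient by $\sigma^{\mDb(\mathsf{e})}_g$: concretely, on an object $X^\bullet$ of $\mDsg(\B)$ (which is an object of $\mDb(\B)$) the map $(\sigma^{\mDsg(\mathsf{e})}_g)^{X^\bullet}$ is the image under $Q_\C$ of $(\sigma^{\mDb(\mathsf{e})}_g)^{X^\bullet}\colon \mDb(\mathsf{e})((X^\bullet)^g)\xrightarrow{\simeq}(\mDb(\mathsf{e})(X^\bullet))^g$. Since $Q_\C$ and the $G$-actions on the quotients are induced from $\mDb(\C)$, this is a well-defined isomorphism; naturality on morphisms of $\mDsg(\B)$ follows because a morphism there is represented by a roof $X^\bullet\xleftarrow{f_1}W^\bullet\xrightarrow{f_2}Y^\bullet$ with $\mathsf{Cone}(f_1)\in\mKb(\Proj\B)$, and applying $\mathsf{e}$ and $\rho_g$ (both exact) to this roof, together with the naturality square for $\sigma^{\mDb(\mathsf{e})}_g$ established in Lemma~\ref{D^b(e)Gfunctor} (diagram~(\ref{natransformmorphisms})), gives the required commuting square after passing to the quotient. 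Finally, the associativity condition~(\ref{associativity}) for $\{\sigma^{\mDsg(\mathsf{e})}_g\}$ is inherited from that of $\{\sigma^{\mDb(\mathsf{e})}_g\}$, since all the relevant maps are images under $Q_\C$ of maps satisfying~(\ref{associativity}) in $\mDb(\C)$.

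The main obstacle I anticipate is the bookkeeping of \emph{well-definedness on morphisms of the quotient}: one must check that the factored isomorphism $\sigma^{\mDsg(\mathsf{e})}_g$ does not depend on the chosen roof representing a morphism of $\mDsg(\B)$, and that the resulting square is genuinely commutative in $\mDsg(\C)$ rather than merely in $\mDb(\C)$. This is handled by observing that $Q_\C$ is a $G$-functor with trivial $2$-isomorphism (Remark~\ref{quotientGfunctor}), so applying $Q_\C$ to the naturality squares of $\sigma^{\mDb(\mathsf{e})}_g$ directly yields the naturality squares of $\sigma^{\mDsg(\mathsf{e})}_g$; independence of the roof is automatic because $Q_\C$ is a functor and already identifies roofs representing the same morphism. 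The rest of the verification is routine and parallels Lemma~\ref{D^b(e)Gfunctor} verbatim, so I would not spell out the component-wise diagram chase again.
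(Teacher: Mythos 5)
Your proposal is correct and follows essentially the same route as the paper: the canonical actions on $\mDsg(\B)$ and $\mDsg(\C)$ come from the $G$-invariance of $\mKb(\Proj-)$ together with Theorem~\ref{quotienttheorem} (this is exactly Remark~\ref{Gactiononsingularity}/Proposition~\ref{equivsing=singequiv}), and $\sigma^{\mDsg(\mathsf{e})}$ is the component-wise $\sigma^{\mathsf{e}}$, i.e.\ the image of $\sigma^{\mDb(\mathsf{e})}$ under the quotient, with naturality checked on roof representatives exactly as in Lemma~\ref{D^b(e)Gfunctor}. The paper's proof is the same argument, only stated more briefly.
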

\begin{proof}
The canonical $G$-actions on $\mDsg(\B)$ and $\mDsg(\C)$ are described in Remark~\ref{Gactiononsingularity}. Because $\mathsf{e}$ is exact we write $\mDb(\mathsf{e}) \eqqcolon \mathsf{e}$ for its derived functor and recall that it is a $G$-functor with family of natural isomorphisms $\sigma^{\mDb(\mathsf{e})}$ induced by applying $\sigma^\mathsf{e}$ component-wise. For $\{ \sigma^{\mDsg (\mathsf{e})}\}_{g\in G}$, we have that it is induced by $\sigma^\mathsf{e}$ on objects in the same way as in diagram~$(\ref{natransformobjects})$. For a morphism $f\colon X^{\bullet} \to Y^{\bullet}$ in the singularity category, we have the class of equivalent morphisms of the form:
\begin{equation*}
\begin{tikzcd}
X^\bullet &  \mathsf{e}((X^{\bullet})^g) \arrow[r, "\sigma^e"] & (\mathsf{e}(X^{\bullet}))^g\\
W^{\bullet} \arrow[u, "f_1"] \arrow[d, "f_2"'] & \mathsf{e}((W^{\bullet})^g) \arrow[d, "\mathsf{e}(f_2^g)"'] \arrow[u, "\mathsf{e}(f_1^g)"] \arrow[r, "\sigma^\mathsf{e}"] & (\mathsf{e}(W^{\bullet}))^g  \arrow[u, "(\mathsf{e}f_1)^g"'] \arrow[d, "(\mathsf{e}f_2)^g"]\\
Y^{\bullet} & \mathsf{e}((Y^{\bullet})^g) \arrow[r, "\sigma^\mathsf{e}"] & (\mathsf{e}Y^{\bullet})^g
\end{tikzcd}
\end{equation*}
where each square above commutes (same as we showed in the commutative diagram of complexes in diagram~$(\ref{natransformmorphisms})$). Note that we use the notation $\sigma^\mathsf{e}$  since the natural isomorphisms $\{ \sigma^{\mDsg (\mathsf{e})} \}_{g\in G}$ are induced by $\sigma^\mathsf{e}$ which is applied component-wise. 
\end{proof}

\begin{prop}
    Let $\mathsf{e}\colon \B \to \C$ be an exact functor of abelian categories with enough projectives such that there exist an induced functor between their singularity categories rendering the following square commutative:
\begin{equation}
\begin{tikzcd}
\mDb(\B) \arrow[r, "Q_{\B}"] \arrow[d, "\mDb(\mathsf{e})"']& \mDsg (\B) \arrow[d, "\mDsg (\mathsf{e})"] \\
\mDb(\C) \arrow[r, "Q_{\C}"'] & \mDsg (\C)
\end{tikzcd}
\end{equation}
Assume that a finite group $G$ is acting on both $\B$ and $\C$ with $|G|$ invertible in $\B$ and $\C$. Assume further that $(\mathsf{e}, \sigma^\mathsf{e})$ is a $G$-functor. Then there exist canonical families of natural isomorphisms making all the functors in the above square $G$-functors and the following square of equivariant categories commutes:
\begin{equation}\label{equivsquare2}
\begin{tikzcd}
\mDb(\B)^G \arrow[r, "Q_{\B}^G"] \arrow[d, "\mDb(\mathsf{e})^G"']& \mDsg (\B)^G \arrow[d, "\mDsg (\mathsf{e})^G"] \\
\mDb(\C)^G \arrow[r, "Q_{\C}^G"'] & \mDsg (\C)^G
\end{tikzcd}
\end{equation}
\end{prop}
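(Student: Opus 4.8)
The plan is to use the explicit description of the $G$-functor structures coming from the earlier lemmas and reduce the commutativity of the equivariant square to that of the underlying square, paying attention only to the compatibility with linearizations. First I would invoke Lemma~\ref{D^b(e)Gfunctor} to record that $\mDb(\mathsf{e})$ is a $G$-functor with structure $\sigma^{\mDb(\mathsf{e})}$ obtained by applying $\sigma^{\mathsf{e}}$ component-wise, Lemma~\ref{D_sg(e)Gfunctor} for the $G$-functor structure on $\mDsg(\mathsf{e})$ (with the canonical $G$-actions on $\mDsg(\B)$ and $\mDsg(\C)$ from Remark~\ref{Gactiononsingularity}), and Remark~\ref{quotientGfunctor} to see that the quotient functors $Q_{\B}\colon \mDb(\B)\to \mDsg(\B)$ and $Q_{\C}\colon \mDb(\C)\to \mDsg(\C)$ are $G$-functors with \emph{identity} $G$-structure, since the action on the Verdier quotient is induced by the action upstairs. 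By Lemma~\ref{triangadjointequiv} (or just Lemma~\ref{gfunctorinducesequivariant}) all four functors then induce equivariant functors, so the square~$(\ref{equivsquare2})$ consists of well-defined functors; it remains to check it commutes up to natural isomorphism.

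The key observation is that commutativity of the equivariant square is \emph{not} automatic from commutativity of the underlying square: by the composition rule for $G$-structures in Remark~\ref{compositionGfunctors}, the two composites $\mDsg(\mathsf{e})^G\circ Q_{\B}^G$ and $Q_{\C}^G\circ \mDb(\mathsf{e})^G$ agree on underlying functors (this is precisely the given commutative square~$(\ref{commutativediagramofsingularcats})$ for $\mathsf{e}$), and the natural transformation witnessing the equality is then automatically $G$-natural provided the two induced $G$-structures on the common composite coincide. So the step I would carry out is: compute both $G$-structures explicitly. On one side the structure is $\sigma^{\mDsg(\mathsf{e})}_g \, Q_{\B} \circ \mDsg(\mathsf{e})\, (\mathrm{id})$, i.e.\ just $\sigma^{\mDsg(\mathsf{e})}_g$ transported along $Q_{\B}$; on the other side it is $(\mathrm{id})\, \mDb(\mathsf{e}) \circ Q_{\C}\, \sigma^{\mDb(\mathsf{e})}_g$, i.e.\ $\sigma^{\mDb(\mathsf{e})}_g$ transported along $Q_{\C}$. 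Since both $\sigma^{\mDsg(\mathsf{e})}_g$ and $\sigma^{\mDb(\mathsf{e})}_g$ are, by the constructions in Lemmas~\ref{D^b(e)Gfunctor} and~\ref{D_sg(e)Gfunctor}, induced component-wise from the \emph{same} $\sigma^{\mathsf{e}}$, and since $Q_{\B}$, $Q_{\C}$ are identity-on-objects with identity $G$-structure, these two transported families coincide. Concretely, for an equivariant complex $(X^\bullet,\chi^\bullet)\in \mDb(\B)^G$, chasing it both ways around~$(\ref{equivsquare2})$ gives the object $\mathsf{e}(X^\bullet)$ in $\mDsg(\C)$ with, in both cases, the linearization obtained by applying $\mathsf{e}$ to each $\chi^n$ and post-composing with $(\sigma^{\mathsf{e}}_g)^{X^n}$ — exactly as in diagrams~$(\ref{natransformobjects})$ and the one in the proof of Lemma~\ref{D_sg(e)Gfunctor}. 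Hence the identity on underlying objects is a $G$-natural isomorphism between the two composites, which is what we need.

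I would then phrase the argument as: by Lemma~\ref{equivnaturaltransf}, the identity natural transformation $\mDsg(\mathsf{e})\circ Q_{\B} \Rightarrow Q_{\C}\circ \mDb(\mathsf{e})$ (which exists and is an equality by hypothesis) is a $G$-natural transformation between the two $G$-functors, since by the computation above the relevant squares~$(\ref{G-natural})$ commute; therefore it induces an (iso)natural transformation $\mDsg(\mathsf{e})^G\circ Q_{\B}^G \Rightarrow Q_{\C}^G\circ \mDb(\mathsf{e})^G$, i.e.\ the square~$(\ref{equivsquare2})$ commutes. The ``all functors are $G$-functors'' claim is collected from the lemmas cited in the first paragraph, so no further work is needed there.

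The main obstacle — and the only genuinely non-formal point — is verifying that the two $G$-structures on the common composite functor really do agree, i.e.\ that $\sigma^{\mDsg(\mathsf{e})}$ transported through $Q_{\B}$ equals $\sigma^{\mDb(\mathsf{e})}$ transported through $Q_{\C}$. This is where the ``extra structure of $G$-functors'' remarked upon in the paragraph preceding the statement matters: it is conceivable to have a commutative square of $G$-functors that fails to be commutative after equivariantization precisely because these two structures differ. Here it works out because both singular and derived $\sigma$'s are honestly inherited component-wise from the single natural isomorphism $\sigma^{\mathsf{e}}\colon \mathsf{e}\rho_g \xrightarrow{\simeq}\rho_g \mathsf{e}$ on $\B$, and the quotient functors contribute only identities; making this bookkeeping precise (tracking the composition formula of Remark~\ref{compositionGfunctors} through the factorization of $Q^G$ as in diagram~$(\ref{factorization of equivariant quotient})$) is the substantive content of the proof, though it is a verification rather than a conceptual difficulty.
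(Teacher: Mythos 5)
Your proposal is correct and follows essentially the same route as the paper: both reduce the commutativity of the equivariant square to the equality of the two composite $G$-structures via Remark~\ref{compositionGfunctors}, and verify this equality by noting that $Q_{\B}$, $Q_{\C}$ have identity $G$-structures and are identity on objects, while $\sigma^{\mDb(\mathsf{e})}$ and $\sigma^{\mDsg(\mathsf{e})}$ are both induced component-wise from the single family $\sigma^{\mathsf{e}}$. Your extra packaging through Lemma~\ref{equivnaturaltransf} is a harmless reformulation of the same verification.
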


\begin{proof}
Recall that by Remark~\ref{Gactiononderivedcat} and Proposition~\ref{equivsing=singequiv}, the $G$ action on $\B$ and $\C$ extends canonically to a $G$ action on their respective derived  and singularity categories. 
By Remark~\ref{quotientGfunctor} we know that $(Q_{\B}, \sigma^{\B})$ and $(Q_{\C}, \sigma^{\C})$ are $G$-functors and thus we have $Q_{\B}^G \colon \mDb(\B)^G \xrightarrow{} \mDsg (\B)^G$ and  $Q_{\C}^G \colon \mDb(\C)^G \xrightarrow{} \mDsg (\C)^G$.
By Lemmas~\ref{D^b(e)Gfunctor} and~\ref{D_sg(e)Gfunctor} we have that
$\mDb(\mathsf{e})$ and $\mDsg (\mathsf{e})$ are endowed with $G$-functor structure in a canonical way (induced by $\sigma^\mathsf{e}$). Thus, we have induced equivariant functors $\mDb(\mathsf{e})^G \colon \mDb(\B)^G \xrightarrow[]{} \mDb(\C)^G$ and $\mDsg (\mathsf{e})^G\colon \mDsg (\B)^G \xrightarrow[]{} \mDsg (\C)^G$.
\par
We have to verify that the square of equivariant categories~$(\ref{equivsquare2})$ is commutative.
By Remark~\ref{compositionGfunctors}, have to verify the equality $(\sigma^{Q_{\C}} \mDb(\mathsf{e}))\circ (Q_{\C}\sigma^{\mDb(\mathsf{e})} )= \linebreak ( \sigma^{\mDsg (\mathsf{e})}Q_{\B} ) \circ (\mDsg (\mathsf{e})\sigma^{Q_{\B}})$. For simplicity we write $\sigma_1$ for the left hand side of the desired equality and $\sigma_2$ for the right hand side. In order for them to be equal one would have to check that they coincide on objects, i.e.\ $\sigma_1^{X^{\bullet}} = \sigma_2^{X^{\bullet}}$. 
We can immediately see that 
$\sigma_{i,g}^{X^{\bullet}} \colon \mathsf{e} ((X^{\bullet})^g) \xrightarrow[]{\simeq}  (\mathsf{e}(X^{\bullet}))^g$ 
are the same isomorphism for all $X^{\bullet} \in \Dsg(\B)$, $g \in G$ and $i=1,2$ since $\sigma^{Q_{\B}}, \sigma^{Q_{\C}} $ and $Q_{\B}, Q_{\C}$ are identities on objects and both $\mDb(\mathsf{e}), \mDsg (\mathsf{e})$ and their families of natural isomorphisms are induced by $\mathsf{e}$ as well as its family of natural isomorphisms as described in Lemma~\ref{D_sg(e)Gfunctor}.
\end{proof}

\begin{cor}\label{equivrecollsquare}
Let $\mathsf{R}_{\mathsf{ab}}(\A, \B, \C)$ be a recollement of abelian categories that lifts to a $G$-equivariant recollement with $|G|$ invertible in $\B$. Assume also that $\B$ and $\C$ have enough projectives and that $\mDsg(\mathsf{e}) \colon \mDsg(\B) \to \mDsg(\C)$ exists.
 Then there exists the following commutative square of equivariant categories:
\begin{equation}
\begin{tikzcd}
\mDb(\B)^G \arrow[r, "Q_{\B}^G"] \arrow[d, "\mDb(\mathsf{e})^G"']& \mDsg (\B)^G \arrow[d, "\mDsg (\mathsf{e})^G"] \\
\mDb(\C)^G \arrow[r, "Q_{\C}^G"'] & \mDsg (\C)^G
\end{tikzcd}
\end{equation}

\end{cor}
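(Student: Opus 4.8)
The plan is to obtain this as an immediate consequence of the previous Proposition, once all of its hypotheses have been checked to hold in the present setting. First I would unpack the phrase ``lifts to a $G$-equivariant recollement'': by Definition~\ref{equivariantrecollement} and Theorem~\ref{main1}, this means $G$ acts on $\B$, the subcategory $\A$ is $G$-invariant, and there is an induced action on $\C\simeq\B/\A$ for which the quotient functor $\mathsf{e}\colon\B\to\C$ is a $G$-functor $(\mathsf{e},\sigma^{\mathsf{e}})$. This supplies precisely the $G$-functor hypothesis needed to invoke the previous Proposition, and together with the standing assumptions that $\B$ and $\C$ have enough projectives and that $\mDsg(\mathsf{e})$ exists (so that diagram~$(\ref{commutativediagramofsingularcats})$ commutes), it takes care of all the hypotheses except the invertibility of $|G|$ in $\C$.

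The one step requiring an actual argument, rather than a citation, is therefore transferring invertibility of $|G|$ from $\B$ to $\C$, since Remark~\ref{invertibilityonsubcats} is stated for triangulated recollements. I would argue exactly as there, using a fully faithful functor of the abelian recollement, say $\mathsf{r}\colon\C\to\B$: given a morphism $f$ in $\C$, write $\mathsf{r}(f)=|G|\,h$ for some $h$ in $\B$ by invertibility in $\B$, use full faithfulness of $\mathsf{r}$ to find $h'$ in $\C$ with $\mathsf{r}(h')=h$, and conclude $f=|G|\,h'$. Hence $|G|$ is invertible in $\C$ as well, and all hypotheses of the previous Proposition are now verified.

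Finally I would simply apply the previous Proposition: it produces the canonical $G$-functor structures on $Q_{\B}$, $Q_{\C}$, $\mDb(\mathsf{e})$ and $\mDsg(\mathsf{e})$ — all induced componentwise from $\sigma^{\mathsf{e}}$ via Lemmas~\ref{D^b(e)Gfunctor} and~\ref{D_sg(e)Gfunctor} and Remark~\ref{quotientGfunctor} — and the commutativity of the resulting square of equivariant functors, which is exactly the assertion of the Corollary. I do not expect any genuine obstacle here; the content is entirely bookkeeping of hypotheses, with the invertibility transfer above being the only point that needs a line of proof.
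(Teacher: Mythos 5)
Your proposal is correct and matches the paper's (implicit) argument: the Corollary is obtained exactly by checking that the lifted $G$-equivariant recollement supplies the $G$-functor $(\mathsf{e},\sigma^{\mathsf{e}})$ and then applying the preceding Proposition. Your explicit transfer of invertibility of $|G|$ from $\B$ to $\C$ via the fully faithful functor $\mathsf{r}$ is precisely the argument of Remark~\ref{invertibilityonsubcats}, which the paper itself invokes in the abelian setting (e.g.\ in the proof of Theorem~\ref{main3}), so there is no gap.
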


We are ready to formulate the first main result of this section which yields the desired relation between the singularity functors $\mDsg(\mathsf{e})$,  $\mDsg(\mathsf{e}^G)$ and $\mDsg(\mathsf{e})^G$.

\begin{thm}\label{main3}
    Let $\mathsf{R}_{\mathsf{ab}}(\A, \B, \C)$ be a recollement of abelian categories that lifts to a $G$-equivariant recollement with $|G|$ invertible in $\B$. 
    Let also $\B$ and $\C$ have enough projectives. Assume that $\mDsg(\mathsf{e})\colon \mDsg(\B) \to \mDsg(\C)$ exists. Then there exists a singularity functor $\mDsg (\mathsf{e}^G)\colon \mDsg (\B^G) \xrightarrow[]{} \mDsg (\C^G) $ rendering the following diagram commutative. 
    \begin{equation}\label{inducedsignularequiv}
    \begin{tikzcd}
    \mDsg (\B^G) \arrow[r, "F'_{\B}"] \arrow[d, dotted, "\mDsg (\mathsf{e}^G)"']& \mDsg (\B)^G \arrow[d, "\mDsg (\mathsf{e})^G"] \\
    \mDsg (\C^G) \arrow[r, "F'_{\C}"'] & \mDsg (\C)^G
    \end{tikzcd}
    \end{equation}
    If additionally,  $\mDsg (\mathsf{e})\colon \mDsg (\B) \xrightarrow[]{} \mDsg (\C)$ is an equivalence and both $\mDsg(\B^G)$ and $\mDsg(\C^G)$ are idempotent complete categories, then $\mDsg (\mathsf{e}^G)$ is a singular equivalence.
\end{thm}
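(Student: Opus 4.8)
The goal is to show that if $\mDsg(\mathsf{e})\colon \mDsg(\B)\to\mDsg(\C)$ is an equivalence and $\mDsg(\B^G)$, $\mDsg(\C^G)$ are idempotent complete, then the induced functor $\mDsg(\mathsf{e}^G)\colon \mDsg(\B^G)\to\mDsg(\C^G)$ is a triangle equivalence. The natural strategy is to use the commutative square~$(\ref{inducedsignularequiv})$ together with the two-out-of-three property for equivalences: if the top horizontal $F'_{\B}$, the bottom horizontal $F'_{\C}$, and the right vertical $\mDsg(\mathsf{e})^G$ are all equivalences, then so is the left vertical $\mDsg(\mathsf{e}^G)$.

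\textbf{Step 1: the horizontal arrows are equivalences.} By Proposition~\ref{singequivuptoretracts}, the comparison functor induces a triangle functor $F'_{\A}\colon \mDsg(\A^G)\to\mDsg(\A)^G$ which is an equivalence \emph{up to retracts} (equivalently, fully faithful with dense image up to summands). Here $\mDsg(\B^G)$ and $\mDsg(\C^G)$ are assumed idempotent complete, while $\mDsg(\B)^G$ and $\mDsg(\C)^G$ are always idempotent complete being equivariant categories of idempotent complete categories (bounded derived categories are idempotent complete by \cite[Corollary~2.10]{BalSchil}, and the equivariant construction preserves this by \cite[Lemma~2.3]{XiaoWuChen2}, as noted in the excerpt). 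A fully faithful triangle functor between idempotent complete triangulated categories which is dense up to retracts is automatically dense, hence an equivalence. Therefore both $F'_{\B}$ and $F'_{\C}$ are honest triangle equivalences.

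\textbf{Step 2: the right vertical arrow is an equivalence.} Since $\mDsg(\mathsf{e})\colon\mDsg(\B)\to\mDsg(\C)$ is a triangle equivalence and it is a $G$-functor by Lemma~\ref{D_sg(e)Gfunctor}, Corollary~\ref{G functor equivalence} gives that the induced equivariant functor $\mDsg(\mathsf{e})^G\colon\mDsg(\B)^G\to\mDsg(\C)^G$ is an equivalence of categories; it is triangulated by Lemma~\ref{triangadjointequiv} (the actions are admissible and $|G|$ is invertible in $\B$, hence in $\C$ by Remark~\ref{invertibilityonsubcats}, and the equivariant categories are canonically triangulated by Proposition~\ref{equivsing=singequiv}).

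\textbf{Step 3: conclude via the commutative square.} The square~$(\ref{inducedsignularequiv})$ commutes by Theorem~\ref{main3} (the non-trivial part already established: $\mDsg(\mathsf{e}^G)$ is defined as $F_{\C}'^{-1}\circ\mDsg(\mathsf{e})^G\circ F'_{\B}$, or equivalently the square commutes). Since $F'_{\B}$, $F'_{\C}$, and $\mDsg(\mathsf{e})^G$ are triangle equivalences, the composite $F_{\C}'^{-1}\circ\mDsg(\mathsf{e})^G\circ F'_{\B}$ is a triangle equivalence, and this composite is naturally isomorphic to $\mDsg(\mathsf{e}^G)$ by commutativity of~$(\ref{inducedsignularequiv})$. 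Hence $\mDsg(\mathsf{e}^G)$ is a singular equivalence.

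\textbf{Main obstacle.} The delicate point is Step~1: upgrading the ``equivalence up to retracts'' coming from Theorem~\ref{quotienttheorem}/Proposition~\ref{singequivuptoretracts} into a genuine equivalence. This is precisely where the idempotent completeness hypothesis on $\mDsg(\B^G)$ and $\mDsg(\C^G)$ is needed, and one must be careful that the target categories $\mDsg(\B)^G$, $\mDsg(\C)^G$ are also idempotent complete so that density-up-to-retracts of a fully faithful functor forces density. Once this is in place, the rest is a formal diagram chase.
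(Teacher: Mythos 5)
There is a genuine gap: your argument only addresses the final assertion of the theorem and treats the main body of the statement — the existence of the singularity functor $\mDsg(\mathsf{e}^G)$ and the commutativity of square~$(\ref{inducedsignularequiv})$ — as already known. Citing ``Theorem~\ref{main3}'' for the commutativity is circular, and your fallback of \emph{defining} $\mDsg(\mathsf{e}^G):=F_{\C}'^{-1}\circ\mDsg(\mathsf{e})^G\circ F'_{\B}$ does not work for two reasons. First, the existence and commutativity are asserted \emph{without} the idempotent completeness hypothesis, whereas inverting $F'_{\C}$ requires it (a priori $F'_{\C}$ is only an equivalence up to retracts). Second, even when $F'_{\C}$ is invertible, this definition does not show that the functor is the singularity functor \emph{induced by} $\mathsf{e}^G$, i.e.\ compatible with the quotient functors $Q_{\B^G}$, $Q_{\C^G}$ and with $\mDb(\mathsf{e}^G)$. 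The substantive step, which the paper carries out and your proposal omits, is to verify that $\mDb(\mathsf{e}^G)$ maps $\mKb(\Proj\B^G)$ into $\mKb(\Proj\C^G)$: this is done by combining $\mDb(\mathsf{e})(\mKb(\Proj\B))\subseteq\mKb(\Proj\C)$, the $G$-invariance of these subcategories (Lemma~\ref{Ginvariantsubcats}), the identifications $\mKb(\Proj\B^G)\simeq\mKb(\Proj\B)^G$ and $\mKb(\Proj\C^G)\simeq\mKb(\Proj\C)^G$ from~$(\ref{Kbprojcomparison})$, and the compatibility $K_{\C}\circ\mDb(\mathsf{e}^G)=\mDb(\mathsf{e})^G\circ K_{\B}$ of Lemma~\ref{D(F^G)andD(F)^G}; only then does the universal property of the Verdier quotient produce $\mDsg(\mathsf{e}^G)$ and, after a second application of the universal property, the identity $F'_{\C}\circ\mDsg(\mathsf{e}^G)=\mDsg(\mathsf{e})^G\circ F'_{\B}$. (Alternatively one can show $\pd_{\C^G}\mathsf{e}^G(P,\pi)<\infty$ for projectives of $\B^G$ by an induction/forgetful argument, as in the converse Theorem~\ref{main4}, but some such argument is indispensable.)

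For the part you do prove, your route coincides with the paper's: under idempotent completeness of $\mDsg(\B^G)$ and $\mDsg(\C^G)$ the functors $F'_{\B}$, $F'_{\C}$ are honest equivalences (Proposition~\ref{singequivuptoretracts}, resting on Theorem~\ref{quotienttheorem}(ii)), $\mDsg(\mathsf{e})^G$ is an equivalence by Corollary~\ref{G functor equivalence}, and two-out-of-three in the commutative square concludes. One inaccuracy there: your claim that $\mDsg(\B)^G$ and $\mDsg(\C)^G$ are ``always idempotent complete'' because bounded derived categories are is unjustified — singularity categories are Verdier quotients, and quotients of idempotent complete categories need not be idempotent complete. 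Fortunately this is not needed: a fully faithful triangle functor whose \emph{source} is idempotent complete and whose image is dense up to retracts is essentially surjective, so the assumed idempotent completeness of $\mDsg(\B^G)$ and $\mDsg(\C^G)$ (equivalently, of the Verdier quotients $\mDb(\B)^G/\mKb(\Proj\B)^G$ and $\mDb(\C)^G/\mKb(\Proj\C)^G$) already suffices, exactly as in Theorem~\ref{quotienttheorem}(ii).
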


\begin{proof}
Recall that $|G|$ being invertible in $\B$ implies that it is also invertible in $\C$ by Remark~\ref{invertibilityonsubcats}. Thus $\B^G$ and $\C^G$ have enough projectives by Remark~\ref{enoughprojinj} and the action extends 
to an action on the singularity categories by Proposition~\ref{equivsing=singequiv}.
Recall also that by diagram~$(\ref{commutativityofdiagramswithsingularities})$ for the abelian category $\B$ (resp.\ $\C$)  we have that there is triangle equivalence up to retracts $F'_{\B} \colon \mDsg(\B^G) \xrightarrow[]{} \mDsg (\B)^G$ (resp.\ $F'_{\C}$).

We prove that in our recollement setup there exists $\mDsg (\mathsf{e}^G)$ rendering the following diagram commutative:
\begin{equation*}
\begin{tikzcd}
\mDb(\B)^G \arrow[ddd, bend right=60, "\mDb(\mathsf{e})^G"'] \arrow[d, leftarrow, "K_{\B}"'] \arrow[r, "Q_{\B}^G"] & \mDsg (\B)^G \arrow[ddd, bend left = 60, "\mDsg (\mathsf{e})^G"] \\
\mDb(\B^G) \arrow[r, "Q_{\B^G}"] \arrow[d, "\mDb(\mathsf{e}^G)"']& \mDsg (\B^G) \arrow[d, dotted, "\mDsg (\mathsf{e}^G)"] \arrow[u, "F'_{\B}"']\\
\mDb(\C^G) \arrow[r, "Q_{\C^G}"] & \mDsg (\C^G) \arrow[d, "F'_{\C}"] \\
\mDb(\C)^G \arrow[u, leftarrow, "K_{\C}"] \arrow[r, "Q_{\C}^G"] & \mDsg (\C)^G
\end{tikzcd}
\end{equation*}
Commutativity of the outer bended square follows by Corollary~\ref{equivrecollsquare}. Commutativity of top and bottom squares is due to diagram~$(\ref{commutativityofdiagramswithsingularities})$ combined with the commutativity of diagram~$(\ref{factorization of equivariant quotient})$.
The left bent diagram commutes by Lemma~\ref{D(F^G)andD(F)^G}.

We claim that there is an induced functor $\mDsg (\mathsf{e}^G)$ such that the middle square is commutative. Indeed, notice that since $\mDb(\mathsf{e}) (\mKb(\Proj\B)) \subseteq \mKb(\Proj\C)$ and both $\mKb(\Proj \B)$ and $\mKb(\Proj \C)$ are $G$-invariant subcategories, we have, by Lemma~\ref{Ginvariantsubcats}, that $\mDb(\mathsf{e})^G \big(\mKb(\Proj\B)^G\big)$ lies in $ \mKb(\Proj\C)^G$. From the fact that $\mKb(\Proj\B^G) \simeq \mKb(\Proj\B)^G$ and $\mKb(\Proj\C^G) \simeq \mKb(\Proj\C)^G$ via the comparison functor, and using the above diagram we obtain that 
\begin{align*}
\mDb(\mathsf{e}^G)(\mKb(\Proj\B^G)) &= K_{\C}^{-1}\mDb(\mathsf{e})^G(K_{\B}(\mKb(\Proj\B^G))) \\ 
										& \simeq K_{\C}^{-1}\mDb(\mathsf{e})^G(\mKb(\Proj\B)^G) \\
& \subseteq   K_{\C}^{-1} \mKb(\Proj\C)^G \\
& \simeq   \mKb(\Proj\C^G)
\end{align*}
This yields that $Q_{\C^G} \circ \mDb(\mathsf{e}^G)$ annihilates $\mKb(\Proj(\B^G))$ and thus $\mDsg (\mathsf{e}^G)$ is induced rendering the middle square commutative. From the commutativity of the diagram so far, we have that $F'_{\C} \circ \mDsg(\mathsf{e}^G) \circ Q_{\B^G} = \mDsg(\mathsf{e})^G \circ F'_{\B} \circ Q_{\B^G}$. Using the universal property of the quotient functor $Q_{\B^G}$, we obtain that  $F'_{\C} \circ \mDsg(\mathsf{e}^G) = \mDsg(\mathsf{e})^G \circ F'_{\B} $, i.e.\ the desired commutativity of diagram~$(\ref{inducedsignularequiv})$.

If, moreover, $\mDsg (\mathsf{e})$ is a singular equivalence, then we have that $\mDsg (\mathsf{e})^G$ is also a singular equivalence by Corollary~\ref{G functor equivalence}.
Assuming that  $\mDsg(\B^G)$ and $\mDsg(\C^G)$ are idempotent complete, then by Proposition $\ref{singequivuptoretracts}$ we have the following equivalences $F'_{\B} \colon \mDsg (\B^G) \xrightarrow[]{\simeq} \mDsg (\B)^G$ and $F'_{\C} \colon\mDsg (\C^G) \xrightarrow[]{\simeq}~\mDsg(\C)^G$. We infer that $\mDsg (\mathsf{e}^G)$ is an equivalence by the commutativity of diagram~$(\ref{inducedsignularequiv})$.
\end{proof}

%

It is natural to ask if the converse of Theorem~\ref{main3} holds. Indeed we have that it holds under some mild assumptions. 

\begin{thm}\label{main4}
Let $\mathsf{R}_{\mathsf{ab}}(\A, \B, \C)$ be a recollement of abelian categories that lifts to a $G$-equivariant recollement with $|G|$ invertible in $\B$. 
    Let also $\B$ and $\C$ have enough projectives. Assume that $\mDsg(\mathsf{e}^G)\colon \mDsg(\B^G) \to \mDsg(\C^G)$ exists. Then there exists a singularity functor $\mDsg (\mathsf{e})\colon \mDsg (\B) \xrightarrow[]{} \mDsg (\C) $.
    If additionally,  $\mDsg (\mathsf{e}^G)$ is a singular equivalence, then $\mDsg (\mathsf{e})$ is a singular equivalence.
\begin{proof}
The existence of $\mDsg(\mathsf{e}^G)\colon \mDsg(\B^G) \to \mDsg(\C^G)$ implies $\pd_{\C^G}\mathsf{e}^G(P,\pi) <\infty $ for all $(P, \pi) \in \Proj (\B^G)$. We will show  that $\pd_{\C}\mathsf{e}(P) < \infty$ for all $P \in \Proj \B$ which again implies the existence of $\mDsg (\mathsf{e})\colon \mDsg (\B) \xrightarrow[]{} \mDsg (\C)$ . Indeed, both $\mathsf{Ind} $ and $\mathsf{For}$ are exact and preserve projectives since they are biajoint functors, hence $\mathsf{e}^G \mathsf{Ind}(P)$ has finite projective dimension. By the equivalence $\mathsf{e}^G \mathsf{Ind} \simeq \mathsf{Ind} \, \mathsf{e}$ we infer that $\mathsf{Ind}(\mathsf{e}P)$ has finite projective dimension. 
Applying the forgetful functor, we have that $\mathsf{For} ( \mathsf{Ind}(\mathsf{e}P) ) = \oplus (\mathsf{e}P)^g$ has finite projective dimension because it preserves the finite projective resolution of $\mathsf{Ind}(\mathsf{e}P)$. Hence, $ (\mathsf{e}P)^{1_G} \simeq \mathsf{e}(P)$ is a summand of an object with finite projective dimension and thus it has finite projective dimension.

Assume that $\mDsg (\mathsf{e}^G)\colon \mDsg (\B^G) \xrightarrow[]{} \mDsg (\C^G)$ is an equivalence. By Theorem~\ref{pss}, this is equivalent to  $\pd_{\C^G}\mathsf{e}^G(P,\pi) < \infty $ and $\pd_{\B^G}\mathsf{i}^G(A,\alpha) < \infty $ for all $(P, \pi)$ in $\Proj (\B^G)$ and $(A,\alpha)$ in $\A^G$, respectively. It suffices to show that  $\pd_{\C}\mathsf{e}(P) < \infty$ for all $P \in \Proj \B$ and  $\pd_{\C}\mathsf{i}(A) < \infty$ for all $A \in \A$. The first part has been already proved. For the second part we have that $\mathsf{Ind}(\mathsf{i}A)$ has finite projective dimension since it is isomorphic to $\mathsf{i}^G \mathsf{Ind}(A)$. By the same interplay of induction and forgetful functor, we obtain that $ (\mathsf{i}A)^{1_G} \simeq \mathsf{i}(A)$ is a summand of  some object of finite projective dimension and thus it has finite projective dimension.
\end{proof}
\end{thm}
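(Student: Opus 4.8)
The plan is to reduce Theorem~\ref{main4} to Theorem~\ref{pss} applied to both the original recollement $\mathsf{R}_{\mathsf{ab}}(\A,\B,\C)$ and the lifted $G$-equivariant recollement $\mathsf{R}_{\mathsf{ab}}(\A^G,\B^G,\C^G)$, using the biadjoint pair $(\mathsf{Ind},\mathsf{For})$ to transfer finiteness of projective dimension between the equivariant and non-equivariant settings. First I would record the standing consequences of the hypotheses: $|G|$ invertible in $\B$ forces $|G|$ invertible in $\C$ (Remark~\ref{invertibilityonsubcats}), and $\B^G,\C^G$ have enough projectives (Remark~\ref{enoughprojinj}), so Theorem~\ref{pss} is applicable on both sides. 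I would also recall the two key facts about the induced functors: $\mathsf{e}^G\circ\mathsf{Ind}\simeq\mathsf{Ind}\circ\mathsf{e}$ and $\mathsf{i}^G\circ\mathsf{Ind}\simeq\mathsf{Ind}\circ\mathsf{i}$ as natural isomorphisms of equivariant functors, together with the facts that $\mathsf{Ind}$ and $\mathsf{For}$ are exact and preserve projectives (being biadjoint), and that $\mathsf{For}\circ\mathsf{Ind}(X)=\bigoplus_{g\in G}X^g$.

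For the existence of $\mDsg(\mathsf{e})$, by Theorem~\ref{pss} applied to $\mathsf{R}_{\mathsf{ab}}(\A,\B,\C)$ it suffices to show $\pd_{\C}\mathsf{e}(P)<\infty$ for all $P\in\Proj\B$. Given such a $P$, the object $\mathsf{Ind}(P)$ is projective in $\B^G$, so the existence of $\mDsg(\mathsf{e}^G)$ gives $\pd_{\C^G}\mathsf{e}^G(\mathsf{Ind}(P))<\infty$; via $\mathsf{e}^G\mathsf{Ind}(P)\simeq\mathsf{Ind}(\mathsf{e}P)$ this says $\pd_{\C^G}\mathsf{Ind}(\mathsf{e}P)<\infty$. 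Applying the exact, projective-preserving functor $\mathsf{For}$ to a finite projective resolution of $\mathsf{Ind}(\mathsf{e}P)$ produces a finite projective resolution of $\mathsf{For}\circ\mathsf{Ind}(\mathsf{e}P)=\bigoplus_{g\in G}(\mathsf{e}P)^g$, so this direct sum has finite projective dimension; since $(\mathsf{e}P)^{1_G}\simeq\mathsf{e}(P)$ is a direct summand, $\pd_{\C}\mathsf{e}(P)<\infty$. This yields the existence of the singularity functor $\mDsg(\mathsf{e})\colon\mDsg(\B)\to\mDsg(\C)$.

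For the equivalence statement, assume $\mDsg(\mathsf{e}^G)$ is a singular equivalence. By Theorem~\ref{pss} applied to the equivariant recollement, this is equivalent to $\pd_{\C^G}\mathsf{e}^G(P,\pi)<\infty$ for all $(P,\pi)\in\Proj(\B^G)$ together with $\pd_{\B^G}\mathsf{i}^G(A,\alpha)<\infty$ for all $(A,\alpha)\in\A^G$. The first condition has just been used. For the second, I would run the identical argument with $\mathsf{i}$ in place of $\mathsf{e}$: for $A\in\A$, the object $\mathsf{Ind}(A)\in\A^G$ has $\pd_{\B^G}\mathsf{i}^G(\mathsf{Ind}(A))<\infty$, and via $\mathsf{i}^G\mathsf{Ind}(A)\simeq\mathsf{Ind}(\mathsf{i}A)$ and the $\mathsf{For}$-argument we conclude $\bigoplus_{g\in G}(\mathsf{i}A)^g$, hence its summand $(\mathsf{i}A)^{1_G}\simeq\mathsf{i}(A)$, has finite projective dimension in $\B$. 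Thus $\pd_{\C}\mathsf{e}(P)<\infty$ for all $P\in\Proj\B$ and $\pd_{\B}\mathsf{i}(A)<\infty$ for all $A\in\A$, so Theorem~\ref{pss} gives that $\mDsg(\mathsf{e})$ is a singular equivalence.

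I do not anticipate a serious obstacle here; the only point requiring a little care is the passage from $\pd_{\C^G}\mathsf{Ind}(\mathsf{e}P)<\infty$ to $\pd_{\C}\mathsf{e}(P)<\infty$, where one must be sure that $\mathsf{For}$ carries a finite projective resolution in $\C^G$ to a finite projective resolution in $\C$ (using exactness and preservation of projectives) and that finite projective dimension passes to direct summands. These are standard, so the proof is essentially a bookkeeping exercise combining Theorem~\ref{pss} with the elementary properties of the induction and forgetful functors.
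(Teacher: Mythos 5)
Your proposal is correct and follows essentially the same route as the paper: reduce everything via the projective-dimension criteria of Theorem~\ref{pss}, then transfer finiteness of projective dimension between the equivariant and non-equivariant settings using $\mathsf{e}^G\mathsf{Ind}\simeq\mathsf{Ind}\,\mathsf{e}$ (and its analogue for $\mathsf{i}$), the exact projective-preserving biadjoints $\mathsf{Ind}$, $\mathsf{For}$, and the fact that $\mathsf{e}(P)$ (resp.\ $\mathsf{i}(A)$) is a summand of $\mathsf{For}\,\mathsf{Ind}(\mathsf{e}P)=\bigoplus_{g\in G}(\mathsf{e}P)^g$. The only cosmetic difference is that you attribute the existence criterion for $\mDsg(\mathsf{e})$ to Theorem~\ref{pss} itself, whereas it is really the condition $\pd_{\C}\mathsf{e}(P)<\infty$ for $P\in\Proj\B$ that guarantees the induced functor on singularity categories, exactly as the paper uses it.
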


We can combine Theorem~\ref{main3} and Theorem~\ref{main4} and formulate the following main result regarding equivariant singular equivalences.

\begin{thm}\label{main6}
Let $\mathsf{R}_{\mathsf{ab}}(\A, \B, \C)$ be a recollement of abelian categories that lifts to a $G$-equivariant recollement with $|G|$ invertible in $\B$. Assume that $\B$ and $\C$ have enough projectives. Consider the following statements:
    \begin{itemize}
        \item[(i)] The functor $\mathsf{e}\colon \B \to \C$ induces a singular equivalence.
        \vspace{1mm}
        \item[(ii)] The functor $\mathsf{e}^G\colon \B^G \to \C^G$ induces a singular equivalence.
    \end{itemize}
Then \textnormal{(ii)} $\Longrightarrow$ \textnormal{(i)}.
Moreover, if $\mDsg(\B^G)$ and $\mDsg(\C^G)$ are idempotent complete then \textnormal{(i)} $\Longrightarrow$ \textnormal{(ii)}. In this case we have the following commutative diagram:
\begin{equation*}
\begin{tikzcd}
\mDsg( \B^G) \arrow[r, "\mDsg(\mathsf{e}^G)"]  \arrow[d, "F'_{\B}"'] & \mDsg(\C^G)  \arrow[d, "F'_{\C}" ] \\
\mDsg(\B)^G \arrow[r, "\mDsg(\mathsf{e})^G "]  \arrow[d, "\mathsf{For}"'] & \mDsg(\C)^G  \arrow[d, "\mathsf{For}"] \\
\mDsg(\B) \arrow[r, "\mDsg(\mathsf{e})"] & \mDsg(\C)
\end{tikzcd}
\end{equation*}
where $F'$ is the equivalence up to retracts of Proposition~\ref{singequivuptoretracts}.
\begin{proof}
The implication (ii) $\Longrightarrow$ (i) follows by Theorem~\ref{main4} and the converse is implied by Theorem~\ref{main3}. The commutativity of the diagram is immediate from the proof of Theorem~\ref{main3} and diagram~$(\ref{comdiagramFor})$ for the forgetful functor.
\end{proof}
\end{thm}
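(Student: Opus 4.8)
The plan is to assemble Theorem~\ref{main6} directly from the two halves already proved, so the real work is bookkeeping rather than new mathematics. First I would dispose of the implication (ii) $\Longrightarrow$ (i): this is precisely the content of Theorem~\ref{main4}. Indeed, if $\mathsf{e}^G$ induces a singular equivalence $\mDsg(\mathsf{e}^G)\colon \mDsg(\B^G) \xrightarrow{\simeq} \mDsg(\C^G)$, then in particular $\mDsg(\mathsf{e}^G)$ exists, so Theorem~\ref{main4} produces the functor $\mDsg(\mathsf{e})\colon \mDsg(\B)\to\mDsg(\C)$ and, using the hypothesis that $\mDsg(\mathsf{e}^G)$ is an equivalence, concludes that $\mDsg(\mathsf{e})$ is a singular equivalence. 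Note that this direction needs no idempotent-completeness assumption, which is why the theorem is stated asymmetrically.

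For the converse, assume $\mDsg(\B^G)$ and $\mDsg(\C^G)$ are idempotent complete and that $\mathsf{e}$ induces a singular equivalence $\mDsg(\mathsf{e})\colon \mDsg(\B)\xrightarrow{\simeq}\mDsg(\C)$. By Theorem~\ref{pss} the existence of $\mDsg(\mathsf{e})$ holds automatically under our hypotheses (or is part of the assumption), so Theorem~\ref{main3} applies verbatim: it yields the singularity functor $\mDsg(\mathsf{e}^G)$ fitting into the commutative square~$(\ref{inducedsignularequiv})$, and under the idempotent-completeness assumption it asserts that $\mDsg(\mathsf{e}^G)$ is a singular equivalence. That is exactly statement (ii).

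It then remains to record the commutative diagram. Its top square is diagram~$(\ref{inducedsignularequiv})$ from Theorem~\ref{main3}, where $F'_{\B}$ and $F'_{\C}$ are the equivalences up to retracts of Proposition~\ref{singequivuptoretracts}; under the standing idempotent-completeness hypothesis these $F'$ are genuine equivalences (again by Proposition~\ref{singequivuptoretracts}). The bottom square is the instance of the general commuting square~$(\ref{comdiagramFor})$ for the forgetful functor applied to the $G$-functor $\mDsg(\mathsf{e})$, namely $\mathsf{For}\circ \mDsg(\mathsf{e})^G = \mDsg(\mathsf{e})\circ\mathsf{For}$; this uses only that $\mDsg(\mathsf{e})$ is a $G$-functor, which is Lemma~\ref{D_sg(e)Gfunctor}. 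Pasting the two squares gives the displayed diagram.

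There is essentially no obstacle here beyond checking that the hypotheses of Theorems~\ref{main3} and~\ref{main4} match those of the present statement: in both of those results one assumes $\B,\C$ have enough projectives, $|G|$ invertible in $\B$, and the recollement lifts to a $G$-equivariant recollement, and one assumes a priori the existence of $\mDsg(\mathsf{e})$ (resp.\ $\mDsg(\mathsf{e}^G)$) — but under the singular-equivalence hypotheses these existence statements are automatic. The one point worth double-checking is that ``$\mDsg(\mathsf{e})$ is a singular equivalence'' in (i) indeed entails the hypothesis ``$\mDsg(\mathsf{e})$ exists'' needed to invoke Theorem~\ref{main3}, and symmetrically on the equivariant side for Theorem~\ref{main4}; both are immediate since an equivalence is in particular a functor. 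Hence the proof is just:

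\begin{proof}
The implication (ii) $\Longrightarrow$ (i) is Theorem~\ref{main4}: if $\mDsg(\mathsf{e}^G)$ is a singular equivalence then in particular it exists, so Theorem~\ref{main4} gives that $\mDsg(\mathsf{e})$ exists and is a singular equivalence.

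Conversely, assume that $\mDsg(\B^G)$ and $\mDsg(\C^G)$ are idempotent complete and that $\mathsf{e}$ induces a singular equivalence. In particular $\mDsg(\mathsf{e})$ exists, so Theorem~\ref{main3} applies and yields a singularity functor $\mDsg(\mathsf{e}^G)\colon \mDsg(\B^G)\to\mDsg(\C^G)$ which, by the idempotent-completeness assumption together with the fact that $\mDsg(\mathsf{e})$ is an equivalence, is a singular equivalence. This proves (i) $\Longrightarrow$ (ii).

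Finally, the top square of the displayed diagram is diagram~$(\ref{inducedsignularequiv})$ of Theorem~\ref{main3}, in which $F'_{\B}$ and $F'_{\C}$ are the equivalences up to retracts of Proposition~\ref{singequivuptoretracts}; under the present idempotent-completeness hypothesis they are equivalences by the same proposition. The bottom square is the commuting square~$(\ref{comdiagramFor})$ applied to the $G$-functor $\mDsg(\mathsf{e})$ (which is a $G$-functor by Lemma~\ref{D_sg(e)Gfunctor}), i.e.\ $\mathsf{For}\circ\mDsg(\mathsf{e})^G=\mDsg(\mathsf{e})\circ\mathsf{For}$. Pasting the two squares gives the asserted commutative diagram.
\end{proof}
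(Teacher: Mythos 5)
Your proposal is correct and follows essentially the same route as the paper: (ii) $\Rightarrow$ (i) via Theorem~\ref{main4}, (i) $\Rightarrow$ (ii) via Theorem~\ref{main3} under idempotent completeness, and the diagram assembled from the square~(\ref{inducedsignularequiv}) in the proof of Theorem~\ref{main3} together with the forgetful-functor square~(\ref{comdiagramFor}) applied to the $G$-functor $\mDsg(\mathsf{e})$. The extra remarks on existence versus equivalence are harmless verifications the paper leaves implicit.
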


\begin{rem}
Let $R$ be a ring and $n=\sum_{i=1}^{n} 1_R$. Assume that $R$ is commutative and let $\A$ be an additive $R$-linear category. If $n$ is invertible in $R$, then it is invertitble in $\A$ since for any morphism $f$ we can set $f' \coloneq n^{-1}f$. Recall that, even for a non commutative ring $R$, $\Mod{R}$ is naturally $\mathsf{Z}(R)$-linear category where $\mathsf{Z}(R)$ is the center of $R$. Now, if $n$ is invertible in $\Mod{R}$, consider $\mathsf{Id}_R$ as an $R$-module homorphism, then $\mathsf{Id}=nf'$ for some unique $f' \in \Hom_R(R,R)$. Hence, $1_R=\mathsf{Id}(1_R)=nf'(1_R)$, i.e.\ $n$ is invertible in $R$. We have proved that $n=|G|$ is invertible in $\Mod{R}$ if and only if $n$ is invertible in $R$. 
\end{rem}

Recall that the singularity category of an Artin algebra $\Lambda$ is defined to be the Verdier quotient $\mDb(\smod \Lambda)/\mKb(\proj (\Lambda))$. It is a result of Chen that the singularity category of an Artin algebra $\Lambda$ is idempotent complete, see \cite[Corollary~2.4]{XiaoWuChen3}). Thus the results of Theorem~\ref{main6} can be applied to module categories over Artin algebras.  
If we have a finite group $G$ acting on $\smod\Lambda$ by automorphisms (see subsection~\ref{skewgroupalgebra}), then $(\smod\Lambda)^G \simeq \smod \Lambda G$ and, moreover, $\Lambda G$ is an Artin algebra. 
Hence, the singularity category $\mDsg (\Lambda G)$ is idempotent complete, thus so is $\mDsg ((\smod \Lambda)^G)$ since these two are equivalent categories.
We have the following corollary which provides an equivariant version of \cite[Theorem~8.1]{PSS}.

\begin{cor}\label{recollementofmodulesandequiv}
	Let $\Lambda$ be an Artin algebra and $(\smod \Lambda/\Lambda e\Lambda, \smod \Lambda, \smod e \Lambda e)$ be the recollement induced by an idempotent $e \in R^G$ and $G$ a finite group acting by algebra automorphisms with $|G|$ invertible in $\Lambda$. Then the following are equivalent:
	\begin{itemize}
        \item[(i)] The functor $\mathsf{e} \colon \smod \Lambda  \to \smod e\Lambda e$ induces a singular equivalence.
        \item[(ii)] The functor $\mathsf{e}^G\colon (\smod \Lambda)^G \to( \smod e\Lambda e)^G$ induces a singular equivalence.
        \item[(iii)] The functor $\mathsf{e'}\colon \smod \Lambda G \to \smod e'\Lambda G e'$ induces a singular equivalence, where $e'=e1_G \in \Lambda G$.
        \vspace{1.5mm}
        \item[(iv)] $\pd_{\Lambda}(\frac{\Lambda /\Lambda e\Lambda}{\mathsf{rad}(\Lambda /\Lambda e\Lambda)}) < \infty$ and $\pd_{e\Lambda e}e\Lambda < \infty$.
        \vspace{1.5mm}
        \item[(v)] $\pd_{\Lambda}(\frac{\Lambda G/\Lambda Ge'\Lambda G}{\mathsf{rad}(\Lambda G/\Lambda Ge'\Lambda G)}) < \infty$ and $\pd_{e'\Lambda Ge'}e'\Lambda G < \infty$.
    \end{itemize}
\end{cor}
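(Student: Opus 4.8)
The plan is to deduce \textbf{Corollary~\ref{recollementofmodulesandequiv}} from \textbf{Theorem~\ref{main6}}, \textbf{Theorem~\ref{pss}}, and the identifications worked out in subsection~\ref{examplewithidempotents}, so that almost nothing new needs to be proved: the real work has already been done. First I would set the stage. By the discussion in subsection~\ref{skewgroupalgebra}, the action of $G$ on $\Lambda$ by algebra automorphisms induces an action by automorphisms on $\smod\Lambda$, and since $e\in\Lambda^G$ the recollement $(\smod\Lambda/\Lambda e\Lambda,\ \smod\Lambda,\ \smod e\Lambda e)$ lifts to a $G$-equivariant recollement by the results of subsection~\ref{examplewithidempotents} (the finitely generated analogue of Proposition~\ref{equivariant recollement induced by an idempotent}); here one uses that $\Lambda G$ is again an Artin algebra, so all the relevant module categories are abelian with enough projectives. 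Moreover $|G|$ is invertible in $\Lambda$ by hypothesis, hence invertible in $\smod\Lambda$ by the preceding Remark. Finally, by \cite[Corollary~2.4]{XiaoWuChen3} the singularity category of any Artin algebra is idempotent complete; applying this to $\Lambda G$ and using $(\smod\Lambda)^G\simeq\smod\Lambda G$, $(\smod e\Lambda e)^G\simeq\smod e'\Lambda G e'$ gives that $\mDsg((\smod\Lambda)^G)$ and $\mDsg((\smod e\Lambda e)^G)$ are idempotent complete. Thus all the hypotheses of Theorem~\ref{main6} are in force.

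Next I would prove the equivalences. The equivalence (i) $\Longleftrightarrow$ (ii) is exactly Theorem~\ref{main6}: the idempotent-completeness just verified supplies the implication (i) $\Longrightarrow$ (ii), while (ii) $\Longrightarrow$ (i) holds unconditionally. For (ii) $\Longleftrightarrow$ (iii), I would invoke Proposition~\ref{equivariant recollement induced by an idempotent}: the canonical isomorphisms $\Phi\colon\smod\Lambda G\xrightarrow{\simeq}(\smod\Lambda)^G$ and $\Phi\colon\smod e'\Lambda G e'\xrightarrow{\simeq}(\smod e\Lambda e)^G$ identify the functor $\mathsf{e}'=(-)e'$ with $\mathsf{e}^G$ up to these isomorphisms of categories, so $\mathsf{e}'$ induces a singular equivalence if and only if $\mathsf{e}^G$ does. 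The equivalence (i) $\Longleftrightarrow$ (iv) is \cite[Theorem~8.1]{PSS} (equivalently Theorem~\ref{pss} rewritten for Artin algebras, noting that for $P\in\proj\Lambda$ finite projective dimension of $\mathsf{e}(P)=Pe$ over $e\Lambda e$ is the statement $\pd_{e\Lambda e}e\Lambda<\infty$ since $e\Lambda e$ is the image of $\Lambda e$, and $\Lambda/\Lambda e\Lambda$ being the category living over the quotient algebra reduces the condition on $\mathsf{i}(A)$ to a condition on the top of $\Lambda/\Lambda e\Lambda$). Finally (iii) $\Longleftrightarrow$ (v) is the same statement \cite[Theorem~8.1]{PSS} applied verbatim to the Artin algebra $\Lambda G$ with the idempotent $e'=e1_G$, using the algebra isomorphism $(e\Lambda e)G\cong e'\Lambda G e'$ recorded in the proof of Proposition~\ref{equivariant recollement induced by an idempotent}.

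So the proof is essentially a matter of assembling: (iv) $\Longleftrightarrow$ (i) $\Longleftrightarrow$ (ii) $\Longleftrightarrow$ (iii) $\Longleftrightarrow$ (v), with the first and last links coming from \cite[Theorem~8.1]{PSS}, the middle links from Theorem~\ref{main6} and Proposition~\ref{equivariant recollement induced by an idempotent}. The one point that deserves a sentence of care — and which I expect to be the main (minor) obstacle — is the passage between the abelian recollement results, which were stated for $\Mod$, and the finitely generated setting: one must note that everything restricts, namely that $\Lambda G$ is an Artin algebra (so $\smod\Lambda G$ is abelian with enough projectives), that $\Phi$ restricts to $\smod\Lambda G\simeq(\smod\Lambda)^G$ and to $(\proj\Lambda)^G\simeq\proj\Lambda G$ when $|G|$ is invertible (subsection~\ref{skewgroupalgebra}), and hence that the recollement of subsection~\ref{examplewithidempotents} and the singularity-category machinery of Section~\ref{singularcats} all carry over to the small setting without change. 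Once this bookkeeping is in place, the five equivalences follow formally, and I would close by displaying the commutative diagram, which is precisely the one furnished by Theorem~\ref{main6} transported along the isomorphisms $\Phi$.
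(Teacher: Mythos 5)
Your proposal is correct and follows essentially the same route as the paper: the corollary is obtained by combining Theorem~\ref{main6} (using Chen's result that singularity categories of Artin algebras are idempotent complete, applied to $\Lambda G$ and $e'\Lambda Ge'$), the identifications $\Phi\colon \smod \Lambda G \simeq (\smod\Lambda)^G$ and $\smod e'\Lambda Ge' \simeq (\smod e\Lambda e)^G$ from Proposition~\ref{equivariant recollement induced by an idempotent} to pass between (ii) and (iii), and \cite[Theorem~8.1]{PSS} applied to $\Lambda$ and to $\Lambda G$ for (iv) and (v). Your parenthetical gloss on why (i)~$\Leftrightarrow$~(iv) reduces to the stated projective-dimension conditions is a bit loosely phrased, but this is immaterial since that equivalence is taken verbatim from \cite[Theorem~8.1]{PSS}, exactly as the paper does.
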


\begin{rem}
It is known that $\Lambda$ and $\Lambda G$ are separably equivalent. We refer to \cite[Section 3]{Bergh} for the definition and see the proof of \cite[Theorem 4.1]{Bergh} for the separability. In this setup, it is natural to ask if $\Lambda$ and $\Lambda G$ are singularly equivalent. By \cite[Theorem 2.5]{ZhaoSun}, a pair of separably equivalent algebras is equivalent to a separable pair of functors. This is given by the bi-adjoint pair $(\mathsf{Ind}, \mathsf{For})$, i.e.\  induction and restriction of scalars between $\smod\Lambda$ and $\smod\Lambda G$. In \cite[Theorem 3.9]{ZhaoSun} the authors characterize when a separable pair induces a singular equivalence. There are examples showing that this is not the case, see for instance \cite[Section~4]{ChenLu} which deals with singularity categories of skewed-gentle algebras.
\end{rem}

\section{Applications and Examples}
\label{sectionexamples}

This section provides applications and examples illustrating the results developed in the previous sections. It consists of three subsections, the first one is of geometric nature, the second one deals with triangular matrix rings and the third one is about singular equivalences with level and singular Hochschild cohomology.

\subsection{Geometric Example}\label{geomexample}

In this section $X$ will be a scheme, $\mD(X)$ is the derived category of sheaves of $\mathcal{O}_X$-modules and $\mD_\mathsf{qc}(X)$ its full subcategory consisting of complexes with quasi-coherent cohomology. For a quasi-compact quasi-separated scheme $X$ with a closed subscheme $Z \subset X$ such that its complement $U = X \setminus Z$ is quasi-compact, J\o{}rgensen \cite{Jorgensen} proved that there exists the following recollement:
\begin{equation}\label{jorgensen}
\begin{tikzcd}
\mD_{\mathsf{qc}} (U)  \arrow[rr, "\mathbb{R}u_*" description]& &\mD_{\mathsf{qc}}(X) \arrow[ll, bend left, ""] \arrow[ll, bend right, "\mathbb{L}u^*"'] \arrow[rr] & & \mD_{\mathsf{qc},Z}(X)  \arrow[ll, bend left, ""] \arrow[ll, bend right, "v"']
\end{tikzcd}
\end{equation}
where $v$ is the inclusion of the full subcategory $\mD_{\mathsf{qc},Z}(X)$ of complexes with quasi-coherent cohomology supported on $Z$ and $u \colon U \to X$ the inclusion of schemes.

In fact, for a scheme $X$ which is either quasi-compact with affine diagonal or noetherian, the natural forgetful functor $ \Psi_X \colon \mathsf{D}(\mathsf{Qcoh}(X)) \to \mD_\mathsf{qc}(X)$ is an equivalence of categories, see \cite[Corollary~5.5]{BokstedtNemman} and discussion preceding \cite[Theorem~1.2]{hnr}.
Under either of the above assumptions on $X$, the open subscheme $U$ satisfies the same conditions: if $X$ is noetherian, then $U$ is noetherian; if $X$ has affine diagonal, then $U$ also has affine diagonal. Note that if $X$ and $U$ are assumed to be quasi-compact in the second case, then recollement $(\ref{jorgensen})$ exists as we indicate in the next remark:

\begin{rem}\label{jorgensen2}
Let $X$ be a scheme and $Z \subset X$ be a closed subscheme with complement $U$. If $X$ is either noetherian (without assumptions on $U$) or quasi-compact with affine diagonal and $U$ is quasi-compact, then there exists the next recollement:
\begin{equation*}
\begin{tikzcd}
\mD (\mathsf{Qcoh}(U))  \arrow[rr, "\mathbb{R}u_*" description]& &\mD(\mathsf{Qcoh}(X)) \arrow[ll, bend left, ""] \arrow[ll, bend right, "\mathbb{L}u^*"'] \arrow[rr] & & \mD_Z(\mathsf{Qcoh}(X))  \arrow[ll, bend left, ""] \arrow[ll, bend right, "v"']
\end{tikzcd}
\end{equation*}
which is equivalent to the recollement $(\ref{jorgensen})$. In particular, $\Psi_X$ restricts to an equivalence $\Psi_X \big|_Z \colon \mD_Z(\mathsf{Qcoh}(X)) \to \mD_{\mathsf{qc},Z}(X)$, $v$ denotes the inclusion $\mD_Z(\mathsf{Qcoh}(X)) \hookrightarrow \mD(\mathsf{Qcoh}(X))$ and $\mathbb{L}u^*$ and $\mathbb{R}u_*$ denote the derived functors induced by the inclusion $u \colon U \to X$ which commute with the equivalence of recollements induced by the triple $(\Psi_U, \Psi_X, \Psi_X\big|_Z)$.
\end{rem}

Let $X$ be a $k$-scheme and $G$ be a finite subgroup of $\mathsf{Aut}(X)$. Recall from Example~\ref{exampleonsheaves1}  that $G$ induces a right action on $\mathsf{Qcoh}(X)$ by $g^*$ (i.e.\ pullbacks of automorphisms). Since $U$ is a $G$-invariant subspace, then the $G$-action restricts to $\mathsf{Qcoh}(U)$. Moreover, the pullback $u^*$, which is the restriction to $U$, is a $G$-functor:
\begin{equation}\label{u*Gfunctor}
 u^*g^* \simeq  (gu)^* = (ug)^* \simeq g^* u^* 
\end{equation}
where the middle equality holds since $U$ is $G$-invariant.

The $G$-action on $\mathsf{Qcoh}(X)$ (resp.\ $\mathsf{Qcoh}(U)$) extends to an admissible $G$-action on $\mD(\mathsf{Qcoh}(X))$ (resp.\ $\mD(\mathsf{Qcoh}(U))$ by derived pullbacks $\mD(g^*)$ since each $g^*$ is an exact functor. Moreover, following the proof of Lemma~\ref{D^b(e)Gfunctor} we can show that $\mathbb{L}u^*$ is a $G$-functor. Indeed, $u^*$ is an exact functor (because open immersions are flat), thus the derived functor $\mathbb{L}u^*$ is applied component-wise which also holds for $\mD(g^*)$ since $g^*$ are also exact auto-equivalences for each $g \in \mathsf{Aut}(X)$. We conclude using the equivalence~\ref{u*Gfunctor} which implies that the $G$-functor $u^*$ lifts to a $G$-functor between the derived categories. This implies that $\mathbb{R}u_*$ is a $G$-functor, by Lemma~\ref{adjointequiv}. Note that $\mD(\mathsf{Qcoh}(X))$ admits a (unique) dg-enhancement, see for example \cite[Theorem~4.6]{SC}. This implies that $\mD(\mathsf{Qcoh}(X))^G$ is canonically triangulated, by Remark~\ref{trianequivrem}, assuming that $|G|$ is invertible in $k$.  Then, by Theorem~\ref{main2}, we have that there exists a recollement of triangulated categories

\begin{equation}\label{equivariantgeometricrecollement}
\begin{tikzcd}
\mD (\mathsf{Qcoh}(U))^G  \arrow[rr, "(\mathbb{R}u_*)^G" description]& & \mD (\mathsf{Qcoh}(X))^G \arrow[ll, bend left, ""] \arrow[ll, bend right, "(\mathbb{L}u^*)^G"'] \arrow[rr] & & \mD_Z(\mathsf{Qcoh}(X))^G  \arrow[ll, bend left, ""] \arrow[ll, bend right, "v^G"']
\end{tikzcd}
\end{equation}
where $v^G$ is the induced inclusion of subcategories. We remark that we also obtain the equivariant recollement of recollement~$(\ref{jorgensen})$ using Lemma~\ref{inducedequivariantequivalence} and the equivalences $\Psi_U$, $\Psi_X$ and  $\Psi_X \big|_Z$.

It is natural to ask whether the above categories are equivalent to unbounded derived categories of equivariant quasi-coherent sheaves. Moreover, what is the relation they have with the quasi-coherent sheaves of the quotient varieties? To examine this question, we need the following.

\begin{rem}\label{icofunboundedcats}
Recall that an abelian category is Grothendieck if it satisfies AB-5 and has a generator. It is well known that any Grothendieck category has exact products (i.e.\ is AB-4). It is also well known (see \cite[Proposition~077P]{StacksProject}) that for any scheme, $\mathsf{Qcoh}(X)$ is a Grothendieck category, thus it has exact products.

Following \cite[Example~3.20]{ChaoSun}), we have that for any abelian category $\A$ on which a finite group $G$ acts with $|G|$ invertible in $\A$ the comparison functor $K \colon \mD (\A^G) \xrightarrow{} \mD (\A)^G$ is an equivalence up to retracts. If $\A$ satisfies AB-4 (resp.\ AB-4*), then $\A^G$ is also AB-4 (resp.\ AB-4*). Then $\mD (\A^G)$ has arbitrary direct coproducts (resp.\ products) by \cite[Lemma~1.5]{BokstedtNemman} and thus is idempotent complete by \cite[Proposition~1.6.8]{Neeman} (resp.\ \cite[Remark~1.6.9]{Neeman}). Therefore, the derived category $ \mD (\A^G)$ is idempotent complete and the functor $K$ is an equivalence.
\end{rem}

By the above remark, the derived categories $\mD (\mathsf{Qcoh}^G(X))$ and $\mD (\mathsf{Qcoh}^G(U))$ are idempotent complete and therefore, assuming that $|G|$ is invertible in $k$,  we have the following triangle equivalences:
\[
 K_X \colon \mD (\mathsf{Qcoh}^G(X)) \xrightarrow{\simeq} \mD (\mathsf{Qcoh}(X))^G
\]
 and 
\[
K_X \big|_U  = K_U \colon \mD (\mathsf{Qcoh}^G(U)) \xrightarrow{\simeq} \mD (\mathsf{Qcoh}(U))^G
\]
where $K_X$ and $K_U$ are the comparison functors.

Thus, we can identify $\mD(\mathsf{Qcoh}^G(X))$ and $\mD(\mathsf{Qcoh}(X))^G$ using this equivalence. Since we have a full embedding $v^G \colon  \mD_Z(X)^G \to \mD(\mathsf{Qcoh}(X))^G$ we can identify $ \mD_Z(\mathsf{Qcoh}(X))^G$ with the full subcategory  $\mD_Z(\mathsf{Qcoh}^G(X))$  by restricting the comparison functor, i.e.\ we have the following commutative square:
\begin{equation}\label{restrictionofcomparrisonfunctortoZ}
\begin{tikzcd}
\mD(\mathsf{Qcoh}^G(X)) \arrow[r, "K_X"] & \mD(\mathsf{Qcoh}(X))^G \\
\mD_Z(\mathsf{Qcoh}^G(X)) \arrow[u, hookrightarrow, "v'"] \arrow[r, "K_X \big|_Z"] & \mD_Z(\mathsf{Qcoh}(X))^G \arrow[u, hookrightarrow, "v^G"]
\end{tikzcd}
\end{equation}
where the vertical arrows are the canonical inclusions. Hence, $\mD_Z(\mathsf{Qcoh}^G(X))$ is the full subcategory, which consists of complexes $(E,\phi)^{\bullet}$ whose cohomology is supported on $Z$ if and only if $E^{\bullet}$ has cohomology supported on $Z$.
\par
Hence, the equivariant recollement~$(\ref{equivariantgeometricrecollement})$ is equivalent via the comparison functors $(K_U, K_X,K_X \big|_Z)$ to the following recollement:
\begin{equation}\label{derivedequivariantrecollementofqcoh}
\begin{tikzcd}
\mD (\mathsf{Qcoh}^G(U))  \arrow[rr, "\mathbb{R}(u_*^G)" description]& & \mD (\mathsf{Qcoh}^G(X)) \arrow[ll, bend left, ""] \arrow[ll, bend right, "\mathbb{L}((u^*)^G)"'] \arrow[rr] & & \mD_Z(\mathsf{Qcoh^G}(X))  \arrow[ll, bend left, ""] \arrow[ll, bend right, "v'"']
\end{tikzcd}
\end{equation}
where $ \mathbb{L}((u^*)^G) = K_U^{-1} \circ (\mathbb{L}u^*)^G \circ K_X $ which is easy to observe using the definition of the comparison functor, the exactness of $u^*$ and $g^*$ and equation~$(\ref{u*Gfunctor})$. 
Moreover, since $(\mathbb{L}u^*)^G$ admits a right adjoint, so does $ \mathbb{L}((u^*)^G)$ and this adjoint will be $\mathbb{R}(u^G_*) $ by uniqueness of adjoints.
The fact that $v'= K_X^{-1} \circ v^G \circ K_X \big|_Z$ follows by the commutativity of diagram~$(\ref{restrictionofcomparrisonfunctortoZ})$. Similarly we can complete the diagram~$(\ref{derivedequivariantrecollementofqcoh})$ into a recollement by using the adjoints of the recollement~$(\ref{equivariantgeometricrecollement})$. Notice that the comparison functors induce the equivalence $ \mD(\mathsf{Qcoh}(X))^G/ \mD(\mathsf{Qcoh}(U))^G \simeq \mD(\mathsf{Qcoh}^G(X))/ \mD(\mathsf{Qcoh}^G(U))$.
Now we can apply this to the quotient scheme case.

\begin{prop}\label{derivedcategoryofclosedsubscheme}
Let $X$ be a scheme over a field $k$ and $Z$ is a closed subscheme with complement $U$. Assume that either $X$ is noetherian or that $X$ is quasi-compact with affine diagonal and $U$ is quasi compact. Let also $G$ be a finite subgroup of $\mathsf{Aut}(X)$ and assume that $G$ is acting freely on $X$ with $|G|$ invertible in $k$ and $U$ is $G$-invariant. Assume also that the quotient scheme $X/G$ exists. Then we have that $\mD_Z(\mathsf{Qcoh}^G(X))$ is equivalent to $\mD_{Z/G}(\mathsf{Qcoh}(X/G))$. 
\end{prop}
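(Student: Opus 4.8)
The plan is to reduce everything to the equivariant recollement of quasi-coherent sheaves already established in the discussion preceding the statement, and then to invoke the free-quotient equivalences of Example~\ref{exampleonsheaves2}. First I would record the two pullback equivalences coming from the quotient map $\pi\colon X\to X/G$: by Example~\ref{exampleonsheaves2}, since $G$ acts freely and $X/G$ exists, we have $\mathsf{Qcoh}(X/G)\xrightarrow{\simeq}\mathsf{Qcoh}^G(X)$, and since $U$ is $G$-invariant and $G$ acts freely on $U$ with quotient the open subscheme $U/G\subseteq X/G$, we also have $\mathsf{Qcoh}(U/G)\xrightarrow{\simeq}\mathsf{Qcoh}^G(U)$. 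These are exact equivalences of abelian categories, hence they induce triangle equivalences $\mathsf{D}(\mathsf{Qcoh}(X/G))\xrightarrow{\simeq}\mathsf{D}(\mathsf{Qcoh}^G(X))$ and $\mathsf{D}(\mathsf{Qcoh}(U/G))\xrightarrow{\simeq}\mathsf{D}(\mathsf{Qcoh}^G(U))$ on unbounded derived categories. Moreover these equivalences are compatible with the relevant functors: the pullback along the open immersion $U/G\hookrightarrow X/G$ corresponds, under pullback along $\pi$, to the equivariant restriction functor $(u^*)^G$, because $\pi$ restricted over $U/G$ is exactly the quotient map $U\to U/G$ and pullbacks compose; hence we get a commutative (up to natural isomorphism) square relating $\mathbb{L}(u/G)^*$ and $\mathbb{L}((u^*)^G)$.

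Next I would transport the recollement. The scheme $X$ is either noetherian or quasi-compact with affine diagonal and $U$ quasi-compact, and these properties descend to $X/G$ and $U/G$ (a free quotient of a noetherian scheme by a finite group is noetherian; affine diagonal and quasi-compactness are likewise preserved, at least after noting $\pi$ is finite hence affine). Therefore Remark~\ref{jorgensen2} applies to the pair $(X/G, Z/G)$ with complement $U/G$, yielding J\o{}rgensen's recollement
\[
\begin{tikzcd}
\mathsf{D}(\mathsf{Qcoh}(U/G)) \arrow[rr]& & \mathsf{D}(\mathsf{Qcoh}(X/G)) \arrow[ll, bend left] \arrow[ll, bend right] \arrow[rr] & & \mathsf{D}_{Z/G}(\mathsf{Qcoh}(X/G)) \arrow[ll, bend left] \arrow[ll, bend right]
\end{tikzcd}
\]
On the other hand, the equivariant recollement~$(\ref{derivedequivariantrecollementofqcoh})$ of $\mathsf{Qcoh}^G$ is already available. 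By the uniqueness of recollement data (the outer two functors of a recollement are determined, up to equivalence, by the localization/colocalization it expresses), the pullback equivalences along $\pi$ identify the left-hand and middle terms of the two recollements compatibly with the quotient functors $\mathbb{R}u_*$; consequently they identify the kernels of the respective quotient functors. Since $\mathsf{D}_{Z/G}(\mathsf{Qcoh}(X/G))$ is, by Remark~\ref{jorgensen2} and the definition, the full subcategory of complexes with cohomology supported on $Z/G$, and $\mathsf{D}_Z(\mathsf{Qcoh}^G(X))$ is, by the commutative square~$(\ref{restrictionofcomparrisonfunctortoZ})$ and the characterization following it, the full subcategory of equivariant complexes $(E,\phi)^\bullet$ with $E^\bullet$ supported on $Z$, I would check directly that pullback along $\pi$ matches these support conditions: a coherent (or quasi-coherent) sheaf on $X/G$ has support in $Z/G$ iff its pullback to $X$ has support in $\pi^{-1}(Z/G)=Z$ (using that $\pi$ is surjective and finite), and the same passes to cohomology of complexes. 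This gives the restricted equivalence $\mathsf{D}_{Z/G}(\mathsf{Qcoh}(X/G))\xrightarrow{\simeq}\mathsf{D}_Z(\mathsf{Qcoh}^G(X))$, which is the claim.

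The main obstacle I anticipate is not any single deep step but the bookkeeping of compatibilities: one must be careful that $Z/G$ is a well-defined closed subscheme with complement exactly $U/G$ (this uses $Z$ being $G$-invariant, which follows from $U$ being $G$-invariant), that the hypotheses of Remark~\ref{jorgensen2} genuinely hold for $X/G$ (descent of noetherianness / affine diagonal / quasi-compactness along the finite free quotient), and that the equivalence of recollements is not merely an equivalence of the middle terms but respects all six functors, so that the induced equivalence of kernels is the pullback functor and not some twist of it. Once these compatibilities are in place, the support-condition matching is a routine check about finite surjective morphisms, and the proof concludes. An alternative, slightly slicker route would be to avoid invoking uniqueness of recollements and instead argue that pullback along $\pi$ sends $\mathsf{Qcoh}(U/G)$-torsion (i.e. sheaves vanishing on $U/G$) to $\mathsf{Qcoh}^G(U)$-torsion, directly identifying the localizing subcategories generated by the kernels; I would present whichever of these is cleaner after writing out the details.
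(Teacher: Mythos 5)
Your proposal is correct and follows essentially the same route as the paper: transport the problem along the pullback equivalences $\pi^*\colon \mathsf{Qcoh}(X/G)\xrightarrow{\simeq}\mathsf{Qcoh}^G(X)$ and $\mathsf{Qcoh}(U/G)\xrightarrow{\simeq}\mathsf{Qcoh}^G(U)$ (free action, quotient exists), verify that noetherianness, quasi-compactness and affine diagonal descend to $X/G$ and $U/G$ so that Remark~\ref{jorgensen2} yields J\o{}rgensen's recollement over the quotient, and then use the recollement structure to identify the supported subcategories. The only difference is presentational: the paper makes the final identification through the chain of Verdier-quotient equivalences $\mD_Z(\mathsf{Qcoh}^G(X))\simeq \mD(\mathsf{Qcoh}(X))^G/\mD(\mathsf{Qcoh}(U))^G\simeq \mD(\mathsf{Qcoh}^G(X))/\mD(\mathsf{Qcoh}^G(U))\simeq \mD(\mathsf{Qcoh}(X/G))/\mD(\mathsf{Qcoh}(U/G))\simeq \mD_{Z/G}(\mathsf{Qcoh}(X/G))$, whereas you match the two recollements (equivalently, the kernels of the restriction functors, i.e.\ the support conditions under $\pi^*$) directly, which amounts to the same argument.
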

\begin{proof}
Recall by Example~\ref{exampleonsheaves2} that the quotient scheme $X/G$ exists if and only if each $G$-orbit is contained in some affine open subset of $X$. Then, for the $G$-invariant open $U$, the quotient $U/G$ is an open subscheme of $X/G$. Moveore, $Z/G$ also exists and it is the complement of $U/G$ in $X/G$.
We have the following sequence of triangle equivalences:
\begin{flalign*}
\mD_Z(\mathsf{Qcoh}^G(X))  & \simeq \mD_Z(\mathsf{Qcoh}(X))^G  && \text{Induced by $K_X \big|Z$} \\
 &  \simeq  \frac{\mD(\mathsf{Qcoh}(X))^G}{\mD(\mathsf{Qcoh}(U))^G} && \text{By the equivariant recollement~(\ref{equivariantgeometricrecollement}}) \\
 &  \simeq \frac{\mD(\mathsf{Qcoh}^G(X))}{\mD(\mathsf{Qcoh}^G(U))} && \text{Induced by the comparison functors} \\
 & \simeq \frac{\mD(\mathsf{Qcoh}(X/G))}{\mD(\mathsf{Qcoh}(U/G))} && \text{}
\end{flalign*}
Where the last equivalence is induced by the fact that since $G$ is acting freely and the quotient scheme $X/G$ exists, then, by Examples~\ref{exampleonsheaves1} and~\ref{exampleonsheaves2}, we have that  $\pi^* \colon \mathsf{Qcoh}^G(X) \xrightarrow{\simeq} \mathsf{Qcoh}(X/G)$ is the equivalence given by pullbacks of sheaves along the quotient map $\pi \colon X \to X/G$ - similarly we have that $\pi_{|_U} \colon U \to U/G$ induces $(\pi_{|_U})^* \colon \mathsf{Qcoh}^G(U) \xrightarrow{\simeq} \mathsf{Qcoh}(U/G)$.

It is easy to prove, using the quotient map $\pi \colon X \to X/G$ that, if $X$ is noetherian, then so is $X/G$, hence also $U/G$, being an open subscheme of $X/G$. If $X$ (resp.\  $U$) is quasi-compact, then $X/G$ (resp.\  $U/G$) is quasi-compact, since it is a surjective image of a quasi-compact space via the quotient map. Finally, for any affine $V \subset X/G$, we have that $\pi^{-1}(V)$ is affine, we can show that if $X$ has affine diagonal (hence $U$), then so does $X/G$ (hence $U/G$).
Therefore, by Remark~\ref{jorgensen2}, there exists the recollement $\mathsf{R}_{\mathsf{tr}}(\mD (\mathsf{Qcoh}(U/G)), \mD (\mathsf{Qcoh}(X/G)) , \mD_{Z/G} (\mathsf{Qcoh}(X/G) ) )$. 
This implies that there is an equivalence $\frac{\mD(\mathsf{Qcoh}(X/G))}{\mD(\mathsf{Qcoh}(U/G))} \simeq \mD_{Z/G} (\mathsf{Qcoh}(X/G))$.
\end{proof}

Recall by Example~\ref{exampleonsheaves2} that $X/G$ exists when $X$ is a quasi-projective algebraic variety. In fact, the open subvariety $U$ is also quasi-projective and any quasi-projective variety is noetherian.

\begin{cor}
Let $X$ be a quasi-projective algebraic variety over a field $k$ and $Z$ a closed subscheme with complement $U$. Let also $G$ be a finite subgroup of $\mathsf{Aut}(X)$ and assume that $G$ is acting freely on $X$ with $|G|$ invertible in $k$ and $U$ is $G$-invariant. Then we have $\mD_Z(\mathsf{Qcoh}^G(X)) \simeq \mD_{Z/G}(\mathsf{Qcoh}(X/G))$. 
\end{cor}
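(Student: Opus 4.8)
The plan is to deduce the statement directly from Proposition~\ref{derivedcategoryofclosedsubscheme}, so the only real work is to check that all of its hypotheses are satisfied under the assumptions of the corollary. First I would observe that a quasi-projective variety $X$ over $k$ is of finite type over $k$, hence noetherian (and separated); thus the first alternative in Proposition~\ref{derivedcategoryofclosedsubscheme} --- that $X$ be noetherian --- holds, and consequently no quasi-compactness hypothesis on $U$ is needed. I would also note for completeness that $U$, being open in the quasi-projective scheme $X$, is itself quasi-projective, hence noetherian.

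Next I would verify that the quotient scheme $X/G$ exists. By the remark recalled in Example~\ref{exampleonsheaves2} (see \cite[Chapter~II, Paragraph~7, Remark]{Mumford}), for $X$ quasi-projective every $G$-orbit is contained in an affine open subset of $X$; by \cite{SGA1}[Expos\'e~V, Proposition~1.8] this is precisely the condition guaranteeing that the categorical quotient $X/G$ exists as a scheme. The remaining hypotheses of Proposition~\ref{derivedcategoryofclosedsubscheme} --- namely that $G$ is a finite subgroup of $\mathsf{Aut}(X)$ acting freely, that $|G|$ is invertible in $k$, and that $U$ is a $G$-invariant open with closed complement $Z$ --- are assumed verbatim in the corollary.

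Having checked all the hypotheses, Proposition~\ref{derivedcategoryofclosedsubscheme} applies and yields the triangle equivalence $\mD_Z(\mathsf{Qcoh}^G(X)) \simeq \mD_{Z/G}(\mathsf{Qcoh}(X/G))$, which is exactly the assertion. There is essentially no obstacle in this deduction; the single point that requires a word of justification is the existence of $X/G$ (and of the induced open subscheme $U/G$ with complement $Z/G$), which reduces to the classical statement about $G$-orbits in quasi-projective varieties cited above, already used to set up Proposition~\ref{derivedcategoryofclosedsubscheme}.
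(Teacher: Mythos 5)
Your proposal is correct and follows exactly the paper's route: the corollary is derived as a direct application of Proposition~\ref{derivedcategoryofclosedsubscheme}, noting that a quasi-projective variety is noetherian (so is its open subvariety $U$) and that $X/G$ exists because every $G$-orbit lies in an affine open. Nothing further is needed.
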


\begin{rem}
If the action is not free or the quotients scheme $X/G$ does not exist, then recollement~$(\ref{derivedequivariantrecollementofqcoh})$ implies the following recollement for the quotient stacks:
\[
\mathsf{R_{tr}} (\mD (\mathsf{Qcoh}[U/G]), \mD (\mathsf{Qcoh}[X/G]) , \mD_Z (\mathsf{QCoh} (X))^G )
\]
To show the equivalence $\mD_Z(\mathsf{Qcoh}^G(X)) \simeq \mD_{[Z/G]}(\mathsf{Qcoh}([X/G]))$ one would have to use \cite[Theorem~1.2]{hnr} for the quotient stacks $[X/G]$ and $[U/G]$ and the existence of a recollement of the form~$(\ref{jorgensen})$ to obtain the analogue of Remark~\ref{jorgensen2}.
\end{rem}

\subsection{Equivariant Categories of Modules of Triangular Matrix Rings} \label{triangmatrixexample}

In this subsection we examine group actions on module categories of triangular matrix rings and their recollements which are induced by some idempotent. We show that the induced skew group ring is a Morita equivalent to a triangular matrix ring, and then we extend a known corollary about singular equivalences.
We begin by recalling some useful machinery that we will need throughout this section.

Let $F \colon \B \to \A$ be an additive functor between abelian categories. To this functor we associate the \textbf{comma category} denoted by $(F \downarrow \mathsf{Id} )$. Its objects are triples $(A,B, f)$, where $f\colon FB \to A$ is a morphism in $\A$. A morphism $(\a, \b) \colon (A,B,f) \to (A',B',f')$ between triples consists of morphisms $\a \colon A \to A'$ in $\A$ and $\b \colon B \to B'$ in $\B$ such that the following diagram commutes:
\begin{equation*}
\begin{tikzcd}
FB \arrow[r, "f"] \arrow[d, "F\b"']& A \arrow[d, "\a"] \\
FB' \arrow[r, "f'"'] & A'
\end{tikzcd}
\end{equation*}

The comma category is abelian when the functor $F$ is right exact, see \cite{FosGriRei}. We recall the following functors:
\begin{itemize}
\item[(i)] $\mathsf{T}_{\B} \colon \B \to (F \downarrow \mathsf{Id} )$ is defined by $ \mathsf{T}_{\B}B = (FB, B, \mathsf{Id}_{F(B)} ) $ on objects and given a morphism $\b \colon B \to B'$  then $ \mathsf{T}(\b) = (F\b , \b)$.

\item[(ii)] $ \mathsf{U}_{\B} \colon (F \downarrow \mathsf{Id} ) \to \B $ is defined by $\mathsf{U}_{\B} (A,B, f) = B$ and given a morphism $(\a, \b) \colon (A,B,f) \to (A',B',f')$ then $\mathsf{U}_{\B}(\a, \b) = \b $. Similarly we define the functor $ \mathsf{U}_{\A} \colon (F \downarrow \mathsf{Id} ) \to \A $.

\item[(iii)] $\mathsf{Z}_{\B} \colon (F \downarrow \mathsf{Id} ) \to \B$ is defined by $\mathsf{Z}_{\B}B = (0,B, 0) $ on objects and given a morphism $ \b \colon B \to B'$ then $\mathsf{Z}_{\B}(\b) = (0, \b)$. Similarly we define the functor $\mathsf{Z}_{\A} \colon (F \downarrow \mathsf{Id} ) \to \A$.

\item[(iv)] $\mathsf{q} \colon (F \downarrow \mathsf{Id} ) \to  \A $ is defined by $\mathsf{q} (A,B,f) = \mathsf{Coker}(f)$ on objects and, if $(\a, \b) \colon (A,B,f) \to (A',B',f')$ is a morphism, then $\mathsf{q} (\a,\b)$ is the induced morphism $\mathsf{Coker}(f) \to \mathsf{Coker}(f)$ between the cokernels.
\end{itemize}

Then from \cite{Psaroud:survey}, these functors give rise to a recollement:
\begin{equation}\label{1strecolofcomma}
\xymatrix@C=0.5cm{
\A \ar[rrr]^{\mathsf{Z}_{\A}} &&& (F \downarrow \mathsf{Id} )   \ar[rrr]^{\mathsf{U}_{\B}} \ar @/_1.5pc/[lll]_{\mathsf{q}}  \ar @/^1.5pc/[lll]^{\mathsf{U}_{\A}} &&& \B 
\ar @/_1.5pc/[lll]_{\mathsf{T}_{\B}} \ar
 @/^1.5pc/[lll]^{\mathsf{Z}_{\B}}
}
\end{equation}

Notice that $\mathsf{U}_{\A} \mathsf{T}_{\B} \simeq F$ and $\mathsf{q} \mathsf{Z}_{\B} = 0$. Moreover, $\mathsf{U}_{\A}$ and $\mathsf{Z}_{\B}$ are exact. When $F$ has a right adjoint $H\colon \A \to \B$ with unit $\eta \colon \mathsf{Id}_{\B} \to HF$ and counit $\epsilon \colon FH \to \mathsf{Id}_{\A} $ we have the following two additional functors:
\begin{itemize}
\item[(vii)] $\mathsf{H}_{\A} \colon \A \to  (F \downarrow \mathsf{Id} ) $ is defined by $\mathsf{H}_{\A} A = (A, HA, \epsilon_A) $ on objects and on morphisms $\a : A \to A'$ then $\mathsf{H}_{\A}(\a) = (\a, H\a)$.
\item[(viii)] $\mathsf{p} \colon  (F \downarrow \mathsf{Id} ) \to \B $ is defined by $\mathsf{p}(A,B,f) = \mathsf{Ker}( \eta_B \circ Hf)$ on objects and on morphisms is the induced morphism between the kernels.
\end{itemize}
With these two extra functors we have the recollement:
\begin{equation}\label{2ndrecolofcomma}
\xymatrix@C=0.5cm{
\B \ar[rrr]^{\mathsf{Z}_{\B}} &&& (F \downarrow \mathsf{Id} )   \ar[rrr]^{\mathsf{U}_{\A}} \ar @/_1.5pc/[lll]_{\mathsf{U}_{\B}}  \ar @/^1.5pc/[lll]^{\mathsf{p}} &&& \A 
\ar @/_1.5pc/[lll]_{\mathsf{Z}_{\A}} \ar
 @/^1.5pc/[lll]^{\mathsf{H}_{\A}}
}
\end{equation}
In this case we have that $\mathsf{U}_{\B}\mathsf{H}_{\A} \simeq H$.
Moreover, notice that the existence of the right adjoint $H$ yields a right adjoint of $\mathsf{U}_{\A}$, namely $\mathsf{H}_{\A}$, and a right adjoint of $\mathsf{Z}_{\B}$, namely $\mathsf{p}$. Dual to the above construction, we can define the comma category $(\mathsf{Id}\downarrow F)$ where $F\colon \B \to \A$  is a left exact functor.
Recollements of comma categories are canonically related to recollements of abelian categories with certain extra properties. Franjou and Pirashvili proved in \cite[Proposition~8.9]{FranjPir}) the next result, see also \cite[Proposition~3.1]{GaoKoenigPsaroudakis} for a different proof.

\begin{prop}
\label{recolemenetswithcommacats}
Let $\mathsf{R}_{\mathsf{ab}}(\A, \B, \C)$ be a recollement of abelian categories. Assume that $\mathsf{p}$ is exact and $\B$ and $\C$ have enough projectives. Then the recollements $\mathsf{R}_{\mathsf{ab}}(\A, \B, \C)$ and $\mathsf{R}_{\mathsf{ab}}(\A, (\mathsf{pl} \downarrow \mathsf{Id}), \C)$ are equivalent.

Dually, if $\mathsf{q}$ is exact and $\B$ and $\C$ have enough injectives, then the recollements $\mathsf{R}_{\mathsf{ab}}(\A, \B, \C)$ and $\mathsf{R}_{\mathsf{ab}}(\A, ( \mathsf{Id} \downarrow \mathsf{qr}), \C)$ are equivalent.
\end{prop}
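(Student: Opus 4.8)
The plan is to prove the statement by constructing an explicit equivalence of recollements, using the comma-category machinery recalled above together with the two recollements \eqref{1strecolofcomma} and \eqref{2ndrecolofcomma} associated to the functor $\mathsf{pl}\colon \C \to \A$ (respectively $\mathsf{qr}\colon \C\to\A$ for the dual statement). First I would observe that $\mathsf{pl}$ is right exact: indeed $\mathsf{l}$ is a left adjoint, hence right exact, and $\mathsf{p}$ is exact by hypothesis, so their composite is right exact and the comma category $(\mathsf{pl}\downarrow \mathsf{Id})$ is abelian. Since $\mathsf{pl}$ has a right adjoint — namely $\mathsf{re}\,\mathsf{i}$ wait, more precisely the right adjoint of $\mathsf{l}$ is $\mathsf{e}$ and the right adjoint of $\mathsf{p}$ is $\mathsf{i}$, so $\mathsf{pl}$ admits the right adjoint $\mathsf{e}\,\mathsf{i}$, but $\mathsf{e}\,\mathsf{i}=0$; the correct statement is that $\mathsf{pl}$ has right adjoint given by the composite of the right adjoints in the reverse order, which here we do not need explicitly — both recollements \eqref{1strecolofcomma} and \eqref{2ndrecolofcomma} with $F=\mathsf{pl}$ are available, so that $(\mathsf{pl}\downarrow\mathsf{Id})$ carries the full six-functor structure $\mathsf{R}_{\mathsf{ab}}(\A,(\mathsf{pl}\downarrow\mathsf{Id}),\C)$.

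Next I would build the comparison functor $\Theta\colon \B \to (\mathsf{pl}\downarrow\mathsf{Id})$ on objects by $B\mapsto (\mathsf{p}B,\ \mathsf{e}B,\ \gamma_B)$, where $\gamma_B\colon \mathsf{pl}\mathsf{e}B\to \mathsf{p}B$ is obtained by applying $\mathsf{p}$ to the counit $\mathsf{l}\mathsf{e}\to \mathsf{Id}_{\B}$; on morphisms it is $\mathsf{p}$ and $\mathsf{e}$ applied componentwise, and functoriality is immediate from naturality of the counit. The key point is to check that $\Theta$ is an equivalence of abelian categories compatible with the two recollement structures, i.e.\ that it intertwines $\mathsf{i}$ with $\mathsf{Z}_\A$, $\mathsf{e}$ with $\mathsf{U}_\B$, etc. Compatibility on the section/retraction functors can be read off directly: $\mathsf{U}_\B\Theta=\mathsf{e}$ on the nose, and $\mathsf{U}_\A\Theta=\mathsf{p}$, while $\mathsf{Z}_\A$ corresponds to $\mathsf{i}$ because $\mathsf{e}\,\mathsf{i}=0$ forces $\Theta\mathsf{i}A=(\mathsf{p}\mathsf{i}A,0,0)$ and one identifies $\mathsf{p}\mathsf{i}\simeq \mathsf{Id}_\A$ using $\mathsf{i}$ fully faithful and the recollement identities. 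That $\Theta$ is fully faithful and essentially surjective — hence an equivalence — is where the exactness hypothesis on $\mathsf{p}$ and the existence of enough projectives in $\B$ and $\C$ enter: exactness of $\mathsf{p}$ guarantees that the triple $(\mathsf{p}B,\mathsf{e}B,\gamma_B)$ recovers $B$ via the glueing, and a short diagram chase using the unit/counit of the adjunctions $(\mathsf{l},\mathsf{e},\mathsf{r})$ and $(\mathsf{q},\mathsf{i},\mathsf{p})$ exhibits a quasi-inverse.

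The main obstacle I anticipate is verifying essential surjectivity of $\Theta$ and producing the quasi-inverse: given a triple $(A,C,f)$ with $f\colon \mathsf{pl}C\to A$, one must construct an object $B\in\B$ together with coherent identifications $\mathsf{p}B\simeq A$, $\mathsf{e}B\simeq C$ matching $f$ with $\gamma_B$; the natural candidate is a pullback/glueing of $\mathsf{l}C$ and $\mathsf{i}A$ along the relevant maps, and one has to check this lands in $\B$ correctly and that the construction is functorial and inverse to $\Theta$ up to natural isomorphism. This is exactly the content of the cited references \cite[Proposition~8.9]{FranjPir} and \cite[Proposition~3.1]{GaoKoenigPsaroudakis}, so in the write-up I would carry out the glueing construction explicitly (invoking that $\B$ has enough projectives to reduce checking isomorphisms to a projective generator, or arguing directly via the recollement counit triangles) and then cite those sources for the verification that the two recollement structures correspond. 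The dual statement follows by applying the proven statement to the opposite categories: $\mathsf{R}_{\mathsf{ab}}(\A,\B,\C)^{\mathrm{op}}=\mathsf{R}_{\mathsf{ab}}(\A^{\mathrm{op}},\B^{\mathrm{op}},\C^{\mathrm{op}})$ has exact $\mathsf{q}^{\mathrm{op}}$ when $\mathsf{q}$ is exact, $\B^{\mathrm{op}}$ and $\C^{\mathrm{op}}$ have enough projectives when $\B,\C$ have enough injectives, and $(\mathsf{q}^{\mathrm{op}}\mathsf{l}^{\mathrm{op}}\downarrow\mathsf{Id})^{\mathrm{op}}=(\mathsf{Id}\downarrow\mathsf{q}\mathsf{r})$, so dualizing the equivalence just obtained gives the second assertion.
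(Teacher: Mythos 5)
The first thing to note is that the paper itself does not prove Proposition~\ref{recolemenetswithcommacats}: it is recalled from Franjou--Pirashvili \cite[Proposition~8.9]{FranjPir}, with \cite[Proposition~3.1]{GaoKoenigPsaroudakis} cited for an alternative proof. Since you also end up delegating the substantive verification (fullness and essential surjectivity of the comparison functor, which is exactly where exactness of $\mathsf{p}$ and the existence of enough projectives in $\B$ and $\C$ do their work) to those same references, your proposal is on the same footing as the paper's treatment. Moreover, the functor you write down, $\Theta(B)=(\mathsf{p}B,\mathsf{e}B,\mathsf{p}\epsilon_B)$ with $\epsilon$ the counit of $\mathsf{l}\dashv\mathsf{e}$, is the correct canonical comparison (in the triangular matrix situation of subsection~\ref{triangmatrixexample} it is literally the identity), and your reduction of ``equivalence of recollements'' to intertwining $\mathsf{i}$ with $\mathsf{Z}_{\A}$ and $\mathsf{e}$ with $\mathsf{U}_{\C}$, the remaining four functors then matching by uniqueness of adjoints, is fine.

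Two slips should be corrected, though neither damages the overall structure. First, in the adjoint triple $(\mathsf{q},\mathsf{i},\mathsf{p})$ the functor $\mathsf{i}$ is the \emph{left} adjoint of $\mathsf{p}$, not its right adjoint; $\mathsf{p}$, and hence $\mathsf{pl}$, need not admit a right adjoint at all, so your claim that both recollements~(\ref{1strecolofcomma}) and~(\ref{2ndrecolofcomma}) are available for $F=\mathsf{pl}$ is unjustified. Fortunately it is also unnecessary: the recollement $\mathsf{R}_{\mathsf{ab}}(\A,(\mathsf{pl}\downarrow\mathsf{Id}),\C)$ of the statement is precisely~(\ref{1strecolofcomma}) for $F=\mathsf{pl}\colon\C\to\A$, which only needs right exactness of $\mathsf{pl}$, as you verified. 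Second, in the duality step left and right adjoints swap under passing to opposite categories, so the left adjoint of $\mathsf{e}^{\mathrm{op}}$ is $\mathsf{r}^{\mathrm{op}}$ and the comma category obtained is $(\mathsf{q}^{\mathrm{op}}\mathsf{r}^{\mathrm{op}}\downarrow\mathsf{Id})$, whose opposite is $(\mathsf{Id}\downarrow\mathsf{qr})$; your formula with $\mathsf{l}^{\mathrm{op}}$ would instead produce $(\mathsf{Id}\downarrow\mathsf{ql})$. Finally, if you wish to be more self-contained than the paper: exactness of $\mathsf{p}$ alone already yields faithfulness of $\Theta$ (if $\mathsf{e}\phi=0$ then $\Image\phi$ lies in $\Image\mathsf{i}$, and applying the exact functor $\mathsf{p}$ together with $\mathsf{p}\phi=0$ forces $\Image\phi=0$), whereas fullness and essential surjectivity -- via the glueing you anticipate, e.g.\ the pushout of $\mathsf{l}C\leftarrow \mathsf{i}\mathsf{pl}C\rightarrow \mathsf{i}A$, where the left map is the counit of $\mathsf{i}\dashv\mathsf{p}$ at $\mathsf{l}C$ and the right map is $\mathsf{i}f$ -- are the steps you would actually have to carry out or, as the paper does, cite.
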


Comma categories of module categories are related to module categories of triangular matrix rings. More precisely, consider the triangular matrix ring 
\[
\Lambda = 
\begin{pmatrix}
R & 0 \\
_SN_R & S \\
\end{pmatrix}
\]
where $R$ and $S$ are rings and $_SN_R$ an $S$-$R$-bimodule. It is known that there exists an equivalence between $\Mod \Lambda$ and the comma category $(- \otimes _S N  \downarrow \mathsf{Id})$ where $- \otimes_{S}  N \colon \Mod S \to \Mod R$, see \cite[Chapter~III]{AusReiSmal}. This means that a right $\Lambda$-module is given by a triple $(X,Y,f)$, where $f \colon Y \otimes_S N \to X$ is a morphism in $\Mod R$. More concretely, a right $\Lambda$-module corresponding to $(X,Y,f)$ is the additive group $X \oplus_f Y$ with the right action:
$$ (x,y)  
\begin{pmatrix}
r & 0 \\
n & s \\
\end{pmatrix} 
= (xr + f(y\otimes n) , ys)$$
Morphisms of right $\Lambda$-modules are of the form $(\chi, \psi) \colon(X,Y,f) \to (X', Y', f')$ where $\chi\colon X\to X' $ in $\Mod R$ and $ \psi \colon Y \to Y'$ in $\Mod S$, such that $\chi \circ f = f' \circ (\psi \otimes \mathsf{Id}_N )$, that is the following diagram commutes:
\begin{equation*}
    \begin{tikzcd}
        Y\otimes_S N \arrow[d, "\psi \otimes \mathsf{Id}_N "'] \arrow[r,"f"] & X \arrow[d, "\chi"]\\
        Y' \otimes_S N \arrow[r, "f'"'] & X'
    \end{tikzcd}
\end{equation*}

There is a natural way to construct a recollement induced by an idempotent $e = \begin{psmallmatrix} 0 & 0\\0 & 1\end{psmallmatrix}$ as in subsection~\ref{examplewithidempotents}. In this case we have that $S \simeq e \Lambda e$ and and $R \simeq (1-e)\Lambda (1-e)$. We have that the induced recollement is equivalent to 
\begin{equation}\label{modRmodS}
\begin{tikzcd}
\Mod R \arrow[rr, "\mathsf{Z}_{R}" description]& & \Mod \Lambda \arrow[ll, bend left, "\mathsf{U}_{R}=\mathsf{1-e}"]  \arrow[ll, bend right, "\mathsf{q}"'] \arrow[rr, "\mathsf{e}=\mathsf{U}_{S}"] & & \Mod S \arrow[ll, bend left, "\mathsf{Z}_{S}"] \arrow[ll, bend right, "\mathsf{T}_{S}"']
\end{tikzcd}
\end{equation}
where $\mathsf{e}$ (resp.\  $\mathsf{1-e}$) is the functor induced by the idempotent $e$ (resp.\  $1-e$) and $\mathsf{U}_R \mathsf{T}_S = - \otimes_S N $. Observe that $-\otimes_S N$ admits a right adjoint, thus we obtain the following recollement, which is the recollement induced by the idempotent $1-e$:

\begin{equation} \label{modSmodR}
\begin{tikzcd}
\Mod S \arrow[rr, "\mathsf{Z}_{S}" description]& & \Mod \Lambda \arrow[ll, bend left, "\mathsf{p}"]  \arrow[ll, bend right, "\mathsf{U}_{S}"'] \arrow[rr, "\mathsf{U}_{R}" description] & & \Mod R \arrow[ll, bend left, "\mathsf{H}_{R}"] \arrow[ll, bend right, "\mathsf{Z}_{R}"']
\end{tikzcd}
\end{equation}

Notice that in this setup, $\mathsf{U}_R$ admits a right adjoint, thus it is exact, and similarly $\mathsf{U}_S$ admits a left adjoint. This means that Proposition~\ref{recolemenetswithcommacats} applies on recollement~\ref{modRmodS} for $\mathsf{p} = \mathsf{U}_R$ and on recollement~\ref{modSmodR} for $\mathsf{q}= \mathsf{U}_S$. In fact we can show that any recollement of module categories satisfying either of these properties is a recollement of the above form, i.e.\ of a triangular matrix ring. We summarize this result in the following remark to apply it later on.

\begin{rem}\label{recollementofmodulesistriangular}
Consider a recollement $\mathsf{R}_{\mathsf{ab} }(\Mod R, \Mod T, \Mod S)$. When the functor $\mathsf{p}$ has a right adjoint, it is exact. By Proposition~\ref{recolemenetswithcommacats} the above recollement is equivalent to the recollement $\mathsf{R}_{\mathsf{ab}}(\Mod R, (\mathsf{pl} \downarrow \mathsf{Id}), \Mod S)$, where $\mathsf{pl} \colon \Mod S \to \Mod R$. Moreover, since both $\mathsf{l}$ and $\mathsf{p}$ have right adjoints, their composition $\mathsf{pl}$ admits a right adjoint. Thus, by the Eilenber-Watts theorem, there exists a bimodule $_S N _R$ such that $\mathsf{pl} \simeq - \otimes_S N$. Hence  $(\mathsf{pl} \downarrow \mathsf{Id})$ is equivalent to $\Mod \Lambda$ for some triangular matrix ring $\Lambda$ and the recollement we started with is equivalent to the recollement~\ref{modRmodS}. 
If $\mathsf{q}$ admits a left adjoint, we obtain, similarly, that the recollement of $\Mod T$ is equivalent to a recollement of $(\mathsf{Id} \downarrow \mathsf{qr})$ by Proposition~\ref{recolemenetswithcommacats}. The latter is equivalent to a recollement of some triangular matrix ring.
\end{rem}

We turn our attention to group actions on a triangular matrix ring $\Lambda$ that induces group actions by automorphisms on $\Mod \Lambda$, as in subsection~\ref{examplewithidempotents}. 
Consider a finite group $G$ together with a group homomorphism $G \to \mathsf{Aut}(\Lambda)^{op}$ such that $e \in \Lambda^G$.
Then we have the induced action by automorphisms on $\Mod \Lambda$. By Proposition~\ref{equivariant recollement induced by an idempotent}, we have the next recollement of skew group rings induced by the idempotent $e1_G$ of $\Lambda G$:

\begin{equation}\label{G-triangularmatrix}
\begin{tikzcd}
\Mod RG \arrow[rr, "\mathsf{Z}_{R}^G"]& & \Mod \Lambda G \arrow[ll, bend left, "\mathsf{U}_{R}^G"]  \arrow[ll, bend right, "\mathsf{q}^G"'] \arrow[rr, "\mathsf{e}^G=\mathsf{U}_{S}^G"] & & \Mod SG \arrow[ll, bend left, "\mathsf{Z}_{S}^G"] \arrow[ll, bend right, "\mathsf{T}_{S}^G"']
\end{tikzcd}
\end{equation}
where $\mathsf{e}^G$ is (identified with) the functor induced by the idempotent $e1_G$. 
Note that $\Mod \Lambda G \simeq (\Mod \Lambda)^G$ and in the presence of the above recollement we prove that $\Lambda G$ is Morita equivalent to a triangular matrix ring. 

\begin{prop}\label{triangularmatrixequivariant}
Let $\Lambda$ be a triangular matrix ring. Let $G$ be a finite group acting by automorphisms such that $e \in \Lambda^G$. Then the skew group ring $\Lambda G$ is Morita equivalent to the triangular matrix ring $\Lambda'= \begin{psmallmatrix} RG & 0\\N' & SG\end{psmallmatrix}$ with $N' = \Hom_{\Lambda G}( \Lambda G , \Lambda G / SG  )$.
\end{prop}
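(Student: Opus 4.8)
The strategy is to recognize the recollement~$(\ref{G-triangularmatrix})$ as a recollement of module categories to which the machinery of Remark~\ref{recollementofmodulesistriangular} applies, and then to extract the precise form of the triangular matrix ring. First I would observe that in the recollement~$(\ref{G-triangularmatrix})$ the functor $\mathsf{p}^G$ exists and has a right adjoint: indeed the original recollement~$(\ref{modSmodR})$ shows $\mathsf{p}$ is exact with a right adjoint in the non-equivariant setting, and since $|G|$ is invertible (we may assume this; if not, one works directly with the equivariant adjoint triples of Proposition~\ref{prop1} which already provide all six functors) the adjoint triples lift to the equivariant setting by Lemma~\ref{adjointequiv}, so $\mathsf{p}^G \dashv (\text{something})$ and dually $\mathsf{q}^G$ has a left adjoint. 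Hence the hypotheses of Remark~\ref{recollementofmodulesistriangular} are satisfied for the recollement $\mathsf{R}_{\mathsf{ab}}(\Mod RG, \Mod \Lambda G, \Mod SG)$, so $\Lambda G$ is Morita equivalent to a triangular matrix ring $\Lambda' = \begin{psmallmatrix} RG & 0 \\ N' & SG \end{psmallmatrix}$ for a suitable $SG$-$RG$-bimodule $N'$.

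The remaining point is to identify $N'$. Following Remark~\ref{recollementofmodulesistriangular}, the bimodule is determined by the composite $\mathsf{p}^G \mathsf{l}^G \simeq - \otimes_{SG} N'$ (where $\mathsf{l}^G$ is the fully faithful left adjoint of $\mathsf{e}^G = \mathsf{U}_S^G$), via the Eilenberg--Watts theorem. In the non-equivariant triangular matrix setting we have $\mathsf{U}_R \mathsf{T}_S \simeq - \otimes_S N$ where $N$ is the off-diagonal bimodule, and here $\mathsf{T}_S$ plays the role of the fully faithful left adjoint of $\mathsf{U}_S$. So I would compute $N'$ as the image of $SG$ (as a right $SG$-module) under $\mathsf{p}^G \mathsf{l}^G$, or equivalently describe it intrinsically. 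The cleanest description uses the standard fact that for the recollement induced by the idempotent $e' = e 1_G \in \Lambda G$, the off-diagonal bimodule in the associated triangular matrix presentation is $e' \Lambda G (1 - e')$, equivalently $\Hom_{\Lambda G}\big((1-e')\Lambda G, e'\Lambda G\big)$; rewriting via $SG \simeq e'\Lambda G e'$, $RG \simeq (1-e')\Lambda G (1-e')$, and $\Lambda G / SG$ understood as the quotient module appearing in the recollement (the kernel side $\Mod \Lambda G / \Lambda G e' \Lambda G$ playing the role of $R$), one arrives at $N' = \Hom_{\Lambda G}(\Lambda G, \Lambda G / SG)$ as stated. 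I would verify that this $\Hom$-module carries the correct $SG$-$RG$-bimodule structure (left $SG$ from the target, right $RG$ from the source) and that $- \otimes_{SG} N'$ is naturally isomorphic to $\mathsf{p}^G\mathsf{l}^G$ by checking the two functors agree on the free module $SG$ and are both right exact and coproduct-preserving.

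The main obstacle I anticipate is purely bookkeeping: matching the notation in the statement ($N' = \Hom_{\Lambda G}(\Lambda G, \Lambda G/SG)$, where ``$\Lambda G/SG$'' is shorthand that must be interpreted correctly as the relevant quotient/kernel term) with the intrinsic construction coming from Remark~\ref{recollementofmodulesistriangular}, and confirming that no issue with the invertibility of $|G|$ is needed — the triangular-matrix structure of $\Lambda$ already guarantees the exactness of $\mathsf{p}$ (it has a right adjoint $\mathsf{H}_R$) and the existence of a left adjoint for $\mathsf{q}$ ($= \mathsf{U}_S$, which has the left adjoint $\mathsf{Z}_S$... rather, one uses that $\mathsf{q} = \mathsf{U}_R$ side appropriately), so these properties pass to the equivariant recollement~$(\ref{G-triangularmatrix})$ directly via Proposition~\ref{equivariant recollement induced by an idempotent} without invoking Lemma~\ref{adjointequiv} at all. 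Once the functorial identifications are pinned down, the proof is a short assembly: apply Remark~\ref{recollementofmodulesistriangular}, then compute the off-diagonal bimodule.
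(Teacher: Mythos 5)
Your proposal is correct and follows essentially the same route as the paper: apply Remark~\ref{recollementofmodulesistriangular} to the equivariant recollement~$(\ref{G-triangularmatrix})$, using Lemma~\ref{adjointequiv} to lift the adjunction $\mathsf{U}_R \dashv \mathsf{H}_R$ to $\mathsf{U}_R^G \dashv \mathsf{H}_R^G$ so that the relevant functor is exact, and then identifying $N'$ via Eilenberg--Watts by evaluating $\mathsf{U}_R^G\mathsf{T}_S^G$ on $SG$, exactly as the paper does. The only cosmetic difference is your initial hedge about invertibility of $|G|$, which, as you yourself note at the end, is not needed since neither the statement nor Lemma~\ref{adjointequiv} requires it.
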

\begin{proof}
We use Remark~\ref{recollementofmodulesistriangular} to show that the recollement~$(\ref{G-triangularmatrix})$ is equivalent to a recollement of a triangular matrix ring. Indeed, we know that $\mathsf{H}_R$ is a right adjoint to $\mathsf{U}_R$. Thus, by Lemma~\ref{adjointequiv}, the functor $\mathsf{H}_R$ is also a $G$-functor and $\mathsf{H}_R^G$ is right adjoint to $\mathsf{U}_R^G$. Thus, the comma category $(\mathsf{U}_R^G \mathsf{T}_S^G \downarrow \mathsf{Id})$ is equivalent to the category of modules over the triangular matrix ring 
$\Lambda'= \begin{psmallmatrix} RG & 0\\N' & SG\end{psmallmatrix}$,
where $N'$ is the $SG$-$RG$-bimodule that corresponds to $\mathsf{U}_R^G \mathsf{T}_S^G \simeq -\otimes_{SG} N' \colon \Mod SG \to \Mod RG$. For $N'$ we apply the functor $\mathsf{U}_R^G \mathsf{T}_S^G$ on $SG$ and we obtain that
$N'$ is $\Hom_{\Lambda G}(SG \otimes_{SG} \Lambda G , \Lambda G / SG  ) = \Hom_{\Lambda G}( \Lambda G , \Lambda G / SG  )$
which is naturally a right $RG$-module. It is also a left $SG$-module if we define $sg \cdot n' \coloneqq (\mathsf{U}_R^G \mathsf{T}_S^G)(sg(-))n'$, where $sg(-)\colon SG \to SG$ is the right $SG$-module homomorphism. 
\end{proof}

We have shown that the recollement $\mathsf{R_{ab}}(\Mod RG, \Mod \Lambda G, \Mod SG)$ is equivalent to $\mathsf{R_{ab}}(\Mod RG, \Mod \Lambda', \Mod SG)$ which is the recollement induced by the idempotent $e'=~\begin{psmallmatrix} 0 & 0\\0 & 1_{SG}\end{psmallmatrix}$, that is the idempotent $e1_G$ of $\Lambda G$ corresponds to the idempotent $e'$ of $\Lambda'$.
Moreover, note that the idempotent $(1-e)$ is also $G$-invariant. Thus the recollement~$(\ref{modSmodR})$ induces also a {$G$-equivariant} recollement, which is 
\begin{equation} 
\begin{tikzcd}
\Mod SG \arrow[rr, "\mathsf{Z}_{S}^G"]& & \Mod \Lambda G \arrow[ll, bend left, "\mathsf{p}^G"]  \arrow[ll, bend right, "\mathsf{U}_{S}^G"'] \arrow[rr, "\mathsf{(1-e)}^G=\mathsf{U}_{R}^G"] & & \Mod RG \arrow[ll, bend left, "\mathsf{H}_{R}^G"] \arrow[ll, bend right, "\mathsf{Z}_{R}"']
\end{tikzcd}
\end{equation}
and using the dual part of Remark~\ref{recollementofmodulesistriangular} we obtain, similarly, that $\Lambda G$ is Morita equivalent to the triangular matrix ring $\Lambda'$. Note also that $\mathsf{(1-e)}^G$ is the functor induced by the idempotent $(1-e)G$ of $\Lambda G$ and corresponds to $1-e'$ of $\Lambda'$.

We end this section examining the singular equivalence in this context. 
Recall by \cite[Theorem~1.3]{RR} that $\mathsf{gl.dim}(R) = \mathsf{gl.dim}(RG)$, where $R$ is an Artin algebra, and therefore $\mathsf{gl.dim}(R)<\infty$ if and only if $\mathsf{gl.dim}(RG)<\infty$.

\begin{cor}\label{globaldimensionoftriangmatrixrings}
Let $\Lambda$ be a triangular matrix Artin algebra over a commutative ring $k$. Let $G$ be a finite group acting by automorphisms such that $e \in \Lambda^G$ and $|G|$ is invertible in $k$. Denote by 
$e'= \begin{psmallmatrix} 0 & 0\\0 & 1_{SG}\end{psmallmatrix}$
the idempotent of $\Lambda'= \begin{psmallmatrix} RG & 0\\N' & SG\end{psmallmatrix}$ that corresponds to $e 1_G$ of $\Lambda G$. Denote by $\mathsf{e, \, e', \, (1-e)}$ and $\mathsf{(1-e')}$ the functors induced by the idempotents $e$, $e'$, $1-e$ and $1-e'$, respectively. We have the following:
\begin{itemize}
\item[(i)] If $\mathsf{gl.dim.}(R)<\infty$, then $\mDsg(\mathsf{e'}) \colon \mDsg(\Lambda') \xrightarrow{\simeq} \mDsg(SG)$.
\item[(ii)] If $\mathsf{gl.dim.}(S)<\infty$ and $\pd_R N< \infty$, then $\mDsg \mathsf{(1-e')} \colon \mDsg(\Lambda') \xrightarrow{\simeq} \mDsg(RG)$.
\end{itemize}
\begin{proof}
By the preceding discussion and \cite[Corollary~8.19]{PSS}, (i) follows immediately. 
For (ii) we have that $\mDsg( \mathsf{1-e}) \colon \mDsg(\Lambda) \xrightarrow{\simeq} \mDsg(R)$ by \cite[Corollary~8.17]{PSS}.
Since the recollement induced by $1-e$ lifts to a $G$-equivariant recollement~$(\ref{modSmodR})$, Corollary~\ref{recollementofmodulesandequiv} yields that $\mDsg ( \mathsf{(1-e)}^G ) \colon \mDsg(\Lambda G) \xrightarrow{\simeq} \mDsg(RG)$. By Proposition~\ref{triangularmatrixequivariant} we have that $\Lambda G$ is Morita equivalent to the triangular matrix ring $\Lambda '$ and thus we have the singular equivalence $\mDsg(\mathsf{1-e'}) \colon \mDsg(\Lambda') \xrightarrow{\simeq} \mDsg(RG)$.
\end{proof}
\end{cor}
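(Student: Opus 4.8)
The plan is to reduce both parts to the non-equivariant singular equivalence criteria for triangular matrix algebras in \cite{PSS}, to transfer these to the skew group algebra $\Lambda G$ via the machinery of Section~\ref{singularcats}, and then to move the resulting equivalences across the Morita equivalence $\Lambda G\sim\Lambda'$ of Proposition~\ref{triangularmatrixequivariant}. Two preliminary remarks make the equivariant hypotheses available: since $|G|$ is invertible in the commutative base ring $k$ and $\Lambda$ is a $k$-algebra, $|G|$ is invertible in $\Lambda$ (hence in $R$ and $S$); and $\Lambda G$ is again an Artin algebra, so $\mDsg(\Lambda G)\simeq\mDsg((\smod\Lambda)^G)$ is idempotent complete by \cite[Corollary~2.4]{XiaoWuChen3}, and likewise $\mDsg(SG)$ and $\mDsg(RG)$. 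Moreover, since $e\in\Lambda^G$ the complementary idempotent $1-e$ is $G$-invariant as well, so both recollements $(\ref{modRmodS})$ and $(\ref{modSmodR})$ — induced by $e$ and by $1-e$ respectively — lift to $G$-equivariant recollements by Proposition~\ref{equivariant recollement induced by an idempotent}, and by Proposition~\ref{triangularmatrixequivariant} these are realised inside $\Mod\Lambda'$ via the matrix idempotents $e'$ and $1-e'$, with $e1_G$ corresponding to $e'$ and $(1-e)1_G$ to $1-e'$.

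For part (i), I would first apply \cite[Corollary~8.19]{PSS} to the triangular matrix algebra $\Lambda=\begin{psmallmatrix}R&0\\N&S\end{psmallmatrix}$: the hypothesis $\mathsf{gl.dim}(R)<\infty$ forces the functor $\mathsf{e}=\mathsf{U}_S$ of recollement~$(\ref{modRmodS})$ to induce a singular equivalence $\mDsg(\mathsf{e})\colon\mDsg(\Lambda)\xrightarrow{\simeq}\mDsg(S)$. Next, Corollary~\ref{recollementofmodulesandequiv}, applied to the $G$-invariant idempotent $e$ with $|G|$ invertible in $\Lambda$, upgrades this to $\mDsg(\mathsf{e}^G)\colon\mDsg(\Lambda G)\xrightarrow{\simeq}\mDsg(SG)$, where $\mathsf{e}^G$ is the functor induced by $e1_G$. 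Finally, transporting along the Morita equivalence $\Lambda G\sim\Lambda'$ of Proposition~\ref{triangularmatrixequivariant}, under which $e1_G$ is carried to $e'$, and using that a Morita equivalence restricts to a triangle equivalence of singularity categories intertwining the corresponding multiplication functors, one obtains $\mDsg(\mathsf{e'})\colon\mDsg(\Lambda')\xrightarrow{\simeq}\mDsg(SG)$.

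Part (ii) is the mirror image, run on recollement~$(\ref{modSmodR})$ whose quotient functor is $\mathsf{(1-e)}=\mathsf{U}_R$. Here \cite[Corollary~8.17]{PSS} shows that $\mathsf{gl.dim}(S)<\infty$ together with $\pd_R N<\infty$ gives $\mDsg(\mathsf{(1-e)})\colon\mDsg(\Lambda)\xrightarrow{\simeq}\mDsg(R)$; Corollary~\ref{recollementofmodulesandequiv} applied with $1-e$ in place of $e$ gives $\mDsg(\mathsf{(1-e)}^G)\colon\mDsg(\Lambda G)\xrightarrow{\simeq}\mDsg(RG)$; and Proposition~\ref{triangularmatrixequivariant}, matching $(1-e)1_G$ with $1-e'$, transports this to $\mDsg(\mathsf{(1-e')})\colon\mDsg(\Lambda')\xrightarrow{\simeq}\mDsg(RG)$.

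The routine parts are the two invocations of \cite{PSS} and the two applications of Corollary~\ref{recollementofmodulesandequiv}. The main obstacle I anticipate is the compatibility bookkeeping at the Morita step: one must verify that the equivalence $\Mod\Lambda G\simeq\Mod\Lambda'$ of Proposition~\ref{triangularmatrixequivariant} sends $e1_G$ to $e'$ and $(1-e)1_G$ to $1-e'$, so that multiplication by $e1_G$ is intertwined with multiplication by $e'$ (and similarly for the complementary idempotents), and that such a Morita equivalence descends to an equivalence of singularity categories compatible with the canonical quotient functors onto $\mDsg$. These points are essentially contained in the discussion following Proposition~\ref{triangularmatrixequivariant}, so what remains is to spell them out and assemble the commuting squares.
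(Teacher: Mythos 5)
Your argument is correct, and your treatment of part (ii) coincides with the paper's proof: \cite[Corollary~8.17]{PSS} applied to $\Lambda$, then Corollary~\ref{recollementofmodulesandequiv} for the $G$-invariant idempotent $1-e$, then transport along the Morita equivalence $\Lambda G\sim\Lambda'$ of Proposition~\ref{triangularmatrixequivariant} matching $(1-e)1_G$ with $1-e'$. For part (i), however, you take a genuinely different (and uniform) route: you run the same three-step scheme, applying \cite[Corollary~8.19]{PSS} to $\Lambda$ itself, upgrading via Corollary~\ref{recollementofmodulesandequiv} to a singular equivalence $\mDsg(\Lambda G)\xrightarrow{\simeq}\mDsg(SG)$, and then transporting across the Morita equivalence. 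The paper instead uses the Reiten--Riedtmann equality $\gld(R)=\gld(RG)$ (recalled from \cite[Theorem~1.3]{RR} just before the corollary) to conclude $\gld(RG)<\infty$ and applies \cite[Corollary~8.19]{PSS} \emph{directly} to the triangular matrix algebra $\Lambda'$, so that (i) needs no equivariant transfer at all. Your version buys uniformity between (i) and (ii) and avoids invoking the global-dimension invariance under skew group algebras, at the cost of carrying the equivariantization and the idempotent bookkeeping ($e1_G\leftrightarrow e'$) also in case (i); the paper's version is shorter for (i) but genuinely needs the equivariant detour only for (ii), where the hypothesis $\pd_R N<\infty$ does not transfer directly to a condition on $\Lambda'$. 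Both arguments are complete, and the compatibility points you flag at the Morita step are indeed settled by the discussion following Proposition~\ref{triangularmatrixequivariant}, where the equivalence of the recollements and the correspondence of idempotents are established.
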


\begin{rem}
A ring $R$ is strongly indecomposable if there is no non-trivial idempotent $e$ of $R$ such that $(1-e) Re=0$.
For triangular matrix rings where $R, S$ are strongly indecomposable, by \cite[Theorem~3.2]{AhnWyk}, the group of automorphisms of the ring $\Lambda$ is a subgroup of $\mathsf{Aut}(R)\times \mathsf{Aut}(S)\times N \times \mathsf{Aut}(_SN_R)$ where the summand $N$ is viewed only as an abelian group. Namely, for an element $(\rho, \kappa, n_0, \lambda)\in \mathsf{Aut}(\Lambda)$: 
$$(\rho, \kappa, n_0, \lambda)
\begin{pmatrix}
r & 0 \\
n & s \\
\end{pmatrix}
=
\begin{pmatrix}
\rho(r) & 0 \\
n_0\rho(r)+\lambda(n)-\kappa(s)n_0 & \kappa(s) \\
\end{pmatrix}
$$
Then for a finite group $G$ together with a group homomorphism $G \to \mathsf{Aut}(\Lambda)^{op}$, i.e.\ the codomain is a subgroup of $\mathsf{Aut}(R)^{op}\times \mathsf{Aut}(S)^{op}\times N^{op} \times \mathsf{Aut}(_SN_R)^{op}$, we observe that the idempotent $e = \begin{psmallmatrix} 0 & 0\\0 & 1\end{psmallmatrix}$ inducing recollement~$(\ref{modRmodS})$ is $G$-invariant. Note that the induced action by automorphisms on $\Mod S$ stems from the restriction of $G \to \mathsf{Aut}(\Lambda)^{op}$ to the summand $\mathsf{Aut}(S)^{op}$. Similarly, the action on $\Mod R $ is induced by the restriction to $\mathsf{Aut}(R)^{op}$.  
\end{rem}

\subsection{Singular Equivalence with Level}\label{equivariantmoritatypewithlvl}

For this subsection let $\Lambda$ be a finite dimensional associative algebra over a field $k$. Denote by $\mathsf{rad}(\Lambda)$ the Jacobson radical of $\Lambda$. The semisimple quotient $\Lambda/\mathsf{rad}(\Lambda)$ is separable if $\Lambda/\mathsf{rad}(\Lambda)$ remains semisimple under extension of scalars to a field containing $k$. Let also $\Lambda^e = \Lambda \otimes_k \Lambda^{op}$ be the enveloping algebra of $\Lambda$. We identify $\Lambda$-$\Lambda$-bimodules with left $\Lambda^e$-modules. 

Denote by $\umod \Lambda$ the stable module category of $\smod  \Lambda$ modulo morphisms factoring through projective modules. Denote also by $\mathrm{\Omega}_{\Lambda} $ the sygyzy endofunctor of $\umod \Lambda$. 
When $\Lambda$ is self-injective, it follows from Happel \cite{Happel} that $\umod \Lambda$ is triangulated and, moreover, it is equivalent to the singularity category $\mDsg(\Lambda)$ by the work of Rickard \cite{JRick}. 
We recall the next notion due to Wang \cite{Wang}. 

\begin{defn}
Let $A$ and $B$ be finite dimensional algebras over a field $k$.
    Let $_AM_B$ and $_BN_A$ be an $A$-$B$-bimodule and a $B$-$A$-bimodule, respectively, and let $n \geq 0$. We say that $(M,N)$ defines a \textbf{singular equivalence of Morita type with level} $n$, if the following conditions are satisfied:
    \begin{enumerate}
        \item The modules  $_AM$, $M_B$, $_BN$ and $N_A$ are finitely generated and projective.
        \item There are isomorphisms $M \otimes_B N \simeq \mathrm{\Omega}^n_{A^e}(A)$ and $N \otimes_A M \simeq \mathrm{\Omega}^n_{B^e}(B)$  in $\umod A^e$ and in $\umod B^e$, respectively.
    \end{enumerate}
\end{defn}
It is well known that if $(M,N)$ defines a singular equivalence of Morita type with level $n$, then the functor $M\otimes_B - $ induces a singular equivalence between $A$ and $B$, i.e.\ the functor $M\otimes_B -\colon \mathsf{D}_{\mathsf{sg}}(B) \to \mathsf{D}_{\mathsf{sg}}(A)$ is a triangulated equivalence.

In the following we recall Qin's theorem on singular equivalences of Morita type with level in a recollement situation over finite dimensional algebras.

\begin{thm} \label{moritatypewithlvlofidempotent}
\textnormal{(\!\!\cite[Theorem~4.1]{YQ})}
Let $\Lambda$ be a finite-dimensional algebra over a field $k$ such that $\Lambda/\mathsf{rad}(\Lambda)$ is separable over $k$ and let $e \in \Lambda$ be an idempotent. If $\mathsf{e}\colon  \smod \Lambda \to \smod e\Lambda e$ induces a singular equivalence,  then $\Lambda$ and $e \Lambda e$ are singularly equivalent of Morita type with level.  
\end{thm}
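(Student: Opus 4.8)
The plan is to exhibit explicit bimodules realising the singular equivalence and to verify the two defining conditions of a singular equivalence of Morita type with level. Write $A=\Lambda$ and $B=e\Lambda e$. The natural starting candidates are the $A$-$B$-bimodule $\Lambda e$ and the $B$-$A$-bimodule $e\Lambda$, since $\mathsf{e}=(-)e$ is isomorphic to $-\otimes_\Lambda\Lambda e$ and its stable quasi-inverse is modelled by $-\otimes_{e\Lambda e}e\Lambda$. Two of the four one-sided projectivity requirements hold for free: ${}_\Lambda(\Lambda e)$ and $(e\Lambda)_\Lambda$ are summands of $\Lambda$, hence projective.

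First I would translate the hypothesis through the recollement $\mathsf{R}_{\mathsf{ab}}(\smod\Lambda/\Lambda e\Lambda,\smod\Lambda,\smod e\Lambda e)$ by means of Theorem~\ref{pss}. That the functor $\mathsf{e}$ induces a singular equivalence is equivalent to $\pd_\Lambda X<\infty$ for every $\Lambda/\Lambda e\Lambda$-module $X$ together with $\pd_{e\Lambda e}(\Lambda e)<\infty$. From this I extract the two finiteness statements $\pd_\Lambda(\Lambda/\Lambda e\Lambda)<\infty$ and $\pd_{e\Lambda e}(\Lambda e)<\infty$ (and symmetrically for $e\Lambda$), which measure exactly how far the naive candidates are from satisfying the definition, the point being that $(\Lambda e)_B$ and ${}_B(e\Lambda)$ are only of finite, not zero, projective dimension.

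Next I would promote these one-sided finiteness statements to finiteness of projective dimension over the enveloping algebras, and this is precisely where the separability of $\Lambda/\mathsf{rad}(\Lambda)$ over $k$ enters. The lemma to invoke is that, under this hypothesis, a bimodule over $\Lambda$ (resp.\ over $B$) has finite projective dimension over $\Lambda^e$ (resp.\ $B^e$) once it has finite one-sided projective dimension; this yields $\pd_{\Lambda^e}(\Lambda/\Lambda e\Lambda)<\infty$ and the analogous control over $B^e$. I would then manufacture genuinely two-sided projective bimodules $M$ and $N$ by passing to sufficiently high syzygies of $\Lambda e$ and $e\Lambda$ over the enveloping algebras $\Lambda\otimes_k B^{op}$ and $B\otimes_k\Lambda^{op}$: because $\Lambda$ and $B$ are projective over the field $k$, every projective module over these algebras is projective on each of its one-sided structures, so a high enough syzygy is projective on both sides once the ambient one-sided projective dimensions are finite. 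This secures condition (1) at the cost of introducing the level $n$.

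Finally I would establish the two stable isomorphisms of condition (2). On the $B$-side there is an exact identification $e\Lambda\otimes_\Lambda\Lambda e\cong e\Lambda e=B$, and on the $A$-side the multiplication map $\Lambda e\otimes_{e\Lambda e}e\Lambda\to\Lambda e\Lambda\hookrightarrow\Lambda$ has kernel and cokernel assembled from $\Lambda/\Lambda e\Lambda$ and a $\Tor^{e\Lambda e}$-term, each of finite projective dimension as a bimodule by the previous step. Tracking these through the syzygy adjustments should give isomorphisms $M\otimes_B N\cong\Omega^n_{\Lambda^e}(\Lambda)$ in $\umod\Lambda^e$ and $N\otimes_A M\cong\Omega^n_{B^e}(B)$ in $\umod B^e$ for a common level $n$. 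The main obstacle is exactly this last bookkeeping: arranging the two-sided projective replacements of $\Lambda e$ and $e\Lambda$ so that their tensor products land on syzygies of $\Lambda$ and of $B$ with a single coherent level on both sides, and confirming that separability genuinely converts the one-sided finiteness into the enveloping-algebra finiteness that forces the discrepancies to vanish in the stable bimodule categories. The one-sided-only projectivity of the starting bimodules is what makes this the technical heart of the argument.
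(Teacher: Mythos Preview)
The paper does not give its own proof of this statement: Theorem~\ref{moritatypewithlvlofidempotent} is quoted verbatim from \cite[Theorem~4.1]{YQ} and used as a black box in the subsequent corollary, so there is no in-paper argument to compare your proposal against.

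That said, your outline is essentially the strategy of Qin's original proof. The natural candidates $\Lambda e$ and $e\Lambda$, the translation of the singular-equivalence hypothesis via \cite[Theorem~5.2]{PSS} into the finiteness conditions $\pd_{\Lambda}(\Lambda/\Lambda e\Lambda)<\infty$ and $\pd_{e\Lambda e}(e\Lambda)<\infty$, and the use of separability to pass from one-sided to enveloping-algebra projective dimensions are all correct and are the same moves Qin makes. Your diagnosis of the technical heart is also right: after replacing $\Lambda e$ and $e\Lambda$ by high syzygies over $\Lambda\otimes_k B^{\mathrm{op}}$ and $B\otimes_k\Lambda^{\mathrm{op}}$, one must check that the tensor products land on syzygies of $\Lambda$ and $B$ with a \emph{common} level, and this requires controlling the $\Tor$-terms and the cokernel $\Lambda/\Lambda e\Lambda$ as bimodules of finite projective dimension. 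One small correction: you write $\pd_{e\Lambda e}(\Lambda e)<\infty$, but the condition coming from Theorem~\ref{pss} is $\pd_{e\Lambda e}(eP)<\infty$ for $P\in\proj\Lambda$, which amounts to $\pd_{e\Lambda e}(e\Lambda)<\infty$; both one-sided $B$-structures are needed and both follow this way.
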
 

Now we can use the machinery of Corollary~\ref{recollementofmodulesandequiv} and obtain the following.

\begin{cor}\label{equivariantmoritatypewithlvlcor}
Let $\Lambda$ be a finite-dimensional algebra over a field $k$ and let $e \in \Lambda$ be an idempotent. Let also $G$ be a finite group acting on $\smod \Lambda$ by $k$-algebra automorphisms such that $|G|$ is invertible in $k$. The following are equivalent:
\begin{itemize}
 	\item[(i)] The functor $\mathsf{e} \colon \smod \Lambda \to  \smod e\Lambda e$ induces a singular equivalence.
        \item[(ii)] The functor $\mathsf{e}^G\colon ( \smod \Lambda)^G \to (\smod e\Lambda e)^G$ induces a singular equivalence.
\end{itemize}
If we additionally assume that $\Lambda /\mathsf{rad}(\Lambda)$ is separable over $k$, then we have the following properties:
\begin{itemize}
        \item[(1)] $\Lambda$ and $e \Lambda e$ are singularly equivalent of Morita type with level.
        \item[(2)] $\Lambda G$ and $e'\Lambda Ge'$ are singularly equivalent of Morita type with level.
    \end{itemize}
\begin{proof}
    The proof is just a combination of Corollary~\ref{recollementofmodulesandequiv}, Theorem~\ref{moritatypewithlvlofidempotent} and of the following easy remark.
\end{proof}
\end{cor}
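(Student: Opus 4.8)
The plan is to show that the displayed corollary reduces almost entirely to statements already proved, so the only genuine work is assembling them and verifying the ``easy remark'' about skew group algebras of finite dimensional algebras. First I would observe that an action of $G$ on $\smod\Lambda$ by $k$-algebra automorphisms, via a homomorphism $G\to\mathsf{Aut}(\Lambda)^{op}$, gives an action by automorphisms in the sense of subsection~\ref{skewgroupalgebra}, so that $(\smod\Lambda)^G\simeq\smod\Lambda G$ and, because $|G|$ is invertible in $k$, also $(\proj\Lambda)^G\simeq\proj(\Lambda G)$. The recollement $(\smod\Lambda/\Lambda e\Lambda,\smod\Lambda,\smod e\Lambda e)$ is the one induced by the idempotent $e$; if $e$ is $G$-invariant then by subsection~\ref{examplewithidempotents} it lifts to a $G$-equivariant recollement with $|G|$ invertible in $\smod\Lambda$. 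Here, however, the corollary does \emph{not} assume $e\in\Lambda^G$; so the first real point is that one may replace $e$ by a $G$-invariant idempotent without changing the Morita-equivalence class of $e\Lambda e$, or else invoke that the statement (i)$\Leftrightarrow$(ii) in Corollary~\ref{recollementofmodulesandequiv} already packages everything needed. I would first note that $\mDsg(\Lambda)$, $\mDsg(e\Lambda e)$ and their skew-group analogues are idempotent complete by \cite[Corollary~2.4]{XiaoWuChen3}, since $\Lambda G$ and $(e\Lambda e)G$ are again finite dimensional $k$-algebras.

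The core of the argument is then immediate: the equivalence (i)$\Leftrightarrow$(ii) is precisely Corollary~\ref{recollementofmodulesandequiv} applied to the Artin algebra $\Lambda$ (a finite dimensional $k$-algebra is Artin), using idempotent completeness of $\mDsg(\B^G)$ and $\mDsg(\C^G)$ to invoke Theorem~\ref{main6}. For the two ``additionally'' clauses, assuming $\Lambda/\mathsf{rad}(\Lambda)$ separable over $k$: clause (1) is exactly Theorem~\ref{moritatypewithlvlofidempotent} (Qin's theorem), applied with the hypothesis that $\mathsf{e}\colon\smod\Lambda\to\smod e\Lambda e$ induces a singular equivalence, which is (i). For clause (2) I would apply Qin's theorem again, this time to the algebra $\Lambda G$ and the idempotent $e'=e1_G\in\Lambda G$: by (ii) and Proposition~\ref{equivariant recollement induced by an idempotent} the functor $\mathsf{e}'\colon\smod\Lambda G\to\smod e'\Lambda G e'$ is (identified with $\mathsf{e}^G$ and hence) a singular equivalence, so Theorem~\ref{moritatypewithlvlofidempotent} gives that $\Lambda G$ and $e'\Lambda G e'$ are singularly equivalent of Morita type with level, provided $\Lambda G/\mathsf{rad}(\Lambda G)$ is separable over $k$.

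Thus the one thing that genuinely needs verification — the ``easy remark'' referred to in the sketched proof — is: \emph{if $\Lambda/\mathsf{rad}(\Lambda)$ is separable over $k$ and $|G|$ is invertible in $k$, then $\Lambda G/\mathsf{rad}(\Lambda G)$ is separable over $k$.} I would argue this as follows. Since $|G|\in k^\times$, for any field extension $K/k$ one has $(\Lambda G)\otimes_k K\cong(\Lambda\otimes_k K)G$, and Maschke-type arguments show that $\mathsf{rad}((\Lambda\otimes_k K)G)=\mathsf{rad}(\Lambda\otimes_k K)\cdot G$, because a skew group algebra $AG$ with $|G|$ invertible is semisimple whenever $A$ is semisimple (the averaging idempotent $\frac1{|G|}\sum_g g$ splits the relevant extensions). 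Hence $(\Lambda G)\otimes_k K/\mathsf{rad}$ is $\cong\big((\Lambda\otimes_k K)/\mathsf{rad}(\Lambda\otimes_k K)\big)G$, which is semisimple precisely because $(\Lambda\otimes_k K)/\mathsf{rad}$ is semisimple — and the latter holds for all $K$ by separability of $\Lambda/\mathsf{rad}(\Lambda)$. Therefore $\Lambda G/\mathsf{rad}(\Lambda G)$ stays semisimple under all extensions of scalars, i.e.\ is separable over $k$. I expect the main (though still minor) obstacle to be bookkeeping around the case where $e$ is not literally $G$-invariant: one must be careful that Corollary~\ref{recollementofmodulesandequiv} as stated already assumes a $G$-invariant idempotent, so the cleanest route is to take the statement (i)$\Leftrightarrow$(ii) of that corollary as given for the $G$-invariant $e$ and note that the present corollary is its specialization to finite dimensional $k$-algebras together with Qin's theorem; alternatively one observes that $\mathsf{e}$ and $\mathsf{e}^G$ depend only on the Morita-equivalence classes involved, so one may always replace $e$ by a conjugate lying in $\Lambda^G$ when the action is by $k$-algebra automorphisms.
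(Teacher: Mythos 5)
Your proposal is correct and follows essentially the same route as the paper: the equivalence (i)$\Leftrightarrow$(ii) is Corollary~\ref{recollementofmodulesandequiv}, clauses (1) and (2) are Theorem~\ref{moritatypewithlvlofidempotent} applied to $(\Lambda,e)$ and to $(\Lambda G, e'=e1_G)$, and the only verification needed is that $\Lambda G/\mathsf{rad}(\Lambda G)$ is separable over $k$, which you prove by a Maschke-type argument equivalent to the paper's use of $\Lambda G/\mathsf{rad}(\Lambda G)\simeq(\Lambda/\mathsf{rad}(\Lambda))G$ from Reiten--Riedtmann. One small caveat: your alternative suggestion that one may always replace $e$ by a $G$-invariant conjugate is not justified in general, but it is also not needed, since (as you note) statement (ii) only makes sense under the implicit assumption $e\in\Lambda^G$ carried over from Corollary~\ref{recollementofmodulesandequiv}.
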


\begin{rem}
If $\Lambda /\mathsf{rad}(\Lambda)$ is separable over $k$, then $\Lambda G/\mathsf{rad}(\Lambda G)$ is also separable over $k$. Indeed, for any field extension $k' /k$  we have 
$$k' \otimes_k \Lambda G/\mathsf{rad}(\Lambda G) \simeq k' \otimes_k (\Lambda /\mathsf{rad}(\Lambda)) G \simeq  (k' \otimes_k \Lambda/\mathsf{rad}(\Lambda)) \otimes_k kG  $$
and since $k' \otimes_k \Lambda /\mathsf{rad}(\Lambda)$ is semisimple, so is $k' \otimes_k \Lambda G/\mathsf{rad}(\Lambda G)$. The fact that $\Lambda G/\mathsf{rad}(\Lambda G)\simeq (\Lambda /\mathsf{rad}(\Lambda))G$ for Artin algebras with finite group actions and $|G|$ invertible in $k$ can be found at \cite[Paragraph~1.5]{RR}.
\end{rem}

\begin{rem} 
We would like to mention that work of Asashiba and Pan \cite{AP} provide us with a $G$-invariant equivalence of Morita type (resp.\ stable or singular or singular with level) if and only if there exists a $G$-graded  equivalence of Morita type (resp.\ stable or singular or singular with level). This type of equivalences uses a 2-categorical context and extends previous work of Asashiba \cite{HA}, \cite{HA2} on a 2-categorical Cohen-Montgomery duality. Their 2-categorical setting does not readily specialise in the setting of this paper and, moreover, they use a modified definition for Morita type equivalences. For more details we refer to the above papers of Asashiba. 
\end{rem}

We finish this paper with the next corollary on singular Hochschild cohomology. For the precise definition and further details we refer to \cite{Keller}.

\begin{cor}
\label{HHC}
Let $\Lambda$ be a finite-dimensional algebra over a field $k$ and let $e$ an idempotent element of $\Lambda$. Assume that $\Lambda /\mathsf{rad}(\Lambda)$ is separable over $k$ and let $G$ be a finite group acting on $\smod \Lambda$ by $k$-algebra automorphisms such that the order $|G|$ is invertible in $k$. Assume that the functor $\mathsf{e} \colon \smod \Lambda \to  \smod e\Lambda e$ induces a singular equivalence. Then there is an isomorphism of Gerstenhaber algebras between the singular Hochschild cohomology:
\[
\mathsf{HH}^*_{\mathsf{sg}}(\Lambda) \cong \mathsf{HH}^*_{\mathsf{sg}}(e\Lambda e)
\]
and
\[
\ \ \mathsf{HH}^*_{\mathsf{sg}}(\Lambda G) \cong \mathsf{HH}^*_{\mathsf{sg}}(e'\Lambda G e')
\]
\begin{proof}
Our setup together with Corollary~\ref{equivariantmoritatypewithlvlcor} implies that there is a singular equivalence of Morita type with level between $\Lambda$ and $e \Lambda e$ are between $\Lambda G$ and $e'\Lambda Ge'$. Then the desired Gerstenhaber algebra isomorphisms of the singular Hochschild cohomology are direct consequences of \cite[Theorem~6.2]{Wang:SingularHH}.
\end{proof}
\end{cor}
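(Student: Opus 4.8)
\textbf{Proof proposal for Corollary~\ref{HHC}.}

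The plan is to reduce both isomorphisms to the existence of singular equivalences of Morita type with level, and then invoke the Gerstenhaber algebra isomorphism from singular Hochschild cohomology of such equivalences, namely \cite[Theorem~6.2]{Wang:SingularHH}. First I would observe that the hypotheses of Corollary~\ref{equivariantmoritatypewithlvlcor} are precisely what we have assumed here: $\Lambda$ is a finite dimensional $k$-algebra with $\Lambda/\mathsf{rad}(\Lambda)$ separable over $k$, $e$ is an idempotent, $G$ acts on $\smod\Lambda$ by $k$-algebra automorphisms, $|G|$ is invertible in $k$, and the functor $\mathsf{e}\colon\smod\Lambda \to \smod e\Lambda e$ induces a singular equivalence. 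Thus part (1) of Corollary~\ref{equivariantmoritatypewithlvlcor} gives that $\Lambda$ and $e\Lambda e$ are singularly equivalent of Morita type with level, and part (2) gives that $\Lambda G$ and $e'\Lambda G e'$ are singularly equivalent of Morita type with level, where $e' = e1_G$ as in Corollary~\ref{recollementofmodulesandequiv}.

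Next I would apply \cite[Theorem~6.2]{Wang:SingularHH} to each of these two singular equivalences of Morita type with level. That theorem states that a singular equivalence of Morita type with level between two finite dimensional algebras (over a field, with the relevant separability-type hypothesis built into the citation's setting) induces an isomorphism of Gerstenhaber algebras on singular Hochschild cohomology. Applying it to the pair $(\Lambda, e\Lambda e)$ yields $\mathsf{HH}^*_{\mathsf{sg}}(\Lambda) \cong \mathsf{HH}^*_{\mathsf{sg}}(e\Lambda e)$, and applying it to the pair $(\Lambda G, e'\Lambda G e')$ yields $\mathsf{HH}^*_{\mathsf{sg}}(\Lambda G) \cong \mathsf{HH}^*_{\mathsf{sg}}(e'\Lambda G e')$, which are exactly the two asserted isomorphisms.

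The only point requiring a little care is that \cite[Theorem~6.2]{Wang:SingularHH} should be applicable to the algebras $\Lambda G$ and $e'\Lambda G e'$, so one should check that $\Lambda G/\mathsf{rad}(\Lambda G)$ is again separable over $k$ (and similarly for $e'\Lambda G e'$, which is a corner algebra of $\Lambda G$). This is handled by the remark immediately following Corollary~\ref{equivariantmoritatypewithlvlcor}: since $\Lambda/\mathsf{rad}(\Lambda)$ is separable and $|G|$ is invertible in $k$, one has $\Lambda G/\mathsf{rad}(\Lambda G)\simeq (\Lambda/\mathsf{rad}(\Lambda))G$ by \cite[Paragraph~1.5]{RR}, and base change along any field extension $k'/k$ preserves semisimplicity, so $\Lambda G/\mathsf{rad}(\Lambda G)$ is separable; separability passes to corner algebras. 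I expect this separability bookkeeping to be the only (minor) obstacle; the rest is a direct citation chain, so the proof is short.
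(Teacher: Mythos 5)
Your proposal is correct and matches the paper's own argument: both reduce the statement to Corollary~\ref{equivariantmoritatypewithlvlcor} (whose hypotheses coincide with those assumed here) to obtain singular equivalences of Morita type with level for the two pairs of algebras, and then apply \cite[Theorem~6.2]{Wang:SingularHH}. Your extra check that $\Lambda G/\mathsf{rad}(\Lambda G)$ remains separable is exactly the bookkeeping handled in the remark following Corollary~\ref{equivariantmoritatypewithlvlcor}, so nothing in your route differs from the paper's.
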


\end{document}